\documentclass[11pt]{amsart}
\usepackage[a4paper, margin=1.25in]{geometry}
\usepackage{amsthm}
\usepackage{xcolor,bm}
\usepackage{amssymb,amsmath,amsthm,amsfonts}
\usepackage{tikz-cd}
\usepackage{enumerate}
\usepackage{enumitem}
\usepackage{setspace}
\usepackage{tikz}
\usepackage{empheq}
\usepackage[all]{xy}
\usepackage{verbatim}
\usepackage[normalem]{ulem}
\usepackage{todonotes}
\usepackage{mathrsfs}

\usepackage{hyperref}  
\hypersetup{linkcolor=blue,citecolor=blue,filecolor=blue,urlcolor=blue} 

\numberwithin{equation}{section}
\numberwithin{figure}{section}

\newtheorem{theorem}{Theorem}[section]
\newtheorem{proposition}[theorem]{Proposition}

\newtheorem{lemma}[theorem]{Lemma}

\newtheorem{conjecture}[theorem]{Conjecture}

\theoremstyle{definition}
\newtheorem{definition}[theorem]{Definition}

\newtheorem{remark}[theorem]{Remark}

\usepackage[
  backend=biber,
  style=alphabetic,
  maxcitenames=99,
  maxbibnames=99,
  giveninits=false
]{biblatex}
\definecolor{myblue}{rgb}{0.6, 0.9, 1}

\makeatletter

\newcommand{\Rmnum}[1]{\expandafter\@slowromancap\romannumeral #1@}
\makeatother

\definecolor{myblue}{rgb}{0.6, 0.9, 1}
\definecolor{mygreen}{rgb}{0,0,1}
\definecolor{purple}{rgb}{0.6,0.2,1}
\definecolor{orange}{rgb}{0.8,0,0.2}

\newcommand{\bP}{\mathbb{P}}
\newcommand{\C}{\mathbb{C}}
\newcommand{\Q}{\mathbb{Q}}
\newcommand{\bU}{\mathbb{U}}

\newcommand{\R}{\mathbb{R}}
\newcommand{\bN}{\mathbb{N}}

\newcommand{\bH}{\mathbb{H}}
\newcommand{\bA}{\mathbb{A}}

\newcommand{\mcF}{\mathcal{F}}

\newcommand{\mcE}{\mathcal{E}}

\newcommand{\eps}{\varepsilon}

\newcommand{\mc}{\mathcal}

\newcommand{\Res}{\operatorname{Res}}

\newcommand{\supp}{\operatorname{supp}}

\newcommand{\Pic}{\operatorname{Pic}}

\newcommand{\Z}{\mathbb{Z}}
\newcommand{\mbf}{\mathbf}
\newcommand{\bb}{\mathbb}

\newcommand{\cal}{\mathcal}
\newcommand{\ovl}{\overline}

\newcommand{\h}{\widehat{h}}
\newcommand{\Spec}{\operatorname{Spec}}
\newcommand{\Rat}{\operatorname{Rat}}

\def\sH{\mathscr{H}}

\newcommand{\calJ}{\mathcal{J}}
\newcommand{\Prep}{\operatorname{Prep}}

\newcommand{\wh}{\widehat}
\newcommand{\Div}{\operatorname{Div}}
\newcommand{\scr}{\mathscr}
\newcommand{\vol}{\operatorname{vol}}

\newcommand{\PGL}{\mathrm{PGL}}
\newcommand{\Aut}{\mathrm{Aut}}
\newcommand{\an}{\mathrm{an}}
\newcommand{\can}{\mathrm{can}}
\theoremstyle{definition} 

\newcommand{\intt}{\operatorname{int}}

\begin{document}
\title[Non-Archimedean Rigidity and Uniformity]{Non-Archimedean Rigidity and Uniformity for Common Preperiodic Points}

\author{Chen Gong and Jit Wu Yap}

\date{today}

\begin{abstract}
 Let $k$ be an algebraically closed, complete non-Archimedean field of residue characteristic $0$. Let $f$ be a polynomial of degree at least $2$ over $k$ which does not have potential good reduction. We prove that if $g$ is any other polynomial with the same Julia set, then $f$ and $g$ must be dynamically related. 
 
 As a consequence, we show that for any two complex polynomials $f,g$  of degree at least $2$, either their sets of preperiodic points coincide, or the number of their common preperiodic points is uniformly bounded above by a constant depending only on the degrees,
thereby answering a conjecture of DeMarco--Krieger--Ye for polynomials. We also establish relative results, allowing us to prove special cases of the DeMarco--Mavraki conjecture. 
\end{abstract}

	\maketitle
\tableofcontents

\section{Introduction}
\textbf{Standing assumptions}. Throughout the paper, any metrized field $k$ will be assumed to be complete and non-trivially valued.

\subsection{Non-Archimedean rigidity of Julia sets} Let $f: \bb{P}^1(\bb{C}) \to \bb{P}^1(\bb{C})$ be a rational map of degree $d \geq 2$. One of the central objects in complex dynamics is the Julia set of $f$, denoted by $\calJ_f$ \cite{Mi06}, which captures the locus of instability for the dynamics. The Julia set $\calJ_f$ carries with it an equilibrium measure $\mu_f$ \cite{Bro65, Lyu83, FLM83}, which is the unique Radon probability measure satisfying $\supp \mu_f = \calJ_f$ and 
\begin{equation} \label{eq: IntroMixing1}
f_* \mu_f = \mu_f, \quad f^* \mu_f = d \cdot \mu_f.
\end{equation}
It is an old problem, dating back to Julia \cite{Jul22}, to determine when two rational maps $f,g$ share the same Julia set or even the same equilibrium measure. This problem has been extensively studied both in the polynomial setting \cite{Jul22,BE87,Bea90,Bea92,SS95} and also for general rational maps \cite{Ere89,Lev90,LP97,Din00,Ye15, DFG23,JX25, MS22}. These works reveal a rigidity phenomenon: two non-integrable\footnote{We use the term “integrable” by analogy with the notion of integrable systems in Hamiltonian dynamics, see~\cite{Ves91,Cas06}.} (also called non-exceptional in \cite{LP97}) rational maps share the same equilibrium measure if and only if they are dynamically related by an algebraic correspondence. These rigidity results, combined with arithmetic equidistribution \cite{BR06, CL06, FRL06, Yua08, Gau23, YZ24}, play an important role in arithmetic dynamics; see for example \cite{BD11,BD14,FG22,JX23b, DM24, MS22, DMY26}.
\par 
Our first goal in this paper is to establish the corresponding rigidity results for Julia sets of polynomials over a non-Archimedean field $k$ with residue characteristic $0$. The foundations of one-variable non-Archimedean dynamics were developed by Benedetto \cite{Ben98, Ben00, Ben01a, Ben01b} and Rivera-Letelier \cite{RL03}, leading to a theory of Fatou and Julia sets paralleling the complex case. Unlike their complex counterparts, however, these sets are not subsets of $\bb{P}^1(k)$, but instead of the Berkovich analytification $\bb{P}_k^{1,\an}$, a larger space introduced by Berkovich \cite{Ber90} to facilitate non-Archimedean geometry. The theory of non-Archimedean dynamics on $\bb{P}_k^{1,\an}$ 
arises
naturally from considering degenerations of families of rational maps over $\bb{C}$ \cite{Ki06, DF14, Fav20, Luo21, Luo22, FG25}. 

Similar to the complex case, to any rational map \( f \) of degree \( d \geq 2 \) over \( k \), one can associate an equilibrium measure \( \mu_f \), which is a Radon probability measure on \( \mathbb{P}^{1,\an}_k \) supported on the Julia set \( \mathcal{J}_f \) and satisfying \eqref{eq: IntroMixing1}. 
This measure was first constructed by Favre--Rivera-Letelier \cite{FRL04}, and independently by Baker--Rumely~\cite{BR06} and Chambert-Loir--Thuillier~\cite{Thu05,CL06}. Its ergodic and entropy-theoretic properties were further investigated by Favre--Rivera-Letelier \cite{FRL10, FRL26}.

We may now state our first main result. For a rational map $f$ of degree $d \geq 2$ over $k$, we say that $f$ does not have potential good reduction if its Julia set $\cal{J}_f$ is not a single point. For a Julia set $\cal{J}$, we define $\Aut(\cal{J})$ to be the set of affine linear transformations of $k$ that preserve $\cal{J}$.

\begin{theorem}\label{IntroRigidity1}
Let \( k \) be an algebraically closed, complete non-Archimedean field with residue characteristic \( 0 \). 
Let \( \mathcal{J} \) be the Julia set of a polynomial of degree at least \( 2 \) which does not have potential good reduction. 
Let \( g \) be a polynomial of minimal degree whose Julia set coincides with \( \mathcal{J} \).

For any polynomial $f$ of degree $d\ge 2$, the following conditions are equivalent:
\begin{enumerate}
\item There exist an integer $n \ge 1$ and $\sigma \in \Aut(\calJ)$ such that $f = \sigma \circ g^{n}.$
\item The Julia sets coincide:
$\calJ_f = \calJ.$
\item The equilibrium measures coincide: $
\mu_f = \mu_g.$
\item The Green functions coincide:
$G_f = G_g.$
\item The Böttcher coordinates at $\infty$ agree up to a root of unity:
$\phi_f = \lambda \phi_g$, where $\lambda$ is a root of unity.
\end{enumerate}
\end{theorem}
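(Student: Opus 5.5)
Most of the equivalences are formal; the real work is (2)$\Rightarrow$(1).

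\textbf{The formal steps.}
\begin{itemize}
\item (1)$\Rightarrow$(2) is immediate. Since $\calJ_{g^n}=\calJ_g$ and $\sigma$ preserves $\calJ$, the map $f=\sigma\circ g^n$ satisfies $f^{-1}(\calJ)=\calJ$. The Julia set of a polynomial is the closure of the repelling or non-classical repelling locus, and this characterization is compatible with affine symmetries, so $\calJ_f=\calJ$.
\item (2)$\Rightarrow$(4). For a polynomial, $G_f$ is the Green function of the filled Julia set $\mathcal K_f$ with pole at $\infty$, i.e.\ the potential of the Berkovich equilibrium measure of $\mathcal K_f$. The set $\mathcal K_f$ is determined by $\calJ_f$: it is the complement of the unbounded component of $\bP^{1,\an}_k\setminus\calJ_f$. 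Since $\calJ$ is not a point, it has positive capacity, so $G_f=G_g$. This also gives (4)$\Leftrightarrow$(3), because $\mu=\Delta G$ and $G$ is recovered from $\mu$ as its logarithmic potential normalized at $\infty$. And (3)$\Rightarrow$(2) holds since $\supp\mu_f=\calJ_f$.
\item (4)$\Leftrightarrow$(5). Near $\infty$ we have $G_f=\log|\phi_f|$. Two Böttcher coordinates tangent to a multiple of the identity with $|\phi_f|=|\phi_g|$ on a neighbourhood of $\infty$ in the Berkovich sense differ by a unit-modulus analytic function. By the maximum principle on annuli, that function is a constant $\lambda$ with $|\lambda|=1$.
\end{itemize}
To see that $\lambda$ is a root of unity, I would normalize both leading coefficients via the functional equations $\phi_f\circ f=\phi_f^{d}$ and $\phi_g\circ g=\phi_g^{e}$. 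The key point is that $f$ and $g$ commute at the level of $\phi$ near $\infty$ once (1) is known. So I would derive (5) from (1) rather than directly from (4), and conversely (5)$\Rightarrow$(4) is trivial.

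\textbf{The main step: (4)$\Rightarrow$(1).} Assume $G_f=G_g=G$ and let $e=\deg g$, which is minimal. I would follow the classical Beardon / Schmidt--Steinmetz strategy adapted to the non-Archimedean setting. Write $\phi=\phi_g$ near $\infty$. Then $\phi_f=\lambda\phi$ with $|\lambda|=1$ by the argument above, so near $\infty$ we have $\phi\circ f=\lambda'\phi^d$.

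First I would show $d=e^n$. The polynomial $h=f\circ g$ and $g\circ f$ both have Green function $G$ and leading behaviour controlled by $\phi$. Alternatively, consider the group $\Gamma$ of germs at $\infty$ of the form $\phi^{-1}(c\,\phi^{m/n})$ that extend to polynomials preserving $G$. Its image in $\Q_{>0}$ via degrees is generated by a minimal element, and minimality of $e$ then gives $d=e^n$. The cleanest route is the Euclidean-type argument: if $d=e^n r$ with $1<r$, produce a polynomial $\psi$ with $\psi\circ g^n=f$ up to a symmetry.

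To get the decomposition, I would prove that $F=f$ and $H=\sigma_0\circ g^n$ agree near $\infty$ after composing with a rotation. Both conjugate to $z\mapsto c z^{d}$ in the coordinate $\phi$. Hence $f\circ(g^{n})^{-1}$, an analytic germ at $\infty$, is $\phi^{-1}(\mu\phi)$ with $|\mu|=1$. This germ preserves $G$, and I must show it extends to an affine map preserving $\calJ$.

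\textbf{Expected obstacle.} This extension step is the main obstacle. Over $\C$ one uses that the Böttcher coordinate extends along $G$ and that such symmetries are rigid (Beardon). Here I would instead use the residue characteristic $0$ hypothesis, together with the fact that $\calJ$ is not a point, to produce an actual repelling periodic type-II or classical point structure. Concretely, I would compare Taylor coefficients. The germ $\tau=\phi^{-1}(\mu\phi)$ commutes with the dynamics in the sense that $\tau\circ g=\phi^{-1}(\mu^{e}\phi)\circ g$. Non-potential-good-reduction forces the set $\{\mu:\phi^{-1}(\mu\phi)\text{ preserves }G\}$ to be a finite group; I would argue this by showing it acts faithfully on the finite set of branches at the first branch point of $\calJ$'s convex hull. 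Hence $\mu$ is a root of unity. That root-of-unity condition then yields, via $\phi^{-1}(\mu\phi)=\lambda z+b+O(1/z)$ and the finiteness, that $\tau$ is affine: a finite-order germ commuting with $g$ up to finite twist equals the affine symmetry of the ``centered'' normal form of $g$.

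This also yields (5) with $\lambda$ a root of unity.
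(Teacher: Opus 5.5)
\textbf{There is a genuine gap, and it is exactly at the step you call formal.} You derive $\phi_f=\lambda\phi_g$ from $G_f=G_g$ by saying that the analytic function $\phi_f/\phi_g$ has modulus $1$ near $\infty$ and is therefore constant ``by the maximum principle on annuli''. This is false over a non-Archimedean field. For example, $|1+z^{-1}|_x=1$ at every point $x$ of $\{|z|>1\}\subset\bP^{1,\an}_k$, yet $1+z^{-1}$ is not constant. The modulus of an analytic function is far too coarse an invariant to force rigidity.

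Your argument also never uses the hypothesis that $\calJ$ is not a point, so it proves too much. Take $f=z^d$ and $g=z^d+z$. Both have good reduction, so $G_f=G_g=\log^+|z|$. Your reasoning would then give $\phi_g=\lambda z$. But then $\phi_g\circ g=\lambda(z^d+z)\neq\lambda^d z^d=\phi_g^d$, a contradiction. The paper states explicitly that deducing $\phi_f=c\,\phi_g$ directly from $G_f=G_g$ is open in the non-Archimedean setting.

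Your main step (4)$\Rightarrow$(1) opens with ``$\phi_f=\lambda\phi$ with $|\lambda|=1$ by the argument above'', so everything after it has no foundation. The later claims are also only heuristic:
\begin{itemize}
\item the finiteness of the admissible $\mu$'s via their action on branches;
\item the extension of the germ $\phi^{-1}(\mu\phi)$ to an affine symmetry;
\item the reduction $d=e^n$, which as sketched does not even address multiplicatively independent degrees.
\end{itemize}

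\textbf{What is missing} is a genuinely non-Archimedean replacement for the Böttcher comparison. The paper first proves it in equal degree (Theorem~\ref{thm: rigidity-samedegree}):
\begin{itemize}
\item It shows $T_f=T_g$ as simplicial trees.
\item It builds finite vertex sets with $f^{-1}(X_v)=g^{-1}(X_v)=Y_v$.
\item Using a repelling periodic point and the estimate $l(e_v)\le d^{d-1}l(e_w)$, it shows every disk $D_w$ with $w\in Y_v$ can be made arbitrarily small.
\item It chooses a finite set $A$ whose $f$- and $g$-preimages have equal counts in each $D_w$.
\item Newton's identities, together with $|(d-i)N|=1$ (residue characteristic $0$), then force $a_i/a_d=b_i/b_d$, i.e.\ $f=\sigma\circ g$. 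This is where bad reduction and residue characteristic $0$ are really used.
\end{itemize}

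Different degrees are handled separately. Applying the equal-degree result to $f\circ g$ and $g\circ f$ gives $f\circ g=\zeta\,g\circ f$. Passing to the commuting maps $\hat f,\hat g$ and invoking the Julia--Fatou--Ritt classification over $\C$ (after an embedding $K\hookrightarrow\C$) yields $f^{n_1}=\zeta' g^{n_2}$. The root-of-unity statement in (5) and the role of the minimal degree of $g$ are then transported from the complex theorem of Schmidt--Steinmetz via Lemma~\ref{lem: autjalgebraic}, rather than obtained from a non-Archimedean analysis of Böttcher germs.
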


As a corollary of Theorem \ref{IntroRigidity1}, we are able to answer 
(Proposition \ref{prop: symmetryjulia})
a question raised by Favre--Gauthier in \cite[Remark 3.10]{FG22}. We note that our assumption of $\cal{J}$ being not a single point is necessary: for any monic polynomial $f$ with coefficients all having norm $\leq 1$, its Julia set $\cal{J}_f$ is always only the Gauss point $\{x_g\}$ and so we cannot possibly have rigidity. 
\par 
We note that (1) easily implies (2) to (5). The equivalence of (3) and (4) follows from \cite[Section 4]{FRL06} and the equivalence of (2) and (3) follows from \cite[Theorem~10.91]{BR10}. The main difficulty lies in showing that (2) implies (1). To overcome this, we construct a non-Archimedean analogue of DeMarco--McMullen's simplicial tree~\cite{DM08}, which serves as a combinatorial tool for studying the Julia set of a polynomial. The idea of using tree-like combinatorics to study dynamics of polynomials goes back to Branner--Hubbard \cite{BH88, BH92}. We show if $\mu_f = \mu_g$, then both $f$ and $g$ have the same associated tree, in particular having the same set of distinguished vertices, which allows us to study where the roots of $f^{n}(z) - \alpha$ and $g^n(z)-\alpha$ lie. We will explain this in detail in Section \ref{sec: Strategy}.

\subsection{Uniform bound on common preperiodic points} The second goal of this paper is to demonstrate how non-Archimedean rigidity results, such as Theorem~\ref{IntroRigidity1}, can be applied to establish uniformity results for unlikely intersection problems. We thank Niki Myrto Mavraki for enlightening us with this perspective. Let $f,g$ be two rational maps of degree $d \geq 2$ over $\bb{C}$. Then Baker--DeMarco \cite{BD11} showed that either 
$$\Prep(f) = \Prep(g) \text{ or } |\Prep(f) \cap \Prep(g)| < \infty$$
where $\Prep(f)$ denotes the set of preperiodic points for $f$ and similarly for $g$. A higher-dimensional version was proven by Yuan--Zhang \cite{YZ17}. In \cite{DKY21}, DeMarco--Krieger--Ye conjectured that there should exist a uniform constant $M = M(d) > 0$, depending only on $d$, such that if $f,g$ are of degree $d \geq 2$, then 
$$\Prep(f) = \Prep(g) \text{ or } |\Prep(f) \cap \Prep(g)| \leq M.$$
Our second main result not only proves their conjecture for polynomials, but also extends it to the case of polynomials of different degrees, using Theorem \ref{IntroRigidity1}.

\begin{theorem}
\label{thm: uniformprep}
Given any integers $d_1,d_2 \ge 2$, there exists a constant 
\( M = M(d_1,d_2) \), depending only on $d_1$ and $d_2$, such that for any two polynomials 
$f$ and $g$ of degrees $d_1$ and $d_2$ over $\mathbb{C}$, one of the following holds:
\[
\Prep(f) = \Prep(g) \quad \text{or} \quad 
|\Prep(f) \cap \Prep(g)| \le M.
\]
\end{theorem}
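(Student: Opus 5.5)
The plan is to argue by contradiction and degenerate. Suppose no uniform bound exists. Then there are sequences of pairs $(f_n,g_n)$ of degrees $(d_1,d_2)$ with $\Prep(f_n)\neq\Prep(g_n)$ but $|\Prep(f_n)\cap\Prep(g_n)|\to\infty$. After conjugating simultaneously by an affine map, we may put $f_n$ in monic centered form. The pairs $(f_n,g_n)$ then define points of a quasi-projective parameter variety $V$ of pairs of polynomials. Following the Mavraki-style strategy, we pass to a generic point of an irreducible subvariety $Z\subset V$ (the Zariski closure of the bad sequence, or a minimal such subvariety) and work over the function field $K=\C(Z)$. Here the relevant dynamical pair $(f,g)$ is defined over a function field, so its points have places of bad reduction, and completing at such a place gives a non-Archimedean field $k$ of residue characteristic $0$. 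This is exactly the setting of Theorem \ref{IntroRigidity1}.

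The key steps, in order, are as follows.

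First, set up the arithmetic. Use the pairing or mutual energy $\langle \hat h_f,\hat h_g\rangle$ (an adelic Arakelov–Zhang distance) and the quantitative equidistribution of small points, as in DeMarco–Krieger–Ye and the work of Mavraki–Schmidt / DeMarco–Mavraki on uniform Bogomolov results for families. Specifically, if $\Prep(f)\neq \Prep(g)$, then $\mu_{f,v}\neq\mu_{g,v}$ at some place, and the distance is positive. We need a lower bound of the form $\langle f,g\rangle\ge c\,h(f,g)-C$ on a Zariski open subset of $Z$, together with an upper bound forcing few common preperiodic points when the distance is large relative to the height. This is the "height gap" step.

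Second, show that the height-weighted distance does not degenerate on $Z$. At the generic point over $K$, one of $f,g$ must lack potential good reduction at some place $v$ of $K$; otherwise the family is isotrivial after conjugation, and the Archimedean rigidity (Baker–DeMarco and Levin–Przytycki) applies fiberwise via the classical result. Given that, Theorem \ref{IntroRigidity1} over $k=\widehat{K}_v$ says that $\mu_{f,v}=\mu_{g,v}$ forces $f$ and $g$ to be dynamically related, $f^a=\sigma\circ g^b$-type. Such a relation would then force $\Prep(f)=\Prep(g)$ on all of $Z$, contradicting the choice of the sequence. Hence $\mu_{f,v}\neq\mu_{g,v}$ at some non-Archimedean place of the function field. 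This produces a positive contribution to the pairing that grows linearly with the height of the parameter, which is the needed height gap.

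Third, conclude. A height-gap inequality combined with a uniform equidistribution or counting argument, such as a quantitative Archimedean estimate on compact parameter sets, bounds the number of common preperiodic points uniformly on a Zariski open subset of $Z$. Noetherian induction on the proper closed complement then finishes the argument.

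I expect the main obstacle to be the second step: passing from the rigidity statement at a single non-Archimedean place to a uniform lower bound on the pairing in terms of the height over all of $Z$. This requires continuity of the local energies $\langle \mu_{f,v},\mu_{g,v}\rangle$ in families, and it requires handling the case in which the degeneration of $f$ and $g$ happens at different rates, where one might have good reduction while the other does not. My first attempt would be to normalize by conjugation so that the map with the larger escape rate is the one without potential good reduction. I would then apply Theorem \ref{IntroRigidity1} to that map, deducing $\mathcal J_f\neq\mathcal J_g$ unless the two are related.
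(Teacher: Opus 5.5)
Your outer architecture matches the paper's. It has three parts: a height gap $\langle f,g\rangle\ge c\,h(f,g)-C$, a separate positive lower bound in bounded height, and the quantitative-equidistribution upper bound $\langle f,g\rangle\le C'\frac{\log N}{N}(h(f,g)+1)$ of DeMarco--Krieger--Ye and Gauthier. But the step that is supposed to produce the height gap does not work as stated. Knowing $\mu_{f,v}\neq\mu_{g,v}$ at one place $v$ of $\mathbb{C}(Z)$ only tells you the pairing grows when the parameter $s\in Z$ degenerates toward that valuation. It says nothing about the other directions in which $s$ can go to infinity. To get $\langle f_s,g_s\rangle\ge c\,h(s)-C$ on a Zariski open subset of $Z$, you need positivity along \emph{every} degeneration direction with uniform constants. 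That is, you need bigness (non-degeneracy) of the geometric part of the Deligne pairing $\langle\overline{L},\overline{L}\rangle$ on $Z$. For $\dim Z\ge 2$ this does not follow from positivity at a single valuation: on $\mathbb{A}^1\times\mathbb{A}^1$, a nef class pulled back from the first factor is positive along $\{\infty\}\times\mathbb{P}^1$ but is not big, and its height does not control the second coordinate. Proving such bigness on arbitrary $Z$ is exactly the hard point of the non-degeneracy approach, which DeMarco--Mavraki only achieve generically. The paper goes the other way and \emph{deduces} bigness (Proposition \ref{Relative1}) from its height inequality. Noetherian induction does not rescue this, since the inductive step on $Z$ itself needs the bigness.

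The missing idea is the rescaled ultrafilter degeneration at a fixed place, which handles all degeneration directions at once. Fix a place $v$ (Archimedean or $p$-adic) and any sequence with $\log|(f_n,g_n)|_v\to\infty$. Rescale by $\epsilon_n=(\log|(f_n,g_n)|_v)^{-1}$ and take the $\omega$-limit over $\mathscr{H}(\omega)$. This field is non-Archimedean of residue characteristic $0$ whatever $v$ was, which matters because rigidity genuinely fails in residue characteristic $p$. The argument then runs as follows.
\begin{itemize}
\item Continuity of the rescaled pairing turns $\langle f_n,g_n\rangle_v=o(\log|(f_n,g_n)|_v)$ into $\mu_{f_\omega}=\mu_{g_\omega}$.
\item The normalization $|(f_\omega,g_\omega)|=e$ together with Lemma \ref{lem: mcminres} shows that $f_\omega$ lacks potential good reduction.
\item Theorem \ref{thm: rigidity-samedegree} with Proposition \ref{prop: symmetryjulia} then forces $f_\omega^{n_1}=\lambda g_\omega^{n_2}$. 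This is ruled out because the quantitative conditions $\lambda_{V'}(f)\le C_1\log|(f,g)|$ and $\log|f^{n_1}-\lambda g^{n_2}|\ge -C_2\log|(f,g)|$ survive the limit.
\end{itemize}
This yields $\langle f,g\rangle_v\ge C_4\log|(f,g)|_v$ on the whole stratum $V\setminus V'$ (Theorems \ref{IntroUniform2} and \ref{IntroUniform3}, the latter uniform over all large residue characteristics). A product-formula ``good places'' argument then shows these places carry at least half of $h(f,g)$.

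Two further pieces are missing from your plan. The bounded-height bound $\langle f,g\rangle\ge\delta$ is a separate Archimedean compactness argument using Schmidt--Steinmetz rigidity (Theorem \ref{thm: glbenergybd2}), which your step 3 only gestures at. You also need the DeMarco--Krieger--Ye specialization from $\mathbb{C}$ to $\overline{\mathbb{Q}}$ before any height is defined.
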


DeMarco--Krieger--Ye \cite{DKY20, DKY21} had previously shown their conjecture for rational maps belonging to either the quadratic unicritical family $\{z^2 + c\}_{c \in \bb{C}}$ or to the Lattès family associated to the Legendre family of elliptic curves. Poineau \cite{Poi25, Poi26} subsequently proved their conjecture for any pair of Lattès maps $(f,g)$, confirming a conjecture of Bogomolov--Fu--Tschinkel \cite{BFT18}. Mavraki--Schmidt \cite{MS22} proved the conjecture for any one-parameter family of rational maps and DeMarco--Mavraki \cite{DM24} showed the conjecture holds for a Zariski open $U \subset \Rat_d \times \Rat_d$ where $\Rat_d$ is the moduli space of all degree $d$ rational maps. The latter result was generalized by Gauthier~\cite{Gau24} for rational maps with different degrees.
\par 
Mavraki--Schmidt \cite{MS22} and DeMarco--Mavraki \cite{DM24} both rely on the theory of non-degeneracy, which is the key ingredient in the recent breakthroughs on the uniform Mordell--Lang conjecture by Dimitrov--Gao--Habegger \cite{DGH21}, Kühne \cite{Kuh21} and Gao--Ge--Kühne \cite{GGK21}. Our approach follows DeMarco--Krieger--Ye's \cite{DKY20, DKY21}, Gauthier's~\cite{Gau24} and Poineau's \cite{Poi25, Poi26}, where we study an adelic energy pairing, introduced by Favre--Rivera-Letelier \cite{FRL06} and further studied by Fili \cite{Fil17}, and prove uniform lower bounds. This is more in line with Yuan's proof of the uniform Bogomolov conjecture for curves \cite{Yua24}. To obtain uniform lower bounds, we combine Theorem \ref{IntroRigidity1} with a degeneration argument by ultrafilters following Favre--Gong \cite{FG25}, which we will explain in detail later.
\par 
Combining our uniform lower bounds on the energy pairing with the powerful machinery of Yuan--Zhang \cite{YZ24, Yua24}, we are able to establish relative versions of Theorem \ref{thm: uniformprep}. Let $\mathrm{Poly}^d$ and $\mathrm{Poly}_{\mathrm{mc}}^d$ denote  the moduli space of polynomials of degree $d$ and the space of monic centered polynomials of degree $d$ respectively. Let $X \subseteq \mathrm{Poly}_{\mathrm{mc}}^d \times \mathrm{Poly}^d$ be an irreducible subvariety defined over $\ovl{\bb{Q}}$. Let $\Delta \subseteq (\bb{P}^1)^2$ and consider the fibered power 
$$\Delta^{\dim X + 1} \times X \subseteq (\bb{P}^1)^{2(\dim X+1)} \times \mathrm{Poly}_{\mathrm{mc}}^d \times \mathrm{Poly}^d.$$

\begin{theorem} \label{IntroRelativeThm1}
Assume that as $(f,g)$ varies over $X(\ovl{\bb{Q}})$, the map $f$ is not always the monomial map $z^d$ and that $\Prep(f) \not = \Prep(g)$ for some $(f,g) \in X(\ovl{\bb{Q}})$. Then the set 
$$\left\{ (x_1,\ldots,x_{\dim X + 1}, f,g) \mid x_i \in \Prep(f) \cap \Prep(g) \right\} \subset (\Delta)^{\dim X +1} \times X$$
is not Zariski dense in $(\Delta)^{ \dim X +1} \times X$. 
\end{theorem}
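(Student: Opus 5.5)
We follow the Yuan--Zhang strategy for relative uniform Bogomolov/Manin--Mumford statements via adelic line bundles on quasi-projective varieties \cite{YZ24, Yua24}. We argue by contradiction and assume the set $S$ of tuples $(x_1,\ldots,x_{\dim X+1},f,g)$ with all $x_i \in \Prep(f)\cap\Prep(g)$ is Zariski dense in $Y := \Delta^{\dim X+1}\times X$.

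\textbf{Step 1: heights.} On the universal family $\bb{P}^1\times\bb{P}^1\times X \to X$ take the invariant adelic line bundles $\ovl{L}_f$ and $\ovl{L}_g$ attached to $f$ and $g$, pulled back by the two projections. Their restrictions to $\Delta\times X \cong \bb{P}^1\times X$ give invariant bundles $\ovl{L}_f,\ovl{L}_g$ on the family of $\bb{P}^1$ over $X$. Set $\ovl{M} = \ovl{L}_f + \ovl{L}_g$, which is nef in the sense of \cite{YZ24}. On $Y$, form $\ovl{N} = \sum_i \pi_i^*\ovl{M} + \ovl{H}$, where $\ovl{H}$ is an auxiliary nef bundle pulled back from $X$, for instance the bifurcation/Call--Silverman height bundle on the parameter space. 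Points of $S$ have $\h_{\ovl{\pi_i^*M}} = 0$ for every $i$. The fundamental inequality of \cite{YZ24} on the essential minimum then gives a Zariski dense set of small points, which forces $\ovl{N}$ restricted to $Y$ to be degenerate in the relevant sense. Concretely, the top self-intersection $\ovl{M}^{\,2}$ fiberwise, integrated against $H^{\dim X}$, must vanish up to terms controlled by $\ovl{H}$. Taking the $\dim X+1$ fibered power is exactly what lets the bundles $\pi_i^*\ovl{M}$ dominate the base directions: the Betti-form/nondegeneracy argument of \cite{Yua24}, adapted to heights as in Gauthier \cite{Gau24}, shows that $\ovl{N}$ is big on $Y$ unless the fiberwise pairing vanishes identically.

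\textbf{Step 2: reduction to an energy pairing.} The fiberwise intersection $\ovl{L}_f\cdot\ovl{L}_g$, after normalisation, is the adelic energy pairing $\langle f,g\rangle$ of \cite{FRL06, Fil17}. It is a sum over places $v$ of local mutual energies $(\mu_{f,v}-\mu_{g,v},\mu_{f,v}-\mu_{g,v})_v \ge 0$, and each local term vanishes iff $\mu_{f,v}=\mu_{g,v}$. From Step 1 we get that the height of the "pair" $(f,g)\in X$ with respect to this energy pairing is zero on a Zariski dense subset, hence on a dense set of parameters. We therefore need a \emph{uniform lower bound}: there is $c>0$ with $\langle f,g\rangle \ge c\, h_X(f,g) - C$ whenever $\Prep(f)\neq\Prep(g)$, and the inequality is strict on a Zariski open subset of $X$. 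Such a bound, fed back into Step 1, shows that $\ovl{N}|_Y$ is big. That contradicts the density of small points.

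\textbf{Step 3: the lower bound (main obstacle).} This is where Theorem \ref{IntroRigidity1} enters, via a degeneration argument in the style of \cite{FG25}. Suppose the lower bound failed along a sequence $(f_n,g_n)$ with heights tending to infinity. We would pass to an ultrafilter limit and obtain a non-Archimedean field $k$ of residue characteristic $0$ together with limiting polynomials $(f_\infty,g_\infty)$ over $k$, with $\mu_{f_\infty}=\mu_{g_\infty}$. The hypothesis that $f$ is not always $z^d$, together with monic centred normalisation, should force $f_\infty$ to lack potential good reduction whenever the height grows. Theorem \ref{IntroRigidity1} then gives $f_\infty=\sigma\circ g_\infty^{\,n}$-type relations. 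These are algebraic conditions, so by specialisation they would hold on a Zariski dense set of $X$, and hence on all of $X$. That would give $\Prep(f)=\Prep(g)$ everywhere on $X$, contradicting the hypothesis.

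The delicate points are two. First, the degrees of $n$ and $\sigma$ must be bounded uniformly, using the minimal-degree $g$ in Theorem \ref{IntroRigidity1}, so that only finitely many closed conditions arise. Second, the bounded-height region must be handled separately. There we would use the Archimedean rigidity results and the finiteness of \cite{BD11} combined with Northcott.
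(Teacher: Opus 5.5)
Your architecture matches the paper's. You use the nef bundle $\ovl{L}_f+\ovl{L}_g$ on $\Delta\times X$ and Yuan's criterion (Proposition~\ref{YuanBigness1}), which reduces potential bigness to bigness of the Deligne pairing on $X$; its height at $s$ is the energy pairing $\langle f_s,g_s\rangle$. Proposition~\ref{YuanBigness2} then reduces that to a uniform lower bound obtained from degeneration and rigidity. In Step~1, drop $\ovl{H}$: once a base height is added, the points of $S$ are no longer small. What must be big is $\sum_i\pi_i^*\ovl{M}$ itself, and mere non-vanishing of the fiberwise pairing does not give this.

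\textbf{The main gap is Step~3.} A failure of the \emph{global} inequality does not degenerate to a single non-Archimedean field. The ultralimit must be taken place by place, with $\eps_n=(\log|(f_n,g_n)|_{\nu_n})^{-1}$.

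More seriously, your concluding step fails. An identity $f_\omega^{n_1}=\lambda g_\omega^{n_2}$ over $\sH(\omega)$ only says $|f_n^{n_1}-\lambda g_n^{n_2}|_{\nu_n}^{\eps_n}\to 0$ along $\omega$. That is, the pairs approach the relation locus at the scale of $|(f_n,g_n)|$. This never gives an exact relation for any $(f_n,g_n)$, so there is nothing to specialise. Likewise, $\Aut(\calJ_{f_\omega})$ can exceed $\bU_m$ if $f_n$ approaches $V'$.

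The paper instead builds quantitative hypotheses into local statements (Theorems~\ref{IntroUniform2} and~\ref{IntroUniform3}):
\begin{itemize}
\item $\lambda_{V'}(f)\le C_1\log|(f,g)|_\nu$;
\item $\log|f^{n_1}-\lambda g^{n_2}|_\nu\ge -C_2\log|(f,g)|_\nu$.
\end{itemize}
These survive the ultralimit as exact statements and contradict Theorem~\ref{thm: rigidity-samedegree}. Bad reduction of $f_\omega$ comes from the normalisation $|(f_\omega,g_\omega)|=e$, from $\mu_{f_\omega}=\mu_{g_\omega}$, and from Lemma~\ref{lem: mcminres}, not from $f$ being non-monomial.

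A global counting step (Theorem~\ref{thm: glbenergybd1}) is then needed. If $f\notin V'$ and no relation holds, some nonzero $a_i$, or some nonzero coefficient of $f^{n_1}-\lambda g^{n_2}$, has height at most $R_1h(f,g)+R_1$ by Lemma~\ref{lem: height}. By the product formula, the places violating the hypotheses therefore carry at most about half of $h(f,g)$. Summing the local bounds over the remaining places, split by residue characteristic below or above $C_5$, gives $\langle f,g\rangle\ge A\,h(f,g)-B$.

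\textbf{Your treatment of bounded height also fails.} Northcott requires bounded degree, but pairs of bounded height over $\ovl{\bb{Q}}$ come from fields of unbounded degree and are infinitely many. Moreover, \cite{BD11} gives no uniformity. The paper (Theorem~\ref{thm: glbenergybd2}) shows instead that if $h(f,g)<C$, at least half of the Archimedean weight lies in a fixed compact set bounded away from $V'$ and from the relation locus. There $\langle\cdot,\cdot\rangle_\infty\ge 2\delta$ by Schmidt--Steinmetz rigidity and continuity; monomial $f$ is handled separately.

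The two bounds combine to $\langle f,g\rangle\ge c\,(h(f,g)+1)$ with $c>0$, which is the form Proposition~\ref{YuanBigness2} needs. A bound $c\,h-C$ alone would not suffice, since the Weil height can have essential minimum $0$ on $X$.
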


In fact, we prove a Bogomolov version of Theorem \ref{IntroRelativeThm1}, which also handles points of small canonical height (Theorem \ref{RelativeManinMumford1}). One may deduce Theorem \ref{thm: uniformprep} from Theorem \ref{IntroRelativeThm1}, for example see \cite[Lemma 5.4]{DM24}. Theorem \ref{IntroRelativeThm1} can be viewed as a special case of the DeMarco--Mavraki conjecture \cite{DM26}, as named by Noytaptim--Zhong \cite{Zho25, NZ26}. The DeMarco--Mavraki conjecture may be viewed as a vast dynamical generalization of the Relative Manin--Mumford conjecture proven by Gao--Habegger \cite{GH23}. To state the conjecture, we have to introduce the notion of $\Phi$-special subvarieties for a family of dynamical systems. 
\par 
Let $S$ be a smooth and irreducible quasi-projective variety over $\bb{C}$ and let $\Phi: S \times \bb{P}^N \to S \times \bb{P}^N$ be an algebraic family of endomorphisms of degree $d$. Let $\bb{C}(S)$ be the function field of $S$ and let $\cal{X} \subseteq S \times \bb{P}^N$ be a closed irreducible subvariety that is flat over $S$. We say that $\cal{X}$ is $\Phi$-special if there exists a subvariety $\mathbf{Y} \subseteq \bb{P}^N_{\ovl{\bb{C}(S)}}$ containing the generic fiber $\mathbf{X}$ of $\cal{X}$ and a polarizable endomorphism $\Psi: \mathbf{Y} \to \mathbf{Y}$ over $\ovl{\bb{C}(S)}$ such that for some $n \in \bb{N}$, the following hold: 

\begin{enumerate}
 \item $\mathbf{\Psi}^n(\mbf{Y}) = \mbf{Y};$
 \item $\mathbf{\Psi}^n \circ \mathbf{\Phi} = \mathbf{\Phi} \circ \mathbf{\Psi}^n$ on $\mbf{Y}$ where $\mbf{\Psi}$ is the generic fiber of $\Phi$;
 \item $\mbf{X}$ is preperiodic for $\mbf{\Psi}$.
\end{enumerate}

We may now define the relative special dimension $r_{\Phi,\cal{X}}$ of $\cal{X}$ as 
$$r_{\Phi,\cal{X}}:= \min \{\dim \cal{Y} - \dim S \mid \cal{Y} \text{ is } \Phi-\text{special} \text{ and } \cal{X} \subset \cal{Y}\}.$$
Now let $\wh{T}_{\Phi}$ be the dynamical Green $(1,1)$-current associated to $\Phi$ \cite[Section 2.2]{GV25}. We can now state the DeMarco--Mavraki conjecture. 

\begin{conjecture} \label{IntroConjecture1} \textnormal{(DeMarco--Mavraki Conjecture)}
The following are equivalent:
\begin{enumerate}
    \item $\cal{X}$ contains a Zariski dense set of $\Phi$-preperiodic points; 
    \item $\wh{T}_{\Phi}^{\wedge r_{\Phi,\cal{X}}} \wedge [\cal{X}] \not = 0.$
\end{enumerate}
\end{conjecture}

DeMarco--Mavraki show the implication $(2)$ implies $(1)$ in their paper \cite{DM26}. As a consequence of Theorem \ref{IntroRelativeThm1}, we are able to show that the DeMarco--Mavraki conjecture holds for all fibered powers $(\Delta)^{k} \times X \subseteq (\bb{P}^1)^{2k} \times \mathrm{Poly}_{\mathrm{mc}}^d \times \mathrm{Poly}^d.$ We only handle the case of equal degrees $d_1 = d_2$ since the DeMarco--Mavraki conjecture was stated for polarized endomorphisms only. 

\begin{theorem} \label{IntroDeM1}
Let $X \subseteq \mathrm{Poly}_{\mathrm{mc}}^d \times \mathrm{Poly}^d$ be an irreducible subvariety defined over $\ovl{\bb{Q}}$. Assume that as $(f,g)$ varies over $X(\ovl{\bb{Q}})$, the map $f$ is not always $z^d$. Fix a positive integer $k$ and let 
$$\cal{X} = (\Delta)^k \times X \subseteq (\bb{P}^1)^{2k} \times X,$$
where we view $\cal{X}$ as a family over the base $X$. 
Then the DeMarco--Mavraki conjecture is true for $\cal{X}$.
\end{theorem}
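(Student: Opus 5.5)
We prove the two implications separately. The implication $(2)\Rightarrow(1)$ is \cite{DM26}, so the work is in $(1)\Rightarrow(2)$. Here $\Phi$ acts on $(\bb{P}^1)^{2k}\times X$ by $(f,g,\ldots,f,g)$ on the fibre over $(f,g)\in X$.

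\textbf{Step 1: compute $r_{\Phi,\cal{X}}$.} The plan is to split according to whether $\Prep(f)=\Prep(g)$ holds generically on $X$. Since $\Delta$ is the diagonal, the relation $\Prep(f)=\Prep(g)$ is equivalent to $\mu_f=\mu_g$ (Baker--DeMarco). By Theorem \ref{IntroRigidity1}, applied over the function field $\ovl{\bb{C}(X)}$ with a non-Archimedean completion, this means $f$ and $g$ are dynamically related: both are of the form $\sigma\circ h^n$ for a common $h$.

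\begin{enumerate}
\item If $\Prep(f)=\Prep(g)$ holds generically, we expect $\Delta$ to be $(f,g)$-special, with $\Psi$ built from $h\times h$ restricted to $\Delta$. Then $\cal{X}$ is itself special and $r_{\Phi,\cal X}=k$.
\item Otherwise, we expect the only special subvariety containing $\cal{X}$ to be the whole space $(\bb{P}^1)^{2k}\times X$, so that $r_{\Phi,\cal X}=2k$.
\end{enumerate}

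To confirm this in the second case, we must check that no intermediate special subvariety exists. Such a subvariety would be cut out by relations between coordinates and would contain the diagonal curves. In the first case, a smaller special subvariety would contain $\cal X$ and have lower relative dimension, which is impossible by dimension count.

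\textbf{Step 2: the first case.} Here $r=k$. The current $\wh T_\Phi^{\wedge k}$ restricted to $\cal X$ is a product of fibrewise measures $\mu_f$ on each diagonal copy, and this product is nonzero. So condition $(2)$ holds automatically.

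\textbf{Step 3: the second case.} Now $r=2k$, which exceeds $\dim\cal X-\dim X=k$. If $\dim X<k$, then $\wh T^{\wedge 2k}\wedge[\cal X]$ has degree exceeding $\dim\cal X$, so it vanishes. In that situation we must show that preperiodic points are not Zariski dense. The preperiodic points of $\cal X$ are exactly the tuples $(x_1,\ldots,x_k,f,g)$ with each $x_i\in\Prep(f)\cap\Prep(g)$.

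\begin{enumerate}
\item When $k\ge\dim X+1$, Theorem \ref{IntroRelativeThm1} gives non-density directly, after projecting to the first $\dim X+1$ factors.
\item When $k\le\dim X$, the same Bogomolov-type argument is needed (Theorem \ref{RelativeManinMumford1}). Its core is the uniform positive lower bound on the adelic energy pairing $\langle\mu_f,\mu_g\rangle$ at generic points of $X$. Combined with Yuan--Zhang's adelic line bundle machinery, this shows that the height on $\cal X$ is not essentially minimal unless the top intersection $\wh T^{\wedge r}\wedge[\cal X]$ is nonzero.
\end{enumerate}

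Concretely, we argue by contrapositive. Assume $\wh T^{\wedge 2k}\wedge[\cal X]=0$. Use the Yuan--Zhang non-degeneracy criterion: density of small points forces the volume of $\cal X$ for the pulled-back adelic bundle to vanish, while $\Prep(f)\neq\Prep(g)$ generically gives a positive height contribution via the energy pairing.

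\textbf{Main obstacle.} The main obstacle is matching the vanishing of the current $\wh T^{\wedge r}\wedge[\cal X]$ with the non-density statement. This matching is needed in exactly the range where $2k\le\dim\cal X$ yet density fails. It requires identifying the non-vanishing locus of the current with the non-degeneracy locus of the family. I would first try to deduce it from Theorem \ref{IntroRelativeThm1} together with the equality $\Prep(f)=\Prep(g)$ on the special sublocus of $X$. For the base, I would invoke the DeMarco--Mavraki criterion \cite{DM26} to restrict to the subvariety of $X$ where the current is supported.
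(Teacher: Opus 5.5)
The gap is in Step 3 for $k\le\dim X$, in the case where $\Prep(f_s)\neq\Prep(g_s)$ for some $s$. You treat this range as one where density of preperiodic points might fail, and you propose to reach a contradiction from the vanishing of an arithmetic volume on $\mathcal{X}$. But for $k\le\dim X$ the $\Phi$-preperiodic points of $\mathcal{X}=\Delta^k\times X$ are Zariski dense by \cite[Theorem 3.1]{DM24}. So condition (1) holds, and the whole content is to prove the non-vanishing in (2). The situation ``$2k\le\dim\mathcal{X}$ yet density fails'', which you single out as the main obstacle, never occurs.

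Your mechanism also cannot produce this non-vanishing. Height-zero points are dense on $\Delta^k\times X$ for $k\le\dim X$. Hence the (nef) adelic line bundle computing $\sum_i\h_{f,g}(x_i)$ is never arithmetically big there, and its volume is zero regardless of the current. Nothing in your plan links the vanishing of $\wh{T}_\Phi^{\wedge 2k}\wedge[\mathcal{X}]$ to any height statement. The potential bigness coming from the energy bounds (Proposition \ref{Relative1}) gives arithmetic bigness only on $\Delta^m\times X$ for $m\ge\dim X+1$. Theorem \ref{RelativeManinMumford1}, which you invoke for this range, says nothing about fewer factors.

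The paper closes this range with three ingredients absent from your plan.

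First, it does not need your unproved claim $r_{\Phi,\mathcal{X}}=2k$. The ambient $(\bb{P}^1)^{2k}\times X$ is special, so $r_{\Phi,\mathcal{X}}\le 2k$, and by positivity it suffices to show $\wh{T}_\Phi^{\wedge 2k}\wedge[\mathcal{X}]\neq 0$. Likewise, for $k\ge\dim X+1$ your dimension count is unnecessary: non-density together with the known implication (2)$\Rightarrow$(1) already forces (2) to fail.

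Second, for $k=\dim X$ this non-vanishing is exactly non-degeneracy of $\mathcal{X}$. It follows from the potential bigness via \cite[Theorem 4.1]{Yua24}, which gives bigness of the geometric (generic-fibre) part of the line bundle on $\Delta^{\dim X}\times X$, not positivity of an arithmetic volume.

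Third, for $k<\dim X$ one descends from $m=\dim X$. Since $\wh{T}_f^{\wedge 2}=\wh{T}_g^{\wedge 2}=0$ on $\Delta\times X$, the non-vanishing top power on $\Delta^m\times X$ collapses to $\pi_1^*\wh{T}_f\wedge\pi_1^*\wh{T}_g\wedge\cdots\wedge\pi_m^*\wh{T}_f\wedge\pi_m^*\wh{T}_g$. For the projection $\pi\colon\Delta^m\times X\to\Delta^k\times X$, this equals $\pi^*\bigl(\pi_1^*\wh{T}_f\wedge\cdots\wedge\pi_k^*\wh{T}_g\bigr)\wedge\pi_{k+1}^*\wh{T}_f\wedge\cdots\wedge\pi_m^*\wh{T}_g$. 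So the product over the first $k$ factors cannot vanish on $\Delta^k\times X$.

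Separately, in your first case the detour through Theorem \ref{IntroRigidity1} over a completion of $\ovl{\bb{C}(X)}$ is delicate, since the generic pair may have potential good reduction for the chosen valuation. Fibrewise complex rigidity on a Zariski open subset, together with irreducibility of $X$, already gives a single root of unity $\lambda$ with $f=\lambda g$. Then $\mathcal{X}$ is $\Phi$-preperiodic, and you can take $\Psi=\Phi$ rather than something built from $h\times h$.
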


\subsection{Strategy of the proof} \label{sec: Strategy}
\subsubsection{Rigidity of Julia Sets}
We first explain the proof of the implication (2) \( \Rightarrow \) (1) in Theorem~\ref{IntroRigidity1}.
We first reduce to the case where both $f,g$ have the same degree $d \geq 2$, which we will assume for the rest of this section. Let us sketch the proof when $k = \bb{C}$ is Archimedean. The equality of Julia sets $J_f = J_g$ implies their dynamical Green's 
functions
$G_f,G_g$ are  equal. Let $\phi_f,\phi_g: U \to D(0,1)$ be their associated Böttcher coordinates satisfying 
\begin{equation} \label{eq: IntroFunctional1}
\phi_f(f(z)) = \phi_f(z)^d, \quad \phi_g(g(z)) = \phi_g(z)^d,
\end{equation}
where $U$ is some neighbourhood of $\infty$. Then $G_f = G_g$ implies $\phi_f = c\phi_g$ for some explicit constant $c$, using the fact that $G_f = \log |\phi_f|$ and similarly for $g$. Baker--Eremenko \cite{BE87} and Schmidt--Steinmetz \cite{SS95} use the functional equations \eqref{eq: IntroFunctional1} to deduce that $f$ and $g$ must then be dynamically related.
\par 
When $k$ is non-Archimedean, the equality $J_f = J_g$ still implies that $G_f = G_g$. We still have the non-Archimedean Böttcher coordinates $\phi_f,\phi_g$ but it is no longer clear how to conclude that $\phi_f = c \phi_g$.  The main reason for this is that the norm function $| \cdot |: k \to \bb{R}$ contains less information for non-Archimedean fields than Archimedean fields because of the strong triangle inequality. As an example, if $\bb{L}$ denotes the completion of the field of Puiseux series $\bigcup_{n > 0} \bb{C}((t^{1/n}))$, then $| \cdot |$ maps $\bb{L}$ to $\exp(\bb{Q})$, which is much smaller than $\bb{R}$ and hence captures less information. This is the reason why $G_f$'s are all equal if $f$ has good reduction. 
\par 
We adopt a different approach to show that $f$ and $g$ must be dynamically related, exploiting the fact that $f$ has bad reduction which induces rich dynamics on $J_f$. We first illustrate the argument for $f,g$ of the form $z^2 + c$, where $|c| > 1$. We first pick a large $\alpha > 0$ so that $\{G_f < \alpha\} = \{G_g < \alpha\}$ is a disk $D(0,R)$ for some $R > 0$. Then 
$$\left\{G_f < \frac{\alpha}{2^n} \right\} = \left\{G_g < \frac{\alpha}{2^n} \right\} \implies f^{-n}(D(0,R)) = g^{-n}(D(0,R)).$$
Now it is easy to show that for any $\eps > 0$, we may find an $n$ large enough so that $f^{-n}(D(0,R))$ is the union of $2^n$ disjoint disks $\{D_1,\ldots,D_{2^n}\}$, each of radius $< \eps$. Hence the zeros of $f^n $ and $g^n$ can be paired up so that any pair $\{x,y\}$ satisfies $|x-y| < \eps$. Using Viète's formulas and taking $\eps \to 0$ allows us to deduce relations between the coefficients of $f$ and $g$ which lets us conclude. 
\par 
In general, the dynamics of $f$ will be much more complicated than the quadratic case. To help describe the dynamics, we construct a non-Archimedean analogue of the DeMarco--McMullen tree \cite{DM08}, which is a simplicial tree associated to a polynomial $f$ of degree $d \geq 2$ over $\bb{C}$. The set of ends of $T_f$ corresponds to the Julia set $\cal{J}_f$. For a tame polynomial \cite{Fab13a, Fab13b, Tru14, KN22} of degree $d \geq 2$ over a non-Archimedean field $k$, we associate to it a subset 
$$T_f = \bigcup_{t > 0} \partial \{G_f(z) < t\} \subseteq \bP_k^{1,\an},$$ 
whose closure in $\bP_{k}^{1,an}$ is exactly the convex hull of $\calJ_f\cup\{\infty\}.$

We endow it with a distinguished set of vertices and edges so that $f$ acts on $T_f$ as a simplicial self-map. The assumption that $f$ has bad reduction ensures that $T_f$ has infinitely many vertices and edges and is hence interesting. 
\par 
Given $f,g$ of degree $d \geq 2$ with $\mu_f = \mu_g$, one can show that $T_f = T_g$ as subsets of $\bb{P}_k^{1,\an}$, and with the same set of vertices and edges. However, $f$ and $g$ may act differently. We show that for any $\eps > 0$, we may find a finite subset $X \subseteq T_f$ such that 
$$f^{-1}(X) = g^{-1}(X) = Y\not=\emptyset$$
with every vertex of $Y$ representing a disk of radius $< \eps$ (Proposition \ref{prop: controldistance}). This allows us to replicate the argument used in the case of $z^2 + c$ and hence show that $f$ and $g$ must be dynamically related.

\subsubsection{Degenerations and Uniformity}
We now explain how to deduce Theorem \ref{thm: uniformprep} from Theorem \ref{IntroRigidity1}. By a specialization argument \cite[Section 10.2]{DKY21}, we may assume that $f,g$ are both defined over a number field $K$. Following Favre--Rivera-Letelier \cite{FRL06} (see also Fili \cite{Fil17}), for each place $\nu \in M_K$ we may define a local energy pairing 
$$\langle f, g \rangle_\nu = -\iint_{\bb{P}_\nu^{1,\an} \times \bb{P}_\nu^{1,\an}} \log |x-y|_\nu d(\mu_{f,\nu} - \mu_{g,\nu})(x) d(\mu_{f,\nu} - \mu_{g,\nu})(y)$$
where $\mu_{f,\nu},\mu_{g,\nu}$ are the equilibrium measures of $f,g$ over $\bb{C}_\nu$. We may then define a global adelic energy pairing by 
$$\langle f ,g \rangle = \sum_{\nu \in M_K} N_\nu \langle f, g \rangle_\nu$$
where $N_\nu = [K_\nu:\bb{Q}_\nu]/[K:\bb{Q}]$. 
By the work of DeMarco--Krieger--Ye \cite{DKY20, DKY21}, subsequently generalized by Gauthier \cite{Gau24} and Ang--Yap \cite{AY23}, proving Theorem \ref{thm: uniformprep} reduces to establishing uniform lower bounds on the global pairing $\langle f,g \rangle$. More precisely, one needs the existence of a uniform constant $c = c(d_1,d_2) > 0$, depending only on  $d_1$ and $d_2$, such that if $\Prep(f) \not = \Prep(g)$, then
\begin{equation} \label{eq: IntroUniform1}
\langle f ,g \rangle \geq c \max\{h(f)+h(g),1\}
\end{equation}
where $h$ is a Weil height on polynomials. 
\par 
To prove \eqref{eq: IntroUniform1}, we establish suitable uniform lower bounds on the local energy pairings 
$\langle f,g \rangle_\nu$ 
using a degeneration argument. This is the approach taken by DeMarco--Krieger--Ye \cite{DKY20} and Poineau \cite{Poi25} in the case of Lattes maps. We will sketch an example of such a uniform local lower bound and how to deduce it via degenerations. 
\par 
Let $V$ be a positive-dimensional subvariety of 
$\mathrm{Poly}_{\mathrm{mc}}^{d_1}$ 
defined over $\ovl{\bb{Q}}$. 
There exists a maximal Zariski open $U \subseteq V$ such that for all $f \in U(\ovl{\bb{Q}})$, one has  $\Aut(\calJ_f)=\bb{U}_m = \{\zeta^m = 1 \mid \zeta \in \bb{C}\}$.
Let $V' = V \setminus U$. 
One can verify that $V'$ is defined by the vanishing of finitely many 
polynomials $\{p_1,\ldots,p_{\ell}\}$ in the coefficients of $f$. 
\par 
Let $(k,| \cdot |)$ be a non-trivially valued field of characteristic $0$. If we write $f = \sum_{i=0}^{d_1} a_i z^i$ and $g = \sum_{i=0}^{d_2} b_i z^i$, we define $|f|=\max\{|a_i|\}$, $|(f,g)| = \max\{|a_i|,|b_j|,|b_{d_2}^{-1}|\}$ and 
$$\lambda_{V'}(f) = - \log \max \{|p_1(f)|,\ldots,|p_{\ell}(f)|\}.$$
The function $\lambda_{V'}$ measures how close $f$ is to the subvariety $V'$. In particular, we have $\lambda_{V'}(f) = + \infty$ if and only if $f \in V'$. We now state one of our uniform lower bounds.

\begin{theorem} \label{IntroUniform2}
Let \((k,|\cdot|)\) be a field of characteristic \(0\) equipped with a
nontrivial absolute value, and fix \(C_1,C_2>0\). Then there exist constants
\(C_3,C_4>0\) depending only on $k$, $d_1$ and $d_2$ such that the following holds:

Suppose that
\[
(f,g)\in (V\setminus V')(k)\times \operatorname{Poly}^{d_2}(k)
\]
satisfies
\[
\lambda_{V'}(f)\leq C_1\log |(f,g)|.
\]
If positive integers \(n_1,n_2\) exist with \(d_1^{n_1}=d_2^{n_2}\), assume
further that
\[
\log |f^{n_1}-\lambda g^{n_2}|
\geq -C_2\log |(f,g)|
\quad\text{for all } \lambda\in \mathbb{U}_m.
\]
Then
\[
\log |(f,g)|\geq C_3
\quad\Longrightarrow\quad
\langle f,g\rangle \geq C_4\log |(f,g)|.
\]
\end{theorem}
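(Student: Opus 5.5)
We argue by contradiction through a degeneration via ultrafilters, following Favre--Gong \cite{FG25}. Here $\langle f,g\rangle$ denotes the local energy pairing $\langle f,g\rangle_\nu$ for the given absolute value on $k$; it is nonnegative and vanishes exactly when $\mu_f=\mu_g$. Suppose no constants $C_3,C_4$ exist. Then there is a sequence $(f_j,g_j)$ in $(V\setminus V')(k)\times\mathrm{Poly}^{d_2}(k)$ satisfying the two hypotheses with $\log|(f_j,g_j)|\to\infty$, yet
\[
\langle f_j,g_j\rangle = o\bigl(\log|(f_j,g_j)|\bigr).
\]
Set $\epsilon_j=1/\log|(f_j,g_j)|\to 0$.

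Fix a non-principal ultrafilter $\omega$ on $\bN$ and form the asymptotic cone of $k$ with respect to the rescaled absolute values $|\cdot|^{\epsilon_j}$. This is a non-Archimedean field $\mathbb{K}$ of residue characteristic $0$; we complete it and pass to its algebraic closure. The sequences $(f_j)$ and $(g_j)$ define polynomials $f_\omega,g_\omega$ over $\mathbb{K}$ of degrees $d_1,d_2$, because the normalization $|(f_j,g_j)|^{\epsilon_j}=e$ keeps the coefficients bounded and $b_{d_2}^{-1}$ bounded. Moreover $|(f_\omega,g_\omega)|=e>1$ in $\mathbb{K}$, and we will use this to show that $f_\omega$ or $g_\omega$ has bad reduction, i.e.\ its Julia set is not a single point. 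Ruling out simultaneous potential good reduction needs care with affine conjugation. One approach: if both maps had potential good reduction, their Julia sets are single type II points; the maps being monic centered (for $f$) and $|b_{d_2}^{-1}|$ bounded then forces $|(f_\omega,g_\omega)|\le 1$ after a normalization, contradicting $|(f_\omega,g_\omega)|=e$. Alternatively, one can handle the case of equal good-reduction points directly, since it yields coincidence of the Julia sets anyway.

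The key continuity input, as in \cite{FG25}, is
\[
\lim_\omega \epsilon_j\langle f_j,g_j\rangle = \langle f_\omega,g_\omega\rangle_{\mathbb{K}}.
\]
This holds because the equilibrium measures and Green functions vary continuously in the degeneration. Granting it, the hypothesis gives $\langle f_\omega,g_\omega\rangle=0$, so $\mu_{f_\omega}=\mu_{g_\omega}$ and hence $\calJ_{f_\omega}=\calJ_{g_\omega}$.

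Now apply Theorem \ref{IntroRigidity1} over $\mathbb{K}$, using that this common Julia set $\calJ$ is not a point. Both maps are iterates of a minimal-degree $h$ composed with elements of $\Aut(\calJ)$, so $d_1=\deg h^{a}$ and $d_2=\deg h^{b}$. In particular there are $n_1,n_2$ with $d_1^{n_1}=d_2^{n_2}$ and $f_\omega^{n_1}=\sigma\circ g_\omega^{n_2}$ for some $\sigma\in\Aut(\calJ)$. Since $f_\omega$ is monic centered, $\calJ$ has barycenter $0$, so $\sigma(z)=\lambda z$ with $\lambda$ a root of unity. We must show $\lambda\in\bU_m$, and this is where the hypothesis on $\lambda_{V'}$ enters. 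The condition $\lambda_{V'}(f_j)\le C_1\log|(f_j,g_j)|$ gives $|p_i(f_\omega)|\ge e^{-C_1}>0$ for some $i$, so $f_\omega$ lies in the generic part of $V$ over $\mathbb{K}$. Hence $\Aut(\calJ_{f_\omega})=\bU_m$, assuming the open condition defining $U$ is cut out by the $p_i$ uniformly over all fields. Finally, $f_\omega^{n_1}-\lambda g_\omega^{n_2}=0$ means $\lim_\omega \epsilon_j\log|f_j^{n_1}-\lambda g_j^{n_2}|=-\infty$, or at any rate $<-C_2$. Approximating $\lambda$ by the roots of unity in $k$ (finitely many, $\bU_m$), this contradicts the second hypothesis.

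The main obstacle is the continuity of the energy pairing under degeneration, together with making the bad-reduction dichotomy rigorous. For continuity, I would show uniform convergence of the rescaled Green functions $\epsilon_j G_{f_j}\to G_{f_\omega}$; this follows from the standard bound $|G_f-\log^+|z||\le C\log|f|$. I would then express $\langle f,g\rangle$ as an integral of $G_f-G_g$ against $\mu_f-\mu_g$, computed via Green functions at infinity, and pass to the limit.
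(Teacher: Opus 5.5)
Your overall route is the paper's. You degenerate along $\epsilon_j=(\log|(f_j,g_j)|)^{-1}$ via a non-principal ultrafilter and get $\langle f_\omega,g_\omega\rangle=0$ from continuity of the rescaled pairing (Theorem~\ref{thm: cts-energy}), hence $\calJ_{f_\omega}=\calJ_{g_\omega}$. You then apply rigidity over $\sH(\omega)$ (your $\mathbb{K}$) and contradict the $C_2$-hypothesis. Using Theorem~\ref{IntroRigidity1} to force $d_1,d_2$ to be multiplicatively dependent is a clean way to handle the case with no $n_1,n_2$.

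The gap is the step showing that the common Julia set is not a single point. Both Theorem~\ref{thm: rigidity-samedegree} and Proposition~\ref{prop: symmetryjulia}(3) need it. First, you conflate bad reduction ($\calJ\neq\{x_g\}$) with the absence of potential good reduction ($\calJ$ not a point). Second, you claim that simultaneous potential good reduction forces $|(f_\omega,g_\omega)|\le 1$. That claim is false unless you already use $\calJ_{f_\omega}=\calJ_{g_\omega}$. Take $f_\omega$ monic centered with all coefficients of norm $\le 1$, and $g_\omega=cz^{d_2}$ with $|c|=e$. Both have potential good reduction, with Julia sets $\{x_g\}$ and $\{\zeta(0,e^{-1/(d_2-1)})\}$, yet $|(f_\omega,g_\omega)|=e$. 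Your alternative remark does not rescue this, since two maps sharing a one-point Julia set is exactly the case where rigidity fails. Your argument also never uses residue characteristic $0$, which this step genuinely requires.

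The missing ingredient is Lemma~\ref{lem: mcminres}: over residue characteristic $0$, a monic centered polynomial with potential good reduction actually has good reduction. Suppose $\calJ_{f_\omega}=\{\zeta(a,r)\}$, and conjugate by $M(z)=\lambda z+a$ with $|\lambda|=r$. This gives a polynomial with good reduction whose expansion begins $\lambda^{d_1-1}z^{d_1}+d_1a\lambda^{d_1-2}z^{d_1-1}$. Hence $r=1$, and since $|d_1|=1$, also $|a|\le 1$; so the point is $x_g$. The correct order is therefore: first obtain $\calJ_{f_\omega}=\calJ_{g_\omega}$. If this set were a point, the lemma makes it $\{x_g\}$, so $g_\omega$ also has good reduction, i.e.\ $|b_j|\le 1$ and $|b_{d_2}|=1$. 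Together with $|a_i|\le 1$, this contradicts $|(f_\omega,g_\omega)|=e$.

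Once this is in place, the rest needs none of your extra assumptions:
\begin{itemize}
\item $\lambda_{V'}(f_\omega)\le C_1$ puts $f_\omega$ in $(V\setminus V')(\sH(\omega))$, so $\Sigma_{f_\omega}=\bU_m$, and Proposition~\ref{prop: symmetryjulia}(3) gives $\Aut(\calJ)=\bU_m$ with no barycenter argument.
\item No approximation of $\lambda$ is needed. Distinct $m$-th roots of unity of $\bar k$ stay distinct in $\sH(\omega)$, so $\lambda$ is the class of a constant sequence $\zeta$, and $|f_\omega^{n_1}-\lambda g_\omega^{n_2}|=\lim_\omega|f_j^{n_1}-\zeta g_j^{n_2}|^{\epsilon_j}\ge e^{-C_2}$.
\item The bound $|G_f-\log^+|z||\le C\log|f|$ only gives uniform boundedness of $\epsilon_jG_{f_j}$, not convergence; for continuity of the pairing, simply cite Theorem~\ref{thm: cts-energy}.
\end{itemize}
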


We also have the following variant of Theorem \ref{IntroUniform2} where the condition of $|(f,g)|$ being sufficiently large is replaced by $k$ having sufficiently large residue characteristic.

\begin{theorem} \label{IntroUniform3}
For every \(C_1,C_2>0\), there exist constants \(C_5,C_6>0\) depending only on $d_1$ and $d_2$ such that the following holds: Let \((k,|\cdot|)\) be a non-Archimedean field whose residue characteristic is at least \(C_5\), and let
\[
(f,g)\in (V\setminus V')\times \operatorname{Poly}^{d_2}(k)
\]
satisfy
\[
\lambda_{V',k}(f)\le C_1\log |(f,g)|.
\]
Suppose moreover that whenever there exist positive integers \(n_1,n_2\) with
\[
d_1^{n_1}=d_2^{n_2},
\]
the following additional condition is imposed:
\[
\log |f^{n_1}-\lambda g^{n_2}|
\ge -C_2\log |(f,g)|
\qquad\text{for every } \lambda\in \mathbb U_m.
\]
 Then
\[
\langle f,g\rangle \ge C_6\log |(f,g)|.
\]
\end{theorem}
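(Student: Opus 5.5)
We argue by contradiction using a degeneration via ultrafilters, following Favre--Gong \cite{FG25}. Suppose no such $C_5, C_6$ exist. Then for each $j \in \bN$ there is a non-Archimedean field $(k_j,|\cdot|_j)$ of residue characteristic $p_j \to \infty$, and a pair $(f_j,g_j)$ satisfying the two hypotheses, such that
$$\langle f_j,g_j\rangle_{k_j} < \tfrac{1}{j}\log|(f_j,g_j)|_j.$$
We first observe that we may assume $\log|(f_j,g_j)|_j>0$. Indeed, if $|(f_j,g_j)|_j \le 1$, then both $f_j$ and $g_j$ have good reduction (the leading coefficients are units since $|b_{d_2}^{-1}|\le 1$, and similarly for the monic $f_j$). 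Hence $\mu_{f_j}=\mu_{g_j}=\delta_{x_g}$, and the inequality holds trivially. After rescaling the absolute values by the positive power $1/\log|(f_j,g_j)|_j$, we normalize so that $\log|(f_j,g_j)|_j = 1$. Both sides of the desired inequality, and both hypotheses, are homogeneous of degree one under such rescaling, so this normalization is harmless.

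Next we fix a non-principal ultrafilter $\omega$ on $\bN$ and form the ultraproduct of the $k_j$, restricted to elements of bounded norm and modulo those of infinitesimal norm. This gives a complete non-Archimedean field $\bb{K}$. Its residue characteristic is $0$, because $p_j\to\infty$, and its absolute value is nontrivial, because of the normalization. We pass to its completed algebraic closure, still of residue characteristic $0$. The coefficients of $f_j,g_j$ have norm at most $e$, so they define limit polynomials $f_\omega,g_\omega$ over $\bb{K}$ of degrees $d_1,d_2$; here we use $|b_{d_2}^{-1}|\le e$. The hypotheses pass to the limit:
\begin{itemize}
\item $|(f_\omega,g_\omega)| = e$;
\item $\lambda_{V'}(f_\omega)\le C_1$, so $f_\omega\notin V'$ and $\Aut(\calJ_{f_\omega})=\bb{U}_m$;
\item when $d_1^{n_1}=d_2^{n_2}$, we have $|f_\omega^{n_1}-\lambda g_\omega^{n_2}|\ge e^{-C_2}$ for all $\lambda\in\bb{U}_m$.
\end{itemize}
We also need that one of $f_\omega,g_\omega$ lacks potential good reduction. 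The value $|(f_\omega,g_\omega)|=e>1$ forces some coefficient to be large. Since $f_\omega$ is monic centered, either $f_\omega$ has bad reduction or $g_\omega$ is not conjugate to a good-reduction map; this needs a short argument using the invariance of the pairing under affine conjugation.

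The key analytic input is continuity of the local energy pairing under the ultraproduct degeneration:
$$\langle f_\omega,g_\omega\rangle = \lim_\omega \langle f_j,g_j\rangle = 0.$$
For this we express $\langle f,g\rangle$ through Green functions, $G_f$ and $G_g$ at critical points and on $\mu_f$, using the standard formula of \cite{FRL06}. Green functions are uniform limits of $d^{-n}\log\max\{1,|f^n|\}$, with error bounds depending only on degrees and $|(f,g)|$. These finite-level quantities commute with the ultralimit, as in \cite{FG25}.

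Vanishing of the pairing gives $\mu_{f_\omega}=\mu_{g_\omega}$. Theorem \ref{IntroRigidity1} then applies, since we have residue characteristic $0$ and no potential good reduction. It yields $f_\omega^{a}=\sigma\circ h^{n}$ and $g_\omega^{b}=\tau\circ h^{n'}$ for a common minimal-degree $h$, which we may combine into $f_\omega^{n_1}=\sigma g_\omega^{n_2}$ with $\sigma\in\Aut(\calJ)$. The degrees then satisfy $d_1^{n_1}=d_2^{n_2}$. Because $f_\omega$ is monic centered, $\Aut(\calJ)=\bb{U}_m$, so $\sigma$ is multiplication by some $\lambda\in\bb{U}_m$. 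This contradicts the lower bound $|f_\omega^{n_1}-\lambda g_\omega^{n_2}|\ge e^{-C_2}$. One detail: we may need to take powers to match $n_1,n_2$ with those in the hypothesis, and the lower bound should persist for multiples since $f^{kn_1}-\lambda' g^{kn_2}$ is controlled by $f^{n_1}-\lambda g^{n_2}$.

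I expect the main obstacle to be the continuity step $\langle f_\omega,g_\omega\rangle=\lim_\omega\langle f_j,g_j\rangle$. This requires uniform-in-$j$ estimates on the convergence of Green functions and on the pairing, for a family whose fields vary. I would try first to prove a uniform bound $|G_f-d^{-n}\log^+|f^n||\le C d^{-n}\log|(f,g)|$ with $C$ depending only on degrees. I would then write the pairing as a finite expression in such quantities, integrated against pullbacks of $\delta_{x_g}$.
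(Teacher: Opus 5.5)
Your proposal is correct and is essentially the paper's argument. You normalize by $\epsilon_j=(\log|(f_j,g_j)|_j)^{-1}$, pass to the ultralimit field $\sH(\omega)$ (residue characteristic $0$ because $p_j\to\infty$), use continuity of the rescaled pairing to get $\mu_{f_\omega}=\mu_{g_\omega}$, and contradict rigidity. The continuity step you flag as the main obstacle is already Theorem~\ref{thm: cts-energy}, so you need not redo it via Green-function estimates.

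Two simplifications:

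The bad-reduction step should come after $\mu_{f_\omega}=\mu_{g_\omega}$, and it needs no conjugation invariance. As you state it, the dichotomy fails before the pairing is used: take $f_\omega=z^2$ and $g_\omega=bz^2$ with $|b|=e$. Once the measures agree, if $f_\omega$ had good reduction then so would $g_\omega$, forcing $|(f_\omega,g_\omega)|=1\neq e$. Lemma~\ref{lem: mcminres} then turns bad reduction of the monic centered $f_\omega$ into no potential good reduction.

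No matching of $(n_1,n_2)$ is needed, and your ``controlled by'' claim can be dropped. Theorem~\ref{thm: rigidity-samedegree} applies directly to $f_\omega^{n_1}$ and $g_\omega^{n_2}$ for the given pair, since they have the same degree and the same Julia set.
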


Combining these two lower bounds gives us \eqref{eq: IntroUniform1}. 
\par 
As mentioned earlier, Theorem \ref{IntroUniform2} can be directly deduced from Theorem \ref{IntroRigidity1} using a degeneration argument. We follow the approach by Favre--Gong \cite{FG25} which combines rescaling limits with ultrafilters. The idea of rescaling limits in complex dynamics was first introduced by Kiwi \cite{Ki06}, where given a family of rational maps $(f_t)_{t \in \bb{D}^{*}}$ parametrized by the punctured disk $\bb{D}^{*}$, he constructs a rational map $f$ over the non-Archimedean field $\bb{C}((t))$ as a rescaled limit of the family. This has been expanded on by DeMarco--Faber \cite{DF14} and Favre \cite{Fav20}. 
\par 
In his thesis, Luo \cite{Luo21, Luo22}, using ultrafilters, generalizes Kiwi's construction to any sequence of degree $d$ rational maps $(f_n)$ over $\bb{C}$. In particular, the sequence $(f_n)$ need not be constrained in a one-parameter family and can vary in the entire moduli space $\Rat_d$. Favre--Gong then formalized Luo's construction using the theory of Berkovich spaces, which allows each $f_n$ to be defined over possibly distinct metrized fields $k_n$. The method of Favre--Gong has previously been applied by the second author to obtain uniformity results in both arithmetic dynamics \cite{Yap25} and Diophantine geometry with Looper \cite{LY26}.
\par 
We now sketch out the degeneration argument. Assume for contradiction that Theorem \ref{IntroUniform2} is false. We obtain a sequence 
$(f_n,g_n) \in (V \setminus V')(k) \times \mathrm{Poly}^{d_2}(k)$ such that 
$$\lambda_{V'}(f_n) \leq C_1 \log |(f_n,g_n)|,~ \langle f_n, g_n \rangle \leq \frac{1}{n} \log |(f_n,g_n)|;$$ 
$$\log |(f_n,g_n)| \to \infty,~\text{and}~ \log |f_n^{n_1} - \lambda g_n^{n_2}| \geq - C_2 \log |(f_n,g_n)| \text{ for all } \lambda \in \bb{U}_m.$$

We set $\eps_n = (\log |(f_n,g_n)|)^{-1} \to 0$. Fix a non-principal ultrafilter $\omega$. Then we may associate to $(f_n,g_n)$ an $\omega$-limit, which is a pair of degree $d$ polynomials $(f_{\omega},g_{\omega})$ defined over the metrized field 
$$\scr{H}(\omega) = \Bigl\{(z_n)\in k^{\bN}\ \bigm|\ \lim_{\omega}|z_n|^{\epsilon_n}<\infty\Bigr\}
\Big/
\Bigl\{(z_n)\in k^{\bN}\ \bigm|\ \lim_{\omega}|z_n|^{\epsilon_n}=0\Bigr\}.$$
The absolute value $| \cdot |_{\omega}$ on $\scr{H}(\omega)$ is given by $|(z_n)| = \lim_{\omega} |z_n|^{\eps_n}$ where $\lim_{\omega}$ denotes the $\omega$-limit \cite[Section 3]{FG25}. The field $\scr{H}(\omega)$ is algebraically closed, non-Archimedean and of residue characteristic $0$. By continuity, the pair $f_{\omega},g_{\omega}$ satisfies 
$$\langle f_{\omega}, g_{\omega}\rangle \leq \lim_{\omega} \frac{1}{n} \eps_n \log |(f_n,g_n)|  = 0$$
and furthermore, it is easy to reduce to the case where $f_{\omega}$ does not have potential good reduction. 
Since 
$$\lambda_{V'}(f_{\omega}) \leq \lim_{\omega} C_1 \eps_n \log  |(f_n,g_n)| = C_1 < +\infty,$$
we have $f_{\omega} \not \in V'(\scr{H}(\omega))$
, which implies that $\Aut(\calJ_{f_\omega})=\bb{U}_m$. 
Applying Theorem~\ref{IntroRigidity1}, one obtains $f_{\omega}^{n_1} = \lambda g_{\omega}^{n_2}$ for some $\lambda \in \bb{U}_m$.
But we also have 
$$\log |f_{\omega}^{n_1} - \lambda g_{\omega}^{n_2}| \geq - \lim_{\omega} C_2 \eps_n \log |(f_n,g_n)| = - C_2,$$
and hence $f_{\omega}^{n_1} \not = \lambda g_{\omega}^{n_2}$, 
giving us a contradiction. 
\par 
  Finally, we note that there is a theory of globally valued fields coming from the model theory point of view \cite{YH22,Sza23,YDHS24}. 
  This is related to the theory of adelic curves by Chen--Moriwaki \cite{CM19, CM21} and also to the theory of Yuan--Zhang \cite{YZ24}. Globally valued fields can be viewed as a global analogue of valued fields, where one deals with fields equipped with a global height function, such as the Weil height on $\ovl{\bb{Q}}$.
Using ultrafilters~\cite{Gol22} is a standard technique in model theory, allowing one to degenerate sequences of objects defined over globally metrized fields in a way analogous to degeneration over valued fields.
The interested reader may consult Hultberg's work \cite{Hul26} for applications to semiabelian varieties.  

\subsection{Further questions}
We now raise a few further questions. The first question is whether an extension of Theorem \ref{IntroRigidity1} to rational maps exists. More generally, we can ask for a non-Archimedean rigidity statement for a pair of rational maps $(f,g)$ and a curve $C \subseteq (\bb{P}^1)^2$. This will be addressed in a forthcoming work by Zhuchao Ji and the second author \cite{JY26}, with applications towards the uniform dynamical Manin--Mumford and Bogomolov conjecture on $(\bb{P}^1)^n$. 
\par 
Secondly, Theorem \ref{IntroRigidity1} only holds for residue characteristic $0$ and one may ask for analogues that hold for positive residue characteristic. Our proof relies crucially on residue characteristic being $0$ as we use the fact that $|n| = 1$ for all positive integers $n$. This leads us to formulate the following conjecture.

\begin{conjecture} 
Let $k$ be an algebraically closed and complete non-Archimedean field, possibly of positive characteristic. Let $\cal{J}$ be the Julia set of a tame polynomial of degree at least $2$ which does not have potential good reduction. Let $g$ be a polynomial of minimal degree whose Julia set coincides with $\cal{J}$. Then if $f$ is a polynomial of degree $d \geq 2$ with $\cal{J}_f = \cal{J}$, there exists an integer $n \geq 1$ and $\sigma \in \Aut(\cal{J})$ such that $f = \sigma \circ g^n$.
\end{conjecture}

When $\cal{J}$ is not the Julia set of a tame polynomial, there are counterexamples. Let $p\ge 3$ be a prime number. Consider $
f(z)=\frac{z^p - z}{p}$ and $g(z)=\frac{z^p - pz^2 - z}{p} \in \C_p[z].$
Then the Julia set of both maps equals $\Z_p$; see~\cite[Example 8.17]{Ben19} for an explanation in the case of $f$, and the argument for $g$ is analogous. However, there do not exist affine transformations $\sigma_1, \sigma_2$ such that
$f \circ \sigma_1 = \sigma_2 \circ g.$ Observe that $\Aut(\bb{Z}_p)$ is infinite, which suggests that it should play a similar role as the circle $S^1$ in the Archimedean case. We make a more ambitious conjecture that if $\Aut(\cal{J})$ is finite, then Conjecture \ref{IntroConjecture1} should be true.

\begin{conjecture} \label{IntroConjecture2}
Let $k$ be an algebraically closed and complete non-Archimedean field, possibly of positive characteristic. Let $\cal{J}$ be the Julia set of a polynomial of degree at least $2$ which does not have potential good reduction, such that $\Aut(\cal{J})$ is finite. Let $g$ be a polynomial of minimal degree whose Julia set coincides with $\cal{J}$. Then if $f$ is a polynomial of degree $d \geq 2$ with $\cal{J}_f = \cal{J}$, there exists an integer $n \geq 1$ and $\sigma \in \Aut(\cal{J})$ such that $f = \sigma \circ g^n$.
\end{conjecture}

Conjecture \ref{IntroConjecture2} naturally leads to the question of classifying Julia sets $\cal{J}$ with $|\Aut(\cal{J})| = + \infty$. We do not have a concrete conjecture for this question but we expect that over $\bb{C}_p$, such a Julia set should be similar to $\bb{Z}_p$ in nature.
\par 
Finally, it is interesting to ask whether one can deduce $\phi_f = c \phi_g$ directly from $G_f = G_g$, in analogy with the case over $\bb{C}$. At present, we do not know how to approach this problem.

\subsection*{Acknowledgements}
Both authors would like to thank Laura DeMarco, Charles Favre, Nuno Hultberg, Zhuchao Ji, and Myrto Mavraki for their thoughtful and valuable comments on this paper. We are especially grateful to Myrto Mavraki for her insightful suggestions highlighting the importance of rigidity of Julia sets in the non-Archimedean setting.
 
\section{Dynamics on the Berkovich projective line and potentials}
\subsection{Construction of the Berkovich projective line over a general Banach ring}We briefly recall the construction of the Berkovich projective line over a general Banach ring. We refer the reader to~\cite{LP24} for a detailed treatment.

A Banach ring $(B,\|\cdot\|)$ is, by convention, a commutative ring with unity endowed with a complete norm $
\|\cdot\| \colon B \to \R_+$ 
satisfying $\|a+b\| \le \|a\| + \|b\|$ and  $\|ab\| \le K \|a\| \|b\|$
for some constant $K>0$ and for all $a,b \in B$; see~\cite[A.1.2.1]{BGR84}.

The \emph{Berkovich spectrum} \( \mathcal{M}(B) \) is the set of all bounded multiplicative seminorms on \( B \), that is, maps
\( |\cdot| \colon B \to \R_{+} \) satisfying $
|a+b| \le |a| + |b|$, $|ab| = |a||b|$, and  $|\cdot| \le C\|\cdot\|$  for some $C>0$.
For any \( x \in \mathcal{M}(B) \) and \( b \in B \), we denote the value of \( b \) at \( x \) by \( |b|_x \) or \( |b(x)| \).

The space \( \mathcal{M}(B) \) is endowed with the weakest topology for which all functions of the form
$x \longmapsto |b(x)|$, $b \in B$,
are continuous. By \cite[Theorem~1.2.1]{Ber90}, the space \( \mathcal{M}(B) \) is compact.

To define the Berkovich projective line $\bP_B^{1,\an}$ over $B$, we first define the Berkovich affine line $\bA^{1,\an}_B$. It is the set of all multiplicative seminorms on $B[T]$
whose restriction to $B$ belongs to $\mathcal{M}(B)$.  
It is endowed with the weakest topology such that all functions $x \longmapsto |P(x)|$, for some $P \in B[T]$, are continuous.

For $x \in \bA^{1,\an}_B$, consider the prime ideal $\ker(x) := \{ P \in B[T] \mid |P(x)| = 0 \}.$
The \emph{complete residue field} $\sH(x)$ is defined as the completion of the fraction field of
$B[T]/\ker(x)$ with respect to the norm induced by $x$.

The Berkovich projective line $\bP^{1,\an}_B$ is obtained by gluing two copies $X_0$ and $X_1$ of
$\bA^{1,\an}_B$ along $\bA^{1,\an}_B \setminus \{0\}$ in the usual way.
This construction yields homogeneous coordinates $[z_0 : z_1]$ on $\bP^{1,\an}_B$.
For simplicity, we write $z = z_0$, $0 = [0:1]$, and $\infty = [1:0]$, so that
$\bP^{1,\an}_B = \bA^{1,\an}_B \sqcup \{\infty\}.$
By construction, the space $\bP^{1,\an}_B$ is compact.

There is a canonical continuous map
$\pi \colon \bP^{1,\an}_B \longrightarrow \mathcal{M}(B),$
sending a point $x \in \bP^{1,\an}_B$ to its restriction to $B$.
This map is proper, and for every $s \in \mathcal{M}(B)$, the fiber $\pi^{-1}(s)$ is canonically
identified with the Berkovich projective line $\bP^{1,\an}_{\sH(s)}$.

\subsection{Dynamics on $\bb{P}_k^{1,\an}$}
We briefly recall some basic notions concerning the dynamics of rational maps on the Berkovich projective line. For the case $k=\C$, see~\cite{Mi06}, and for the case $k\neq\C$, see~\cite{Jon15,Ben19}. 

\subsubsection{Berkovich projective line over a metrized field}Let $k$ be an algebraically closed complete metrized field, that is, a field endowed with a complete multiplicative norm $|\cdot|$.  

\smallskip

We say that $k$ is \emph{non-Archimedean} if its norm satisfies the strong triangle inequality $|x+y|\le \max\{|x|,|y|\}.$
By the Gelfand--Mazur theorem and Ostrowski’s theorem, if $k$ is Archimedean, then there exists
$0<\varepsilon\le 1$ such that $k$ is isometric to $\C_\varepsilon=(\C,|\cdot|^\varepsilon)$, where
$|\cdot|$ denotes the standard Euclidean norm on $\C$.  
In this case, $\bP_k^{1,\an}$ is homeomorphic to the Riemann sphere $\hat{\C}$.

If $k$ is non-Archimedean, then the Berkovich projective line $\bP_k^{1,\an}$ carries the structure
of a compact $\R$-tree: for any $x,y\in\bP_k^{1,\an}$, there exists a unique segment $[x,y]$
joining them. 

There are exactly four types of points on $\bP_k^{1,\mathrm{an}}$:

\begin{enumerate}
    \item type I: There exists a rational function $R$ such that $|R(x)|=0$. In this case, $x$ can be identified with a point of $\bP^1(k)$, and the associated
seminorm is given by the evaluation map $P\mapsto |P(x)|$;
\item type II: There exist $a\in k$ and $r\in |k^\times|$ such that for every polynomial $P(z)=\sum_{i=0}^{d}a_i(z-a)^i\in k[z]$, one has 
\[|P(x)|=\max_{z\in \bar D(a,r)} |P(z)|=\max_{0\le i\le d}|a_i|r^i,\]  where $\bar D(a,r)$ denotes the closed ball of radius $r$
centered at $a$;
\item type III: There exist $a\in k$ and $r>0$ and $r\notin |k^{\times}|$ such that for every polynomial $P(z)=\sum_{i=0}^{d}a_i(z-a)^i\in k[z]$, one has 
\[|P(x)|=\sup_{z\in \bar D(a,r)} |P(z)|=\max_{0\le i\le d}|a_i|r^i.\] 
\item type IV: There exists a sequence of decreasing closed balls $\bar{D}_{n}$  with $\bigcap \bar{D}_{n}=\emptyset$, such that \[|P(x)|=\inf_{n}\max_{z\in \bar{D}_{n}}|P(z)|\] for all $P\in k[z]$.
\end{enumerate}
The set of type~I and type~IV points forms the ends of the Berkovich tree.  
Type~II points correspond to branch points of the tree, while type~III points are regular points.


Moreover, if \( k \) is spherically complete, i.e., every decreasing sequence of closed balls has nonempty intersection, then \( \mathbb{P}^{1,\an}_k \) has no type~IV points.  
If \( |k^\times| = \mathbb{R}_+ \), then \( \mathbb{P}^{1,\an}_k \) does not contain type~III points.

\smallskip

We usually write $\zeta(a,r)$ for the type I, type~II or III point associated with the closed ball
$\bar D(a,r)$, and we denote the Gauss point $\zeta(0,1)$ by $x_g$. When $r=0$, $\zeta(a,r)$ is the type I point defined by  $a.$
\subsubsection{Dynamics of a rational map}
Recall that a rational map of degree $d\ge 2$ with coefficients in $k$ is given in homogeneous coordinates by
\[
f\colon [z_0:z_1]\longmapsto [P(z_0,z_1):Q(z_0,z_1)],
\]
where $P(z_0,z_1)=\sum_{i=0}^d a_i z_0^i z_1^{d-i}$, and $Q(z_0,z_1)=\sum_{i=0}^d b_i z_0^i z_1^{d-i}$
are homogeneous polynomials of degree $d$ in $k[z_0,z_1]$ without common factors.
We denote by $\Rat_d(k)$ the set of rational maps of degree $d$ over $k$.
It can be viewed as an affine subvariety of $\bP^{2d+1}$ defined by the nonvanishing locus of the normalized resultant
\[
|\Res(f)| \coloneqq \frac{|\Res(P,Q)|}{\max_{0\le i,j\le d}\{|a_i|,|b_j|\}^{2d}} \neq 0.
\]

A rational map $f$ extends naturally to the Berkovich projective line $\bP_k^{1,\an}$ by sending a point
$x$ in the hyperbolic space $\bH_k=\bP_k^{1,\an}\setminus\bP^{1}(k)$ to $f(x)$, defined by the rule
\[
|h(f(x))| = |(h\circ f)(x)| 
\quad \text{for all } h\in k(z).
\]

The induced endomorphism $f\colon \bP_k^{1,\an}\longrightarrow \bP_k^{1,\an}$
is continuous, finite, open, and surjective (see, for instance,~\cite[Proposition~4.3]{Jon15}).
One can define a local degree $\deg_x(f)\in\{1,\dots,d\}$ at each point $x\in\bP_k^{1,\an}$ such that
\begin{align}
\label{eqq: degree}
    \sum_{f(y)=x} \deg_y(f) = d
\quad \text{for all } x\in\bP_k^{1,\an}.
\end{align}

The exceptional set $\mcE$ of $f$ is the largest finite subset of $\bP^1(k)$ that is totally invariant under $f$.
It contains at most two points.
The Julia set \( \calJ_f \subset \bP_k^{1,\an} \) is the set of points \( x \) such that, for every open neighborhood \( U \) of \( x \),
$\bigcup_{n\ge 0} f^n(U) \supset \bP_k^{1,\an} \setminus \mc E .$
It is a closed and totally invariant subset of \( \bP_k^{1,\an} \).
Its complement
$\mcF(f) = \bP_k^{1,\an} \setminus \calJ_f$
is called the \emph{Fatou set}.
By \cite[Theorem 8.7]{Ben19}, the Julia set consists only of points of type~I and type~II.

\smallskip

A rational map \( f \) of degree \( d \ge 2 \) is said to have \emph{good reduction} if its Julia set reduces to the Gauss point \( x_g \). By~\cite[Theorem C]{FRL10}, this is equivalent to the condition that \( f \) has zero entropy, or equivalently that \( |\Res(f)| = 1 \). We say that \( f \) has \emph{potential good reduction} if it is conjugate to a rational map with good reduction. Otherwise,  we say \( f \) \emph{does not have potential good reduction}.
When \( k = \C \), we adopt the convention that \( f \) does not have good reduction.

\subsubsection{Tangent space and tangent map}

\label{sec: directiontagentmap}
\label{sec:directiontangentmap}

Let \(k\) be an algebraically closed non-Archimedean field. 

Recall that the Berkovich projective line \(\mathbb{P}_k^{1,\an}\) has a tree-like structure: for any two points \(x,y\in\mathbb{P}_k^{1,\an}\), there is a unique segment \([x,y]\) joining them. A \emph{direction} at a point \(x\in\mathbb{P}_k^{1,\an}\) is an equivalence class in
$\mathbb{P}_k^{1,\an}\setminus\{x\},$
where two points \(y,y'\neq x\) are equivalent if and only if
$(x,y]\cap(x,y']\neq\varnothing.$

The set of directions at \(x\) is called the \emph{tangent space} at \(x\) and is denoted by
$T_x\mathbb{P}_k^{1,\an}.$
For \(\vec{v}\in T_x\mathbb{P}_k^{1,\an}\), we denote by \(U(\vec{v})\) the corresponding connected component of
\(\mathbb{P}_k^{1,\an}\setminus\{x\}\). Thus, \(U(\vec{v})\) is a connected open subset of \(\mathbb{P}_k^{1,\an}\) whose boundary is the singleton \(\{x\}\).

\smallskip

Every nonconstant rational map \(f\in k(z)\) induces a map on tangent spaces,
\[
T_xf\colon T_x\mathbb{P}_k^{1,\an}
\longrightarrow
T_{f(x)}\mathbb{P}_k^{1,\an},
\]
called the \emph{tangent map} of \(f\) at \(x\). 
\subsection{Potential theory}
\subsubsection{Laplace operator}
We briefly recall some notions from potential theory on \( \mathbb{P}^{1,\an}_k \).
For the case \( k = \mathbb{C} \), see \cite{Ho94}, and for the non-Archimedean setting, see \cite{BR10,FJ04,Thu05}.

\smallskip

Let \( \sigma = \{P_i\}_{1 \le i \le l} \) be a finite collection of homogeneous polynomials of the same degree \( l(\sigma) \) with no common zero. Define
\[
\psi_{\sigma}
\coloneqq
\frac{1}{l(\sigma)} \log
\frac{\max\{|P_1|, \dots, |P_l|\}}{\max\{|z_0|, |z_1|\}^{l(\sigma)}} .
\]
Then \( \psi_{\sigma} \colon \mathbb{P}^{1,\mathrm{an}}_k \to \mathbb{R} \) is continuous.

Let \( X \) be the closure of \( \{\psi_{\sigma}\} \) in the space of continuous functions, endowed with the topology of uniform convergence. Denote by \( \mathcal{M}^{1}(\mathbb{P}^{1,\mathrm{an}}_k) \) the space of Radon probability measures on \( \mathbb{P}^{1,\mathrm{an}}_k \), endowed with the weakest topology such that, for every continuous function \( \varphi \) on \( \mathbb{P}^{1,\mathrm{an}}_k \), the map $\mu \mapsto \int \varphi\, d\mu$
is  a continuous function.

For each \( u \in X \), one can associate a signed measure \( \Delta u \) such that
$\mu_{\mathrm{can}} + \Delta u$
is a Radon probability measure, where \( \mu_{\mathrm{can}} \) denotes the Haar probability measure on the unit circle when \( k = \mathbb{C} \), and the Dirac measure supported at the Gauss point when \( k \) is non-Archimedean. This signed measure is called the \emph{Laplacian} of \( u \).

The Laplace operator defines a continuous map: 
$\mu_{\mathrm{can}}+\Delta \colon X \to \mathcal{M}^{1}(\mathbb{P}^{1,\mathrm{an}}_k)$, and satisfies
$\Delta u_1 = \Delta u_2$
if and only if
$u_1 - u_2$ is constant.

\subsubsection{Equilibrium measure}Let \( f=[P:Q] \) be a rational map of degree \( d \ge 2 \). 
Given any rational map, one can associate an equilibrium measure $\mu_f$ whose support is the Julia set. We briefly recall its construction below.

For any function $\varphi\colon \bP^{1,\an}_k \to \R$, define
\begin{equation}
\label{eq:pushforwardcontinuousfunction}
f_{*}\varphi(x)
\coloneqq
\sum_{y\in f^{-1}(x)} \deg_{y}(f)\,\varphi(y).
\end{equation}
Equation~\eqref{eqq: degree} implies that if $\varphi$ is continuous, then $f_{*}\varphi$ is also continuous, and its sup-norm is bounded by $d$ times the sup-norm of $\varphi$. Consequently, for any Radon measure $\rho$, the pullback of $\rho$ by $f$ can be defined dually by
$\int \varphi df^{*}\rho
=
\int f_{*}\varphi d\rho$ for any continuous function $\varphi.$

Set $\gamma_f
\coloneqq
\frac{1}{d}\log
\frac{\max\{|P|,|Q|\}}{\max\{|z_0|^d,|z_1|^d\}}.$
Iterating the identity for $f^*\mu_{\can}$ yields
\[
\frac{1}{d^n}(f^n)^*\mu_{\can}
=
\mu_{\can}
+
\Delta
\sum_{k=0}^{n-1}\frac{1}{d^k}\,\gamma_f \circ f^{k}.
\]
Since the series
$
\sum_{k=0}^{n-1}\frac{1}{d^k}\gamma_f \circ f^{k}$
converges uniformly in $X$ to
$\varphi_f(z)
\coloneqq
\sum_{k=0}^{\infty}\frac{1}{d^k}\,\gamma_f \circ f^{k},$
the limit of $
\frac{1}{d^n}(f^n)^*\mu_{\can}$
exists. We call this limit the \emph{equilibrium measure} of $f$, and denote it by $\mu_f$. Thus,
\[
\mu_f
=
\mu_{\can}
+
\Delta \varphi_f .
\]

This equilibrium measure satisfies the following equidistribution theorem. In the setting of complex polynomials, this result is due to Brolin~\cite{Bro65}, and for complex rational maps to Lyubich~\cite{Lyu83} and Freire-Lopes-Mañé~\cite{FLM83}. In the non-Archimedean setting, it was established by Favre--Rivera-Letelier~\cite{FRL04,FRL10}.

\begin{theorem}
\label{thm: equidistribution}
Let \( \rho \) be a Radon probability measure on \( \mathbb{P}^{1,\an}_{k} \). Then
\[
\lim_{n\to\infty}\frac{1}{d^n} (f^n)^*\rho
=
\mu_f ,
\]
if and only if \( \rho(\mcE)=0 \). Moreover,
$\frac{1}{d} f^{*}\mu_f
=
f_{*}\mu_f
=
\mu_f.$\end{theorem}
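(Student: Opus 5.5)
The plan is to reduce everything to a single identity: for any Radon probability measure $\rho$ and continuous $\varphi$, one wants $\int \varphi \, d\bigl(d^{-n}(f^n)^*\rho\bigr) \to \int \varphi\, d\mu_f$. The invariance statements come first, since they are formal. From $\mu_f = \lim d^{-n}(f^n)^*\mu_{\can}$ and continuity of $f^*$ on Radon measures (by duality with $f_*$ on continuous functions, which is bounded by $d$ in sup-norm), I get
\[
\tfrac1d f^*\mu_f=\lim \tfrac{1}{d^{n+1}}(f^{n+1})^*\mu_{\can}=\mu_f .
\]
For $f_*\mu_f=\mu_f$, I will use $f_*f^*\rho = d\,\rho$. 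This holds because $\int \varphi\, d f_*f^*\rho=\int f_*(\varphi\circ f)\,d\rho$ and $f_*(\varphi\circ f)=d\varphi$ by \eqref{eqq: degree}.

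For the convergence direction, I first treat measures of the form $\rho=\mu_{\can}+\Delta u$ with $u\in X$ bounded continuous. Pulling back gives
\[
d^{-n}(f^n)^*\rho = d^{-n}(f^n)^*\mu_{\can} + \Delta\bigl(d^{-n}u\circ f^n\bigr).
\]
The second term tends to $0$ weakly since $\|d^{-n}u\circ f^n\|_\infty\to0$ and $\Delta$ is continuous on $X$. The first term tends to $\mu_f$ by construction. Next, Dirac masses at non-exceptional points must be handled. For $x\notin\mcE$, I approximate $\delta_x$ by $\mu_{\can}+\Delta u$ up to a potential with a logarithmic singularity at $x$, namely $u_x=\log(\text{chordal distance to }x)$. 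One must then show that $d^{-n}u_x\circ f^n\to 0$ in $L^1$ or weakly, i.e. that $d^{-n}\log \mathrm{dist}(f^n(\cdot),x)$ is negligible. This is the standard Brolin/Lyubich argument: the averaged pullbacks of $\delta_x$ can concentrate only near backward orbits accumulating on $\mcE$. The argument uses equicontinuity of the potentials, via the fact that $\varphi_f$ is continuous, together with the observation that the local degrees of $f^n$ along preimages of a non-exceptional point cannot stay maximal. Concretely, $\deg_y(f^n)/d^n\to0$ uniformly over $y\in f^{-n}(x)$ when $x\notin\mcE$. Once Diracs are handled, general $\rho$ with $\rho(\mcE)=0$ follows by dominated convergence, integrating $x\mapsto \int\varphi\, d\,d^{-n}(f^n)^*\delta_x$, which is bounded by $\|\varphi\|_\infty$.

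For the converse, suppose $\rho(\mcE)>0$. Since $\mcE$ is totally invariant, $(f^n)^*\delta_e$ is supported on $\mcE$ with total mass $d^n$. The limit would then charge $\mcE$, while $\mu_f$ has no atoms at type I exceptional points. That $\mu_f(\mcE)=0$ follows from $\mu_f$ being the Laplacian of a continuous potential plus $\mu_{\can}$, which gives no mass to type I points. This yields a contradiction.

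The main obstacle is the Dirac case: the uniform decay $\max_{y\in f^{-n}(x)}\deg_y(f^n)/d^n\to0$ for non-exceptional $x$, together with controlling the logarithmic singularity of $u_x$ in the non-Archimedean setting. I would first try to obtain it from the finiteness of $\mcE$ and the fact that a chain of totally ramified preimages of length greater than $2$ forces periodicity into $\mcE$. For the Berkovich (type II) case I would cite Favre--Rivera-Letelier's treatment.
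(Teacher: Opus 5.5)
This statement is not proved in the paper. It is quoted from Brolin, Lyubich, Freire--Lopes--Ma\~n\'e and Favre--Rivera-Letelier.

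The routine parts of your proposal are fine:
\begin{itemize}
\item $\frac1d f^*\mu_f=\mu_f$ by weak continuity of $f^*$, and $f_*\mu_f=\mu_f$ from $f_*f^*\rho=d\rho$;
\item the case $\rho=\mu_{\can}+\Delta u$ with $u$ bounded and continuous;
\item the passage from Dirac masses to general $\rho$ by dominated convergence;
\item the converse, using total invariance and closedness of $\mathcal{E}$ and the fact that $\mu_f$ has a continuous potential, hence no atoms at type I points.
\end{itemize}
In the Berkovich case, every $\zeta\in\bH_k$ is already covered by your bounded-potential step. Indeed, $\delta_\zeta-\delta_{x_g}$ is (up to sign convention) the Laplacian of $z\mapsto\log\delta(z,\zeta)_{x_g}$, the logarithm of the Hsia kernel relative to the Gauss point, and this is bounded and continuous when $\zeta\notin\mathbb{P}^1(k)$. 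So citing Favre--Rivera-Letelier ``for the type II case'' is misplaced. In both settings the only nontrivial case is $\delta_x$ for a non-exceptional type I point $x$.

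That case is where your proposal stops being a proof. The bound $\delta_n:=\max_{y\in f^{-n}(x)}\deg_y(f^n)=o(d^n)$ is true for separable $f$. The mechanism is not chains of totally ramified preimages. A backward chain of distinct points meets the critical set at most $2d-2$ times. A repetition puts $x$ on a cycle whose product of local degrees is $<d^p$ unless the cycle is totally invariant, which gives $\delta_n\le C\lambda^n$ with $\lambda<d$.

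However, this bound does not imply $d^{-n}u_x\circ f^n\to0$. Test $\Delta(d^{-n}u_x\circ f^n)$ against piecewise affine functions, whose Laplacians are finite sums of Diracs at type II points. You then need $d^{-n}\log\|f^n(\xi),x\|\to0$ for every type II point $\xi$ (respectively $L^1$ convergence on the Riemann sphere). If this fails, then along a subsequence $f^n(\xi)\in\bar D(x,e^{-cd^n})$. So $\xi$ lies in a component of $f^{-n}(\bar D(x,e^{-cd^n}))$ whose degree is the sum of $\deg_y f^n$ over all $y\in f^{-n}(x)$ in that component. Nothing you state bounds this degree by $\delta_n$, or prevents such a component from containing the fixed point $\xi$.

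What is missing is an $n$-uniform quantitative estimate showing that $f^n$ cannot squeeze a disc of fixed size into a disc of radius $e^{-cd^n}$ about $x$. Typically this is a \L{}ojasiewicz-type lower bound for $\|f^n(\cdot),x\|$ away from $f^{-n}(x)$. Its proof must deal with the clustering of points of $f^{-n}(x)$, not only with their individual local degrees. ``Equicontinuity via continuity of $\varphi_f$'' gives no control over the logarithmic singularity of $u_x$.

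This is exactly the step the cited works of Lyubich, Freire--Lopes--Ma\~n\'e and Favre--Rivera-Letelier carry out, by various means. As written, your proposal is a correct reduction followed by a deferral of the heart of the theorem to the same literature the paper cites.
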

\subsubsection{Energy pairing}
\label{sec: energypairing}
Let $f$ and $g$ be two rational maps of degrees $d_1, d_2 \ge 2$, respectively.
We define an energy pairing between \( f \) and \( g \) using the mutual energy introduced by Favre--Rivera-Letelier~\cite{FRL06}.

We define the Hsia kernel $H \colon \mathbb{A}^{1,\mathrm{an}}_k \times \mathbb{A}^{1,\mathrm{an}}_k
\longrightarrow [-\infty,\infty)$
by
\[
H(x,y)=
\begin{cases}
|x-y|, & \text{if } k=\mathbb{C},\\[0.5em]
\sup\{x,y\}, & \text{if } k \text{ is non-Archimedean}.
\end{cases}
\]
In the non-Archimedean case, the quantity \( \sup\{x,y\} \) is defined as follows: 
Let \( x_0 \) be the unique point satisfying $
[x,\infty] \cap [y,\infty] = [x_0,\infty].$
Then we set $
\sup\{x,y\} \coloneqq \inf_{c \in k} |z-c|_{x_0}.$  In fact, when $x_0=\zeta(a,r)$ is of type I, II or III, then $\sup\{x,y\}$ is exactly the radius $r$.

 Since \( \varphi_f \) and \( \varphi_g \) are continuous functions,
by~\cite[Lemma~2.4 and Lemma~4.3]{FRL06} the function
\(\log H(x,y) \) belongs to
\( L^{1}\bigl(|\mu_f-\mu_g|\times|\mu_f-\mu_g|\bigr) \) on
\( \mathbb{P}^{1,\an}_k \times \mathbb{P}^{1,\an}_k \).
Therefore, one can define their \emph{energy pairing}, denoted by
\( \langle f,g\rangle \), by
\[
\langle f,g\rangle
=
-
\iint\log H(x,y)\, d(\mu_f-\mu_g)(x)\, d(\mu_f-\mu_g)(y).
\]
By~\cite[Proposition~2.6 and Proposition~4.5]{FRL06}, one has
\( \langle f,g\rangle \ge 0 \), and moreover
\( \langle f,g\rangle = 0 \) if and only if \( \mu_f=\mu_g \).

\begin{proposition}
\label{prop: computeE}
The energy pairing satisfies
\[
\langle f,g\rangle
=
-
\int (\varphi_f-\varphi_g)\, d(\mu_f-\mu_g).
\]
\end{proposition}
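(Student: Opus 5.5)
We express $\log H(x,y)$ through the potentials and then use that $\mu_f-\mu_g$ has total mass zero. The key input is the Poisson-type formula
\[
\int \log H(x,y)\, d\mu_f(y) = \varphi_f(x) + \log\max\{1,|x|\} + c_f \qquad (x \in \mathbb{A}^{1,\an}_k),
\]
for a constant $c_f$, and similarly for $g$. This is the content of \cite[Lemma~2.4 and Lemma~4.3]{FRL06}, and we will invoke it there.

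To justify it directly, we use that $x\mapsto \log H(x,y)$ satisfies $\Delta_x \log H(\cdot,y) = \delta_y - \mu_{\can}$ up to the behaviour at $\infty$. The normalization here is that $\log\max\{1,|x|\}$ is, up to sign convention, the potential of $\mu_{\can}$ relative to $\delta_\infty$. Integrating against $\mu_f$ then gives a function $u_f$ with $\Delta u_f = \mu_f - \mu_{\can}$, up to the same $\delta_\infty$ correction. Similarly, $\varphi_f + \log\max\{1,|x|\}$ has Laplacian $\mu_f - \mu_{\can} = \Delta \varphi_f$ plus the same $\infty$-correction. Hence the two functions differ by a constant, since $\Delta u_1=\Delta u_2$ forces $u_1-u_2$ to be constant.

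Subtracting the formula for $g$ from the one for $f$, the $\log\max\{1,|x|\}$ terms cancel, and we get
\[
\int \log H(x,y)\, d(\mu_f-\mu_g)(y) = \varphi_f(x)-\varphi_g(x) + (c_f-c_g).
\]
We integrate this in $x$ against $\mu_f-\mu_g$, using Fubini, which is legitimate by the $L^1$ statement recalled above. The constant $c_f-c_g$ integrates to zero because $(\mu_f-\mu_g)(\bP^{1,\an}_k)=0$. Since $\varphi_f,\varphi_g$ are continuous and $\infty$ carries no mass for $\mu_f$ or $\mu_g$, this yields
\[
\langle f,g\rangle = -\int(\varphi_f-\varphi_g)\,d(\mu_f-\mu_g).
\]

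The main obstacle is bookkeeping of normalizations. We must check that the $\infty$-terms and the $\log\max\{1,|x|\}$ terms genuinely cancel in the difference, and that no sign error arises from the paper's convention $\mu_{\can}+\Delta u$. In the Archimedean case, $\log|x-y|$ and $\log\max\{1,|x|\}$ must be compared with the $\log\max\{|z_0|,|z_1|\}$ normalization of $\varphi_f$. We would handle this by verifying the identity on the generating functions $\psi_\sigma$, or simply by quoting \cite[Prop.~2.6, 4.5]{FRL06}, where this formula appears.
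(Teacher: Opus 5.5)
Your proposal is correct and follows essentially the same route as the paper: Fubini (justified by the $L^1$ integrability of $\log H$ from \cite{FRL06}), the potential identity $\int \log H(\cdot,y)\,d(\mu_f-\mu_g)(y)=\varphi_f-\varphi_g+C$ from \cite[\S2.5, \S4.5]{FRL06}, and the vanishing of the constant against the mass-zero measure $\mu_f-\mu_g$. The only difference is cosmetic: you obtain the difference formula by subtracting the individual potentials $\int\log H(\cdot,y)\,d\mu_f(y)=\varphi_f+\log\max\{1,|\cdot|\}+c_f$, whereas the paper quotes it for $\mu_f-\mu_g$ directly.
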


\begin{proof}
By Fubini’s theorem, we may write
\[
\langle f,g\rangle
=
-
\int \Bigl( \int \log H(x,y)\, d(\mu_f-\mu_g)(y) \Bigr)
\, d(\mu_f-\mu_g)(x).
\]
By~\cite[\S2.5 and \S4.5]{FRL06}, one has
\[
\int \log H(\cdot,y)\, d(\mu_f-\mu_g)
=
\varphi_f-\varphi_g + C
\]
for some constant \( C \in \mathbb{R} \).
Since \( \mu_f-\mu_g \) has total mass zero, the constant term integrates to zero,
and the desired identity follows.
\end{proof}

\section{Non-Archimedean degenerations by ultrafilters}



\subsection{Infinite product of Banach rings}
\subsubsection{Ultrafilters and Stone--Čech compactification}
We recall some basic notions concerning ultrafilters and the Stone--Čech compactification of the set \( \mathbb{N} \) of natural numbers. We refer to~\cite{CN74} for a detailed account.

\begin{definition}
A collection $\omega \subset \mathcal{P}(\bN)$ is called an \emph{ultrafilter} if it satisfies the following properties:
\begin{enumerate}
    \item $\emptyset \notin \omega$;
    \item if $E,F \in \omega$, then $E \cap F \in \omega$;
    \item if $E \in \omega$ and $E \subseteq F$, then $F \in \omega$;
    \item for every $E \subseteq \bN$, either $E \in \omega$ or $E^c = \bN \setminus E \in \omega$.
\end{enumerate}
\end{definition}

Indeed, an ultrafilter \( \omega \) may be viewed as a finitely additive
\( \{0,1\} \)-valued probability measure on \( \mathbb{N} \):
a subset has measure \(1\) if and only if it belongs to \( \omega \).
Such sets are called \emph{\(\omega\)-big}.
A property is said to hold \emph{\(\omega\)-almost surely} if it holds on an
\(\omega\)-big set.

For any \( n \in \mathbb{N} \), the family
\[
\omega_n = \{ F \subseteq \mathbb{N} \mid n \in F \}
\]
is an ultrafilter on \( \mathbb{N} \).
Such an ultrafilter is called \emph{principal}; it corresponds to the Dirac measure supported at \( n \).
An ultrafilter that is not principal is said to be \emph{non-principal}.
The existence of non-principal ultrafilters is guaranteed by Zorn’s lemma.

We denote by \( \beta\mathbb{N} \) the set of all ultrafilters on \( \mathbb{N} \).
Identifying each \( n \in \mathbb{N} \) with the corresponding principal ultrafilter
\( \omega_n \) yields an injection \( \mathbb{N} \hookrightarrow \beta\mathbb{N} \).

For each subset \( E \subseteq \mathbb{N} \), we define $
U_E = \{ \omega \in \beta\mathbb{N} \mid E \in \omega \}.$
For any \( E, F \subseteq \mathbb{N} \), one has \( U_E \cap U_F = U_{E \cap F} \).
Hence the family \( \{ U_E \}_{E \subseteq \mathbb{N}} \) forms a basis for a topology on
\( \beta\mathbb{N} \), whose open sets are precisely unions of such sets \( U_E \).

Equipped with this topology, the space \( \beta\mathbb{N} \), known as the
\emph{Stone--Čech compactification} of \( \mathbb{N} \), is compact and totally
disconnected, and contains \( \mathbb{N} \) as an open dense subset. 

\smallskip

The notion of an ultrafilter allows one to define generalized limits of sequences in any compact Hausdorff space \( X \). Namely, for any ultrafilter \( \omega \) on \( \mathbb{N} \) and any sequence \( (x_n) \subset X \), there exists a unique point \( x \in X \), called the \emph{\( \omega \)-limit} of \( (x_n) \), and denoted by
\[
x = \lim_{\omega} x_n,
\]
such that for every neighborhood \(V\) of \(x\), one has
\(x_n \in V\) \(\omega\)-almost surely.

Note that if \( \omega = \omega_m \) is the principal ultrafilter associated with
\( m \in \mathbb{N} \), then $
\lim_{\omega} x_n = x_m .$
Thus, in most situations of interest, we shall restrict attention to non-principal
ultrafilters.
\subsubsection{Product of Banach rings}
Let \( (k_n) \) be a sequence of complete metrized fields which are algebraically closed, and denote by
\( |\cdot|_n \) the given absolute value on \( k_n \).
A sequence of positive real numbers \( (\epsilon_n) \) is said to be
\emph{adapted} to \( (k_n) \) if, for each \( n \), one has
\( \epsilon_n \in (0,1] \) when \( k_n \) is Archimedean and
\( \epsilon_n > 0 \) when \( k_n \) is non-Archimedean.
Then, for every \( n \in \mathbb{N} \), the function
\( |\cdot|_n^{\epsilon_n} \colon k_n \to \mathbb{R}_+ \)
again defines a complete multiplicative norm on \( k_n \).

\smallskip

Let $\epsilon = (\epsilon_n)$ be a sequence adapted to $(k_n)$. We define the product Banach ring associated with $\epsilon$ as: 
\[
A^\epsilon
= \left\{ (z_n) \in \prod_{n\in\mathbb{N}} k_n \,\middle|\,
\sup_{n\in\mathbb{N}} |z_n|_n^{\epsilon_n} < +\infty \right\}.
\]
Then $A^\epsilon$, equipped with the norm $\|(z_n)\| := \sup_{n\in\mathbb{N}} |z_n|_n^{\epsilon_n},$
is a Banach ring.

By \cite[Proposition 1.2.3]{Ber90}, the Berkovich spectrum $\mathcal{M}(A^\epsilon)$ is homeomorphic to the Stone--Čech compactification $\beta\mathbb{N}$.
The homeomorphism is given by the map $\beta\mathbb{N} \longrightarrow \mathcal{M}(A^\epsilon),$
which sends an ultrafilter $\omega$ to the multiplicative seminorm $|\cdot|_\omega$ defined by
$|(z_n)|_\omega = \lim_{\omega} |z_n|_n^{\epsilon_n}.$

Note that the kernel of $|\cdot|_\omega$:
$
\ker(\omega) \coloneqq
\left\{ (z_n) \in \prod k_n \,\middle|\,
\lim_{\omega} |z_n|_n^{\epsilon_n} = 0 \right\}$
is a maximal closed ideal of $A^\epsilon$.
Therefore, the residue field of $A^\epsilon$ at $\omega$ is
$\sH(\omega) = A^\epsilon / \ker(\omega).$
Note that for any principal ultrafilter $\omega_n$, the residue field $\sH(\omega_n)$ is canonically identified with $k_n$ endowed with the rescaled norm $|\cdot|_n^{\epsilon_n}$.

When $\omega$ is a non-principal ultrafilter, the following proposition summarizes some main properties of the residue field $\sH(\omega)$.  

\begin{proposition}
\label{prop: prop-residuefield}
Fix a non-principal ultrafilter $\omega$. Then the following statements hold:
\begin{enumerate}
\item $\sH(\omega) = \mathbb{C}$ if and only if $k_n = \mathbb{C}$ $\omega$-almost surely and $\lim_{\omega} \epsilon_n > 0$;
\item the value group of $\sH(\omega)$ is $\R_+;$
\item $\sH(\omega)$ is algebraically closed and spherically complete.
\end{enumerate}
\end{proposition}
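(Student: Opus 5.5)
We prove the three statements of Proposition~\ref{prop: prop-residuefield} in turn, treating $\sH(\omega)$ as an ultraproduct. The basic tool is the description of $\sH(\omega)$ as $A^\epsilon/\ker(\omega)$ with norm $|(z_n)|_\omega=\lim_\omega |z_n|_n^{\epsilon_n}$. Throughout we use that $\omega$ is non-principal, so every $\omega$-big set is infinite.

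\textbf{Step 1: the Archimedean dichotomy, statement (1).} The norm $|\cdot|_\omega$ satisfies the strong triangle inequality as soon as the $\omega$-limit of the constants $2^{\epsilon_n}$ (computed in $k_n$) is at most $1$. If $k_n$ is non-Archimedean $\omega$-almost surely, each $|\cdot|_n^{\epsilon_n}$ is ultrametric, and the limit inherits this. If instead $k_n$ is Archimedean $\omega$-almost surely, then $k_n\cong\C_{\delta_n}$ for some $\delta_n\in(0,1]$, and the rescaled norm is the Euclidean norm raised to the power $\delta_n\epsilon_n$. Hence $|2|_\omega=\lim_\omega 2^{\delta_n\epsilon_n}$. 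If this limit is $1$, the norm $|\cdot|_\omega$ is non-Archimedean by Ostrowski, since $|n|_\omega\le 1$ for all integers $n$. If $\lim_\omega \delta_n\epsilon_n=t>0$, then $|\cdot|_\omega$ is Archimedean. By Gelfand--Mazur and Ostrowski it is then $\C$ or $\R$ with norm $|\cdot|^t$, and algebraic closedness from Step 3 rules out $\R$. Concretely, the map $(z_n)\mapsto\lim_\omega z_n$ (a limit in the compact set $\{|z|\le R\}$) gives an isometric isomorphism onto $\C_t$. Absorbing $\delta_n$ into the identification $k_n=\C$ yields the stated criterion.

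\textbf{Step 2: value group, statement (2).} In the non-Archimedean case we are in the regime $\lim_\omega\epsilon_n=0$, or $k_n$ is non-Archimedean. Fix $r>0$; we must produce $(z_n)$ with $\lim_\omega|z_n|_n^{\epsilon_n}=r$. In the non-Archimedean case $|k_n^\times|$ is dense in $\R_+$, since $k_n$ is algebraically closed and non-trivially valued. We can therefore choose $z_n$ with $|z_n|_n^{\epsilon_n}$ within $1/n$ of $r$. In the Archimedean case we take $z_n$ real with the exact prescribed modulus. In either case the limit is $r$, and the sequence is bounded, so it lies in $A^\epsilon$. The Archimedean case $\sH(\omega)=\C$ also has value group $\R_+$.

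\textbf{Step 3: algebraic closedness and spherical completeness, statement (3).} For algebraic closedness, take a monic polynomial $P=T^m+a_{m-1}T^{m-1}+\dots+a_0$ over $\sH(\omega)$ and lift its coefficients to sequences $(a_{i,n})\in A^\epsilon$. Each $P_n$ has a root $z_n\in k_n$, since $k_n$ is algebraically closed. The standard root bound $|z_n|\le C\max_i|a_{i,n}|^{1/(m-i)}$, raised to the power $\epsilon_n$, shows that $(z_n)\in A^\epsilon$; in the Archimedean case the constant $C=2$ also gets raised to a power at most $1$. Then $P((z_n))=(P_n(z_n))=0$, so the class of $(z_n)$ is a root of $P$.

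For spherical completeness, let $\bar D(c_j,r_j)$ be a decreasing sequence of closed balls in $\sH(\omega)$, with lifts $(c_{j,n})$. We use the standard $\aleph_1$-saturation diagonal argument for ultraproducts. For each $N$, the set
\[
E_N=\{n\ge N : |c_{i,n}-c_{j,n}|_n^{\epsilon_n}<r_i+1/N \text{ for all } i\le j\le N\}
\]
is $\omega$-big, and the sets $E_N$ are decreasing. For $n\in E_N\setminus E_{N+1}$ we set $z_n=c_{N,n}$. For each fixed $i$ this gives $|z_n-c_{i,n}|^{\epsilon_n}<r_i+1/N$ on $E_N$ once $N\ge i$. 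Since $\bigcap_N E_N=\emptyset$, every $n$ falls into exactly one such difference, so $z_n$ is defined for all $n$ (set $z_n=0$ off $E_1$). Taking the $\omega$-limit gives $|z-c_i|_\omega\le r_i$ for all $i$, so $z$ lies in every ball.

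\textbf{Expected obstacle.} The main obstacle is the bookkeeping in Step 3: handling the mixed Archimedean and non-Archimedean factors, and ensuring the diagonal sequence is bounded in $A^\epsilon$. Boundedness holds because $|z_n-c_{1,n}|^{\epsilon_n}$ is bounded by roughly $r_1+1$ and $(c_{1,n})\in A^\epsilon$, which controls $|z_n|^{\epsilon_n}$ up to a power of $2$.
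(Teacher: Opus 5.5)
Your proof is correct, and it takes a more self-contained route than the paper. The paper splits according to whether $k_n=\C$ or $k_n$ is non-Archimedean $\omega$-almost surely.
\begin{itemize}
\item In the first case it cites \cite[Theorems~4.9 and~4.11]{FG25} for all three assertions.
\item In the second case it checks $|m|_\omega\le 1$ to get non-Archimedeanity. It then cites \cite[Theorem~4.1]{Yap25} for algebraic closedness and \cite[Lemma 1.14]{KW14} for spherical completeness. Only the value-group statement is proved by hand, using exactly your density argument.
\end{itemize}

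You instead prove everything directly and uniformly in the type of the factors:
\begin{itemize}
\item For (1), you combine the criterion $|2|_\omega\le 1$ with the explicit isometry $(z_n)\mapsto\lim_\omega z_n$ onto $\C_t$. This also avoids needing completeness of $\sH(\omega)$, which an appeal to Gelfand--Mazur--Ostrowski would require.
\item For algebraic closedness, you lift roots factor by factor in ultraproduct style, and control them by uniform root bounds. The Archimedean constant $2$ is harmless after raising to the power $\delta_n\epsilon_n\le 1$.
\item For spherical completeness, you use a countable-saturation diagonalization. This works regardless of whether the factors are Archimedean.
\end{itemize}
One small point to make explicit in the diagonal argument: the sets $E_N$ are $\omega$-big because nestedness of the balls gives $|c_i-c_j|_\omega\le r_i$ for $i\le j$. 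Also, your sentence using algebraic closedness to rule out $\R$ is redundant once you have the explicit isometry onto $\C_t$.

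Your version gives a self-contained proof. It also covers the degenerate regime where Archimedean factors yield a non-Archimedean limit, which is the part the paper delegates to \cite{FG25}. It further shows that, for Archimedean factors $\C_{\delta_n}$, the correct criterion in (1) is $\lim_\omega\delta_n\epsilon_n>0$. This reduces to the stated condition when $k_n=\C$ carries its standard norm. The paper's version is shorter, at the price of relying on three external results.
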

\begin{remark}
When $k_n=\mathbb{C}$ and $\epsilon_n=1$, we have $\sH(\omega)=\mathbb{C}$. 
By contrast, if $k_n=\mathbb{C}_p$ and $\epsilon_n=1$, then $\sH(\omega)\neq \mathbb{C}_p$, 
since $\sH(\omega)$ is spherically complete whereas $\mathbb{C}_p$ is not.
\end{remark}

\begin{proof}
If $k_n = \mathbb{C}$ $\omega$-almost surely, the assertions follow from \cite[Theorems~4.9 and~4.11]{FG25}.  
We now assume that $k_n$ is non-Archimedean $\omega$-almost surely.  

In this case, $\sH(\omega)$ is non-Archimedean, since for every $m \in \mathbb{N}$ one has
$|m|_\omega = \lim_{\omega} |m|_n^{\epsilon_n} \leq 1.$
The fact that $\sH(\omega)$ is algebraically closed follows from \cite[Theorem~4.1]{Yap25}. The spherical completeness follows from \cite[Lemma 1.14]{KW14}.

To prove that $|\sH(\omega)| = \mathbb{R}_+$, note first that $|k_n|_n$ is dense in $\mathbb{R}_+$ since $k_n$ is algebraically closed.  
Hence, for any $r \in \mathbb{R}_+$, one can choose $z_n \in k_n$ such that $\bigl||z_n|_n^{\epsilon_n} - r\bigr| < \frac{1}{n}.$
The sequence $(z_n)$ defines an element of $\sH(\omega)$, and we have $
|(z_n)|_\omega = \lim_{\omega} |z_n|_n^{\epsilon_n} = r.$
\end{proof}

\subsection{Continuity of dynamics on the hybrid space}
\subsubsection{Hybrid spaces}
Hybrid spaces provide a framework that interpolates between complex and non-Archimedean geometries.
The origin can be traced back to the compactification of $\mathrm{SL}_2$-character varieties by Morgan–Shalen~\cite{MS85}, and to later constructions using $\mathbb{R}$-trees in the work of DeMarco--McMullen~\cite{DM08}. The notion of hybrid space  was formalized by Berkovich~\cite{Ber09}, where the choice of Banach rings introduces substantial flexibility.

Hybrid spaces provide a natural framework for studying degenerations, particularly of measures. In holomorphic dynamics, this approach was introduced by Favre~\cite{Fav20}, building on a work of Boucksom–Jonsson~\cite{BJ17}. He showed that the equilibrium measures associated with a meromorphic family converge, in a suitable sense, to a non-Archimedean one. This result was subsequently extended by Poineau~\cite{Poi26} to more general Banach rings, and analogous statements were obtained by Favre–Gong~\cite{FG25} in the setting of product Banach rings arising from rescaled complex fields.

The hybrid space approach has since found applications in a variety of contexts, including character varieties~\cite{DF19,DM24}, automorphisms of algebraic varieties~\cite{Iro24,Iro25}, arithmetic dynamics~\cite{Yap25} and Diophantine geometry~\cite{LY26}.

\smallskip

We now construct hybrid spaces associated with product Banach rings over various base fields.

Let $\mathbb{P}^{1,\an}_{A^\epsilon}$ be the Berkovich projective line over the product Banach ring $A^\epsilon$.
It is a compact Hausdorff space and
there exists a canonical continuous projection
\[
\pi \colon \mathbb{P}^{1,\an}_{A^\epsilon} \longrightarrow \mathcal{M}(A^\epsilon) \simeq \beta\mathbb{N}.
\]
For any $\omega \in \beta\mathbb{N}$, the fiber $\pi^{-1}(\omega)$ is canonically identified with
$\mathbb{P}^{1,\an}_{\sH(\omega)}$.
In particular, for a principal ultrafilter $n \in \mathbb{N}$, the fiber $\pi^{-1}(n)$ is homeomorphic to
$\mathbb{P}^{1,\an}_{k_n}$. Thus, $\mathbb{P}^{1,\an}_{A^\epsilon}$ can be regarded as a fibration whose fibers are the spaces $\mathbb{P}^{1,\an}_{k_n}$ and their ultrafilter limits.


\subsubsection{Sequential hybridation}
A rational map $f\colon\bP^1_{A^\epsilon}\to \bP^1_{A^\epsilon}$ defined over $A^\epsilon$ is given in homogeneous coordinates by a pair of homogeneous polynomials $P,Q$ of degree $d$ whose resultant is a unit in $A^\epsilon$. When $A^\epsilon$ is a Banach ring, 
then $f$ induces a continuous map $f\colon\bP^{1,\an}_{A^\epsilon}\to \bP^{1,\an}_{A^\epsilon}$ over $\mathcal{M}(A^\epsilon)\cong\beta\bN$, and for any $\omega\in\beta\bN$, we denote by $f_\omega$
the restriction of $f$ on the fiber $\pi^{-1}(\omega)$. It is a rational map of degree $d$ defined over $\sH(\omega)$.

\smallskip 

We now provide a canonical way to encode a sequence of rational maps
\[ (f_n) \in \prod_{n\in\mathbb{N}} \mathrm{Rat}_d(k_n) \]
with bad reduction into a single dynamical system over the hybrid space.

To this end, we first associate to \( (f_n) \) the \emph{resultant sequence}
\( \epsilon(f) = (\epsilon(f_n)) \), defined by
\[
\epsilon(f_n)
=
\bigl(-\log(|\Res(f_n)|/C_d)\bigr)^{-1},
\]
where \( C_d = e \, \sup_{\mathrm{Rat}_d(\mathbb{C})} |\Res| \) if \( k_n=\mathbb{C} \),
and \( C_d = 1 \) otherwise.
This quantity is finite, since \( |\Res| \) is continuous on
\( \mathrm{Rat}_d(\mathbb{C}) \) and satisfies
\(
\lim_{f\to\partial \mathrm{Rat}_d(\mathbb{C})} |\Res(f)| = 0 .
\)

By construction, the resultant sequence \( \epsilon(f) \) is adapted to
\( (k_n) \).

\begin{theorem}
\label{thm: sequentialhybridation}
Let \( (f_n) \in \prod_{n\in\mathbb{N}} \mathrm{Rat}_d(k_n) \) be a sequence of rational maps with bad reduction, and let
\( \epsilon = (\epsilon_n) \) be a sequence adapted to \( (k_n) \).
Assume that there exists a constant \( C>0 \), independent of \( n \in \mathbb{N} \), such that
\( \epsilon_n \le C\, \epsilon(f_n) \) for all \( n \).
Then the following statements hold:
\begin{enumerate}
\item
There exists a rational map $f \in \Rat_d(A^\epsilon)$ inducing a continuous self-map
\[f \colon \mathbb{P}^{1,\an}_{A^\epsilon} \longrightarrow \mathbb{P}^{1,\an}_{A^\epsilon}\]
such that $\pi \circ f = \pi$, and for every $n \in \mathbb{N}$ one has
$f|_{\pi^{-1}(n)} = f_n$
under the canonical identification
$\pi^{-1}(n) \simeq \mathbb{P}^{1,\an}_{k_n}$;

\item
For  a fixed ultrafilter $\omega$, the following holds:
\begin{enumerate}
\item
If $\lim_{\omega} \frac{\epsilon_n}{\epsilon(f_n)} = 0,$
then $f_\omega$ has good reduction;

\item
If $\lim_{\omega} \frac{\epsilon_n}{\epsilon(f_n)} \neq 0,$
then $f_\omega$ does not have good reduction.
\end{enumerate}
\end{enumerate}
\end{theorem}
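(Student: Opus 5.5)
We build the hybrid map coefficient by coefficient, normalize so that it lives over $A^\epsilon$, and then compute the resultant of the $\omega$-limit from its $\omega$-limit norm. For each $n$, write $f_n=[P_n:Q_n]$ and scale the homogeneous coefficients so that $\max_{i,j}\{|a_{i,n}|_n,|b_{j,n}|_n\}=1$. Then the coefficient sequences $(a_{i,n})_n$ and $(b_{j,n})_n$ have $\sup_n |a_{i,n}|_n^{\epsilon_n}\le 1$, so they define elements $a_i,b_j\in A^\epsilon$. Set $P=\sum a_i z_0^iz_1^{d-i}$ and $Q=\sum b_j z_0^jz_1^{d-j}$ over $A^\epsilon$.

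\emph{Part (1).} We must show $R:=\Res(P,Q)=(\Res(P_n,Q_n))_n$ is a unit in $A^\epsilon$. Its inverse is the sequence $(\Res(P_n,Q_n)^{-1})_n$. With our normalization $|\Res(P_n,Q_n)|_n=|\Res(f_n)|$, so
\[
|\Res(P_n,Q_n)^{-1}|_n^{\epsilon_n}=\exp\bigl(-\epsilon_n\log|\Res(f_n)|\bigr).
\]
The definition of $\epsilon(f_n)$ gives $-\log|\Res(f_n)|=\epsilon(f_n)^{-1}+\log(1/C_d)$, and $\log(1/C_d)\le 0$ in all cases. Hence the exponent is at most $\epsilon_n/\epsilon(f_n)\le C$, and the inverse lies in $A^\epsilon$.

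The remaining points of (1) are routine:
\begin{itemize}
\item Since $f$ is given by polynomials over $A^\epsilon$ with unit resultant, it induces a continuous self-map of $\bP^{1,\an}_{A^\epsilon}$ commuting with $\pi$, by the general theory in \cite{LP24}.
\item On the fiber $\pi^{-1}(n)$, the map $A^\epsilon\to \sH(\omega_n)=k_n$ is projection to the $n$-th coordinate. The specialization is therefore $[P_n:Q_n]=f_n$, because the rescaled norm does not change the map.
\end{itemize}

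\emph{Part (2).} The key step is to compute $|\Res(f_\omega)|_\omega$. The image of $R$ in $\sH(\omega)$ is $\Res(P_\omega,Q_\omega)$, and its norm is $\lim_\omega|\Res(f_n)|^{\epsilon_n}$. The coefficient maximum of $f_\omega$ is $\lim_\omega\max_{i,j}|a_{i,n}|^{\epsilon_n}=1$; this holds because the maximum over finitely many indices commutes with $\lim_\omega$, and each $n$ attains the value $1$. Therefore
\[
|\Res(f_\omega)|_\omega=\lim_\omega \exp\bigl(\epsilon_n\log|\Res(f_n)|\bigr)=\lim_\omega\exp\Bigl(-\tfrac{\epsilon_n}{\epsilon(f_n)}-\epsilon_n\log(1/C_d)\Bigr).
\]
The term $\epsilon_n\log(1/C_d)$ tends to $0$ along $\omega$ when it is present, that is, when $k_n=\C$ $\omega$-almost surely, provided $\lim_\omega\epsilon_n=0$. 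If $k_n$ is non-Archimedean $\omega$-almost surely, this term vanishes identically since $C_d=1$.

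When $\sH(\omega)$ is non-Archimedean, good reduction is equivalent to $|\Res|=1$, as recalled above from \cite[Theorem~C]{FRL10}. So in case (a) we get $|\Res(f_\omega)|=e^0=1$, which is good reduction. In case (b) we get $e^{-c}<1$ with $c=\lim_\omega\epsilon_n/\epsilon(f_n)>0$, so $f_\omega$ does not have good reduction.

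The main obstacle is the Archimedean bookkeeping. If $\sH(\omega)=\C$, i.e.\ $\lim_\omega\epsilon_n>0$ with $k_n=\C$, then by our convention $f_\omega$ never has good reduction. For (b) this is consistent. For (a) we must rule this case out. Since $|\Res|\le \sup|\Res|=C_d/e$, we have $-\log(|\Res(f_n)|/C_d)\ge 1$, so $\epsilon(f_n)\le 1$. Then $\lim_\omega\epsilon_n/\epsilon(f_n)=0$ forces $\lim_\omega\epsilon_n=0$. By Proposition~\ref{prop: prop-residuefield}, $\sH(\omega)$ is then non-Archimedean, and the term $\epsilon_n\log(1/C_d)$ tends to $0$ as required. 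We also need that the archimedean $\epsilon_n\le 1$ normalization keeps $|\cdot|_n^{\epsilon_n}$ a norm, which holds by adaptedness.
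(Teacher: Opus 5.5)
Your proposal is correct and follows the paper's proof. You normalize the coefficients to have maximal modulus $1$, use $\epsilon_n\le C\,\epsilon(f_n)$ to see that $\Res(P,Q)$ is a unit of $A^\epsilon$, and then read off good reduction of $f_\omega$ from $|\Res(f_\omega)|=\lim_\omega|\Res(f_n)|_n^{\epsilon_n}$; your explicit treatment of the $C_d$ correction term and of the bound $\epsilon(f_n)\le 1$ (which forces $\lim_\omega\epsilon_n=0$ in case (a)) makes precise steps the paper leaves implicit.
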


\begin{remark}
    If $k_n=\C$ and we suppose $|\Res(f_n)|=\sup_{M\in\PGL_{2}(\C)}|\Res(M_n\cdot f_n\cdot M_n^{-1})|$, then we get the sequential hybridation in \cite[Theorem 1.1]{FG25}.
\end{remark}
\begin{remark}
Similar results were obtained by Luo~\cite[Remark after Theorem 1.2]{Luo21} for renormalized hyperbolic spaces, and by Looper–Yap~\cite[\S4.1]{LY26} for degenerations of abelian varieties. 
\end{remark}
\begin{proof}  
We write $f_{n}=[P_{n}\colon Q_{n}]$, 
and normalize the coefficients of $P_n$ and $Q_n$ so that the maximum modulus of their coefficients is equal to $1$.   
We may suppose $C>1.$ Note that 
\begin{align*}
|\Res(P_n,Q_n)|_n^{\epsilon_n}
\in \left[e^{-(C+C|\log C_d|)},e^{(C+C|\log C_d|)}\right]    
\end{align*}
It follows that 
$P=(P_n)$ and $Q=(Q_n)$ are homogeneous polynomials with coefficients in $A^{\epsilon}$
and $\Res(P,Q)$ is a unit.
Consequently, 
$f=(f_n)$ defines an endomorphism of degree $d$ of $\bP^1_{A^\epsilon}$.
By construction, (1) holds.

If $\sH(\omega)=\C$, then Proposition~\ref{prop: prop-residuefield}(1) implies that
$\lim_{\omega}\epsilon_n>0.$
Hence only case~(b) can occur, and by convention $f_\omega$ does not have good reduction. Now assume that $\sH(\omega)$ is non-Archimedean. Note that
\[
|\Res(f_\omega)|
= \lim_{\omega} |\Res(f_n)|_n^{\epsilon_n}
= 1
\quad \Longleftrightarrow \quad
\lim_{\omega}\frac{\epsilon_n}{\epsilon(f_n)}=0.
\]
Therefore, $f_\omega$ has good reduction if and only if
$\lim_{\omega}\frac{\epsilon_n}{\epsilon(f_n)}=0.$
\end{proof}

\subsubsection{Continuity of energy pairings}
Note that for any \( f \in \mathrm{Rat}_d(A^\epsilon) \) and any \( \omega \in \beta\mathbb{N} \),
the equilibrium measure \( \mu_{f_\omega} \) can be regarded as a Radon probability measure on
\( \mathbb{P}^{1,\an}_{A^\epsilon} \) supported on \( \pi^{-1}(\omega) \). One can show that the equilibrium measure depends continuously on
\( \omega \in \beta\mathbb{N} \).
The proof follows the same strategy as in~\cite[Theorem~6.8]{FG25} in the case \( k_n=\mathbb{C} \).

\begin{theorem}
\label{thm: cvrgmeasure}
Let $f \in \mathrm{Rat}_d(A^\epsilon)$ for some $d \ge 2$.  
The family of equilibrium measures $\bigl(\mu_{f_\omega}\bigr)_{\omega \in \beta\mathbb{N}}$
defines a continuous family of positive measures on
$\mathbb{P}^{1,\an}_{A^\epsilon}$; that is, for every continuous function
$\varphi$ on $\mathbb{P}^{1,\an}_{A^\epsilon}$, the map
\[
\omega \longmapsto \int \varphi \, d\mu_{f_\omega}
\]
is continuous on $\beta\mathbb{N}$.
\end{theorem}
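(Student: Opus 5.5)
We follow the strategy of \cite[Theorem~6.8]{FG25}: write each $\mu_{f_\omega}$ as $\frac{1}{d^m}(f^m)^*\rho_\omega$ up to a uniformly small error, where $\rho_\omega$ is a fixed reference measure varying continuously in $\omega$. Three facts make this work: the Laplacian is continuous on the hybrid space, the Green potential $\varphi_{f_\omega}$ is given by a uniformly convergent series in $\omega$, and the continuous functions obtained from polynomial data are dense among continuous functions on $\mathbb{P}^{1,\an}_{A^\epsilon}$.

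\textbf{Step 1 (reference family).} Write $f=[P:Q]$ with $P,Q$ having coefficients in $A^\epsilon$ and $\Res(P,Q)$ a unit. On $\mathbb{P}^{1,\an}_{A^\epsilon}$ consider the continuous function
\[
\Gamma(x)=\frac{1}{d}\log\frac{\max\{|P(x)|,|Q(x)|\}}{\max\{|z_0(x)|,|z_1(x)|\}^d}.
\]
Its restriction to each fiber $\pi^{-1}(\omega)$ is $\gamma_{f_\omega}$. Since $\Res(P,Q)$ is a unit, the homogeneous Nullstellensatz gives identities $z_i^{N}=\sum A_{ij}P+B_{ij}Q$ with coefficients in $A^\epsilon$. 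These yield $\sup|\Gamma|\le M$ with $M$ independent of $\omega$: the upper bound is immediate, and the lower bound comes from the resultant identity together with the norms of the $A_{ij},B_{ij}$.

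Set $\Phi_m=\sum_{k<m}d^{-k}\Gamma\circ f^k$. This is continuous on the whole hybrid space, and $\sup|\Phi_m-\Phi_\infty|\le M d^{-m}/(1-d^{-1})$ uniformly. Consequently $\Phi_\infty$ is continuous on $\mathbb{P}^{1,\an}_{A^\epsilon}$ and restricts on every fiber to $\varphi_{f_\omega}$.

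\textbf{Step 2 (reduction to test functions).} By Stone--Weierstrass, together with the density statement in \cite{FG25} or \cite{Poi26}, it suffices to treat $\varphi$ in a dense class. We take $\varphi=\psi_\sigma$ for finite collections $\sigma$ of homogeneous polynomials with $A^\epsilon$-coefficients and no common zero, and linear combinations of such functions. The density of such log-max functions among continuous functions on the hybrid space is the point where we rely on \cite{Poi26}, or reprove it as in \cite{FG25}.

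\textbf{Step 3 (integration by parts on each fiber).} For such $\varphi$ the fiberwise identity
\[
\int\varphi\,d\mu_{f_\omega}=\int\varphi\,d\mu_{\can,\omega}+\int\varphi_{f_\omega}\,d\Delta\varphi
\]
holds, by symmetry of the energy (the same computation as in Proposition~\ref{prop: computeE}). On the right-hand side, $\Delta\psi_\sigma$ restricted to the fiber over $\omega$ is an explicit combination of pushforwards of Dirac or canonical measures. Concretely, it is expressed through $\mu_{\can,\omega}$ and the pullback of $\mu_{\can,\omega}$ under the map defined by $\sigma$. Continuity in $\omega$ of $\omega\mapsto\int h\,d\mu_{\can,\omega}$ for continuous $h$ holds because $\mu_{\can,\omega}$ is the Gauss point, or the Haar measure on the circle in Archimedean fibers, and the map sending $\omega$ to its Gauss point is a continuous section of $\pi$. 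The same continuity holds for pullbacks of $\mu_{\can,\omega}$ under maps defined over $A^\epsilon$, since pushforward of continuous functions preserves continuity.

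Combining Steps 1 and 3, $\omega\mapsto\int\varphi\,d\mu_{f_\omega}$ is a uniform limit of continuous functions on $\beta\bN$, hence continuous. Positivity and unit mass are automatic, so the family $(\mu_{f_\omega})$ is a continuous family of positive measures as claimed.

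\textbf{Main obstacle.} The hardest step is the uniform bound on $\Gamma$ across all fibers, including non-principal ones that mix Archimedean and non-Archimedean $k_n$. The Archimedean fibers require the rescaled constant $C_d$, and I expect to handle them via the explicit Nullstellensatz estimate.
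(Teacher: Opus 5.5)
Your outline matches the strategy the paper has in mind (the paper gives no argument beyond pointing to \cite[Theorem~6.8]{FG25}), and Step~1 is fine. Step~3, however, does not go through as written.

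\textbf{Step 3 and the form of $\Delta\psi_\sigma$.} You describe $\Delta\psi_\sigma$ through ``the pullback of $\mu_{\can,\omega}$ under the map defined by $\sigma$''. This is only valid when $\sigma=\{P_1,P_2\}$: then $F=[P_1:P_2]$ maps to $\mathbb{P}^1$ and $\mu_{\can}+\Delta\psi_\sigma=\frac{1}{l}F^*\mu_{\can}$. For $\#\sigma\ge 3$ the map lands in $\mathbb{P}^{\#\sigma-1}$, and you give no reason why $\omega\mapsto\Delta\psi_\sigma|_{\pi^{-1}(\omega)}$ should be continuous. You also cannot restrict to $\#\sigma=2$. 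The density results you invoke in Step~2 concern the span of all $\psi_\sigma$ and go through lattice Stone--Weierstrass, which uses $\max(\psi_\sigma,\psi_{\sigma'})$, and that is a $\psi_{\sigma''}$ with $\#\sigma''\ge 3$.

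The fix is to integrate by parts the other way, which is what your opening sentence and Step~1 are really for. On each fiber $\Delta\Phi_m=d^{-m}(f^m)^*\mu_{\can}-\mu_{\can}$, so
\[
\int\varphi\,d\mu_{f_\omega}-\frac{1}{d^m}\int (f^m)_*\varphi\,d\mu_{\can,\omega}=\int(\Phi_\infty-\Phi_m)\,d\Delta\varphi .
\]
For $\varphi=\psi_\sigma$ with arbitrary $\sigma$, the signed measure $\Delta\psi_\sigma$ has total variation at most $2$ on every fiber, since $\mu_{\can}+\Delta\psi_\sigma$ is a probability measure. Hence the right-hand side is $O(d^{-m})$ uniformly in $\omega$. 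No explicit form of $\Delta\psi_\sigma$ is needed. What remains is continuity of $\omega\mapsto\int(f^m)_*\varphi\,d\mu_{\can,\omega}$.

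\textbf{Continuity of $\mu_{\can,\omega}$.} That remaining step contains a second gap. First, continuity of $(f^m)_*\varphi$ on the whole hybrid space is not the fiberwise statement proved in the paper; it needs its own justification. More seriously, the Gauss-point section does not give continuity of $\omega\mapsto\mu_{\can,\omega}$ at a non-principal $\omega$ for which $\sH(\omega)$ is non-Archimedean while $k_n$ is Archimedean $\omega$-almost surely. This is exactly the situation in Theorem~\ref{IntroUniform2} when $k=\mathbb{C}$ and $\epsilon_n\to 0$. There you must prove that the Haar measures on the unit circles of the principal fibers converge to $\delta_{x_g}$ of the limit fiber.

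One way to do this is to test against $\psi_\sigma$. Mahler-measure bounds give
\[
\bigl|\int\log\max_i|P_{i,n}|\,d\mathrm{Haar}-\log\max_i\|P_{i,n}\|\bigr|\le C(\deg),
\]
where $\|\cdot\|$ is the maximum modulus of the coefficients. Multiplying by $\epsilon_n\to0$ then yields $\log\max_i|P_i|_{x_g}$.

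\textbf{The step you call the main obstacle.} The uniform bound on $\Gamma$ is routine. The cofactors in $\Res(P,Q)\,z_i^{2d-1}=A_iP+B_iQ$ are integer polynomials in the coefficients, and $|m|_n^{\epsilon_n}\le m$. So $C_d$ plays no role once $f\in\Rat_d(A^\epsilon)$.
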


The continuity of equilibrium measures  implies the continuity of the rescaled energy pairings.

\begin{theorem}
\label{thm: cts-energy}

Let $d_1,d_2\ge2$ be two integers. Let \( f\in\Rat_{d_1}(A^\eps)\) and $g\in\Rat_{d_2}(A^\eps)$. For each \( \omega \in \beta\mathbb{N} \), define
\[
\epsilon(\omega)=
\begin{cases}
\epsilon_n, & \text{if } \omega \text{ is the principal ultrafilter } \omega_n,\\
1, & \text{if } \omega \text{ is non-principal}.
\end{cases}
\]
Then the rescaled energy pairings
\[
\epsilon(\omega)\,\langle f_\omega, g_\omega\rangle
\]
depend continuously on \( \omega \in \beta\mathbb{N} \).
\end{theorem}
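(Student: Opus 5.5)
We plan to use Proposition~\ref{prop: computeE}, which writes the pairing as
\[
\langle f_\omega,g_\omega\rangle=-\int(\varphi_{f_\omega}-\varphi_{g_\omega})\,d(\mu_{f_\omega}-\mu_{g_\omega}).
\]
The idea is to realize the rescaled potentials as restrictions of single continuous functions on the whole hybrid space $\bP^{1,\an}_{A^\eps}$, and then apply Theorem~\ref{thm: cvrgmeasure}.

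\textbf{Global potentials.} Write $f=[P:Q]$ with $P,Q$ homogeneous of degree $d_1$ over $A^\eps$, and set
\[
\gamma_f=\tfrac{1}{d_1}\log\frac{\max\{|P|,|Q|\}}{\max\{|z_0|,|z_1|\}^{d_1}}
\]
on $\bP^{1,\an}_{A^\eps}$, where absolute values are now evaluated at points of the hybrid space. This is a well-defined function, and it is continuous by the definition of the Berkovich topology. It is also bounded. The upper bound comes from the norms of the coefficients of $P,Q$. The lower bound uses that $\Res(P,Q)$ is a unit in $A^\eps$: the standard resultant identity expresses $\Res\cdot z_i^{2d_1-1}$ as a combination $A_iP+B_iQ$ with coefficients in $A^\eps$, which gives a uniform lower bound. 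Hence $\Phi_f=\sum_k d_1^{-k}\gamma_f\circ f^k$ converges uniformly and is continuous on $\bP^{1,\an}_{A^\eps}$. The same construction gives $\Phi_g$.

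\textbf{Fibrewise identification.} Next I check what $\Phi_f$ is on each fibre. On a fibre $\pi^{-1}(\omega)$ with $\omega$ non-principal, $\Phi_f$ restricts to $\varphi_{f_\omega}$, computed with the norm of $\sH(\omega)$. On $\pi^{-1}(n)$, the norm is $|\cdot|_n^{\eps_n}$, so $\Phi_f$ restricts to $\eps_n\varphi_{f_n}$, where $\varphi_{f_n}$ is computed with $|\cdot|_n$. In both cases the restriction equals $\eps(\omega)\varphi_{f_\omega}$ in the original normalization. Since the measures do not depend on the scaling of the norm, this gives
\[
\eps(\omega)\langle f_\omega,g_\omega\rangle=-\int(\Phi_f-\Phi_g)\,d(\mu_{f_\omega}-\mu_{g_\omega}).
\]
One subtlety is the Archimedean case $|\cdot|^{\eps}$. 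There $\mu_{\can}$ and the Hsia kernel rescale compatibly, so the formula of Proposition~\ref{prop: computeE} simply scales by $\eps$. I need to confirm that the normalizing constant of the kernel does not spoil this; it does not, because the measures have total mass zero.

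\textbf{Conclusion and main obstacle.} The function $\Phi_f-\Phi_g$ is a fixed continuous function on $\bP^{1,\an}_{A^\eps}$. By Theorem~\ref{thm: cvrgmeasure}, each of the maps $\omega\mapsto\int(\Phi_f-\Phi_g)\,d\mu_{f_\omega}$ and $\omega\mapsto\int(\Phi_f-\Phi_g)\,d\mu_{g_\omega}$ is continuous, so their combination is continuous on $\beta\bN$. The main obstacle I expect is the first step: proving that $\gamma_f$ is continuous and uniformly bounded on the hybrid space. This amounts to the resultant estimate holding uniformly in $n$, which is exactly where invertibility of $\Res(P,Q)$ in $A^\eps$, rather than mere pointwise nonvanishing, is needed.
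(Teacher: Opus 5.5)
Your proposal is correct and follows essentially the same route as the paper. You build the global potentials $\Phi_f=\sum_k d_1^{-k}\gamma_f\circ f^k$ on $\bP^{1,\mathrm{an}}_{A^\eps}$, identify their restriction to $\pi^{-1}(\omega)$ with $\eps(\omega)$ times the fibrewise potential so that Proposition~\ref{prop: computeE} yields $\eps(\omega)\langle f_\omega,g_\omega\rangle=-\int(\Phi_f-\Phi_g)\,d(\mu_{f_\omega}-\mu_{g_\omega})$, and conclude with Theorem~\ref{thm: cvrgmeasure}; your resultant argument for the uniform boundedness and continuity of $\gamma_f$ supplies a step the paper only asserts.
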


\begin{proof}
Let \( f = [P:Q] \) and \( g = [U:V] \) be homogeneous representations of \( f \) and
\( g \), respectively. Define $
\gamma_f
\coloneqq
\frac{1}{d_1}\log
\frac{\max\{|P|,|Q|\}}{\max\{|z_0|^{d_1},|z_1|^{d_1}\}},$ and $\gamma_g
\coloneqq
\frac{1}{d_2}\log
\frac{\max\{|U|,|V|\}}{\max\{|z_0|^{d_2},|z_1|^{d_2}\}} .$
Associated to these functions, set
\[
\varphi_f
\coloneqq
\sum_{k=0}^{\infty} \frac{1}{d_1^{k}} \, \gamma_f \circ f^{k},
\qquad
\varphi_g
\coloneqq
\sum_{k=0}^{\infty} \frac{1}{d_2^{k}} \, \gamma_g \circ g^{k}.
\]
With these definitions, \( \varphi_f \) and \( \varphi_g \) are continuous functions on
\( \mathbb{P}^{1,\an}_{A^\epsilon} \), and they satisfy
\[
\mu_{f_\omega}
=
\delta_{x_g} + \frac{1}{\epsilon(\omega)} \,
\Delta \varphi_f \big|_{\pi^{-1}(\omega)},
\qquad
\mu_{g_\omega}
=
\delta_{x_g} + \frac{1}{\epsilon(\omega)} \,
\Delta \varphi_g \big|_{\pi^{-1}(\omega)} .
\]
Combining these identities with Proposition~\ref{prop: computeE}, we obtain
\[
\epsilon(\omega)\,\langle f_\omega, g_\omega \rangle
=
-
\int_{\pi^{-1}(\omega)}
(\varphi_f-\varphi_g)\, d(\mu_{f_\omega}-\mu_{g_\omega}) .
\]

On the other hand, by Theorem~\ref{thm: cvrgmeasure}, the families of measures
\( (\mu_{f_\omega}) \) and \( (\mu_{g_\omega}) \)
depend continuously on \( \omega \in \beta\mathbb{N} \).
Consequently, the quantity
\( \epsilon(\omega)\,\langle f_\omega, g_\omega \rangle \)
varies continuously with \( \omega \).\end{proof}

\section{Polynomial Trees}
Let $k$ be an algebraically closed, complete non-Archimedean field, possibly of positive characteristic. Given a polynomial $f$ of degree $d\ge2$, we will construct a subset $T_f \subseteq \bb{P}_k^{1,\an}$, inspired by DeMarco--McMullen's construction \cite{DM08}. 

When $f$ is a tame polynomial (e.g. when the residue characteristic of $k$ is $0$ or at least $d+1$), we will endow $T_f$ with a distinguished set of vertices so that $f$ acts on $T_f$ simplicially.


\subsection{Polynomial Dynamics}

\subsubsection{Image of disks}
\label{sec: imageofdisks}
We recall some basic facts concerning the images of disks under polynomial maps and refer the reader to \cite{Ben19} for a detailed discussion.

For $a \in k$ and $r > 0$, we define the open and closed disks in $\mathbb{P}_k^{1,\mathrm{an}}$ as $$D(a,r) := \{ x \in \mathbb{P}_k^{1,\mathrm{an}} \mid |z-a|_x < r \}
\quad \text{and} \quad
\bar{D}(a,r) := \{ x \in \mathbb{P}_k^{1,\mathrm{an}} \mid |z-a|_x \le r \}.
$$
If $r \in |k^\times|$, then $D(a,r)$ is said to be a rational open disk and $\bar{D}(a,r)$ is a rational closed disk.  
If $r \notin |k^\times|$, we have $D(a,r)=\bar{D}(a,r)$, and such a disk is called an irrational disk.

\smallskip

Let $D=D(a,r)$ or $\bar{D}(a,r)$ be a disk. We recall the notion  of \emph{Weierstrass degree} of $f$ on $D$:  
Write the expansion of $f$ around $a$ as
$f(z)=\sum_{i=0}^d a_i (z-a)^i .$
If $D$ is a rational closed disk (resp. a rational open or irrational disk), the Weierstrass degree of $f$ on $D$, denoted by $\mathrm{wdeg}_D f$, is the largest (resp. the smallest) integer
$0 \le \delta \le d$ such that
\[
|a_\delta| r^\delta = \max_{0 \le i \le d} \{ |a_i| r^i \}.
\]

\smallskip

By \cite[Corollary 3.18]{Ben19}, the image of a disk under $f$ is again a disk. More precisely, we have
\begin{align*}
    f\bigl(\bar{D}(a,r)\bigr)
=
\bar{D}\left(f(a), \max_{1 \le i \le d} \{ |a_i| r^i \} \right),\\
f\bigl(D(a,r)\bigr)
=
D\left(f(a), \max_{1 \le i \le d} \{ |a_i| r^i \} \right).
\end{align*}

In particular, $f$ maps a disk to a disk of the same type: it is open, rational closed, or irrational if and only if its image is of the same type.  
Moreover, by \cite[Theorem 3.15]{Ben19}, $f$ sends $D$ onto $f(D)$ with degree $\mathrm{wdeg}_D f$, counted with multiplicities.

\smallskip

To each disk \( D = D(a,r) \) or \( \bar{D}(a,r) \), one associates a type~II or type~III point
\( x = \zeta(a,r) \), according to whether \( r \in |k^\times| \) or not. By \cite[Proposition 7.6]{Ben19}, \( f \) sends this
type~II or type~III point to the point associated with the image disk, namely,
\begin{align}
\label{eqq: imagedisk}
    f(x)=\zeta\left(f(a),\max_{1\le i \le d}\{|a_i|r^i\}\right).
\end{align}
If \( D \) is open or irrational, then \( x \) is the unique boundary point of \( D \). 
If \( D \) is closed or irrational, the local degree \( \deg_x f \) coincides with
the Weierstrass degree of \( f \) on \( \bar{D}(a,r) \), see \cite[\S 7.4]{Ben19}.

\smallskip

Let $D=D(a,r)$ or $\bar{D}(a,r)$ be a disk. Then $f^{-1}(D)$ is a disjoint union of disks.  
More precisely, we have the following proposition: 
\begin{proposition}
\label{prop: preimage-disk}
    Let $x_i=\zeta(a_i,r_i)$ be the  preimages of the point $\zeta(a,r)$.

\begin{enumerate}
\item If $D=D(a,r)$ is open, let $D_{i,j}$ denote the connected components of $f^{-1}(D)$ with boundary point  $x_i=\zeta(a_i,r_i)$.  
Then, $D_{i,j}$ are disjoint disks and 
\begin{align*}
    f^{-1}(D)=\bigsqcup_{i,j} D_{i,j},\qquad \text{with } \sum_j \mathrm{wdeg}_{D_{i,j}} f = \deg_{x_i} f;
\end{align*}

    \item  If $D$ is a rational closed or an irrational disk  $\bar{D}(a,r)$, then
\[
f^{-1}(D)=\bigsqcup_i \bar{D}(a_i,r_i),
\qquad \text{with }  \mathrm{wdeg}_{\bar{D}(a_i,r_i)} f = \deg_{x_i} f .\]
\end{enumerate}
\end{proposition}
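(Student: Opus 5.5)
The plan is to reduce both statements to the local picture at the boundary point $\zeta(a,r)$, using the tree structure of $\mathbb{P}^{1,\an}_k$ together with the facts recalled in \S\ref{sec: imageofdisks}: the image of a disk is a disk of the same type, $f$ maps a disk onto its image with Weierstrass degree, and $\deg_x f=\mathrm{wdeg}_{\bar D}f$ for a closed or irrational disk $\bar D$ with boundary point $x$. Since $f$ is a polynomial, $f^{-1}(\infty)=\{\infty\}$, so $f^{-1}(D)$ is a compact or open subset of $\mathbb{A}^{1,\an}_k$.

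\textbf{Case (2).} Let $D=\bar D(a,r)$ be rational closed or irrational, and $x=\zeta(a,r)$. The complement $\mathbb{P}^{1,\an}_k\setminus D$ is the connected component of $\mathbb{P}^{1,\an}_k\setminus\{x\}$ containing $\infty$.
\begin{enumerate}
\item First I show that each $\bar D(a_i,r_i)$, where $x_i=\zeta(a_i,r_i)$ is a preimage of $x$, satisfies $f(\bar D(a_i,r_i))=D$. Indeed, $f(\bar D(a_i,r_i))$ is a closed (or irrational) disk whose associated point is $f(x_i)=x$ by \eqref{eqq: imagedisk}, so it equals $D$.
\item Next, the disks $\bar D(a_i,r_i)$ are pairwise disjoint. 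If two of them met, one would contain the other, say $x_j\in \bar D(a_i,r_i)$. Then $x_j$ lies in some direction at $x_i$ not pointing to $\infty$, and its image lies in a direction at $x$ inside $D$, different from $x$ itself. This contradicts $f(x_j)=x$, unless $x_j=x_i$.
\item Counting degrees: $f$ maps $\bar D(a_i,r_i)$ onto $D$ with degree $\mathrm{wdeg}=\deg_{x_i}f$. Summing over $i$ gives $d$ by \eqref{eqq: degree}. Since every point of $D$ has exactly $d$ preimages counted with multiplicity, all of them already lie in $\bigsqcup_i\bar D(a_i,r_i)$. This yields $f^{-1}(D)=\bigsqcup_i \bar D(a_i,r_i)$.
\end{enumerate}

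\textbf{Case (1).} Let $D=D(a,r)$ be open. Then $D$ is a component of $\mathbb{P}^{1,\an}_k\setminus\{x\}$, namely the one corresponding to a direction $\vec v\in T_x\mathbb{P}^{1,\an}_k$.
\begin{enumerate}
\item Since $f$ is open and continuous, each connected component of $f^{-1}(D)$ is a connected component of $\mathbb{P}^{1,\an}_k\setminus f^{-1}(x)$ mapping into $D$.
\item Such a component is $U(\vec w)$ for some direction $\vec w$ at some $x_i$ with $T_{x_i}f(\vec w)=\vec v$. The component on the $\infty$ side is excluded because it contains $\infty\notin f^{-1}(D)$. Hence the $D_{i,j}$ are open disks with boundary $x_i$, and they are pairwise disjoint as distinct components.
\item For the multiplicity, recall the standard fact that for each $\vec v$ at $f(x_i)$ one has $\sum_{T_{x_i}f(\vec w)=\vec v}\deg_{\vec w}f=\deg_{x_i}f$ (see \cite{Ben19}), together with the fact that the directional degree at $\vec w$ equals $\mathrm{wdeg}_{U(\vec w)}f$.
\end{enumerate}

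\textbf{Main obstacle.} The main delicacy is identifying the directional degree with the Weierstrass degree of the open disk. I would handle this by applying \cite[Theorem 3.15]{Ben19} directly: $f\colon D_{i,j}\to D$ is surjective of degree $\mathrm{wdeg}_{D_{i,j}}f$. Then I count the preimages of a type I point $y\in D$ close to $x$. Every preimage of $y$ lies in some $D_{i,j}$, and they total $d$ with multiplicity. Comparing the number lying in the directions at $x_i$ with the count from local degree theory, taking $y$ arbitrarily near $x$ so that its preimages are near $x_i$, gives $\sum_j\mathrm{wdeg}_{D_{i,j}}f=\deg_{x_i}f$.
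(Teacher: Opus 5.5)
Your proposal is correct and follows essentially the paper's route. For (2), you show $f(\bar D(a_i,r_i))=D$ via \eqref{eqq: imagedisk}, get disjointness from a tangent-direction version of the strict monotonicity in Lemma~\ref{lem: increasedisk}, and conclude by counting $\sum_i\deg_{x_i}f=d$; for (1), you identify the components of $f^{-1}(D)$ with the directions at the $x_i$ sent to $\vec v$ by $T_{x_i}f$, as the paper does using \cite[Proposition~9.41]{BR10}. Your version is in fact slightly more complete: you justify why every component is such a direction (note that this step silently uses the disjointness from (2)), and you address the identity $\sum_j\mathrm{wdeg}_{D_{i,j}}f=\deg_{x_i}f$, which the paper leaves implicit; but no limiting argument with $y\to x$ is needed, since for any type~I point $y\in D$, part (2) gives exactly $\deg_{x_i}f$ preimages of $y$ in $\bar D(a_i,r_i)$, and these are precisely its preimages in $\bigsqcup_j D_{i,j}$.
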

\begin{lemma}
\label{lem: increasedisk}
    If $D_1\subsetneq D_2$ are two rational closed disks or two irrational disks, then $f(D_1)\subsetneq f(D_2)$.
\end{lemma}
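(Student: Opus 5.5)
We prove Lemma~\ref{lem: increasedisk} directly from the explicit description of images of disks recalled above (\cite[Corollary 3.18]{Ben19}).

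First, the inclusion $f(D_1)\subseteq f(D_2)$ is immediate. The only work is to show the two image disks are distinct, which amounts to comparing their radii.

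Write $D_1=\bar D(a,r_1)$ and $D_2=\bar D(b,r_2)$, both closed disks (for irrational disks $D=\bar D$). Since $D_1\subsetneq D_2$ and non-Archimedean disks are centered at each of their points, we may take $b=a$ with $r_1<r_2$. Here we use that a closed disk determines its radius: the radius is the diameter, because the value group $|k^\times|$ is dense, $k$ being algebraically closed.

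Expand $f(z)=\sum_{i=0}^d a_i(z-a)^i$. By \cite[Corollary 3.18]{Ben19},
\[
f(D_j)=\bar D\bigl(f(a),\rho(r_j)\bigr),\qquad \rho(r):=\max_{1\le i\le d}|a_i|r^i .
\]
Both image disks are closed disks centered at the same point $f(a)$. It therefore suffices to show $\rho(r_1)<\rho(r_2)$.

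The function $\rho$ is a maximum of finitely many functions $|a_i|r^i$ with $i\ge1$. Each of these is either identically $0$ or strictly increasing in $r$. Not all vanish, since $\deg f=d\ge 2$ gives $a_d\neq0$. Hence $\rho$ is strictly increasing on $(0,\infty)$. This gives $\rho(r_1)<\rho(r_2)$, and so $f(D_1)\subsetneq f(D_2)$.

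One point needs care. The image of an irrational disk is irrational, and the image of a rational closed disk is rational closed, so the types match as stated before the lemma. Two closed disks (rational or irrational) with the same center and different radii are distinct as subsets of $\mathbb P^{1,\an}_k$, because they have different boundary points $\zeta(f(a),\rho(r_j))$. So there is no subtlety here, and I expect no real obstacle; the argument is a one-line monotonicity check.
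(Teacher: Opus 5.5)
Your proof is correct and is essentially the paper's argument: recenter both disks at a common point $a$, apply the image formula $f(\bar D(a,r))=\bar D\bigl(f(a),\max_{1\le i\le d}|a_i|r^i\bigr)$, and note that this radius is strictly increasing in $r$, so the two image disks, which share a center, are distinct. You also make explicit the strict monotonicity of $r\mapsto\max_{1\le i\le d}|a_i|r^i$ and why distinct radii give distinct disks; the paper compresses both points into an appeal to the density of $|k^\times|$.
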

\begin{proof}
Write $D_i=\bar{D}(a,r_i)$ with $0<r_1<r_2,$ and $f(z)=\sum_{i=0}^d a_i (z-a)^i .$ Since $|k|$ is dense in $\R_+$, \[f(D_1)=f(\bar{D}(a,r_1))=D(f(a),\max_{1\le i\le d}|a_i|r_1^i)\subsetneq D(f(a),\max_{1\le i\le d}|a_i|r_2^i)=f(D_2),\]
which completes the proof.
\end{proof}
\begin{proof}[Proof of Proposition~\ref{prop: preimage-disk}]
(1)
Let $\vec v$ denote the tangent direction at $x$ represented by the disk $D$.  
For each $x_i$, let $\vec v_{i,j}$ be the tangent directions at $x_i$ such that $
T_{x_i}f(\vec v_{i,j})=\vec v,$
and let $D_{i,j}$ be the disk associated with $\vec v_{i,j}$.  
By \cite[Proposition~9.41]{BR10}, we have $f(D_{i,j})=D.$
Since $\partial D_{i,j}\subset f^{-1}(\partial D)$, it follows that $D_{i,j}$ is a connected component of $f^{-1}(D)$.  
Therefore, $
f^{-1}(D)=\bigsqcup_{i,j} D_{i,j}.$

(2)
Since $f(x_i)=x$, we have
$f\bigl(\bar D(a_i,r_i)\bigr)=\bar D(a,r).$
Hence,  $f^{-1}(D)\supset \bigsqcup_i \bar D(a_i,r_i).$
By Lemma~\ref{lem: increasedisk}, since the disks $\bar D(a_i,r_i)$ have the same image $\bar D(a,r)$ under $f$, they are pairwise disjoint.

Note that
$\deg_{x_i} f=\mathrm{wdeg}_{\bar D(a_i,r_i)} f,$
and $\sum_i \deg_{x_i} f = d.$ Since any point admits $d$ preimages counted with multiplicity, we conclude
$f^{-1}(D)=\bigsqcup_i \bar D(a_i,r_i).$
\end{proof}

\subsubsection{Dynamical Green's function}
\label{sec: greenfct}
We refer the reader to \cite[\S 2.3]{FG22} for a detailed discussion of the dynamical Green's function.  
Here we briefly recall some basic notions.

\smallskip

Let \( f(z)=a_d z^d+\cdots+a_0 \in k[z] \) be a polynomial of degree \( d\ge 2 \) with \( a_d\neq 0 \). It induces a continuous self-map $f:\mathbb A_k^{1,\mathrm{an}}\longrightarrow \mathbb A_k^{1,\mathrm{an}}$
on the Berkovich analytification of the affine line.

We define the \emph{filled-in Julia set} of $f$ as  
\[\mathcal{K}_f = \left\{ z \in \mathbb{A}^{1,\mathrm{an}}_k \,\middle|\, |f^n(z)| \text{ remains bounded as } n \to \infty \right\}.
\]
Set
\begin{equation}
\label{eqq: goodbound}
C_f = \max_{0\le i< d} \Bigl\{ 1,\;
\Bigl( \tfrac{|a_i|}{|a_d|} \Bigr)^{\!\frac{1}{d-i}},\;
\Bigl( \tfrac{2}{|a_d|} \Bigr)^{\!\frac{1}{d-1}} \Bigr\}.
\end{equation}
For any \( |z| > C_f \), one has
\( |a_d|\,|z|^{d} > |a_i|\,|z|^{i} \) for all \( i < d \).
By the strong triangle inequality, this implies
$|f(z)| = |a_d z^{d}| > 2|z|.$
Iterating this estimate yields
\begin{align}
\label{eqq: greencompute}
   |f^{n}(z)|
= |a_d|^{1 + d + \cdots + d^{n-1}} |z|^{d^{n}}>2^n|z|
\longrightarrow \infty . 
\end{align}
It follows that the \emph{basin of attraction of infinity}, defined as  
\[
\Omega_f = \left\{ z \in \mathbb{A}^{1,\mathrm{an}}_k \,\middle|\, |f^n(z)| \to \infty \text{ as } n \to \infty \right\},
\]
is an open set whose complement is precisely \( \cal{K}_f\). Note that all periodic points in $k$ are contained in $\cal{K}_f$. Consequently, \(\cal{K}_f \) is a non-empty compact subset of \( \mathbb{A}^{1,\mathrm{an}}_k \) contained in $\{|z|\le C_f\}$. Both \( \cal{K}_f \) and \( \Omega_f \) are totally invariant under \( f \), meaning \(f^{-1}(\cal{K}_f) = f(\cal{K}_f) = \cal{K}_f \) and likewise for \( \Omega_f \).
Moreover, the Julia set $\mathcal{J}_f$ is the boundary of $\mathcal{K}_f$.

\smallskip

Observe that the function $\frac{1}{d}\log^{+}|f(z)| - \log^{+}|z|$ extends to a continuous function on \( \mathbb{P}^{1,\an}_k \).
Consequently, there exists a constant \( C \ge 0 \) such that $
\left| \frac{1}{d}\log^{+}|f(z)| - \log^{+}|z| \right|
\le C$ on $\mathbb{P}^{1,\an}_k.$
It follows that the sequence of functions
$\frac{1}{d^{n}} \log^{+} |f^{n}(z)|$
converges uniformly on \( \mathbb{A}^{1,\an}_k \) to a continuous function
\[G_f(z)\coloneqq \lim_{n\to\infty}\frac{1}{d^n}\log^+|f^n(z)|,\] called the \emph{Green function} of \( f \).

We summarize some basic properties of the Green function in the following proposition.

\begin{proposition}\label{prop: basicgreen}
Let $f(z)=a_d z^d+\cdots$ be a polynomial of degree $d\ge2$. Then:
\begin{enumerate}
\item $G_f \circ f = d\, G_f$;
\item if $|z|>C_f$, one has
$G_f(z)=\frac{\log |a_d|}{d-1}+\log |z|;$
\item $G_f(z)=0$ if and only if $z$ belongs to the filled-in Julia set $\cal{K}_f$;
\item for any affine map $L\in \mathrm{Aff}(k)$, $
G_f(L(z)) = G_{L^{-1}\circ f \circ L}(z).$
\end{enumerate}
\end{proposition}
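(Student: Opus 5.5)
Each of the four items follows from the definition $G_f=\lim_n d^{-n}\log^+|f^n|$, the uniform convergence recalled just before the statement, and the estimate \eqref{eqq: greencompute}. The plan is to take them in order.

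For (1), we write
\[
G_f(f(z))=\lim_{n\to\infty} d^{-n}\log^+|f^{n+1}(z)| = d\lim_{n\to\infty} d^{-(n+1)}\log^+|f^{n+1}(z)| = d\,G_f(z).
\]
This identity holds pointwise on $k$. It extends to all of $\mathbb{A}^{1,\an}_k$ because both sides are continuous; alternatively, we can run the same computation directly with the seminorm $|f^n(z)|_x$.

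For (2), suppose $|z|>C_f$. Then \eqref{eqq: greencompute} gives
\[
|f^n(z)|=|a_d|^{(d^n-1)/(d-1)}|z|^{d^n}>1,
\]
so $\log^+$ coincides with $\log$ here. Dividing by $d^n$ gives
\[
\frac{d^n-1}{d^n(d-1)}\log|a_d| + \log|z|,
\]
which tends to $\frac{\log|a_d|}{d-1}+\log|z|$ as $n\to\infty$. For a non-classical point $x$ with $|z|_x>C_f$, the same argument works: the dominant monomial still controls $|f(z)|_x$, since the seminorm is multiplicative and satisfies the strong triangle inequality with a strict maximum.

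For (3), first let $z\in\mathcal{K}_f$. Then $|f^n(z)|$ is bounded, so $d^{-n}\log^+|f^n(z)|\to0$. Conversely, let $z\in\Omega_f$. Then some iterate satisfies $|f^N(z)|>C_f$. By (2) together with $C_f\ge(2/|a_d|)^{1/(d-1)}$, we get
\[
G_f(f^N(z)) = \frac{\log|a_d|}{d-1}+\log|f^N(z)| > \frac{\log|a_d|}{d-1}+\log C_f\ge \frac{\log 2}{d-1}>0.
\]
Applying (1), $G_f(z)=d^{-N}G_f(f^N(z))>0$.

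For (4), set $g=L^{-1}\circ f\circ L$, so that $g^n=L^{-1}\circ f^n\circ L$. For an affine map $L(w)=\alpha w+\beta$ with $\alpha\neq0$, the quantity $\bigl|\log^+|L(w)|-\log^+|w|\bigr|$ is bounded by a constant depending only on $L$. Applying this with $w=g^n(z)$ gives
\[
\bigl|\log^+|f^n(L(z))|-\log^+|g^n(z)|\bigr| \le C_L.
\]
Dividing by $d^n$ and letting $n\to\infty$ yields $G_f(L(z))=G_g(z)$.

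The only step needing real care is making these pointwise computations valid on Berkovich points, not just on classical points of $k$. This is handled by continuity and uniform convergence, or by noting that every estimate used is purely seminorm-theoretic.
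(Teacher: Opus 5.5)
Your proof is correct and follows the paper's argument item by item. Items (1) and (2) are the same direct computations from the definition and \eqref{eqq: greencompute}, and (3) uses the same escape-then-pull-back argument via (1) and (2). There are two small deviations. In (3) you get positivity from the built-in bound $C_f\ge (2/|a_d|)^{1/(d-1)}$, which gives $G_f>\frac{\log 2}{d-1}$ beyond $C_f$; the paper instead imposes the extra threshold $\exp\bigl(-\frac{\log|a_d|}{d-1}\bigr)$. In (4) your uniform bound on $\bigl|\log^+|L(w)|-\log^+|w|\bigr|$ treats escaping and non-escaping points at once, whereas the paper splits into the $\mathcal{K}_f$ case (via (3)) and the escaping case (via the asymptotics of $\log|L^{-1}(z)|$).
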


\begin{proof}
(1) By definition of the Green function,
\[
G_f(f(z))
= \lim_{n\to\infty}\frac{1}{d^n}\log^+|f^{n+1}(z)|
= d \lim_{n\to\infty}\frac{1}{d^{n+1}}\log^+|f^{n+1}(z)|
= d\, G_f(z).
\]

(2) This follows directly from \eqref{eqq: greencompute}.

(3) If $z\in \cal{K}_f$, then the sequence $|f^n(z)|$ is bounded, and hence $
G_f(z)=0.$
Conversely, suppose $G_f(z)=0$ and $z\notin \cal{K}_f$. Then $|f^n(z)|\to\infty$, so for $n$ sufficiently large,
\[
|f^n(z)|>\max\!\left\{C_f,\exp\!\left(-\frac{\log|a_d|}{d-1}\right)\right\}.
\]
By (1) and (2), this implies
$d^n G_f(z)=G_f(f^n(z))>0,$
which leads to a contradiction.

(4) Let $L(z)=az+b$. Note that $z$ belongs to the filled-in Julia set of $L^{-1}\circ f\circ L$ if and only if $L(z)$ belongs to the filled-in Julia set of $f$. By (3), this yields
$G_{L^{-1}\circ f\circ L}(z)=0$ iff 
$G_f(L(z))=0.$
Now assume $z\notin \cal{K}_{L^{-1}\circ f\circ L}$, so that
$|L^{-1}\circ f^n \circ L(z)|\to\infty.$
Note that $\log |L^{-1}(z)| = \log|z| + \log|a^{-1}|$ for $|z|$ large enough. Therefore,
\begin{align*}
G_{L^{-1}\circ f\circ L}(z)
&= \lim_{n\to\infty}\frac{1}{d^n}\log|L^{-1}\circ f^n\circ L(z)| \\
&= \lim_{n\to\infty}\frac{1}{d^n}\log|f^n(L(z))|
= G_f(L(z)),
\end{align*}
which completes the proof.
\end{proof}
\subsubsection{Sublevel set of the Green function}
Now set
$M_f=\max_{|z|\le C_f+1}\{ G_f(z),\frac{\log|a_d|}{d-1}+\log(C_f+1)\}$.
Then for any \( t>M_f \), Proposition~\ref{prop: basicgreen}(2) implies
\begin{align}
\label{eqq: greenbig-disk}
   \{G_f<t \}
=
\Bigl\{\log|z|<t -\frac{\log|a_d|}{d-1}\Bigr\},
\end{align}
which is an open disk. Moreover, 
we obtain the following proposition.
\begin{proposition}
\label{prop: predisk-green}
   For any \( t>0 \), the sublevel set
$\{G_f<t\}$
is a finite disjoint union of open disks. Moreover, if \( x \) is a boundary point of a connected component of \( \{G_f<t\} \), then
$G_f(x)=t.$
\end{proposition}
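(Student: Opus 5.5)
The argument goes by downward induction on the level, using the functional equation $G_f\circ f = d\,G_f$ from Proposition~\ref{prop: basicgreen}(1) to transport the disk structure from large levels down to small ones.

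\emph{Large levels.} Fix $t>0$ and choose $n\ge 0$ with $s:=d^n t>M_f$. By \eqref{eqq: greenbig-disk}, the set $\{G_f<s\}$ is a single open disk $D$ in $\mathbb{A}^{1,\an}_k$. Since $G_f\circ f^n = d^n G_f$, we have
\[
\{G_f<t\} = \{G_f\circ f^n < s\} = (f^n)^{-1}(D).
\]

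\emph{Pulling back.} Apply Proposition~\ref{prop: preimage-disk}(1) to the polynomial $f^n$ and the open disk $D$. This shows that $(f^n)^{-1}(D)$ is a disjoint union of open disks $D_{i,j}$. Each $D_{i,j}$ is attached to a tangent direction at a preimage $x_i$ of the boundary point $\zeta$ of $D$ that maps onto the direction of $D$. There are at most $d^n$ points $x_i$, and at each one the relevant directions satisfy $\sum_j \mathrm{wdeg}_{D_{i,j}} f^n = \deg_{x_i} f^n$. Each $D_{i,j}$ is a connected component and is mapped onto $D$, so its Weierstrass degree is at least $1$. Hence the number of $j$ is bounded by the local degree, and the union is finite. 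One small point needs care: Proposition~\ref{prop: preimage-disk} is stated for $f$, but $f^n$ is itself a polynomial of degree $d^n$, so it applies verbatim.

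\emph{Boundary values.} The boundary of $D_{i,j}$ is the single point $x_i$, and $f^n(x_i)=\zeta$ is the boundary point of $D=\{G_f<s\}$. I expect this boundary statement to be the main, though mild, point of care. By \eqref{eqq: greenbig-disk}, $D=\{\log|z|<R\}$ with $R=s-\frac{\log|a_d|}{d-1}$, so $\zeta=\zeta(0,e^R)$. Since $e^R > C_f$, Proposition~\ref{prop: basicgreen}(2), extended by continuity to non-classical points, gives $G_f(\zeta)=\frac{\log|a_d|}{d-1}+R = s$. Alternatively, continuity of $G_f$ together with $G_f\ge s$ outside $D$ and $G_f\le s$ on $\bar D$ gives the same conclusion. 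Therefore
\[
G_f(x_i)=d^{-n}G_f(f^n(x_i))=d^{-n}s=t,
\]
which proves the second assertion.
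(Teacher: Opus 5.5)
Your proposal is correct and follows the paper's proof essentially step for step: choose $n$ with $d^n t>M_f$ so that $\{G_f<d^n t\}$ is a single open disk, pull it back by $f^n$ using Proposition~\ref{prop: preimage-disk}, and transfer the boundary value $G_f=d^n t$ down to level $t$ via $G_f\circ f^n=d^nG_f$. Your additional remarks, namely the explicit count showing the union is finite and the continuity argument for the boundary value, only fill in details that the paper leaves implicit.
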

\begin{proof}
   Choose an integer \( n \) sufficiently large so that \( t d^n > M_f \). By~\eqref{eqq: greenbig-disk}, we have
\[
\{G_f<t d^n\}
=
\Bigl\{\log|z|< t d^n - \frac{\log|a_d|}{d-1}\Bigr\}.
\]
This set is an open disk whose boundary point is
$
x_0=\zeta\!\left(0,\exp\Bigl(t d^n-\frac{\log|a_d|}{d-1}\Bigr)\right).$
By construction, $
|x_0|=\exp\!\Bigl(t d^n-\frac{\log|a_d|}{d-1}\Bigr)>C_f,$
and it follows from Proposition~\ref{prop: basicgreen}(2) that
$G_f(x_0)=t d^n.$

Since $\{G_f<t\}=f^{-n}\{G_f<t d^n\},$ by Proposition~\ref{prop: preimage-disk}, 
 \( \{G_f<t\} \) is a finite disjoint union of open disks, and $x$ belongs to \( f^{-n}(x_0) \). Thus, we have 
\[
G_f(x)=\frac{1}{d^n}G_f(f^n(x))
=\frac{1}{d^n}G_f(x_0)=t,
\]
which completes the proof.
\end{proof}

\subsection{Basic properties of the convex hull}
\subsubsection{Convex hull of the Julia set with infinity} Fix a polynomial \(f\) of degree \(d \ge 2\) over \(k\). Let \(T_f\) denote the set of points in \(\mathbb{A}^{1,\mathrm{an}}_k \setminus \mathcal{K}_f\) that belong to the convex hull of \(\mathcal{J}_f \cup \{\infty\}\).
As a connected subset of $\mathbb{P}_k^{1,\mathrm{an}}$, the set $T_f$ carries a natural $\mathbb{R}$-tree structure. 

The following proposition suggests that one can describe a point in $T_f$ using the Green function.

\begin{proposition}
\label{prop: juliaconvexhull}
 Let $x \in \bA_k^{1,\mathrm{an}}$. Then $x\in T_f$  if and only if $x$ is a boundary point of the sublevel set $\{G_f < t\}$ for some $t>0$.
\end{proposition}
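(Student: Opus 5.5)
We prove the two inclusions separately. Throughout we use Proposition~\ref{prop: predisk-green}: for every $t>0$ the set $\{G_f<t\}$ is a finite disjoint union of open disks, and every boundary point $x$ of one of these components satisfies $G_f(x)=t$. We also use that $G_f$ is continuous, that $\mathcal{K}_f=\{G_f=0\}$, and that $\mathcal{J}_f=\partial\mathcal{K}_f$ is nonempty.

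\emph{Boundary points lie in $T_f$.} Let $x$ be the boundary point of a component $D=D(a,r)$ of $\{G_f<t\}$. Then $G_f(x)=t>0$, so $x\notin\mathcal{K}_f$.

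First we show that $D$ meets $\mathcal{K}_f$. Choose $n$ with $td^n>M_f$. By the proof of Proposition~\ref{prop: predisk-green}, $f^n(D)$ is a component of $f^n(\{G_f<t\})$, and that set is the disk $\{G_f<td^n\}$, which contains $\mathcal{K}_f$. By Proposition~\ref{prop: preimage-disk}, $f^n$ maps $D$ onto this disk. Pick $z\in\mathcal{K}_f$; a preimage of $z$ in $D$ exists and lies in $\mathcal{K}_f$ by total invariance. Hence $D\cap\mathcal{K}_f\neq\emptyset$.

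Since $\overline D\cap\mathcal{K}_f$ is compact and $G_f\equiv t$ on $\partial D=\{x\}$, the set $D\cap\mathcal{K}_f$ is a nonempty compact subset of $\mathcal{K}_f$. Its boundary inside $\mathcal{K}_f$ gives a point $y\in\mathcal{J}_f\cap D$. More directly, take $y\in\partial\mathcal{K}_f$ lying in $D$: the set $\mathcal{K}_f\cap D$ is nonempty and proper in $D$, and $D$ is connected, so such a $y$ exists.

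The segment $[y,\infty]$ leaves the open disk $D$ exactly through its unique boundary point $x$. Therefore $x\in[y,\infty]$, which lies in the convex hull of $\mathcal{J}_f\cup\{\infty\}$. So $x\in T_f$.

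\emph{Points of $T_f$ are boundary points.} Let $x\in T_f$ and set $t=G_f(x)>0$, which is positive because $x\notin\mathcal{K}_f$. Since $x$ is in the convex hull, $x\in[y,\infty]$ for some $y\in\mathcal{J}_f$. (In an $\R$-tree the convex hull of a compact set together with $\infty$ is the union of the segments $[y,\infty]$.)

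Suppose $x$ is not a boundary point of $\{G_f<t\}$. Then we claim $G_f$ would have to be strictly increasing along $[y,\infty]$ near $x$, which should force $x$ onto such a boundary. The cleanest route is to prove the following.

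\emph{Key claim.} Along $[y,\infty]$, starting from $y\in\mathcal{J}_f$, the function $G_f$ is nondecreasing. Moreover, for each $s>0$ the segment crosses $\partial\{G_f<s\}$ at exactly one point.

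Granting the claim, let $x_s$ be the unique boundary point of the component of $\{G_f<s\}$ containing $y$. These points are nested along $[y,\infty]$. By Proposition~\ref{prop: predisk-green} each $x_s$ lies on $[y,\infty]$ with $G_f(x_s)=s$. The map $s\mapsto x_s$ is injective, and continuity of $G_f$ shows that the points $x_s$ fill out the open segment $(y,\infty)$. Indeed, every point $z$ of $(y,\infty)$ with $G_f(z)=s$ is $x_s$: the disk cut off at $z$ containing $y$ lies in $\{G_f<s\}$, since $G_f$ is subharmonic-like, its maximum on a closed disk being attained at the boundary point. This maximum principle for $G_f$ on disks follows from $G_f=\lim d^{-n}\log^+|f^n|$, and each $|f^n|$ attains its sup on a closed disk at its Gauss point. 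Hence $x=x_t$ is a boundary point of $\{G_f<t\}$.

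The main obstacle is this maximum principle together with strict monotonicity of $G_f$ on $(y,\infty)$, needed so that the disk below $z$ lies in $\{G_f<s\}$ rather than $\{G_f\le s\}$. To get strictness I would use Proposition~\ref{prop: predisk-green}: a component of $\{G_f<s\}$ containing $y$ is an open disk whose boundary point has value exactly $s$. Then a level set of $G_f$ cannot contain a nondegenerate subsegment of $(y,\infty)$, because the boundary points for distinct values $s<s'$ are distinct and nested.
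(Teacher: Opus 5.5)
Your first direction (a boundary point of $\{G_f<t\}$ lies in $T_f$) is correct and follows the paper's idea of locating a Julia point in the disk below $x$. The paper maps the disk forward by $f^n$ onto a disk containing $\mathcal{J}_f$ and uses backward invariance of $\mathcal{J}_f$ directly, which would spare you the detour through $\mathcal{K}_f$ and connectedness. Your maximum principle is also fine, and it shows that $G_f$ is nondecreasing along $[y,\infty)$.

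The genuine gap is the step you flag yourself, strict monotonicity, and your proposed fix does not work. Distinctness and nestedness of the points $x_s$ are compatible with a plateau. Suppose $G_f\equiv t$ on a nondegenerate subsegment $[x_t,x]$. Then $x_s\in[y,x_t]$ for $s\le t$ and $x_s\in(x,\infty)$ for $s>t$. So $s\mapsto x_s$ is injective and nested, yet it skips $x$, and your conclusion $x=x_t$ fails. Similarly, continuity of $G_f$ only shows that each value $s>0$ is attained on the segment. The assertion that every point of the segment is some $x_s$ is equivalent to injectivity of $G_f$ there, which is exactly what must be proved. Without it, the weak maximum principle only places the open disk below $x$ in $\{G_f\le t\}$, not in $\{G_f<t\}$.

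The missing ingredient is the paper's Lemma~\ref{lem: increasinggreen}(2), obtained by pushing forward into the region near $\infty$ where $G_f$ is explicit.

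Write $y=\zeta(c,r_0)$ and $x=\zeta(c,r_2)$, with $r_0=0$ if $y$ is of type I. By the maximum principle $c\in\mathcal{K}_f$, so $|f^n(c)|\le C_f$ for all $n$.

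Now take $w=\zeta(c,r_1)\in[y,x)$ with $r_0\le r_1<r_2$. You may assume $w\notin\mathcal{K}_f$, since otherwise $G_f(w)=0<t$. Then $f^n(w)=\zeta(f^n(c),\rho_{1,n})$ and $f^n(x)=\zeta(f^n(c),\rho_{2,n})$ with $\rho_{1,n}<\rho_{2,n}$. This holds because the radius of an image disk is strictly increasing in the radius (Lemma~\ref{lem: increasedisk}).

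As $w$ escapes, for $n$ large we have $\rho_{1,n}>C_f\ge|f^n(c)|$, so $|f^n(w)|=\rho_{1,n}$ and $|f^n(x)|=\rho_{2,n}$. Proposition~\ref{prop: basicgreen}(2) then gives $G_f(f^n(w))<G_f(f^n(x))$, and dividing by $d^n$ yields $G_f(w)<G_f(x)$.

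With this strictness, the open disk below $x$ containing $y$ lies in $\{G_f<t\}$. Since $G_f(x)=t$, it is the whole component containing $y$, and the rest of your argument goes through.
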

\begin{lemma}
\label{lem: increasinggreen}
Let $x_0=\zeta(z_0,r_0)$ be a point in the Julia set of $f$.  Then the following statements hold:
\begin{enumerate}
    \item $\bar D(z_0,r_0)$ is contained in the filled-in Julia set $\cal{K}_f$;
    
    \item the Green function \( G_f \) induces a strictly increasing, continuous, and surjective map
    \[
    [r_0,\infty)\longrightarrow[0,\infty),
    \qquad 
    r\longmapsto G_f(\zeta(z_0,r));
    \]
    
    \item for every \( r>r_0 \),  $D(z_0,r)$ is a connected component of 
    $\{G_f< G_f(\zeta(z_0,r))\}.$
\end{enumerate}
\end{lemma}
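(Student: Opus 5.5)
The plan is to prove the three parts in order, using the local description of $f$ near Julia points together with Proposition~\ref{prop: basicgreen} and Proposition~\ref{prop: predisk-green}.

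\emph{Part (1).} Since $x_0\in\calJ_f=\partial\cal{K}_f\subseteq\cal{K}_f$ and $\cal{K}_f$ is closed, we have $G_f(x_0)=0$. The closed disk $\bar D(z_0,r_0)$ is the set of points $y$ with $y\le x_0$ in the tree order rooted at $\infty$, that is, the points for which $x_0\in[y,\infty]$. For any polynomial $P$ and any such $y$ we have $|P(y)|\le |P(x_0)|$, because $|P|_{x_0}=\sup_{\bar D(z_0,r_0)}|P|$. Taking $P=f^n$ gives $|f^n(y)|\le|f^n(x_0)|$ for all $n$, so $\log^+|f^n(y)|\le\log^+|f^n(x_0)|$. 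Dividing by $d^n$ and letting $n\to\infty$ yields $G_f(y)\le G_f(x_0)=0$. Since $G_f\ge0$, this forces $G_f(y)=0$, and Proposition~\ref{prop: basicgreen}(3) then gives $y\in\cal{K}_f$.

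\emph{Part (2).}
\begin{itemize}
\item \textbf{Continuity.} The map $r\mapsto\zeta(z_0,r)$ is continuous into $\bb{P}^{1,\an}_k$, and $G_f$ is continuous, so the composition is continuous.
\item \textbf{Endpoint values.} At $r=r_0$ the value is $0$. For $r>C_f$ (with $r\ge|z_0|$), Proposition~\ref{prop: basicgreen}(2) gives $G_f=\frac{\log|a_d|}{d-1}+\log r$, which tends to $\infty$. Surjectivity then follows from the intermediate value theorem.
\item \textbf{Monotonicity.} Each $\log^+|f^n(\zeta(z_0,r))|$ is nondecreasing in $r$, so $G_f$ is nondecreasing along the ray.
\item \textbf{Strictness (main obstacle).} Suppose $G_f(\zeta(z_0,r_1))=G_f(\zeta(z_0,r_2))=t$ with $r_0\le r_1<r_2$. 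By monotonicity $G_f$ is constant equal to $t$ on $[r_1,r_2]$. Iterating, $G_f(f^n(\zeta(z_0,r)))=d^nt$ for $r\in[r_1,r_2]$. By Lemma~\ref{lem: increasedisk} and formula~\eqref{eqq: imagedisk}, $f^n$ maps the segment $[\zeta(z_0,r_1),\zeta(z_0,r_2)]$ onto a nondegenerate segment of the ray through $f^n(z_0)$, with radii strictly increasing. If $t>0$, choose $n$ with $d^nt>M_f$. The image segment then lies in $\{G_f>M_f\}$, where $G_f=\mathrm{const}+\log|z|$ is strictly increasing in the radius, which is a contradiction. If $t=0$, then $r_1=r_0$, and the whole closed disk $\bar D(z_0,r_2)$ lies in $\cal{K}_f$ by the argument of part~(1). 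This puts $x_0$ in the interior of $\cal{K}_f$, since $D(z_0,r_2)$ is an open neighbourhood of $x_0$. That contradicts $x_0\in\partial\cal{K}_f=\calJ_f$.
\end{itemize}

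\emph{Part (3).} Fix $r>r_0$ and set $t=G_f(\zeta(z_0,r))>0$.
\begin{itemize}
\item By part~(2), every point $\zeta(z_0,s)$ with $r_0\le s<r$ has $G_f<t$.
\item Every $y\in D(z_0,r)$ satisfies $y\le\zeta(z_0,s)$ for some $s<r$, with $s\ge r_0$. By the monotonicity argument of part~(1), $G_f(y)\le G_f(\zeta(z_0,s))<t$. Hence $D(z_0,r)\subseteq\{G_f<t\}$.
\item The boundary point $\zeta(z_0,r)$ has $G_f=t$, so it is not in the sublevel set. Since $D(z_0,r)$ is connected with boundary $\{\zeta(z_0,r)\}$, it is a connected component of $\{G_f<t\}$. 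This is consistent with Proposition~\ref{prop: predisk-green}.
\end{itemize}
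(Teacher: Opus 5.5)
Your proof is correct and follows essentially the same route as the paper: sup-norm domination gives monotonicity and part (1), strictness comes from pushing forward by $f^n$ into the region where $G_f=\frac{\log|a_d|}{d-1}+\log|z|$ together with the contradiction that an escaping point of the ray would put $x_0$ in the interior of $\mathcal{K}_f$, and part (3) uses strict monotonicity plus the fact that $D(z_0,r)$ has the single boundary point $\zeta(z_0,r)$. Your aside ``then $r_1=r_0$'' in the $t=0$ case is unnecessary but harmless, and in part (3) you treat points with $|z-z_0|_y<r_0$ more carefully than the paper does by taking $s\ge r_0$.
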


\begin{proof}
We first prove that \( G_f(\zeta(z_0,r)) \) is non-decreasing in \( r \).
Let \( x_i=\zeta(z_0,r_i) \) with \( r_1<r_2 \). Then for every integer \( n\ge1 \), one has
\[
|f^n(x_1)|
=\sup_{z\in \bar{D}(z_0,r_1)} |f^n(z)|
\le \sup_{z\in \bar{D}(z_0,r_2)} |f^n(z)|
=|f^n(x_2)|.
\]
Thus \( G_f(x_1)\le G_f(x_2) \). 
Since \( x_0 \in \mathcal{J}_f \), it follows that \( x_0 \in \mathcal{K}_f \). By Proposition~\ref{prop: basicgreen}(3), we have \( G_f(x_0) = 0 \). Consequently, for all \( z \in \bar{D}(z_0, r_0) \), it holds that \( G_f(z)\le G_f(\zeta(z_0,r_0))=G_f(x_0) = 0 \), and hence
$\bar{D}(z_0, r_0) \subset \mathcal{K}_f.$

To prove the second statement, we claim that $(x_0,\infty)\cap \mathcal K_f=\varnothing.$
Indeed, if there exists \( r_1>r_0 \) such that \( \zeta(z_0,r_1)\in \mathcal K_f \),
then the above argument implies \( D(z_0,r_1)\subset \mathcal K_f \).
In particular, \( x_0\in D(z_0,r_1) \) would belong to the interior of
\( \mathcal K_f \), contradicting the assumption that
\( x_0\in \mathcal J_f \).

Now let \( x_i=\zeta(z_0,r_i) \) with \( r_0<r_1<r_2<\infty \).
For any integer \( n\ge1 \), by~\eqref{eqq: imagedisk} we have
$f^n(x_i)=\zeta(f^n(z_0),r_{i,n})$ with $r_{1,n}<r_{2,n}.$
Since \( z_0 \) has bounded forward orbit while \( x_1 \) and $x_2$ escape to infinity,
there exists \( n \) sufficiently large such that $
|f^n(z_0)|\le C_f<r_{1,n}<r_{2,n},$ where $C_f$ is introduced in~\eqref{eqq: goodbound}. 
By~Proposition \ref{prop: basicgreen}(2), we obtain
\[
G_f(f^n(x_i))=\frac{\log|a_d|}{d-1}+\log r_{i,n}.
\]
Therefore, $
G_f(x_1)=\frac{1}{d^n}G_f(f^n(x_1))
<\frac{1}{d^n}G_f(f^n(x_2))=G_f(x_2),$
which proves strict monotonicity. Continuity follows from the continuity of \( G_f \),
and surjectivity follows from the fact that $\lim_{z\to\infty} G_f(z)=+\infty .$

Finally, we prove the last statement. Since $\partial D(z_0,r)=\zeta(z_0,r)$, it suffices to prove $D(z_0,r)\subset \{G_f<G_f(\zeta(z_0,r))\}.$ Let \( p\in D(z_0,r) \), by strict monotonicity, one has $
G_f(\zeta(z_0,|p-z_0|))<G_f(\zeta(z_0,r)).$
Since $p \in D(z_0,|p-z_0|)$, we have $
G_f(p)\le G_f\bigl(\zeta(z_0,|p-z_0|)\bigr)<G_f\bigl(\zeta(z_0,r)\bigr),$
which shows that $
p\in\{ G_f< G_f(\zeta(z_0,r))\}.$
\end{proof}
\begin{proof}[Proof of Proposition~\ref{prop: juliaconvexhull}]
     Let \( x=\zeta(a,r)\in \bA_k^{1,\an} \) be a point belonging to the convex hull.  
Then there exists \( z_0\in \calJ_f \) such that \( x \) lies on the segment
\(
I=[z_0,\infty).
\)
Write \( x=\zeta(z_0,r) \). By Lemma~\ref{lem: increasinggreen}(3), 
$D(z_0,r)$ is a connected component of $\{G_f<G_f(x)\}.$ Hence $x\in\partial\{G_f<G_f(x)\}.$

 To show the converse, it suffices to prove that the closed disk \( \bar{D}(a,r) \) contains a point in the Julia set.  

Choose an integer \( n\ge 1 \) such that $G_f(f^n(x))=d^n G_f(x)>M_f,$
where \( M_f \) is as in~\eqref{eqq: greenbig-disk}. Then the sublevel set 
$\{G_f< d^n G_f(x)\}=f^n(\{G_f<G_f(x)\})$
is a disk whose unique boundary point is \( f^n(x) \), which is defined by the disk $f^n(\bar{D}(a,r)).$ Thus, we obtain 
$\{G_f< d^n G_f(x)\}\subset f^n(\bar D(a,r)).$

Since every point in the Julia set satisfies \( G_f=0 \), we have
$\calJ_f\subset f^n(\bar D(a,r)).$
As the Julia set is backward invariant, it follows that there exists a point of \( \calJ_f \) contained in \( \bar{D}(a,r) \).  
Therefore, \( x \) belongs to the convex hull.
\end{proof}

\subsubsection{Restrict polynomial map to the convex hull}
Since $G_f\circ f=dG_f$, one has $f^{-1}(\{G_f<t\})=\{G_f<\frac{t}{d}\}$ and $f(\{G_f<t\})=\{G_f<td\}$. It then follows from Proposition~\ref{prop: juliaconvexhull} that $T_f$ is totally invariant under $f$:  $f^{-1}(T_f)=f(T_f)=T_f$.

Moreover, one can endow
\( T_f \)  with additional geometric data.
For any  \( x=\zeta(a,r)\in T_f \),  one can associate to it the closed disk $D_x\coloneqq\bar{D}(a,r).$ We define the \emph{mass} of $x$ by
\[
\mu_f(x):=\mu_f(D_x),
\]
where $\mu_f$ is the equilibrium measure of $f$.

Given a segment \( e=[x_1,x_2]\subset T_f \) with \( D_{x_1}\subset D_{x_2} \), let \( r_i \) be the radius of \( D_{x_i} \) for \( i=1,2 \).
We define the \emph{length} of \( e \) by
\[
l(e)=\log\frac{r_2}{r_1}.
\]

The following proposition summarizes how the mass and the length behave under \( f \).

\begin{proposition}
\label{prop: mass-length}
Let $x\in T_f$. The following statements hold:
\begin{enumerate}
\item One has $f(D_x)=D_{f(x)}$. 
\item For all sufficiently large $m\in\mathbb N$, one has $\mu_{f}(f^m(x))=1.$
    \item For every vertex \( x\in T_f \), one has
    \[
    \mu_f(f(x))=\frac{d\mu_f(x)}{\deg_xf}.
    \]
     \item Let $e = [x_1, x_2]$ such that $D_{x_1} \subset D_{x_2}$. If the interval $(x_1, x_2)$ contains no branch points of $T_f$, then:
    \[
    l\bigl(f(e)\bigr)=\deg_{x_1} f \cdot l(e).
    \]
    In particular,
    $l(e)\le l\bigl(f(e)\bigr)\le d\cdot l(e).$
\end{enumerate}
\end{proposition}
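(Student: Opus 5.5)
Each item reduces to the disk-image formula \eqref{eqq: imagedisk}, Proposition~\ref{prop: preimage-disk}, and the invariance $f^*\mu_f=d\mu_f$, $f_*\mu_f=\mu_f$ of Theorem~\ref{thm: equidistribution}.

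For (1), write $x=\zeta(a,r)$. By \cite[Corollary 3.18]{Ben19}, $f(\bar D(a,r))$ is the closed disk $\bar D(f(a),\max_i|a_i|r^i)$. By \eqref{eqq: imagedisk}, the point $f(x)$ is $\zeta(f(a),\max_i|a_i|r^i)$, so this disk is exactly $D_{f(x)}$.

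For (2), I use Proposition~\ref{prop: juliaconvexhull} to get $G_f(x)>0$. Choose $m$ with $G_f(f^m(x))=d^mG_f(x)>M_f$. Then, as in the proof of Proposition~\ref{prop: juliaconvexhull}, $f^m(x)$ is the boundary point of the disk $\{G_f<d^mG_f(x)\}$, which contains $\mathcal K_f\supset\calJ_f=\supp\mu_f$. Since $D_{f^m(x)}$ contains this open disk, $\mu_f(D_{f^m(x)})=1$.

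For (3), the plan is to compute $\mu_f(D_{f(x)})$ through the pullback. By Theorem~\ref{thm: equidistribution}, $d\,\mu_f(D_{f(x)})=f^*\mu_f(D_{f(x)})=\mu_f(f^{-1}(D_{f(x)}))$. I expect identifying $f^*\mu_f(E)$ with $\mu_f(f^{-1}E)$ for a closed disk $E$ to need a short justification. It follows from $f_*\mu_f=\mu_f$ applied to the indicator of $E$, so I would use $\mu_f(f^{-1}E)=\mu_f(E)$ and $f^*\mu_f=d\mu_f$ together as needed. By Proposition~\ref{prop: preimage-disk}(2), $f^{-1}(D_{f(x)})=\bigsqcup_i \bar D(a_i,r_i)$, where the $x_i$ are the preimages of $f(x)$, one of which is $x$.

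The key step, and the main obstacle, is to show $\mu_f(D_{x_i})=\frac{\deg_{x_i}f}{d}\mu_f(D_{f(x)})$ for each $i$. I would prove this via the local pullback. For $\varphi$ supported in $D_{f(x)}$, the function $f_*(\varphi\mathbf 1_{D_{x_i}})$ equals $\mathrm{wdeg}_{D_{x_i}}f\cdot\varphi=\deg_{x_i}f\cdot\varphi$ on $D_{f(x)}$, because $f\colon D_{x_i}\to D_{f(x)}$ has constant degree $\deg_{x_i}f$ counted with multiplicity. Hence $f^*\mu_f$ restricted to $D_{x_i}$ has mass $\deg_{x_i}f\cdot\mu_f(D_{f(x)})$. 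Combined with $f^*\mu_f=d\mu_f$, this gives $d\,\mu_f(D_x)=\deg_xf\,\mu_f(D_{f(x)})$, which is (3).

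For (4), write $e=[x_1,x_2]$ with $x_j=\zeta(a,r_j)$, where $a\in D_{x_1}$, and expand $f$ around $a$. Because $(x_1,x_2)$ contains no branch points of $T_f$, the Weierstrass degree $\delta$ of $f$ on $\bar D(a,r)$ is constant for $r\in[r_1,r_2)$. Indeed, a change in the dominant index would produce an extra preimage branch, hence a branch point of $f^{-1}(T_f)=T_f$. Granting this, $\max_i|a_i|r^i=|a_\delta|r^\delta$ on this range, so by \eqref{eqq: imagedisk} $l(f(e))=\log\frac{|a_\delta|r_2^\delta}{|a_\delta|r_1^\delta}=\delta\,l(e)$. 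Finally, $\delta=\deg_{x_1}f\in[1,d]$, which gives the stated inequalities.
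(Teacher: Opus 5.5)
Your treatment of (1) and (2) is the paper's. For (3) you use the same mechanism as the paper: push test functions forward through $f\colon D_x\to D_{f(x)}$, which has constant degree $\deg_x f$. You use indicator functions where the paper uses a compact set $K_\epsilon$ and a continuous cutoff $\varphi_\epsilon$.

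For (4) you read $l(f(e))$ off \eqref{eqq: imagedisk} once the dominant index (of $f-f(a)$, i.e.\ over $i\ge 1$) is constant on $[r_1,r_2)$. The paper instead notes that $x_1$ is the only preimage of $f(x_1)$ in $D(z_0,r_2)$ and quotes Benedetto's theorem on images of annuli. The underlying idea (no branch point forces constant local degree) is the same, and your version is more self-contained. To make your one-line justification precise, take a radius $r^*\in(r_1,r_2)$ where the index jumps. Proposition~\ref{prop: preimage-disk}(1) then gives a direction at $\zeta(a,r^*)$, other than the one toward $a$, mapping onto the direction containing $f(x_1)$. That direction contains Julia points, so $\zeta(a,r^*)$ is a branch point of $T_f$.

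Your first paragraph for (3) contains a false statement that must be deleted. The second equality in $d\,\mu_f(D_{f(x)})=f^*\mu_f(D_{f(x)})=\mu_f(f^{-1}(D_{f(x)}))$ is wrong. What holds is $\mu_f(f^{-1}E)=(f_*\mu_f)(E)=\mu_f(E)$, whereas $f^*\mu_f(E)=\int f_*\mathbf 1_E\,d\mu_f$ counts preimages with multiplicity; taking $E=\bP^{1,\an}_k$, your chain would give $d=1$. So you cannot ``use $\mu_f(f^{-1}E)=\mu_f(E)$ and $f^*\mu_f=d\mu_f$ together'' in that way.

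Your key step does not need that line. The pointwise identity $f_*\mathbf 1_{D_x}=\deg_xf\cdot\mathbf 1_{D_{f(x)}}$, together with $f^*\mu_f=d\mu_f$, already gives $d\,\mu_f(D_x)=\deg_xf\,\mu_f(D_{f(x)})$. (Your test function should be $(\varphi\circ f)\mathbf 1_{D_{x_i}}$, not $\varphi\mathbf 1_{D_{x_i}}$.)

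What you should add is why the duality $\int\psi\,df^*\mu_f=\int f_*\psi\,d\mu_f$, which the paper only defines for continuous $\psi$, applies to $\psi=\mathbf 1_{D_x}$. Write $D_x=\bar D(a,r)$ and set $\psi_n(y)=\max\{0,1-n\max\{|z-a|_y-r,0\}\}$. These continuous functions decrease pointwise to $\mathbf 1_{D_x}$, so the $f_*\psi_n$ decrease pointwise to $f_*\mathbf 1_{D_x}$ and are bounded by $d$. Dominated convergence then passes the identity to the limit, and this replaces the paper's $\epsilon$-approximation.
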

\begin{proof}
(1) The first statement follows from \S\ref{sec: imageofdisks}.

(2) Note that for a sufficiently large $m \in \mathbb{N}$, equation \eqref{eqq: greenbig-disk} implies that the set $\{G_f < G_f(f^m(x))\}$ is the disk  $D_{f^m(x)}$. Since the measure $\mu_f$ is supported on the Julia set, and Proposition \ref{prop: basicgreen}(3) establishes that the filled-in Julia set is the set $\{G_f = 0\}$, it follows that the support of $\mu_f$ is entirely contained within $D_{f^m(x)}$. Consequently, we have $\mu_f(D_{f^m(x)}) = 1$.

(3) We  pick \( \epsilon>0 \), and
choose a compact subset \( K_\epsilon\subset D_{f(x)} \) satisfying $
\mu_f\left(D_{f(x)}\setminus K_\epsilon\right)<\epsilon.$
Let $K \coloneqq f^{-1}(K_\epsilon)\cap  D_x.$ Since \( f_*\mu_f=\mu_f \), we obtain
\[
\mu_f(D_x\setminus K)
\le \mu_f\left(f^{-1}(D_{f(x)}\setminus K_\epsilon)\right)
= \mu_f\left(D_{f(x)}\setminus K_\epsilon\right)
<\epsilon .
\]

Let \( 0\le \varphi_\epsilon \le 1 \) be a continuous function supported in
\( D_x \) such that \( \varphi_\epsilon \equiv 1 \) on \( K \).
Then \( f_*\varphi_\epsilon=\deg_x f \) on \( K_\epsilon \).
Hence
\begin{align*}
\mu_f(D_x)-\epsilon
&\le \int \varphi_\epsilon\, d\mu_f
= \frac{1}{d}\int f_*\varphi_\epsilon\, d\mu_f
\le \frac{\deg_x f}{d}\mu_f(K_\epsilon)+\epsilon,\\
\mu_f(D_x)
&\ge \int \varphi_\epsilon\, d\mu_f
= \frac{1}{d}\int f_*\varphi_\epsilon\, d\mu_f
\ge \frac{\deg_x f}{d}\mu_f(K_\epsilon).
\end{align*}
Letting \( \epsilon\to 0 \), we obtain the desired result.

(4) 
By construction, there exists a point $x_0=\zeta(z_0,r_0)$ in the Julia set such that $
x_1\in [x_0,x_2].$
Write $x_i=\zeta(z_0,r_i)$ with $r_0<r_1<r_2$.

Since $(x_1,x_2)$ does not contain any branch point of $T_f$, it follows that $x_1$ is the unique preimage of $f(x_1)$ in $D(z_0,r_2)$. Hence $\deg_{x_1}f$ equals the Weierstrass degree of $f$ on $D(z_0,r_2)$.
It then follows from \cite[Theorem~3.33]{Ben19} that $f$ maps the annulus
$D(z_0,r_2)\setminus \bar D(z_0,r_1)$
onto $f(D(z_0,r_2))\setminus f(\bar D(z_0,r_1)),$
and that $
l\bigl(f(e)\bigr) = \deg_{x_1} f \cdot l(e).$
In particular, since $1 \le \deg_{x_1} f \le d$, we obtain $
l(e) \le l\bigl(f(e)\bigr) \le d \cdot l(e).$
\end{proof}
\subsection{Simplicial tree for tame polynomials}
Recall that a simplicial tree $T$ is a nonempty, connected, locally finite,
$1$-dimensional simplicial complex without cycles.
We denote by $V$ the set of vertices and by $E$ the set of (unoriented, closed) edges
(see e.g.,~\cite[\S 2]{DM08}).

\smallskip

In this section, we restrict our attention to tame polynomials. We endow the $\mathbb{R}$-tree $T_f$ with a simplicial tree structure such that the map $f \colon T_f \to T_f$ is simplicial; that is, $f$ maps each edge $e = [v_1, v_2]$ onto the edge $[f(v_1), f(v_2)]$. 
\begin{remark}
  The tameness assumption is imposed to guarantee that the grand orbit of the branch points of $T_f$
 is discrete, which is necessary for defining the edges. It is not clear whether this property holds for arbitrary polynomials.
\end{remark}

\subsubsection{Tame polynomials}
The notion of a tame polynomial was introduced by Trucco~\cite{Tru14} for polynomials for which wild ramification does not occur.

Recall that the \emph{ramification locus} of a polynomial $f$ of degree $d\ge 2$ is defined by
$\mathcal{R}_f \coloneqq \{\, x \in \bA_k^{1,\mathrm{an}} \mid \deg_x f \ge 2\}.$
Observe that the set of finite critical points $
\mathrm{Crit}(f) \coloneqq \{\, x \in k \mid f'(x)=0 \}$
is contained in $\mathcal{R}_f$.
For a thorough study of the ramification locus, we refer the reader to
\cite{Fab13a,Fab13b}.

\smallskip

Let $\bH_k=\bP_k^{1,an}\setminus\bP^{1}(k)$ be the hyperbolic space over $k$. If $\mathcal{R}_f \cap \bH_k$ is a locally finite open subtree, then we say that $f$ is a
\emph{tame polynomial}. Let $p$ denote the residue characteristic of $\tilde{k}$.
By~\cite[Proposition~7.11]{Fab13a}, the polynomial $f$ is tame if and only if either $p=0$ or
$p>0$ and $
p \nmid \mathrm{wdeg}_D f$ for every disk  $D \subseteq \bA_k^{1,\mathrm{an}}.$
In particular, if $p=0$ or $p \ge d+1$, then $f$ is tame.

\smallskip

Moreover, the following proposition holds; see~\cite[\S\,2.6]{Tru14} and~\cite[\S\,2.2]{KN22}.

\begin{proposition}
\label{prop:tamecharacterization}
Let $f$ be a tame polynomial. Then for every disk $D$, the number of critical points in $D$
(counted with multiplicity) is equal to
$\mathrm{wdeg}_D f - 1.$
In particular, $\mathcal{R}_f \cup \{\infty\}$ is the convex hull of
$\mathrm{Crit}(f) \cup \{\infty\}$.
\end{proposition}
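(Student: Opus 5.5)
The proposition has two parts. I will prove the counting statement first and then deduce the convex hull description from it. The argument works disk by disk, using the tameness characterization \cite[Proposition~7.11]{Fab13a}: $p\nmid \mathrm{wdeg}_D f$ for every disk $D$, or $p=0$.

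\emph{Counting critical points.} Fix a disk $D$ with center $a$ and write $f(z)=\sum_i a_i(z-a)^i$. Then $f'(z)=\sum_{i\ge1} i a_i (z-a)^{i-1}$. Let $\delta=\mathrm{wdeg}_D f$, and first assume $D$ is a rational closed disk $\bar D(a,r)$. For $i\ge 1$ we have $|i a_i| r^{i-1}\le |a_i|r^{i-1}$. If $\delta\ge1$, the index $\delta$ maximizes $|a_i|r^i$, and tameness gives $|\delta|=1$, so the term $\delta a_\delta$ attains the maximum of $|ia_i|r^{i-1}$. Any larger index $i>\delta$ has $|ia_i|r^{i-1}\le|a_i|r^{i-1}<|a_\delta|r^{\delta-1}$. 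Hence the Weierstrass degree of $f'$ on $D$ is exactly $\delta-1$. By the Weierstrass preparation theorem (\cite[Theorem~3.15]{Ben19} applied to $f'$), $f'$ has exactly $\delta-1$ zeros in $D$ counted with multiplicity. If $\delta=0$, then $f'$ has Weierstrass degree $0$ on $D$, since every coefficient of $f'$ is dominated strictly, and the count is $0=\delta-1+1$. This degenerate case needs the convention that a disk with $\delta=0$ is not a preimage-type disk. I will state the result for disks with $\delta\ge1$, namely those meeting $f^{-1}(f(a))$ suitably, which is the intended reading.

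\emph{Other disk types.} For open and irrational disks the same computation works with the smallest maximizing index. It is cleaner to approximate instead: write $D(a,r)=\bigcup_{s<r}\bar D(a,s)$ with rational $s$. Weierstrass degrees stabilize as $s\uparrow r$, as does the zero count of $f'$, so the identity passes to the limit.

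\emph{Convex hull.} I now show that $\mathcal R_f\cup\{\infty\}$ is the convex hull of $\mathrm{Crit}(f)\cup\{\infty\}$. Take $x=\zeta(a,r)\in\mathbb H_k$. Using \cite[\S7.4]{Ben19}, $\deg_x f=\mathrm{wdeg}_{\bar D(a,r)}f$. By the first part, $\deg_x f\ge2$ if and only if $\bar D(a,r)$ contains a critical point. Containing a critical point $c$ is equivalent to $x\in[c,\infty]$. So $\mathcal R_f\cap\mathbb H_k=\bigcup_{c}(c,\infty)$, and adding the type I points, which are critical points exactly when $\deg_c f\ge 2$, gives the full convex hull.

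The main obstacle is the coefficient comparison in the first step. One must check that the maximizing index for $f'$ on $D$ is exactly $\delta-1$. This is precisely where $|\delta|=1$ is used, and it fails wildly when $p\mid\delta$.
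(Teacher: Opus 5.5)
Your core argument is the paper's argument written out. The paper cites \cite[Proposition~3.9]{Ben19} for $\mathrm{wdeg}_D f'=\mathrm{wdeg}_D f-1$, which is your coefficient comparison using $|\delta|=1$, and \cite[Theorem~3.13(a)]{Ben19} for the zero count of $f'$. It then deduces the convex-hull statement exactly as you do. Your computations for closed, open and irrational disks are fine.

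The paragraph on $\delta=0$, however, is wrong, and the restriction you adopt there breaks the second half.

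\emph{The claim is false.} You say that $\delta=0$ forces $f'$ to have Weierstrass degree $0$ with no critical points in $D$. But the constant term $a_0$ does not appear in $f'$, so its dominance says nothing about $f'$. Take $f(z)=z^2+C$ with $|C|>1$ and $D=\bar D(0,1)$. The literal maximizing index is $0$, yet $f'(z)=2z$ has Weierstrass degree $1$ on $D$, and the critical point $0$ lies in $D$. Your proposed restriction to disks ``meeting $f^{-1}(f(a))$ suitably'' also singles out nothing, since every disk centered at $a$ contains $a$.

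\emph{The convex-hull step needs the excluded disks.} There you apply the first part to every closed disk $\bar D(a,r)$. That includes disks with literal $\delta=0$, such as $\bar D(0,1)$ above, whose point $\zeta(0,1)$ does lie in $\mathcal{R}_f$.

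\emph{The fix.} Read $\mathrm{wdeg}_D f$ as the Weierstrass degree of $f-f(a)$, i.e.\ maximize $|a_i|r^i$ over $i\ge1$ only. This is the mapping degree of $f$ on $D$, and it is the quantity in the identity $\deg_x f=\mathrm{wdeg}_{\bar D(a,r)}f$ that you use. It is also the quantity in the tameness criterion: under the literal reading, small disks around non-zeros of $f$ have $\mathrm{wdeg}=0$, and $p\mid 0$ would make no polynomial tame when $p>0$. It is always $\ge1$. Since $f$ and $f-f(a)$ have the same derivative, your coefficient comparison applied to $f-f(a)$ proves the count for every disk, and the case $\delta=0$ never arises.

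\emph{A minor omission.} In the convex-hull step you only treat points $\zeta(a,r)$. If $k$ is not spherically complete, you should also note two things about type IV points. Such a point lies on no segment $[c,\infty]$. It also has local degree $1$, because it lies in a closed disk containing no critical point, on which $f$ has degree $1$. The paper glosses over this point as well.
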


\begin{proof}
By~\cite[Proposition~3.9]{Ben19}, one has
\[
\mathrm{wdeg}_D f = \mathrm{wdeg}_D f' + 1.
\]
Moreover, by~\cite[Theorem~3.13(a)]{Ben19}, there are exactly $\mathrm{wdeg}_D f - 1$ critical
points in $D$, counted with multiplicity. Moreover, $\deg_xf\geq 2$ if and only if the associated closed ball contains a critical point. Thus $\cal{R}_f\cup \{\infty\}$ is the convex hull of $\mathrm{Crit}(f) \cup \{\infty\}$.
\end{proof}
\subsubsection{The simplicial tree structure on $T_f$}
The construction of the simplicial tree relies on the following characterization of branch points of $T_f$  using the values of Green function at critical points.

\begin{proposition}
\label{prop: char-branch}
A point $x$ is a branch point of $T_f$ if and only if there exists a point 
$c'\in D_x$ lying in the backward orbit of a critical point of $f$ such that
$G_f(x)=G_f(c').$
\end{proposition}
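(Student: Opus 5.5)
The plan is to translate ``branch point'' into a count of components of a sublevel set inside $D_x$, and then to count critical points of an iterate $f^n$ using tameness (Proposition~\ref{prop:tamecharacterization}).

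\textbf{Step 1: reduce to counting components.} Let $x\in T_f$ and $t=G_f(x)>0$. By Proposition~\ref{prop: juliaconvexhull} and Lemma~\ref{lem: increasinggreen}, the points of $T_f$ near $x$ in a direction pointing away from $\infty$ are exactly the boundary points of components of $\{G_f<s\}$ for $s$ slightly less than $t$. Every open disk $U$ corresponding to a direction at $x$ inside $D_x$ satisfies $U\subset\{G_f\le t\}$. Also, $U$ meets $T_f$ if and only if $U$ is a component of $\{G_f<t\}$ with boundary point $x$. Indeed, if $U$ meets $\calJ_f$, Lemma~\ref{lem: increasinggreen}(3) applied to a Julia point in $U$ gives this. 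Conversely, such a component contains a point of $\mathcal K_f$, hence of $\calJ_f$. Let $N(x)$ be the number of components of $\{G_f<t\}$ contained in $D_x$, equivalently with boundary point $x$. Since the direction towards $\infty$ always meets $T_f$, $x$ is a branch point if and only if $N(x)\ge 2$.

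\textbf{Step 2: push forward by a large iterate.} Choose $n$ so large that $d^nt>M_f$. By \eqref{eqq: greenbig-disk}, $\{G_f<d^nt\}$ is a single open disk $D$ with boundary point $f^n(x)$. Proposition~\ref{prop: mass-length}(1) gives $f^n(D_x)=D_{f^n(x)}$. We have $f^{-n}(D)=\{G_f<t\}$. Applying Proposition~\ref{prop: preimage-disk}(1) to $f^n$, the components of $f^{-n}(D)$ with boundary point $x$ are exactly the $N(x)$ components $U_1,\dots,U_{N(x)}$. Moreover,
\[
\sum_{j=1}^{N(x)} \mathrm{wdeg}_{U_j} f^n=\deg_x f^n=\mathrm{wdeg}_{D_x} f^n .
\]

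\textbf{Step 3: count critical points.} Compositions of tame polynomials are tame in residue characteristic $0$; I would note the general case follows from the Weierstrass degree criterion. So Proposition~\ref{prop:tamecharacterization} applies to $f^n$. The disk $D_x$ contains $\deg_x f^n-1$ critical points of $f^n$, and each $U_j$ contains $\mathrm{wdeg}_{U_j}f^n-1$, all counted with multiplicity. Subtracting, the number of critical points of $f^n$ in $D_x\setminus\bigcup_j U_j$ equals $N(x)-1$. A point of $D_x$ outside the $U_j$ has $G_f\ge t$; since also $G_f\le t$ on $D_x$, these are exactly the critical points $c'\in D_x$ of $f^n$ with $G_f(c')=t$. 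Finally, the critical points of $f^n$ are precisely the points $c'$ with $f^i(c')\in\mathrm{Crit}(f)$ for some $0\le i<n$. Conversely, any $c'$ in the backward orbit of a critical point is a critical point of $f^n$ for all large $n$, so we may enlarge $n$ to include it. Hence $N(x)\ge2$ if and only if such a $c'$ exists, which is the claim.

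\textbf{Main obstacle.} I expect the delicate step to be Step 2. It requires checking that the components of $f^{-n}(D)$ lying in $D_x$ are exactly those with boundary point $x$. This needs that $x$ is the only preimage of $f^n(x)$ in $D_x$, which follows from $f^{-n}(D_{f^n(x)})$ being a disjoint union of closed disks as in Proposition~\ref{prop: preimage-disk}(2). It also requires making the Weierstrass-degree bookkeeping compatible with the multiplicity count of critical points.
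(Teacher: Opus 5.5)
Your proof is correct and follows the paper's own proof. Both pass to an iterate so that $\{G_f<d^nG_f(x)\}$ is a disk, get $\deg_x f^n=\sum_j \mathrm{wdeg}_{U_j}f^n$ from Proposition~\ref{prop: preimage-disk}(1), and use tameness (Proposition~\ref{prop:tamecharacterization}) to find exactly $N(x)-1$ critical points of $f^n$ in $D_x\setminus\bigcup_j U_j$. Your version is more careful than the paper's: it justifies that branching is equivalent to $N(x)\ge 2$, that the leftover critical points are exactly those with $G_f=G_f(x)$, and that critical points of $f^n$ correspond to the backward orbit of $\mathrm{Crit}(f)$, enlarging $n$ for the converse.
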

\begin{proof}
Up to replacing $f$ by an iterate, we may assume that $
D := \{ G_f < d\, G_f(x) \}$ is an open disk. Let $D_i$, $1 \le i \le m$, be the connected components of
$\{ G_f < G_f(x) \}$ whose boundary is equal to $x$.

Denote by $d_i$ the Weierstrass degree of $f$ on $D_i$.
By Proposition~\ref{prop: preimage-disk}(1), we have $
\deg_x f = \sum_{i=1}^m d_i.$
Moreover, by Proposition~\ref{prop:tamecharacterization}, the number of critical points in
$D_x \setminus \bigcup_{i=1}^m D_i$
is equal to
\[
(\deg_x f - 1) - \sum_{i=1}^m (d_i - 1) = m - 1.
\]
Since $x$ is a branch point if and only if $m \ge 2$, this is equivalent to the existence of a critical point
$c$ in $D_x$ such that $
G_f(x) = G_f(c).$
\end{proof}

We define $V_f$ to be the set of points $x\in \bP_k^{1,\mathrm{an}}$ that belong to the grand orbit of a branch point of $T_f$. Since every branch point of $T_f$ is of type~II and $f$ preserves point types, it follows that every $v\in V_f$ is also a type~II point.

Since $G_f\circ f=dG_f$, Proposition~\ref{prop: char-branch} implies that
\[
G_f(V_f)\subset \bigcup_{\substack{n\in\mathbb{N}\\ f'(c)=0}} d^n G_f(c),
\]
which is a discrete subset of $\mathbb{R}_+$.

It follows that $T_f$ admits a decomposition into segments between successive vertices. Denote by $E_f$ the collection of closed intervals $[v_1,v_2]$ such that $v_1,v_2\in V_f$ and
\[
(v_1,v_2)\cap V_f=\varnothing.
\]

By construction, $V_f$ contains all branch points of $T_f$; thus, the interiors of distinct intervals in $E_f$ are disjoint. Furthermore, Proposition~\ref{prop: predisk-green} implies that each sublevel set $\{G_f < t\}$ has only finitely many boundary points. It follows that every vertex $v \in V_f$ is adjacent to only finitely many intervals in $E_f$. Therefore, the $\mathbb{R}$-tree $T_f$, equipped with vertex set $V_f$ and edge set $E_f$, is a simplicial tree.

 Moreover,  the induced map  \( f\colon T_f\to T_f \) is simplicial by construction.

\smallskip

\begin{proposition}
\label{prop:treestructure}
For any $l \ge 1$, the polynomial tree $T_f$ coincides with $T_{f^l}$ as a simplicial tree.
\end{proposition}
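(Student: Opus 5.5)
We need to show two things: $T_f = T_{f^l}$ as subsets of $\bP_k^{1,\an}$, and $V_f = V_{f^l}$. Edges are then determined by the vertices, since $E$ consists of the closed intervals between consecutive vertices.

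For the underlying sets, the plan is to compare Green functions and invoke Proposition~\ref{prop: juliaconvexhull}. Since $f^l$ has degree $d^l$, we have
$$G_{f^l}(z)=\lim_n d^{-ln}\log^+|f^{ln}(z)|=G_f(z),$$
because $d^{-ln}\log^+|f^{ln}(z)|$ is a subsequence of the uniformly convergent sequence defining $G_f$. Hence the sublevel sets $\{G_f<t\}$ and $\{G_{f^l}<t\}$ coincide for all $t>0$. By Proposition~\ref{prop: juliaconvexhull}, $T_f$ is exactly the set of boundary points of these sublevel sets, so $T_f=T_{f^l}$. In particular the two trees have the same branch points, which depend only on the subset. (Alternatively, $\mathcal{J}_f=\mathcal{J}_{f^l}$ gives the same convex hull.) We also note that $f^l$ is tame whenever $f$ is: by Proposition~\ref{prop:tamecharacterization}, or since $\mathrm{wdeg}_D f^l$ is a product of Weierstrass degrees of $f$ on image disks, none of which is divisible by $p$. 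So $V_{f^l}$ is defined.

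For the vertices, recall that $V_f$ is the grand orbit under $f$ of the branch set $B$, and $V_{f^l}$ is the grand orbit of the same $B$ under $f^l$. Grand orbits under $f^l$ are contained in grand orbits under $f$, so $V_{f^l}\subseteq V_f$. The converse is the main point. Take $v\in V_f$ with $f^a(v)=f^b(\beta)$ for some $\beta\in B$. It suffices to show that $f^j(B)\subseteq V_{f^l}$ for every $j\ge 0$, and that $f^{-j}(V_{f^l})\subseteq V_{f^l}$, i.e. that $V_{f^l}$ is closed under single applications of $f$ and $f^{-1}$. This can be reduced to one step. For forward closure, given $w\in V_{f^l}$ we have $f(w)\in f^{-(l-1)}(f^l(w))$ and $f^l(w)\in V_{f^l}$, so forward closure follows from backward closure. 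For backward closure, given $w\in V_{f^l}$ and $u\in f^{-1}(w)$, we have $f^l(u)=f^{l-1}(w)$, so it suffices to show that $V_{f^l}$ is closed under $f$.

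This circularity is the main obstacle, and the plan is to break it with the characterization in Proposition~\ref{prop: char-branch}. The key claim is that for a branch point $\beta$, the image $f(\beta)$ lies in $V_{f^l}$. Choose $\beta'\in f^{-(l-1)}(f(\beta))$ lying on $T_f$, which is possible since $T_f$ is totally invariant. Then $f^l(\beta')=f^l(\beta)$, so $f(\beta)$ and $\beta$ are in the same $f^l$-grand orbit precisely when $f(\beta)=f^{l-1}$ applied to something in that grand orbit. Instead of chasing this directly, we would use Green values. By Proposition~\ref{prop: char-branch}, $v\in V_f$ exactly when $v\in T_f$ and the $f$-orbit structure places $G_f(v)$ in $\{d^nG_f(c')\}$ with $c'$ in the backward orbit of a critical point. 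The critical points of $f^l$ are the $f$-preimages of critical points of $f$ up to order $l-1$, so backward orbits of critical points of $f$ and of $f^l$ coincide. Therefore the branch criterion, and hence $V$, reads the same for both maps, once we check that $v\in V_f$ iff $f^n(v)$ is a branch point or an iterate of one for large $n$. That last check, that grand orbits of $f$ and $f^l$ agree on $T_f$ because some iterate $f^m(v)$ with $m\equiv 0 \pmod l$ lands in a common forward orbit, is where we expect the real work. We would handle it by taking $m$ large and a multiple of $l$, using $G$-values to track levels. Once $V_f=V_{f^l}$, the edge sets agree by definition.
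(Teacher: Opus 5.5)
Your identification of the underlying trees via $G_{f^l}=G_f$ and the inclusion $V_{f^l}\subseteq V_f$ are fine. The reverse inclusion $V_f\subseteq V_{f^l}$, however, is never established. You reduce it to closure of $V_{f^l}$ under $f$ and $f^{-1}$, note that each closure property reduces to the other, and then leave the circle unbroken.

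The missing idea, which is exactly what the paper uses, is that the branch set $B$ of $T_f$ is backward invariant. To see this, let $x\in B$ and $f(y)=x$. The tangent map at the type~II point $y$ is surjective, sends the direction of $\infty$ to that of $\infty$, and sends disk directions to disk directions. By total invariance of $\mathcal{J}_f$, a disk direction at $y$ meets $\mathcal{J}_f$ iff its image does. So $y$ has at least as many downward directions in $T_f$ as $x$, hence $y\in B$.

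With this, your key claim is immediate if you take $\beta'\in f^{-(l-1)}(\beta)$ instead of $f^{-(l-1)}(f(\beta))$. Then $\beta'\in B$ and $f^l(\beta')=f(\beta)$. If $f^{la}(w)=f^{lb}(\beta)$, then $f^{la}(f(w))=f^{l(b+1)}(\beta')$, which gives forward closure of $V_{f^l}$ and hence, by your own reduction, backward closure. With your choice of $\beta'$ one gets $f^l(\beta')=f^2(\beta)$, not $f^l(\beta)$, and $\beta'$ need not lie in $B$.

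The paper does it in one stroke. From $f^m(v)=f^n(x)$ with $x\in B$, iterate further so that $l\mid m$. Write $n=lk_2-r$ with $0\le r<l$, and replace $x$ by any $x'\in f^{-r}(x)$, which is again in $B$.

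Your Green-function fallback does not close the gap as written. Proposition~\ref{prop: char-branch} characterizes branch points, not vertices, and $f$ and $f^l$ already have the same branch set. What differs is the grand orbit, so ``the branch criterion, and hence $V$, reads the same for both maps'' is a non sequitur. That route can be completed, but it needs two facts you do not supply:
\begin{enumerate}
\item $v\in V_f$ iff $v\in T_f$ and $G_f(v)\in d^{j}G_f(B)$ for some $j\in\Z$, with $d^{lj}$ in place of $d^{j}$ for $f^l$. This holds because at large levels $\{G_f<t\}$ is a single disk, so points of $T_f$ at the same large level coincide.
\item $d^{-1}G_f(B)\subseteq G_f(B)$, so that exponents can be shifted into $l\Z$. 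This is once more backward invariance of $B$. Alternatively, it must be extracted from Proposition~\ref{prop: char-branch} by checking that each point of the backward critical orbit with positive Green value retracts onto a branch point of $T_f$ at the same level.
\end{enumerate}
Your proposal leaves both of these as ``the real work.''
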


\begin{proof}
Since the Julia set is invariant under iteration, the underlying \(\mathbb{R}\)-trees \(T_f\) and \(T_{f^l}\) coincide. It remains only to prove that their vertex sets \(V_f\) and \(V_{f^l}\) are the same. Recall that \(G_f=G_{f^l}\). Thus \(T_f\) and \(T_{f^l}\) have the same set \(S\) of branch points. From the definition of the vertex set, the inclusion $V_{f^l}\subset V_f$ is immediate, since any relation involving iterates of \(f^l\) is also a relation involving iterates of \(f\). Conversely, let \(v\in V_f\). By definition, there exists a branch point \(x\in S\) and integers \(m,n\geq 1\) such that $f^m(v)=f^n(x).$ Applying a sufficiently high iterate of \(f\) to both sides if necessary, we may assume that \(m\) is divisible by \(l\). Thus we may write $m=lk_1$ for some \(k_1\geq 1\). Similarly, write $n=lk_2-r,$ where \(k_2\geq 1\) and \(0\leq r\leq l-1\). Choose \(x'\in f^{-r}(x)\). Since preimages of branch points are again branch points of \(T_f\), we have \(x'\in S\). Therefore \[ (f^l)^{k_1}(v) = f^{lk_1}(v) = f^{lk_2-r}(x) = f^{lk_2}(x') = (f^l)^{k_2}(x'). \] By the definition of the vertex set for \(f^l\), this implies \(v\in V_{f^l}\). Hence \(V_f\subset V_{f^l}\). Combining the two inclusions, we obtain $ V_f=V_{f^l}. $
\end{proof}







\section{Non-Archimedean Rigidity}
In this section, we fix a non-Archimedean field $k$ with residue characteristic $0$.  We will fix two polynomials \(f\) and \(g\) of degree at least \(2\) such that their Julia sets coincide. 
By~\cite[\S4]{FRL06} and \cite[Theorem~10.91]{BR10}, this condition is equivalent to the equality of their equilibrium measures,
\(\mu_f=\mu_g\), and also equivalent to the equality of their Green functions,
\(G_f=G_g\).

\subsection{Comparison of polynomials with identical  trees}
\label{sec: compareDMtree}
Since the residue characteristic of $k$ is $0$, $f$ and $g$ are automatically tame. 

To establish the rigidity of Julia sets, we first assume that $f$ and $g$ have the same degree $d\ge 2.$ Our strategy is to compare the induced actions of \(f\) and \(g\) on their associated polynomial trees.

The first step is the following proposition, which shows the induced simplicial trees coincide with each other.

\begin{proposition}
Let $f,g$ be two polynomials of degree $d\ge2$ such that $\calJ_f=\calJ_g$. Then the associated polynomial trees \(T_f\) and \(T_g\) coincide as simplicial trees.
\end{proposition}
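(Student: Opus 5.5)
The plan is to show that the two simplicial structures are built from the same data: the Green function, the set of branch points, and the degree $d$.

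\emph{Underlying trees.} Since $\calJ_f=\calJ_g$, the discussion opening this section gives $\mu_f=\mu_g$ and $G_f=G_g=:G$, hence $\mathcal{K}_f=\mathcal{K}_g=\{G=0\}$ by Proposition~\ref{prop: basicgreen}(3). The set $T_f$ is the part of the convex hull of $\calJ_f\cup\{\infty\}$ lying outside $\mathcal{K}_f$. So $T_f=T_g=:T$ as subsets of $\bP^{1,\an}_k$; Proposition~\ref{prop: juliaconvexhull} also describes $T$ purely in terms of $G$. In particular $T_f$ and $T_g$ have the same set $S$ of branch points.

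\emph{Key claim: a vertex is determined by its $G$-value.} For $x\in T$, I will show that $x\in V_f$ if and only if $G(x)\in\Lambda_f:=G(V_f)$. One direction is trivial. For the other, suppose $G(x)=G(v)$ with $v\in V_f$, and choose $n$ with $d^nG(x)>M_f$.
\begin{itemize}
\item By \eqref{eqq: greenbig-disk}, $\{G<d^nG(x)\}$ is a single open disk, whose unique boundary point is the only point of $T$ at that level (Proposition~\ref{prop: juliaconvexhull}).
\item Since $G\circ f=dG$ and $f(T)=T$, both $f^n(x)$ and $f^n(v)$ are points of $T$ at level $d^nG(x)$.
\item Hence $f^n(x)=f^n(v)$, so $x$ lies in the grand orbit of $v$, i.e.\ $x\in V_f$.
\end{itemize}

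\emph{Computing $\Lambda_f$.} I claim
\[
\Lambda_f=\{d^jG(s): s\in S,\ j\in\Z\}.
\]
The inclusion ``$\subseteq$'' holds because $f^m(v)=f^n(s)$ forces $G(v)=d^{n-m}G(s)$. For ``$\supseteq$'', take $s\in S$ and $j\in\Z$.
\begin{itemize}
\item If $j\ge 0$, the point $f^j(s)\in V_f$ has $G$-value $d^jG(s)$.
\item If $j<0$, total invariance $f^{-1}(T)=T$ lets us pick $y\in f^{-|j|}(s)\cap T$. This $y$ is in the grand orbit of $s$ and has $G(y)=d^{j}G(s)$.
\end{itemize}
The right-hand side depends only on $G$, $S$ and $d$, and these are common to $f$ and $g$. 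Therefore $\Lambda_f=\Lambda_g$, and by the key claim $V_f=\{x\in T: G(x)\in\Lambda_f\}=V_g$.

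\emph{Edges.} $E_f$ and $E_g$ are both defined as the closed intervals between consecutive vertices of the same $\mathbb{R}$-tree $T$ with the same vertex set, so $E_f=E_g$. This gives equality as simplicial trees.

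The only delicate step is the key claim. Its crux is that above level $M_f$ the tree $T$ is a single ray, and this is exactly what \eqref{eqq: greenbig-disk} provides. Everything else is bookkeeping.
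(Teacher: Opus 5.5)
Your proof is correct. It rests on the same crux as the paper's: above the level $M_f$ the tree $T$ is a single ray by \eqref{eqq: greenbig-disk}, so two points of $T$ at the same high level coincide. The packaging is different, however.

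The paper transports one specific orbit relation. Given $v\in V_f$ with $f^m(v)=f^n(v_\star)$ and $m$ large, it uses $G\circ g^l=d^lG$ to place $g^m(v)$ and $g^n(v_\star)$ at the same high level. It concludes $g^m(v)=g^n(v_\star)$, hence $v\in V_g$, and finishes by symmetry.

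You apply the single-ray fact only to $f$. From it you get that $V_f$ is a union of full level sets of $T$. Indeed, your argument shows that the grand-orbit relation on $T$ is just $G(x)/G(y)\in d^{\Z}$. You then identify the levels as $d^{\Z}G(S)$, which gives the intrinsic description $V_f=T\cap G^{-1}\bigl(d^{\Z}G(S)\bigr)$, visibly depending only on $G$ and $d$.

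The paper's route is shorter. It never needs to compute $G(V_f)$, so your pull-back step for negative exponents has no counterpart there. Your route yields an explicit formula for the vertex set and makes it transparent that $V_f$ is an invariant of $(\calJ_f,d)$ alone.

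One small implicit point: you use $V_f\subset T$, both in the key claim (to get $f^n(v)\in T$) and in writing $V_f=\{x\in T: G(x)\in\Lambda_f\}$. This follows at once from $f^m(v)=f^n(s)\in T$ and $f^{-1}(T)=T$.
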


\begin{proof}
By construction, $T_f=T_g$ as $\R$-trees. In particular, they have the same branch points. It therefore suffices to show that their vertex sets agree.
Let \(v \in V_f\), then there exist integers
\(m,n \in \mathbb{N}\) and a branch point \(v_\star\) such that $f^m(v) = f^n(v_\star).$ Choosing \(m\) sufficiently large, we may also suppose that
$D=\{ G_f < G_f(f^m(v)) \}$
is a disk. Since $\calJ_f=\calJ_g$, one obtains \(G_f = G_g\). The equality of degrees implies $G_f\circ f^l=d^lG_f=d^lG_g=G_g\circ g^l.$ Therefore, \[G_g\bigl(g^m(v)\bigr) =G_f\bigl(f^m(v)\bigr)=G_{f}(f^n(v_\star)) = G_g\bigl(g^n(v_\star)\bigr)\]
and $D=\{ G_g < G_g(g^m(v)) \}.$
Therefore,
 $g^m(v) = g^n(v_\star)=\partial D.$
Hence \(v\) belongs to the grand orbit of \(v_\star\) under \(g\), and therefore \(v \in V_g\). By symmetry, one obtains \(V_g \subset V_f\). Thus $V_f=V_g.$
\end{proof}

To compare the coefficients of \( f \) and \( g \), we study their preimages of a given fixed element in $k$. This requires comparing the preimages of vertices under \( f \) and \( g \). However, $f$ and $g$ may act differently on the tree and hence it is not necessarily true that $
f^{-1}(v)=g^{-1}(v).$
\par 
To overcome this difficulty, we associate to each vertex $v$ a subset
$X_v \subset \partial\{G_f<G_f(v)\}$
containing $v$, which is minimal with respect to the invariance property
\[
f^{-1}(X_v)=g^{-1}(X_v).
\]

The construction of $X_v$ is inductive.  
We begin by setting $X_0=\{v\}$ and $Y_0=\varnothing$.  
Define a sequence of maps $(h_s)_{s\ge 0}$ by letting $h_s=f$ if $s$ is even and $h_s=g$ if $s$ is odd.  
For each $s\ge1$, set
\[
X_s = X_{s-1} \cup h_s(Y_{s-1}),
\qquad
Y_s = Y_{s-1} \cup h_s^{-1}(X_{s-1}).
\]
Finally, define
\begin{align*}
X_v &= \bigcup_{s\ge0} X_s, \\
Y_v &= \bigcup_{s\ge0} Y_s.
\end{align*}

The  following proposition is a key ingredient in the proof of Theorem~\ref{IntroRigidity1}.

\begin{proposition}
\label{prop: controldistance}
Let $f,g$ be two polynomials of degree $d\ge2$ such that $\calJ_f=\calJ_g$. Then the following statements hold.
    \begin{enumerate}
        \item  $Y_v=f^{-1}(X_v)=g^{-1}(X_v)$;
        \item \(w \in Y_v\) if and only if there exist \(s \in \mathbb{N}\), distinct vertices
\(v_0,\dots,v_s \in X_v\) with \(v_s=v\), and points
\(w_0,\dots,w_s \in Y_v\) with \(w_0=w\), such that
\[
h_s(w_i)=v_i \quad (0\le i\le s), \qquad
h_{s-1}(w_{i+1})=v_i \quad (0\le i\le s-1).
\]
The diagram below illustrates how the vertices \(v_i\) and points \(w_i\) are selected:
\[\begin{tikzcd}
	{\bullet~v_s=v} & {\bullet~v_{s-1}} & \cdots & {\bullet~v_{1}} & {\bullet~v_0} \\
	{\bullet~w_s} & {\bullet~w_{s-1}} & \cdots & {\bullet~w_{1}} & {\bullet~ w_0=w}
	\arrow["{h_s}"{description}, from=2-1, to=1-1]
	\arrow["{h_{s-1}}"{description}, from=2-1, to=1-2]
	\arrow["{h_{s}}"{description}, from=2-2, to=1-2]
	\arrow["{h_{s}}"{description}, from=2-4, to=1-4]
	\arrow["{h_{s-1}}"{description}, shift left, from=2-4, to=1-5]
	\arrow["{h_{s}}"', from=2-5, to=1-5]
\end{tikzcd}\]
        \item Let $\omega\in Y_v$.  
Let $e_w=[w,w']$ and $e_v=[v,v']$ be edges such that $D_w\subset D_{w'}$ and $D_v\subset D_{v'}$.  
Then we have $w' \in Y_{v'};$
        \item There exists a non-empty finite set
\[
A\subset \bigcup_{v_i\in X_v}(D_{v_i}\cap k)
\]
such that, for every \(w\in Y_v\), \(A\) has the same number of preimages in \(D_w\) under \(f\) and \(g\), counting multiplicities.
    \end{enumerate}
\end{proposition}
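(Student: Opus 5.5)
The plan is to exploit that $G_f=G_g$ and $\deg f=\deg g=d$. Together these give $G\circ f=G\circ g=dG$ for $G:=G_f=G_g$. Hence both $f$ and $g$ map the level set $L_t:=\partial\{G<t\}$ onto $L_{dt}$, and both pull $L_{dt}$ back into $L_t$. By induction on $s$, $X_s\subset L_{G(v)}$ and $Y_s\subset L_{G(v)/d}$. These level sets are finite by Proposition~\ref{prop: predisk-green}, and they consist of vertices of the common simplicial tree $T_f=T_g$.

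\textbf{Parts (1) and (2).} The sequences $(X_s)$ and $(Y_s)$ are increasing inside finite sets, so they stabilize from some $s_0$ on. Two consecutive steps after $s_0$ use $h=f$ and $h=g$. The stabilized sets therefore satisfy
\[
f(Y_v)\cup g(Y_v)\subset X_v \quad\text{and}\quad f^{-1}(X_v)\cup g^{-1}(X_v)\subset Y_v .
\]
Conversely, every element of $Y_v$ is, by construction, a preimage under $f$ or $g$ of an element of $X_v$. Combined with $f(Y_v),g(Y_v)\subset X_v$, this gives $Y_v\subset f^{-1}(X_v)$ and $Y_v\subset g^{-1}(X_v)$, which proves (1). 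For (2), unwind the inductive definition. A point $w$ enters $Y_s$ as an $h_s$-preimage of some $v_0\in X_{s-1}$. In turn, $v_0$ entered as an image $h_{s-1}(w_1)$ with $w_1\in Y_{s-2}$, and so on, until the chain reaches $v$. Whenever a vertex repeats, delete the loop between its two occurrences; this shortens the chain and makes the $v_i$ distinct. The converse direction is immediate from the definitions.

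\textbf{Part (3).} For a vertex $x$, let $p(x)$ denote its parent, i.e.\ the other endpoint of the edge above $x$. The key claim is $h(p(y))=p(h(y))$ for $h\in\{f,g\}$. The map $h$ is simplicial and sends the edge $[y,p(y)]$ onto an edge starting at $h(y)$. Along this edge $G$ increases, since it gets multiplied by $d$. So the image edge is the upward edge at $h(y)$, which gives the claim. From the claim, every parent of an $h$-preimage of $x$ is an $h$-preimage of $p(x)$. Induction on $s$ then gives $p(X_s(v))\subset X_s(v')$ and $p(Y_s(v))\subset Y_s(v')$, where $v'=p(v)$. In particular $w'=p(w)\in Y_{v'}$.

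\textbf{Part (4).} I expect this part to be the main obstacle. For $w\in Y_v$ and $a\in k$, Proposition~\ref{prop: preimage-disk} and \S\ref{sec: imageofdisks} give the following count. The number of $f$-preimages of $a$ in $D_w$ is $\deg_w f$ if $a\in D_{f(w)}$, and $0$ otherwise. Moreover, distinct vertices of $X_v$ lie on the same level set, so their disks are pairwise disjoint. Hence, if $A$ contains exactly $c_x$ points of $D_x\cap k$ for each $x\in X_v$, the requirement becomes
\[
c_{f(w)}\deg_w f=c_{g(w)}\deg_w g \quad\text{for all } w\in Y_v.
\]
By Proposition~\ref{prop: mass-length}(3), $\deg_w f=d\,\mu(w)/\mu(f(w))$, and the same holds for $g$ because $\mu_f=\mu_g$. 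So the choice $c_x=N\mu(x)$ makes both sides equal to $Nd\,\mu(w)$. It remains to check two things. First, $\mu(x)>0$, because $D_x$ meets $\calJ_f$ (Proposition~\ref{prop: juliaconvexhull}). Second, $\mu(x)$ is rational: by Proposition~\ref{prop: mass-length}(2),(3), we have $\mu(f^m x)=1$ for large $m$, and each backward step multiplies the mass by $\deg/d$. Taking $N$ to be a common denominator makes every $c_x$ a positive integer. Finally, choose $c_x$ distinct points of $D_x\cap k$, which is infinite, for each $x\in X_v$; these points form $A$.
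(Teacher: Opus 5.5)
Your parts (1), (3) and (4) are fine.

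\begin{itemize}
\item Part (1): your stabilization argument uses the finiteness of $\partial\{G<G(v)\}$ and $\partial\{G<G(v)/d\}$ in place of the paper's index chasing. It also proves that $X_v,Y_v$ are finite, which the paper's proof of (4) uses tacitly.
\item Part (3): your route via $h(p(y))=p(h(y))$ and induction on $s$ is independent of (2). The paper instead gets (3) by pushing the chains of (2) up one edge.
\item Part (4): this is the paper's argument; your $c_x=N\mu(x)$ with $N=d^m$ is its $\deg_x f^m$.
\end{itemize}

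The gap is in the \emph{only if} direction of (2). First, (2) has to be read as the paper's proof reads it: $s$ only fixes which of $f,g$ plays the role of $h_s$, not the length of the chain. Taken literally it fails. Take $g=z^2+c$ with $|c|>1$, $f=\sigma\circ g$ with $\sigma(z)=-z$, a vertex $v\neq\sigma(v)$, and $w\in g^{-1}(v)$. Then $X_v=\{v,\sigma(v)\}$ and $f(w)=\sigma(v)$, so no admissible $s$ exists. What must be shown is therefore a chain in which one fixed map sends every $w_i$ to $v_i$ and the other sends every $w_{i+1}$ to $v_i$.

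Your unwinding asserts that $v_0=h_s(w)$ entered as $h_{s-1}(w_1)$ with $w_1\in Y_{s-2}$. But you only know $v_0\in X_{s-1}=X_{s-2}\cup h_{s-1}(Y_{s-2})$, and nothing in the construction guarantees $v_0\in h_{s-1}(Y_{s-2})$. In fact $v_0$ may have entered at stage $s-2$, as $v_0=h_{s-2}(w_1)=h_s(w_1)$. Then $w$ and $w_1$ reach $v_0$ through the \emph{same} map, and the chain you build follows no fixed pattern of $f$'s and $g$'s. Deleting loops restores distinctness of the $v_i$, but not the pattern. The paper's proof makes the same jump when it infers $v_0\notin X_{s-2}$ from $v_0\notin X_{s-3}$, so it is not a model to follow here.

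You can close the gap with the masses you already use in (4).
\begin{itemize}
\item Fix $h\neq h'$ in $\{f,g\}$, and let $S$ be the set of vertices reachable from $v$ by iterating $x\mapsto h'(h^{-1}(x))$. Then $h'(h^{-1}(S))\subset S$, i.e.\ $h^{-1}(S)\subset (h')^{-1}(S)$.
\item Proposition~\ref{prop: mass-length}(3) together with $\sum_{y\in h^{-1}(x)}\deg_y h=d$ gives $\sum_{y\in h^{-1}(x)}\mu(y)=\mu(x)$, and the same holds for $h'$. Hence $h^{-1}(S)$ and $(h')^{-1}(S)$ have the same total mass.
\item Since all masses are positive, $h^{-1}(S)=(h')^{-1}(S)$.
\item Induction on the construction now gives $X_t\subset S$ and $Y_t\subset h^{-1}(S)$ for every $t$.
\item So $h(w)\in S$ for each $w\in Y_v$. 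This unwinds to a chain with $h(w_i)=v_i$ and $h'(w_{i+1})=v_i$ ending at $v$, and your loop deletion then makes the $v_i$ distinct.
\end{itemize}
This proves (2) for either order of $f$ and $g$.
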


\begin{proof}[Proof of Proposition~\ref{prop: controldistance}]
(1) 
First, let $w\in Y_v$, and let $s$ be the minimal integer such that $w\in Y_s$.
If $s$ is even, then $
Y_s = Y_{s-1}\cup f^{-1}(X_{s-1})$.  By assumption, $w\in f^{-1}(X_{s-1}).$
Hence $f(w)\in X_{s-1}\subset X_v$.
If $s$ is odd,  then $
X_{s+1}=X_s\cup f(Y_{s}).$ Hence $w\in Y_s\subset  f^{-1}(X_{s+1})\subset f^{-1}(X_v).$
The same argument shows that $Y_v\subset g^{-1}(X_v).$

Conversely, let $w\in f^{-1}(X_v)$, and let $s$ be minimal such that
$w\in f^{-1}(X_s)$. Note that $X_s\subset X_{s+1}$. If $s$ is even, then  $
w\in f^{-1}(X_s)\subset Y_{s+1}\cup f^{-1}(X_{s+1})=Y_{s+2}$. If $s$ is odd, then $
w\in Y_{s+1}=Y_{s}\cup f^{-1}(X_{s}).$ 
In either case, we conclude that $w\in Y_v$.
Therefore $f^{-1}(X_v)\subset Y_v$.
The same argument shows that
$g^{-1}(X_v)\subset Y_v$.

Combining both inclusions yields $f^{-1}(X_v)=g^{-1}(X_v)=Y_v,$
as claimed.

\smallskip

(2) The if direction follows directly from (1). To show the only-if direction, let $s$ be the minimal integer such that $w=w_0\in Y_s$, where $
Y_s = Y_{s-1} \cup h_s^{-1}(X_{s-1}).$
Then necessarily $w_0 \in h_s^{-1}(X_{s-1})$.  

Set  $
v_0 = h_s(w_0) \in X_{s-1}=X_{s-2}\cup h_{s-1}(Y_{s-2}).$  We claim that $v_0\notin X_{s-3}$, otherwise $w_0\in h_{s}^{-1}(X_{s-3})=h_{s-2}^{-1}(X_{s-3})\subseteq Y_{s-2}$, which contradicts the minimality of $s.$ Therefore, $v_0\notin X_{s-2}.$

Choose $w_{1} \in Y_{s-2}$ such that
$h_{s-1}(w_{1}) = v_0.$
We claim that $s-2$ is the least integer $t$ such that $w_{1} \in Y_{t}$.  
Otherwise, we would have
$w_{1} \in Y_{s-3} = Y_{s-4} \cup h_{s-3}^{-1}(X_{s-4}).$
Since $
h_{s-3}(w_{1}) = v_0 \notin X_{s-3},$
it follows that $w_{1} \in Y_{s-4}$.  
Consequently, $
v_0 = h_{s-3}(w_{1}) \in h_{s-3}(Y_{s-4}) \subset X_{s-3},$
which contradicts the previous claim.

Iterating this procedure, we successively construct $v_0,\cdots,v_s$ and $w_0,\cdots,w_s$
 satisfying the required conditions.

(3) By (2), there exist vertices
$v_0 ,  v_1, \dots, v_s=v \in X_v$ and $
w_0=w, \dots, w_s  \in Y_v$
such that for all $0\le i\le s$,
$h_{s}(w_i) = v_i$
and $h_{s-1}(w_{i+1}) = v_{i}.$
Then the corresponding edges $e_{w_i}=[w_i,w_i']$ and $e_{v_i}=[v_i,v_i']$ satisfy
$
h_{s}(e_{w_i}) = e_{v_i}$
 and $h_{s-1}(e_{w_{i+1}}) = e_{v_{i}}.$
In particular, $
h_{s}(w_i') = v_i'$ and $
h_{s-1}(w_{i+1}') = v_{i}'.$
Applying (2) once again, we conclude that
$w' \in Y_{v'}.$

\smallskip

(4) Let  $X_v=\{v_i\}_{1\le i\le l},$ and $
Y_v=\{w_j\}_{1\le j\le l'}.$ By Proposition~\ref{prop: mass-length}(2), there exists $m\in\bN$ large enough such that $\mu_f(f^m(v_i))=\mu_f(f^{m+1}(w_j))=1$  for all $1\le i \le l$ and $1\le j\le l'$. Note that $\mu_f=\mu_g$.
By Proposition~\ref{prop: mass-length}(3), we may define
\begin{align*}
 d_i&\coloneqq \deg_{v_i}f^m=\deg_{v_i}g^m=d^m\mu_f(v_i)\\
    \delta_j&\coloneqq \deg_{w_j}f^{m+1}=\deg_{w_j}g^{m+1}=d^{m+1}\mu_f(w_j)
\end{align*}
For each $j$, there exist indices $i_j$ and $k_j$ such that
$f$ (resp.\ $g$) maps $D_{w_j}$ onto $D_{v_{i_j}}$
(resp.\ $D_{v_{k_j}}$). We then have
$\deg_{w_j}f
 = \frac{\delta_j}{d_{i_j}},$ and 
$\deg_{w_j}g
 = \frac{\delta_j}{d_{k_j}} .$

Now choose a finite set
$A \subset \bigcup_{i=1}^{l} (D_{v_i}\cap k)$
such that $\#(A\cap D_{v_i})=d_i$ for each $1\le i\le l$.
Then, for every $w_j\in Y_v$, the number of preimages of $A$
under $f$ in $D_{w_j}$ is $
d_{i_j}\cdot \frac{\delta_j}{d_{i_j}} = \delta_j,$
and similarly the number of preimages under $g$ in $D_{w_j}$ is $
d_{k_j}\cdot \frac{\delta_j}{d_{k_j}} = \delta_j.$ 
\end{proof}

\subsection{Rigidity of Julia sets for polynomials of the same degree}

Recall that for the Julia set $\mathcal J$ of a polynomial of degree at least $2$, $\Aut(\mathcal J)$ denotes the group of affine transformations that preserve $\mathcal J$.

\smallskip

\begin{lemma}
\label{lem: charautojulia}
Let $f$ be a polynomial of degree at least $2$, and let $\sigma$ be an affine transformation. Then the following statements are equivalent:
\begin{enumerate}
    \item $\sigma \in \Aut(\calJ_f)$;
      \item $\mathcal J_{f \circ \sigma} = \calJ_f$;
    \item $\mathcal J_{\sigma \circ f} = \calJ_f$.
\end{enumerate}
\end{lemma}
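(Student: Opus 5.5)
We work with filled-in Julia sets and Green functions, since $\calJ_h=\partial\mathcal K_h$ for polynomials and $\calJ_h=\calJ_f$ is equivalent to $G_h=G_f$ (equivalently $\mu_h=\mu_f$), as recalled at the start of this section. Write $\sigma(z)=az+b$.

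\emph{(1)$\Rightarrow$(2) and (1)$\Rightarrow$(3).} Since $\sigma$ is a homeomorphism of $\bA_k^{1,\an}$ preserving $\calJ_f$, it also preserves $\mathcal K_f$: $\mathcal K_f$ is the union of $\calJ_f$ with the bounded components of its complement, i.e.\ the complement of the unique unbounded component of $\bA_k^{1,\an}\setminus\calJ_f$, and $\sigma$ fixes $\infty$. For $f\circ\sigma$, note that $z\in\mathcal K_{f\circ\sigma}$ iff the orbit $z,\ f(\sigma z),\ f(\sigma f(\sigma z)),\dots$ stays bounded. Using $\sigma(\mathcal K_f)=\mathcal K_f$, $f^{-1}(\mathcal K_f)=\mathcal K_f$ and $f(\mathcal K_f)\subset\mathcal K_f$, we get $(f\circ\sigma)^{-1}(\mathcal K_f)=\sigma^{-1}(\mathcal K_f)=\mathcal K_f$. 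Hence $\mathcal K_f$ is a compact set that is totally invariant under $f\circ\sigma$.

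To conclude $\mathcal K_{f\circ\sigma}=\mathcal K_f$, use escape rates. Outside a large disk, both $f$ and $f\circ\sigma$ satisfy $|h(z)|>2|z|$, so points with large $|z|$ escape for both maps. Every point outside $\mathcal K_f$ has some $f$-iterate, and hence by total invariance of the complement some $(f\circ\sigma)$-iterate, leaving any fixed compact set, after which it escapes. Conversely, $\mathcal K_f$ is forward invariant and bounded, so its points do not escape. Thus $\mathcal K_{f\circ\sigma}=\mathcal K_f$ and, taking boundaries, (2) holds. For $\sigma\circ f$, the same argument applies using $(\sigma\circ f)^{-1}(\mathcal K_f)=f^{-1}(\sigma^{-1}\mathcal K_f)=\mathcal K_f$.

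\emph{(2)$\Rightarrow$(1).} Suppose $G_{f\circ\sigma}=G_f$. Both maps have degree $d$, so applying the functional equation of Proposition~\ref{prop: basicgreen}(1) to each gives
\[
d\,G_f(z)=G_f(f(\sigma z)) = d\,G_f(\sigma z).
\]
Hence $G_f\circ\sigma=G_f$. It follows that $\sigma$ preserves $\mathcal K_f=\{G_f=0\}$ (Proposition~\ref{prop: basicgreen}(3)), and, being a homeomorphism, it preserves its boundary $\calJ_f$.

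\emph{(3)$\Rightarrow$(1).} Suppose $G_{\sigma\circ f}=G_f=:G$. Then
\[
G(\sigma(f z)) = G_{\sigma\circ f}(\sigma f z)=d\,G(z)=G(f z).
\]
Since $f$ is surjective on $\bA_k^{1,\an}$, this gives $G\circ\sigma=G$, and we conclude exactly as in the previous case.

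\emph{Main obstacle.} The only delicate point is the identification $\mathcal K_{f\circ\sigma}=\mathcal K_f$ in the first step, that is, checking that a totally invariant compact set containing the nonescaping points is exactly the filled-in Julia set. An alternative route for this step is to verify directly that $G_f\circ\sigma=G_f$ whenever $\sigma$ preserves $\mathcal K_f$. For this, use that $G_f$ is harmonic off $\mathcal K_f$ with the normalized logarithmic growth $\log|z|+\frac{\log|a_d|}{d-1}$ at infinity, so uniqueness of the Green function of $\mathcal K_f$ applies. One must also check $|a|=1$, which follows by comparing growth at $\infty$, since $\sigma$ preserves $\mathcal K_f$ and hence its capacity.
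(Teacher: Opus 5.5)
\textbf{The gap is in (1)$\Rightarrow$(2),(3).} Your implications (2)$\Rightarrow$(1) and (3)$\Rightarrow$(1) are correct. For (2)$\Rightarrow$(1) the paper avoids Green functions entirely: total invariance gives $\calJ_f=(f\circ\sigma)^{-1}(\calJ_f)=\sigma^{-1}(f^{-1}(\calJ_f))=\sigma^{-1}(\calJ_f)$.

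The problem is the sentence ``every point outside $\mathcal K_f$ has some $f$-iterate, and hence by total invariance of the complement some $(f\circ\sigma)$-iterate, leaving any fixed compact set''. This does not follow. Write $h=f\circ\sigma$. The $f$-orbit and the $h$-orbit of $z$ are unrelated. The relation $h^{-1}(\Omega_f)=\Omega_f$ only tells you that the $h$-orbit of $z\in\Omega_f$ stays in $\Omega_f$. Nothing you have written prevents that orbit from remaining forever in $\Omega_f\cap\bar D(0,R)$ and accumulating on $\calJ_f$. Showing that every point of $\Omega_f$ escapes under $h$ is exactly the content of the implication, so a genuine input is needed here.

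\textbf{Your alternative route is the fix, and it matches the paper.} It supplies that input and should be the main argument, with the Berkovich (not complex) uniqueness statement actually cited. The paper uses \cite[Theorem~10.91]{BR10}, which says a polynomial's equilibrium measure is determined by its Julia set. This gives $\sigma^*\mu_f=\mu_{\sigma^{-1}\circ f\circ\sigma}=\mu_f$, hence $(f\circ\sigma)^*\mu_f=(\sigma\circ f)^*\mu_f=d\,\mu_f$. Theorem~\ref{thm: equidistribution} then gives $\mu_{f\circ\sigma}=\mu_{\sigma\circ f}=\mu_f$.

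In your formulation, once you have $G_f\circ\sigma=G_f$, you get $G_f(h^n z)=d^nG_f(z)\to\infty$ on $\Omega_f$, which is exactly the missing escape. Your separate check that $|a|=1$ is then unnecessary, since it follows from $G_f\circ\sigma=G_f$ by comparing growth at $\infty$.

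\textbf{A repair of your original route.} You can also fix the first route with tools already in the paper. Since $\calJ_f$ is closed with $h^{-1}(\calJ_f)=\calJ_f$, equidistribution of $h$-preimages of a non-exceptional point of $\calJ_f$ gives $\calJ_h\subseteq\calJ_f$. Then $\mathcal K_h\cap\Omega_f$ is relatively open and closed in $\Omega_f$, because its relative boundary lies in $\calJ_h\cap\Omega_f=\varnothing$. It is also bounded, while $\Omega_f$ is connected and unbounded, so it must be empty.

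That connectedness of $\Omega_f$ deserves a line of proof, e.g.\ because $G_f$ is nondecreasing along $[x,\infty)$. It is also what justifies your unproved claim that $\mathcal K_f$ is the complement of the unbounded component of $\bA^{1,\an}_k\setminus\calJ_f$.
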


\begin{proof}
Assume first that $\sigma \in \Aut(\calJ_f)$. Since $\sigma(\calJ_f)=\calJ_f$, the measures
$\mu_{\sigma^{-1}\circ f\circ \sigma}$ and $\mu_f$ are equilibrium measures with the same support. Hence,
$\sigma^{*}\mu_f
=
\mu_{\sigma^{-1}\circ f\circ \sigma}
=
\mu_f.$
Therefore, \[
(f\circ \sigma)^{*}\mu_f
=
(\sigma \circ f)^{*}\mu_f
=
d\mu_f.\]
Applying Theorem~\ref{thm: equidistribution} to $f\circ \sigma$ (or  to $\sigma \circ f$)  and
measure $\rho=\mu_f$, we obtain
$\mu_{f\circ \sigma}=\mu_{ \sigma\circ f}=\mu_f.$
Consequently, $
\mathcal \calJ_{f\circ \sigma}
=
\mathcal \calJ_{\sigma \circ f}
=
\mathcal \calJ_f.$

Assume that $\mathcal J_{f \circ \sigma}=\calJ_f$. Since $\calJ_f$ is totally invariant under $f$ and $f\circ \sigma$, we obtain
\[
\calJ_f = (f \circ \sigma)^{-1}(\calJ_f) =  \sigma^{-1}(\calJ_f).
\]
Thus $\sigma \in \Aut(\calJ_f)$, which completes the proof.
\end{proof}
We now prove the following rigidity result, which is the core of the proof of Theorem~\ref{IntroRigidity1}:  
\begin{theorem}
\label{thm: rigidity-samedegree}
  Let $k$ be a non-Archimedean field with residue characteristic $0.$  Let $f$ and $g$ be two polynomials of degree $d\ge 2$ which do not have potential good reduction. Suppose $\calJ=\calJ_f=\calJ_g$. Then there exists $\sigma\in\Aut(\calJ)$ such that $f=\sigma\circ g.$
  \end{theorem}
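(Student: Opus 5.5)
The plan is to follow the $z^2+c$ sketch from Section~\ref{sec: Strategy}. I will show that the affine map
\[
\sigma(w)=\tfrac{a_d}{b_d}\,w+c
\]
satisfies $f=\sigma\circ g$, where $a_d,b_d$ are the leading coefficients of $f,g$ and $c\in k$ is a suitable constant. Lemma~\ref{lem: charautojulia} then gives $\sigma\in\Aut(\calJ)$ for free: indeed $\calJ_{\sigma\circ g}=\calJ_f=\calJ=\calJ_g$. So everything reduces to proving that $a_d^{-1}f-b_d^{-1}g$ is a constant.

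\textbf{Step 1: small disks.} Since $G_f=G_g$, the polynomials $f$ and $g$ have the same simplicial tree, and Proposition~\ref{prop: controldistance} applies to every vertex $v$. Fix $v\in V_f$ with $G_f(v)=t$. Then $X_v\subset \partial\{G_f<t\}$, which is finite by Proposition~\ref{prop: predisk-green}, and $Y_v\subset\partial\{G_f<t/d\}$ is finite as well. Take $A$ as in Proposition~\ref{prop: controldistance}(4) and set $N=\#A$. The $dN$ roots of $f(z)=a$, $a\in A$, all lie in $\bigcup_{w\in Y_v}D_w$, and so do those of $g$. Moreover each $D_w$ contains the same number of $f$-roots as $g$-roots. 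So I can pair the roots of
\[
F_A(z)=\prod_{a\in A}(f(z)-a)\quad\text{and}\quad G_A(z)=\prod_{a\in A}(g(z)-a)
\]
so that paired roots lie in a common $D_w$. These disks have radius at most $\rho(t/d)$, where $\rho(s)$ denotes the maximal radius of $D_x$ over $x\in T_f$ with $G_f(x)=s$.

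\textbf{Step 2: the main obstacle.} The key input is that $\rho(s)\to 0$ as $s\to 0$, and this is where the hypothesis of no potential good reduction enters. I would argue it as follows. If $\rho(s)\not\to 0$, then along a decreasing sequence of levels we find nested disks $D_x$ whose radii stay bounded below by some $r_0>0$. Their intersection then contains a type II point of $\calJ_f$ of diameter at least $r_0$, and this point lies on the convex hull. I then want to show that $\calJ_f$ contains no type II points unless it is a single point. For this I would use Proposition~\ref{prop: mass-length}: masses of preimages scale by $\deg$, and edge lengths are expanded by $\deg_{x}f\ge1$ along $T_f$. Combined with tameness in residue characteristic $0$, this should force the finitely many edges at a given level to shrink geometrically under pullback. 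Only the case $\deg=1$ along a whole backward branch is problematic, and there the mass-$1$ argument should show that the map is injective on a disk containing all of $\calJ$, which makes $\calJ$ a point. This is the step I expect to require the most care; if it resists, I would instead cite Trucco's description of Julia sets of tame polynomials without potential good reduction.

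\textbf{Step 3: coefficients and the limit.} All roots lie in the fixed disk $\{|z|\le C_f\}$, and $C_f=C_g$ because the disks $\{G<M\}$ coincide. Viète's formulas with the ultrametric inequality then give
\[
\bigl\|a_d^{-N}F_A-b_d^{-N}G_A\bigr\|\le C\,\rho(t/d),
\]
with $C$ depending only on $C_f$. Next I extract $f$ from $F_A$. Write $\alpha=\frac1N\sum_{a\in A}a$, which is legitimate since $|N|=1$ in residue characteristic $0$. Then $a_d^{-N}F_A=\bigl(a_d^{-1}(f-\alpha)\bigr)^N+(\text{terms of degree}\le d(N-1))$. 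The top $d$ coefficients of the unique monic $N$-th root of a monic polynomial of degree $dN$ depend $1$-Lipschitz-ly on its coefficients, again because $|N|=1$. Hence every non-constant coefficient of $a_d^{-1}f-b_d^{-1}g$ has absolute value at most $C'\rho(t/d)$. Letting $t\to0$ along vertices and applying Step 2 shows these coefficients vanish. Thus $a_d^{-1}f-b_d^{-1}g$ is a constant, which is exactly the claim reduced to at the start.
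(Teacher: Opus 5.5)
Your architecture matches the paper's. You reduce via Lemma~\ref{lem: charautojulia} to showing that $a_d^{-1}f-b_d^{-1}g$ is constant, take $A$ from Proposition~\ref{prop: controldistance}(4), pair roots inside the disks $D_w$, $w\in Y_v$, and use $|n|=1$ to pass from symmetric functions to coefficients. Your $N$-th root extraction is a fine substitute for the paper's Newton-identity induction.

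The gap is Step~2. The claim $\rho(s)\to 0$ is false, and so is your proposed reason, that $\mathcal{J}_f$ has no type II points unless it is a single point. Take $f(z)=z^2-tz^3$ over a residue-characteristic-$0$ field with $0<|t|<1$. Then $f(x_g)=x_g$ with $\deg_{x_g}f=\mathrm{wdeg}_{\bar D(0,1)}f=2$. So $x_g$ is a repelling type II fixed point and lies in $\mathcal{J}_f$. Since $x_g$ has a second preimage (of local degree $1$), $\mathcal{J}_f$ is not a point. Because $G_f(x_g)=0$, for every $s>0$ the component of $\{G_f<s\}$ containing $x_g$ is an open disk of radius $>1$. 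Hence $\rho(s)>1$ for all $s$.

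Your K\"onig-type limit point is exactly such a type II Julia point, so there is no contradiction to reach. Shrinking edge lengths does not help either. Lengths measure $\log$ of radius ratios, and here the segments $[\zeta(0,|t|^{-1/2^{n+1}}),\zeta(0,|t|^{-1/2^{n}})]$ shrink geometrically while the radii only decrease to $1$. No citation (Trucco or otherwise) can rescue a false statement.

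What you need is much weaker: smallness of $D_w$ only for $w\in Y_v$, for one well-chosen $v$ at each scale. This is Lemma~\ref{lem: smallradiusvertex}, and it is where the hypothesis of no potential good reduction genuinely enters.

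\begin{itemize}
\item Take a repelling type I periodic point $p\in\mathcal{J}_f$; the paper cites Favre for its existence.
\item Let $v$ be the boundary of the component of $\{G_f<G_f(v^0)/d^n\}$ containing $p$, so that $\operatorname{diam}(D_v)\to 0$.
\item Transfer smallness to every $w\in Y_v$ through the alternating chains of Proposition~\ref{prop: controldistance}(2)--(3). Along a chain, Proposition~\ref{prop: mass-length}(4) applies. At most $d-1$ of the disjoint disks $D_{w_i}$ contain critical points. Together these give $l(e_v)\le d^{d-1}l(e_w)$ for corresponding edges.
\item Summing along the descending paths from $v^0$ yields
\[
\log\frac{\operatorname{diam}(D_{v^0})}{\operatorname{diam}(D_v)}\le d^{d-1}\log\frac{\operatorname{diam}(D_{v^0})}{\operatorname{diam}(D_w)}.
\]
\end{itemize}

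In the example above, this is what keeps $Y_v$ away from the component through $x_g$.

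With this lemma in place of Step~2, your Step~3 goes through. One small fix there: bound all roots by a common disk such as $\{G_f<M\}=\{G_g<M\}$ instead of asserting $C_f=C_g$. That equality is a statement about coefficients and does not follow from equality of sublevel sets.
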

\begin{remark}
   When both $f$ and $g$ have good reduction, the statement is no longer valid.  
For example, consider
$f(z)=z^d$ and $g(z)=z^d+z.$
Both maps have good reduction, and therefore their Julia sets reduce to the Gauss point $x_g$.  
However, there is no affine transformation $\sigma$ such that
$f=\sigma\circ g.$
\end{remark}

Before beginning the proof of Theorem \ref{thm: rigidity-samedegree}, let us explain our argument when $f = z^2 + c_1, g = z^2 + c_2$ with $|c_i| > 1$. In this case, choose some large $\alpha > 0$ so that $\{G_f(z) < \alpha\} = \{G_g(z) < \alpha\}$ is given by a disk $D(0,R)$. Taking preimages, we obtain that 
$$f^{-n}(D(0,R)) = \left\{G_f(z) < \frac{\alpha}{2^n} \right\} = \left\{G_g(z) < \frac{\alpha}{2^n} \right\} = g^{-n}(D(0,R)).$$
Fix an $\eps > 0$. We may then take $n$ large enough so that $f^{-n}(D(0,R)) = g^{-n}(D(0,R))$ is a disjoint union of $2^n$ disks, each of radius $< \eps$. Now let $\alpha_1,\ldots,\alpha_{2^n}$ be the roots of $f^n(z) = 0$ and similarly let $\beta_1,\ldots,\beta_{2^n}$ be those for $g^n(z) = 0$. Then we may arrange them so that $|\alpha_i - \beta_i| < \eps$ for all $i$. In particular, we obtain 
$$\sum_{1 \leq i < j \leq 2^n} |\alpha_i \alpha_j - \beta_i \beta_j| \leq \sum_{1 \leq i < j \leq 2^n} \left|(\alpha_i - \beta_i) \alpha_j + (\alpha_j - \beta_j) \beta_i \right| \leq \max\{|c_1|,|c_2|\} \eps.$$
By Viète's formulas, this implies that the coefficients of $z^{2^n - 2}$ for $f^n(z)$ and $g^n(z)$ differ by at most $\max\{|c_1|,|c_2|\} \eps$. But this coefficient can easily be computed to be $2^{n-1} c_1$ and $2^{n-1} c_2$. Hence 
$$|2^{n-1}| |c_1 - c_2| < \max\{|c_1|,|c_2|\} \eps \implies |c_1 - c_2| < \max\{|c_1|, |c_2|\} \eps,$$
where we crucially used the fact that $k$ is of residue characteristic $0$. Letting \(\epsilon\to 0\) gives \(c_1=c_2\), proving rigidity in this case.
\par 
For general polynomials, we will use the subset $X_v$ for a suitable vertex $v$, as introduced in Proposition \ref{prop: controldistance} as a replacement for $f^{-n}(D(0,R))$ and $g^{-n}(D(0,R))$. Keeping the notation introduced in \S\ref{sec: compareDMtree}, one has:

\begin{lemma}
\label{lem: smallradiusvertex}
Let $v^0$ be a vertex such that the sublevel set $\{G_f < G_f(v^0)\}$ is a disk. Then for all $\epsilon>0$, there exists a vertex $v \in f^{-n}(v^0)$ for some $n \in \mathbb{N}$, such that  for any $w\in Y_v$,  the radius of the associated ball satisfies $\mathrm{diam}(D_w)<\epsilon.$ 
\end{lemma}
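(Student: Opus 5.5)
The plan is to exploit the fact that $G_f=G_g$ and $\deg f=\deg g=d$, so both maps send the level set $L_s:=\partial\{G_f<s\}$ onto $L_{ds}$. Consequently, for a vertex $v$ with $G_f(v)=t$, we have $X_v\subset L_t$ and $Y_v\subset L_{t/d}$. The claim thus reduces to choosing the vertex $v\in f^{-n}(v^0)$ so that the finite set $Y_v$ stays inside a part of the tree where all disks are small. For $v=v^0$ everything is explicit: $X_{v^0}=\{v^0\}$ and $Y_{v^0}=f^{-1}(v^0)=g^{-1}(v^0)=L_{G_f(v^0)/d}$. Proposition~\ref{prop: controldistance}(3) says that the passage $v\mapsto Y_v$ is compatible with going up edges: if $v'$ is the parent of $v$ and $w'$ is the parent of $w\in Y_v$, then $w'\in Y_{v'}$. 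By induction along the segment $[v,v^0]$, every $w\in Y_v$ therefore lies below some point of $Y_{u}$ for each ancestor $u$ of $v$. In particular, $D_w\subset D_{w_u}$ for some $w_u\in Y_u$.

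First, I would reduce the radius estimate to a statement about ends of $T_f$. For $x\in T_f$, write $x=\zeta(z_0,r)$ with $z_0$ on a path to a Julia point. By Lemma~\ref{lem: increasinggreen}, the radius of $D_x$ decreases strictly to the radius $r_0$ of the corresponding Julia end as the level $G_f(x)\downarrow 0$. The radius tends to $0$ exactly when the end is a type~I point of $\calJ_f$. Because $f$ has no potential good reduction, $\calJ_f$ is not a single point, and $\mu_f$ is non-atomic away from the exceptional set. I would use this to show that there exist type~I Julia points $z$ such that every Julia end sharing a sufficiently deep common ancestor with $z$ is also of type~I, with uniformly small radius. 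The argument would use equidistribution (Theorem~\ref{thm: equidistribution}) of preimages of a point of $D_{v^0}\cap k$ and the mass formula of Proposition~\ref{prop: mass-length}(3): masses of vertices along a descending branch tend to $0$, while a type~II Julia point $\zeta(a,r_0)$ carries a positive mass subtree.

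Second, I would transfer smallness from $v$ to all of $Y_v$ using the chains of Proposition~\ref{prop: controldistance}(2). Each $w\in Y_v$ is joined to $v$ by a finite zigzag $w=w_0,v_0,w_1,\dots,v_s=v$, where each step is an application of $f$ or $g$ mapping $D_{w_i}$ onto $D_{v_i}$ or $D_{v_{i-1}}$. For a polynomial $h$ with leading coefficient $a_d$, the image disk has radius $\max_i|a_i|r^i\ge|a_d|r^d$. This gives $\operatorname{diam}D_w\le(\operatorname{diam}D_{h(w)}/|a_d|)^{1/d}$, so small images force small preimages. Conversely, the length estimate $l(e)\le l(h(e))\le d\,l(e)$ of Proposition~\ref{prop: mass-length}(4) controls how far below the corresponding level the relevant disks sit. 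Combining these with the ancestor containment from the first paragraph, I would choose $v$ deep along a branch toward a good type~I end. The aim is that $X_v$, whose points are all obtained from $v$ by these zigzags at the fixed level $G_f(v)$, lies entirely below ancestors whose subtrees have only small-radius disks at level $G_f(v)/d$.

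The main obstacle is this last step: a priori the zigzag closure $X_v$ could wander into parts of the level set $L_{G_f(v)}$ lying over type~II Julia points, where radii stay bounded below. I would try first to show, using $\mu_f=\mu_g$ and Proposition~\ref{prop: mass-length}(3), that $f$ and $g$ induce the same map on masses. It would follow that $X_v$ only contains vertices $u$ with $\mu_f(u)=\mu_f(v)$ in the same $f^{-n}$-fibre structure above $v^0$, so that the mass can be driven to $0$ uniformly on $X_v$ and $Y_v$ as $n\to\infty$. I would then argue by contradiction: take a sequence of depths with some $w\in Y_v$ of radius $\ge\epsilon$, and use compactness of $\calJ_f$ to extract a Julia end of radius $\ge\epsilon$ with arbitrarily small mass around it. This contradicts the fact that type~II points of $\calJ_f$ carry a definite positive mass on their disks.
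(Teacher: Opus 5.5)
Your argument has two gaps. The first is the mass heuristic on which both your existence step and your closing contradiction rest: a type~II point $\zeta(a,r_0)\in\calJ_f$ does \emph{not} carry positive mass. By Lemma~\ref{lem: increasinggreen}(1), $\bar D(a,r_0)\subset\mathcal K_f$. Hence $\bar D(a,r_0)\setminus\{\zeta(a,r_0)\}$ is an open subset of $\mathcal K_f$ and misses $\calJ_f=\partial\mathcal K_f$. It follows that $\mu_f(\bar D(a,r_0))=\mu_f(\{\zeta(a,r_0)\})=0$, because $\mu_f$ has no atoms when $f$ has no potential good reduction. So masses tend to $0$ along descending branches towards type~I and type~II ends alike, and no mass or equidistribution argument can detect where radii stay bounded below.

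Two side claims also fail. The claim that all ends near a good type~I point are again of type~I is false once $\calJ_f$ contains a single type~II point, since its backward orbit is dense in $\calJ_f$. The claim that $X_v$ consists of vertices of equal mass presupposes $\deg_w f=\deg_w g$, which is not known at this stage.

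What the existence step actually needs is one type~I point $z\in\calJ_f$. The vertices of $[z,\infty)$ at levels $G_f(v^0)/d^n$ lie in $f^{-n}(v^0)$, and by Lemma~\ref{lem: increasinggreen}(2) their radii tend to $0$. The paper gets such a point as a repelling type~I periodic point $p$ \cite{Fav25}. Otherwise $D(p,\epsilon)\subset\bigcap_n D_n$, a bounded $f$-invariant set, which would put $p$ in the Fatou set.

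The second gap is the transfer of smallness from $v$ to all of $Y_v$. Each of your per-step estimates loses a fixed amount at every step of the zigzag of Proposition~\ref{prop: controldistance}(2): the exponent $1/d$ in your diameter bound, or the factor $d$ in $l(e)\le l(h(e))\le d\,l(e)$. The zigzag length $s$ is unbounded, so you get no bound uniform in $w$.

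The missing idea is a critical-point count. Let $e_{v_i},e_{w_i}$ be the edges directly above $v_i,w_i$.
\begin{itemize}
\item Proposition~\ref{prop: mass-length}(4) gives $l(e_{v_i})=\deg_{w_i}h_s\cdot l(e_{w_i})$ and $l(e_{w_{i+1}})\le l(e_{v_i})$, hence $l(e_v)\le\bigl(\prod_{i=0}^s\deg_{w_i}h_s\bigr)\,l(e_w)$.
\item The disks $D_{w_i}$ are pairwise disjoint, and each contains $\deg_{w_i}h_s-1$ critical points of $h_s$ (Proposition~\ref{prop:tamecharacterization}).
\item So at most $d-1$ factors exceed $1$, and $l(e_v)\le d^{d-1}l(e_w)$ independently of $s$.
\end{itemize}
Now apply this at each vertex of the descending path from $v^0$ to $v$; your ancestor observation from Proposition~\ref{prop: controldistance}(3) supplies the matching ancestor of $w$. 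Summing gives $\log(\operatorname{diam}D_{v^0}/\operatorname{diam}D_v)\le d^{d-1}\log(\operatorname{diam}D_{v^0}/\operatorname{diam}D_w)$ for every $w\in Y_v$. This is exactly what prevents $Y_v$ (and hence $X_v$) from ``wandering'' into regions of large radius, which you identified as the main obstacle. No compactness or measure argument is needed.
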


\begin{proof}

By \cite[Theorem 3.1]{Fav25}, $f$ admits a repelling type I periodic point $p$, which necessarily belongs to the Julia set. (Note: Although the original statement is formulated for polynomials over formal Laurent series, the proof remains valid for 
fields with residue characteristic $0$.
)

Up to iteration, we may assume $p$ is a fixed point. Note that $G_f(p) = 0$. Therefore, for any $n\in\bN$, there exists a unique connected component $D_n$ of $\{G_f < \frac{1}{d^n}G_f(v^0)\}$ containing $p$. 

We claim that there exists $n$ such that $\mathrm{diam}(D_n)<\epsilon.$ Suppose for contradiction that $\mathrm{diam}(D_n) \geq \epsilon$ for all $n$. Then  $D(p, \epsilon)$ is contained in the intersection $\bigcap_{n \in \mathbb{N}} D_n$. Since $f(\bigcap_{n \in \mathbb{N}} D_n) = \bigcap_{n \in \mathbb{N}} D_n$, we conclude that $D(p, \epsilon)$ is contained in the Fatou set of $f$, which contradicts the fact that $p$ belongs to the Julia set. Let $v=\partial D_n$.
Then, $v\in f^{-n}(v^0)$ and $\mathrm{diam}(D_v)<\epsilon.$

Let $w \in Y_v$. We claim that $v^0=f^n(v) = f^{n+1}(w)$. This follows from the fact that $\{G_f < G_f(f^n(v))\}$ and $\{G_f < G_f(f^{n+1}(w))\}$ are the same disk with boundary point $v^0 = f^n(v) = f^{n+1}(w)$.

Let $h_n=f$ if $n$ is even and let $h_n=g$ if $n$ is odd. By Proposition~\ref{prop: controldistance}(2), for any $w \in Y_v$, there exist $s \in \mathbb{N}$, distinct vertices $v_0, v_1, \dots, v_s=v \in X_v$, and $w_0=w, \dots, w_s \in Y_v$ such that $h_{s}(w_i) = v_i$ and $h_{s-1}(w_{i+1}) = v_{i}$. Let $e_{w_i}$ and $e_{v_i}$ denote the unique edges with lower vertices $w_i$ and $v_i$, respectively. According to Proposition~\ref{prop: mass-length}(4), we have the length relations $l(e_{v_i}) = \deg_{w_i}h_s \cdot l(e_{w_i})$ and $l(e_{w_{i+1}}) \le l(e_{v_{i}})$ . Combining these yields:
\[
l(e_v) \le \left(\prod_{i=0}^s \deg_{w_i} h_s\right)\cdot l(e_w).
\]

Note that the disks $D_{w_i}$ are disjoint disks. By Proposition~\ref{prop:tamecharacterization}, each disk $D_{w_i}$ contains exactly $\deg_{w_i} h_s - 1$ critical points (counted with multiplicities). Since the total number of critical points of $h_s$ is $d-1$, there are at most $d-1$ indices $i$ such that $\deg_{w_i} h_s \ge 2$. We therefore obtain
\[
l(e_v) \le d^{d-1}\, l(e_w).
\]

Let $v^0, v^1, \dots, v^m = v$ be the sequence of vertices obtained by descending along the tree to $v$, and let $v^0 = w^0, w^1, \dots, w^{m+j} = w$ be the sequence descending to $w$. Since $w\in Y_v$ by assumption, it follows from Proposition~\ref{prop: controldistance}(3) that $w^{i+j} \in Y_{v^i}$ for each $1 \le i \le m$. Therefore,  $l(e_{v^i}) \le d^{d-1} l(e_{w^{i+j}})$. Summing these lengths, we obtain:
\[
\log \frac{\mathrm{diam}(D_{v^0})}{\mathrm{diam}(D_v)} = \sum_{i=1}^{m} l(e_{v^i}) \le d^{d-1} \sum_{i=1}^{m} l(e_{w^{i+j}}) \le d^{d-1} \log \frac{\mathrm{diam}(D_{v^0})}{\mathrm{diam}(D_w)}.
\]
Consequently, as $\epsilon \to 0$, it follows that $\mathrm{diam}(D_w) \to 0$, which completes the proof.
\end{proof}

\begin{proof}[Proof of  Theorem~\ref{thm: rigidity-samedegree}]
By Lemma~\ref{lem: charautojulia}, it suffices to prove that there exists an affine map $\sigma$ such that $f=\sigma\circ g$.

Fix a vertex $v^0$ such that  $\{G_f < G_f(v^0)\}$ is a disk. By Lemma~\ref{lem: smallradiusvertex}, for all $\epsilon>0$, there exists a vertex \(v\in f^{-n}(v^0)\) such that for every
\(w\in Y_v\), one has
$\operatorname{diam}(D_w)<\epsilon.$ Write $Y_v=\{w_{j}\}_{1\le j\le l}.$
By Proposition~\ref{prop: controldistance}(4), there exists a finite set $A\subset \bigcup_{v_i\in X_v} D_{v_i}\cap k$ such that, for each \(1\le j\le l\), the sets \(f^{-1}(A)\) and \(g^{-1}(A)\)
have the same cardinality in \(D_{w_j}\), counted with multiplicity.
Since \(\operatorname{diam}(D_{w_j})<\epsilon\), any 
\(x,y\in D_{w_j}\cap k\) satisfy \(|x-y|<\epsilon\).

Let \(N=\#A\), and write $
f^{-1}(A)=\{x_1,\dots,x_{dN}\}$ and $g^{-1}(A)=\{y_1,\dots,y_{dN}\}$,
counted with multiplicity.
After reindexing, we may assume that
$|x_i-y_i|<\epsilon$ for all  $1\le i\le dN$. Since \(x_i,y_i\in D_{v^0}\) by construction, there exists a constant \(C>0\),
independent of \(\epsilon\), such that for every \(1\le s\le d\), one has
\[
\left|\sum_{i=1}^{dN} x_i^{s}-\sum_{i=1}^{dN} y_i^{s}\right|
\le C\epsilon.
\]

Write $
f(z)=a_d z^d+a_{d-1}z^{d-1}+\cdots+a_0,$ and $g(z)=b_d z^d+b_{d-1}z^{d-1}+\cdots+b_0.$ Assume that $g$ cannot be written as the composition of an affine map with $f$.
Let \( i \) be the largest integer such that $
\frac{a_i}{a_d} \neq \frac{b_i}{b_d}.$
Then \( i \ge 1 \).

Note that for $i \ge 1$, by Viète's theorem, the $(d-i)$-th elementary symmetric function of $f^{-1}(c)$ (resp.\ $g^{-1}(c)$) is independent of the choice of $c \in k$ and is equal to $\frac{a_i}{a_d}$ (resp.\ $\frac{b_i}{b_d}$).
Denote by $x_i^c$ (resp. $y_i^c$) the preimages for $c\in A$ under $f$ (resp. $g$).
Applying Newton’s identities and Viète’s formulas to preimages of all $c \in A$ under $f$ and $g$,  we obtain\begin{align*}
(-1)^{d-i}(d-i)
\left(\frac{a_i}{a_d}\right)
&=
(-1)^{i-1}
\sum_{s=1}^{d-i}
\frac{a_{i+s}}{a_d}
\left(
\sum (x_i^{c})^{s}
\right);\\
(-1)^{d-i}(d-i)
\left(\frac{b_i}{b_d}\right)
&=
(-1)^{i-1}
\sum_{s=1}^{d-i}
\frac{b_{i+s}}{b_d}
\left(
\sum (y_i^{c})^{s}
\right).
\end{align*} 
Since we assume that $\frac{a_{i+s}}{a_d}=\frac{b_{i+s}}{b_d}$, summing over all $c \in A$ and taking the difference, we obtain
\begin{align*}
(-1)^{d-i}(d-i)N
\left(\frac{a_i}{a_d}-\frac{b_i}{b_d}\right)
&=
(-1)^{i-1}
\sum_{s=1}^{d-i}
\frac{a_{i+s}}{a_d}
\left(
\sum_{j=1}^{dN}(x_j^{s}-y_j^{s})
\right).
\end{align*}
Since the residue characteristic is assumed to be \(0\), we have \(|(d-i)N|=1\). It then follows from the strong triangle inequality that

\[
\left|\frac{a_i}{a_d}-\frac{b_i}{b_d}\right|
\le
C\epsilon
\max_{1\le s\le d-i}
\left|\frac{a_{i+s}}{a_d}\right|.
\]
Letting \(\epsilon\to 0\), we obtain
\(\frac{a_i}{a_d}=\frac{b_i}{b_d}\), which leads to a contradiction.

Therefore, 
\(\frac{a_i}{a_d}=\frac{b_i}{b_d}\) for all \(i\ge 1\). Let $\lambda=\frac{
a_d}{b_d}$ and $\mu=a_0-\frac{b_0a_d}{b_d}$, then $f=\lambda g+\mu$, which proves the claim. 
\end{proof}

\subsection{Group of dynamical symmetries}
We recall from~\cite[\S3]{FG22} the notion of the dynamical symmetry group of a polynomial over a field $k$ with characteristic $0$. 

We begin with monic and centered polynomials.  
A \emph{reduced presentation} of \( f \in k[z] \) is a decomposition
\[
f(z)=z^{\mu} f_0(z^{m}),
\]
where \( \mu,m \ge 0 \), \( f_0(0)\neq 0 \), and \( f_0 \) is not of the form \( h(z^{l}) \) for any polynomial \( h \) with \( \deg h \ge 1 \) and any \( l \ge 2 \).  
Such a presentation is unique.

The \emph{group of dynamical symmetries} of \( f \), denoted by \( \Sigma_f \), is defined by
\[
\Sigma_f=
\begin{cases}
\mathbb{U}_m, & \text{if } f_0\neq 1,\\
\bigcup_{m=1}^{\infty}\bU_m, & \text{if } f_0=1,
\end{cases}
\]
where \( \mathbb{U}_m \) denotes the cyclic group of \( m \)-th roots of unity.

Now let $f$ be an arbitrary polynomial of degree at least $2$, and let $L$ be an affine transformation such that $L^{-1}\circ f \circ L$ is monic and centered.
We define the \emph{group of dynamical symmetries of $f$} by
\[
\Sigma_f
=
\bigl\{L^{-1}\circ \sigma \circ L \big| \sigma \in \Sigma_{L^{-1}\circ f \circ L}\bigr\}.
\]
This definition is independent of the choice of the affine transformation $M$.

\smallskip

A priori, the dynamical symmetry group $\Sigma_f$ is defined in purely algebraic terms.  
However, as a consequence of rigidity phenomena for complex Julia sets, Favre--Gauthier~\cite[Proposition~3.9]{FG22} proved  $\Aut(\mathcal{J}_f)=\Sigma_f$ when $f$ is not a monomial.

\begin{proposition}
\label{prop: cplxsymmetrygroup}
Let $f$ be a complex polynomial of degree $d\ge 2$. Then:
\begin{enumerate}
    \item The group $\Aut(\mathcal{J}_f)$ of affine transformations preserving $\mathcal{J}_f$ coincides with the group of all $\sigma\in \mathrm{Aff}(\C)$ such that the Green function satisfies
    $G_f(\sigma(z)) = G_f(z).$
    
    \item If $f$ is not conjugate to the monomial $M_d(z)=z^d$, then
    $ \Aut(\mathcal{J}_f)=\Sigma_f .$
    Otherwise, $\Aut(\mathcal{J}_{M_d})=\mathbb{S}^1$, whereas $\Sigma_{M_d}$ is the group of roots of unity.
\end{enumerate}
\end{proposition}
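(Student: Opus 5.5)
Part (1) comes from the defining property of $\calJ_f$ as the boundary of the filled-in Julia set $\mathcal K_f$ (the complex analogue of Proposition~\ref{prop: basicgreen}(3)). Part (2) will be reduced to the monic centered case and then to Favre--Gauthier~\cite[Proposition~3.9]{FG22}.

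\emph{Part (1).} First I will show that every $\sigma$ with $G_f\circ\sigma = G_f$ lies in $\Aut(\calJ_f)$. Such a $\sigma$ preserves the zero set $\{G_f=0\}=\mathcal K_f$, so it also preserves its topological boundary, which is $\calJ_f$.

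For the converse, let $\sigma\in\Aut(\calJ_f)$.
\begin{itemize}
\item Since $\hat{\C}\setminus\calJ_f$ has a unique unbounded component $\Omega_f$, and $\sigma$ fixes $\infty$ and preserves $\calJ_f$, $\sigma$ maps $\Omega_f$ onto itself. Hence $\sigma(\mathcal K_f)=\mathcal K_f$.
\item On $\Omega_f$, the function $G_f$ is the Green function of $\Omega_f$ with pole at $\infty$: it is harmonic, tends to $0$ at $\calJ_f$, and equals $\log|z|+O(1)$ near $\infty$.
\item $G_f\circ\sigma$ has the same three properties, because $\sigma$ is affine and $\log|\sigma(z)|=\log|z|+O(1)$.
\item By uniqueness of the Green function of $\Omega_f$ (equivalently, uniqueness of the equilibrium measure of the compact set $\mathcal K_f$), $G_f\circ\sigma=G_f$ on $\Omega_f$.
\item On $\mathcal K_f$ both functions vanish, so the identity holds everywhere.
\end{itemize}

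\emph{Part (2).} Choose an affine $L$ with $L^{-1}\circ f\circ L$ monic and centered. By Proposition~\ref{prop: basicgreen}(4), in its complex form, $G_f\circ L=G_{L^{-1}fL}$. Therefore $\Aut(\calJ_f)=L\,\Aut(\calJ_{L^{-1}fL})\,L^{-1}$. The group $\Sigma_f$ is defined in exactly the same conjugation-equivariant way, so it suffices to treat $f$ monic and centered.

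If $f$ is not conjugate to $M_d$, then $f$ is not a monomial, and the equality $\Aut(\calJ_f)=\Sigma_f$ is \cite[Proposition~3.9]{FG22}. I will cite this step rather than reprove it; it is the substantive input and relies on the complex rigidity results of Baker--Eremenko and Schmidt--Steinmetz.

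If $f=M_d$, then $\calJ_{M_d}$ is the unit circle. An affine map $z\mapsto az+b$ preserving the unit circle must have $b=0$ and $|a|=1$, so $\Aut(\calJ_{M_d})=\mathbb S^1$. On the other hand, $M_d=z^d$ has reduced presentation with $f_0=1$, so by definition $\Sigma_{M_d}=\bigcup_m\bU_m$, the group of roots of unity.

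The main obstacle is the non-monomial case of (2), which is delegated to \cite{FG22}. The only delicate point in (1) is checking that $\sigma$ preserves $\Omega_f$ and not merely $\calJ_f$; this follows from connectedness together with $\sigma(\infty)=\infty$.
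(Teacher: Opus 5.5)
Your proposal is correct and essentially agrees with the paper. The paper gives no argument of its own for this statement and simply attributes it to Favre--Gauthier \cite[Proposition~3.9]{FG22}. Like the paper, you delegate the substantive non-monomial case to that reference. Your maximum-principle argument for (1), which parallels the paper's proof of Proposition~\ref{prop: symmetryjulia}(1), and your direct computation for $M_d$ make explicit what the paper leaves to the citation.

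One small remark on your reduction step. You use the conjugation rule $\Sigma_f=\{L\circ\sigma\circ L^{-1}\mid \sigma\in\Sigma_{L^{-1}\circ f\circ L}\}$. That is the correct rule, and it is the one the paper itself uses in the proof of Lemma~\ref{lem: reducedform}. However, the paper's displayed definition of $\Sigma_f$ writes the conjugation the other way round, as $L^{-1}\circ\sigma\circ L$.
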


Following the same philosophy, we establish an analogous comparison between $\Aut(\mathcal{J}_f)$ and $\Sigma_f$ in the non-Archimedean setting. This will follow from Theorem~\ref{thm: rigidity-samedegree}.

\begin{proposition}
\label{prop: symmetryjulia}
Let $k$ be a non-Archimedean field with residue characteristic $0$. Let $f(z)\in k[z]$ be a polynomial of degree at least $2$.
\begin{enumerate}
    \item  The group $\Aut(\calJ_f)$ coincides with the group of $\sigma\in\mathrm{Aff}(k)$ such that the Green function satisfies $G_f(\sigma(z))=G_f(z)$;
    \item Suppose $f$ has potential good reduction. Let $L\in\mathrm{Aff}(k)$ such that $L(\calJ_f)=x_g$, then \[\Aut(\calJ_f)=\{L^{-1}\circ(az+b)\circ L\mid |a|=1,|b|\le 1 \}.\]
    \item If $f$ does not have potential good reduction, then \[\Aut(\calJ_f)=\Sigma_f.\]
\end{enumerate}
\end{proposition}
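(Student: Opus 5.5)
We prove the three parts in order. Part (1) comes from how Julia sets and Green functions transform under affine conjugacy. Part (2) is a direct computation at the Gauss point. Part (3) combines Theorem~\ref{thm: rigidity-samedegree} with an explicit analysis of the coefficients.

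For (1), let $\sigma\in\mathrm{Aff}(k)$. The Julia set of $\sigma^{-1}\circ f\circ\sigma$ is $\sigma^{-1}(\calJ_f)$, and Proposition~\ref{prop: basicgreen}(4) gives $G_{\sigma^{-1}\circ f\circ \sigma}=G_f\circ\sigma$. As recalled at the start of this section (\cite[\S4]{FRL06}, \cite[Theorem~10.91]{BR10}), two polynomials have equal Julia sets if and only if they have equal Green functions. Hence
\[
\sigma\in\Aut(\calJ_f)\iff \calJ_{\sigma^{-1}\circ f\circ\sigma}=\calJ_f\iff G_f\circ\sigma=G_f .
\]

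For (2), note that $\Aut(L(\calJ_f))=L\circ\Aut(\calJ_f)\circ L^{-1}$. So it suffices to determine the affine maps $z\mapsto az+b$ that fix the Gauss point $x_g$. By \eqref{eqq: imagedisk}, such a map sends $x_g$ to $\zeta(b,|a|)$. This equals $x_g=\zeta(0,1)$ exactly when $|a|=1$ and $|b|\le 1$.

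For (3), all the objects involved transform compatibly under affine conjugacy. Therefore we may assume that $f$ is monic and centered, and we write $f(z)=z^{\mu}f_0(z^m)$ in reduced presentation. Since $f$ does not have potential good reduction, $f\neq z^d$.

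\emph{Inclusion $\Sigma_f\subset\Aut(\calJ_f)$.} Suppose $\zeta^m=1$. Then $f(\zeta z)=\zeta^{\mu}f(z)$, and $\zeta^{\mu}$ is again an $m$-th root of unity. By induction, $f^n(\zeta z)=\zeta_n f^n(z)$ for suitable roots of unity $\zeta_n$. Hence $|f^n(\zeta z)|=|f^n(z)|$, which gives $G_f(\zeta z)=G_f(z)$, and (1) finishes this direction.

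\emph{Step 1: every $\sigma\in\Aut(\calJ_f)$ is linear.} Let $\sigma(z)=az+b$ preserve $\calJ_f$. By Lemma~\ref{lem: charautojulia}, $\calJ_{f\circ\sigma}=\calJ_f$. Both $f\circ\sigma$ and $f$ have degree $d$ and share this Julia set, which is not a single point. So Theorem~\ref{thm: rigidity-samedegree} gives $f\circ\sigma=\tau\circ f$ for some $\tau\in\Aut(\calJ_f)$. Comparing leading terms yields $\tau(w)=a^d w+c$. Now compare the $z^{d-1}$ coefficients, using $a_{d-1}=0$. The left side contributes $d\,b\,a^{d-1}$ and the right side contributes $0$. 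Since $k$ has characteristic $0$, this forces $b=0$.

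\emph{Step 2: applying Step 1 to $\tau$.} The map $\tau$ also lies in $\Aut(\calJ_f)$, so the same argument gives $c=0$. Therefore $f(az)=a^d f(z)$. Comparing coefficients, $a^{i}=a^d$ for every $i$ with $a_i\neq 0$, including $i=0$ when $a_0\neq 0$. That is, $a^{d-i}=1$ for every such $i$.

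\emph{Step 3: $a$ is an $m$-th root of unity.} The exponents $i$ with $a_i\neq0$ are exactly $\mu+jm$, where $j$ ranges over the exponents of $f_0$, including $j=0$. Reducedness of $f_0$ means these $j$ have greatest common divisor $1$. The differences $d-i$ therefore generate $m\Z$, and so $a^m=1$. Hence $\sigma\in\bU_m=\Sigma_f$.

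The main point to get right is the two-step use of rigidity: first showing that every element of $\Aut(\calJ_f)$ is linear, and then reapplying this to $\tau$ so that the constant $c$ vanishes. Without $c=0$, cases such as $z^d+a_0$ would not give $a^d=1$.
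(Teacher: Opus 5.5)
Your proof is correct. Parts (1) and (2) essentially follow the paper. In (1) you use the Julia-set/Green-function equivalence in both directions, whereas the paper gets the converse directly from $\mathcal{K}_f=\{G_f=0\}$ and $\calJ_f=\partial\mathcal{K}_f$; either works.

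Part (3) takes a different route after a common start. Both arguments apply Theorem~\ref{thm: rigidity-samedegree} to $f\circ\sigma$ and $f$, obtaining $f\circ\sigma=\tau\circ f$ with $\tau\in\Aut(\calJ_f)$. The paper then extracts only a qualitative consequence: $\sigma$ permutes $\mathrm{Crit}(f)$. This makes $\Aut(\calJ_f)$ finite when there are two distinct critical points, and the unicritical case $z^d+C$ is handled by hand. The inclusion $\Aut(\calJ_f)\subseteq\Sigma_f$ is then imported from \cite[Proposition~3.6]{FG22}.

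You instead read everything off the coefficients. The $z^{d-1}$ term, using $a_{d-1}=0$ and $d\neq 0$ in $k$, kills the translation part of $\sigma$. The same argument applied to $\tau$ kills its translation part. Finally, $f(az)=a^d f(z)$ together with the gcd condition in the reduced presentation forces $a\in\bU_m$.

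Your version is self-contained and treats the unicritical and multicritical cases uniformly. It does not rely on the external classification result; it effectively reproves it in this setting. The paper's version is shorter because it delegates that algebraic step to \cite{FG22}.
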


\begin{proof}
(1) Let $\sigma \in \Aut(\calJ_f)$. Then
$\calJ_{\sigma^{-1}\circ f\circ \sigma}
= \sigma^{-1}(\calJ_f)
= \calJ_f.$
By \cite[Theorem~10.91]{BR10}, we have
$G_{\sigma^{-1}\circ f\circ \sigma}(z)=G_f(z).$
It follows from Proposition~\ref{prop: basicgreen}(4) that
\[
G_f(\sigma(z))
= G_{\sigma^{-1}\circ f\circ \sigma}(z)
= G_f(z).
\]
Conversely, suppose $\sigma \in \mathrm{Aff}(k)$ satisfies $G_f(\sigma(z))=G_f(z)$ for all $z$.
Then $\sigma$ preserves the filled-in Julia set $\cal{K}_f$, and hence also its boundary, $
\calJ_f=\partial \cal{K}_f.$
Therefore, $\sigma \in \Aut(\calJ_f)$.

\smallskip

(2) The second statement follows from the observation that the group of affine transformations fixing $x_g$ is precisely $\{az+b\mid |a|=1,|b|\le 1\}$.

\smallskip

(3) Up to conjugation, we may assume that $f$ is monic and centered, and write its reduced presentation as
$f(z)=z^{\mu} f_0(z^m).$

By Lemma~\ref{lem: charautojulia}, for any $\sigma \in \Aut(\calJ_f)$ we have
$\calJ_{f\circ \sigma}=\calJ_f.$
By Theorem~\ref{thm: rigidity-samedegree}, there exists an element
\(
\rho(\sigma)\in \Aut(\calJ_f)
\)
such that
\begin{equation}\label{eqq: rigid}
f\circ \sigma=\rho(\sigma)\circ f.
\end{equation}
Moreover, the map $
\rho \colon \Aut(\calJ_f) \longrightarrow \Aut(\calJ_f)$
is a group homomorphism.

We claim that $\Aut(\calJ_f)$ is finite.  
Assuming this claim, it then follows from \cite[Proposition~3.6]{FG22} that
$\Aut(\calJ_f) \subseteq \Sigma_f.$

Conversely, let $\lambda \in \bU_m=\Sigma_f$. A direct computation shows that
\[
(\lambda f)^k(z)
= \lambda^{1+\mu+\cdots+\mu^{k-1}} f^k(z)
\quad \text{for all } k\ge1.
\]
Hence the associated Green functions satisfy $G_{\lambda f}=G_f,$
which implies $
\calJ_{\lambda f}=\calJ_f.$
By Lemma~\ref{lem: charautojulia}, we conclude that $\lambda z \in \Aut(\calJ_f)$.
Therefore, $
\Aut(\calJ_f)=\Sigma_f.$

It remains to prove the claim that $\Aut(\calJ_f)$ is finite.
By~\eqref{eqq: rigid}, each $\sigma\in \Aut(\calJ_f)$ permutes the critical points of $f$.
Since an affine transformation is uniquely determined by the images of two distinct points,
it follows that if $f$ has at least two critical points, then $\Aut(\calJ_f)$ is finite.

If instead $f$ is unicritical, then, since $f$ is monic, centered, and has bad reduction,
it can be written as $
f(z)=z^d+C$ with $C\in k^\times.$
In this case, $\sigma$ fixes the unique critical point $0$, and hence must be of the form
$\sigma(z)=\lambda z$ for some $\lambda\in k^\times$.
Similarly, we may write $\rho(\sigma)(z)=\lambda' z$.
Then~\eqref{eqq: rigid} gives
$(\lambda z)^d + C = \lambda'(z^d + C).$
It follows that $\lambda$ is a root of unity and $\lambda'=1$.
Hence $\Aut(\calJ_f)$ is finite in this case as well.
\end{proof}

\subsection{General rigidity of Julia sets}
 In order to prove Theorem~\ref{IntroRigidity1},  we need the following rigidity theorem for complex polynomials. 

 \smallskip

 Recall that $k$ is an algebraically closed field of characteristic $0$, and let
\[
f(z)=a_dz^d+a_{d-1}z^{d-1}+\cdots
\]
be a polynomial of degree $d\ge 2$ over $k$. A formal series $\phi_f\in k[[z^{-1}]][z]$ is called a Böttcher coordinate if it satisfies
\[
\phi_f\circ f=\phi_f^d,
\]
see~\cite[\S 9]{Mi06} and~\cite[\S 2.5]{FG22}. 

In fact, $\phi_f$ can be written in the form
\[
\phi_f=\alpha\left(z+\frac{a_{d-1}}{d a_d}\right)+\sum_{j\ge1}\alpha_j z^{-j},
\]
where $\alpha^{\,d-1}=a_d$ and $\alpha_j\in \Q(a_i,\alpha)$. Moreover,  $\phi_f$ is uniquely determined up to multiplication by a $(d-1)$-st root of unity.

    \begin{theorem}
    \label{thm: cplxrigidity}
Assume that $\calJ$ is the Julia set of a complex polynomial of degree at least $2$ which is not integrable. Let $g$ be a polynomial of minimal degree
whose Julia set equals $\calJ$.

For any complex polynomial $f$ of degree $d\ge 2$, the following conditions are equivalent:
\begin{enumerate}
\item There exist an integer $n \ge 1$ and $\sigma \in \Aut(\calJ)$ such that $f = \sigma \circ g^{n}.$
\item The Julia sets coincide:
$\calJ_f = \calJ.$
\item The equilibrium measures coincide: $
\mu_f = \mu_g.$
\item The Green functions coincide:
$G_f = G_g.$
\item The Böttcher coordinates at $\infty$ agree up to a root of unity:
$\phi_f = \lambda \phi_g$, where $\lambda$ is a root of unity.
\end{enumerate}
\end{theorem}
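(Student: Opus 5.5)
The plan is to assemble this complex rigidity statement from the classical literature rather than reprove it; the paper clearly intends it as an input for the proof of Theorem~\ref{IntroRigidity1}.

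\textbf{Easy implications.} I would first dispose of (1) $\Rightarrow$ (2)--(5).

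If $f=\sigma\circ g^n$ with $\sigma\in\Aut(\calJ)$, then Lemma~\ref{lem: charautojulia} (its proof is field-independent) applied to $g^n$ gives $\calJ_f=\calJ_{g^n}=\calJ$. Hence (2) holds.

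The equivalences (2) $\Leftrightarrow$ (3) $\Leftrightarrow$ (4) are standard over $\C$:
\begin{itemize}
\item The Green function is the Green function of $\C\setminus\mathcal K$ with pole at infinity.
\item $\mathcal K$ is the complement of the unbounded component of $\C\setminus\calJ$, so $\calJ$ determines $G$.
\item $\mu=\Delta G$ and $\supp\mu=\calJ$ give the remaining directions.
\end{itemize}

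For (4) $\Leftrightarrow$ (5), use $G=\log|\phi|$ near $\infty$ for both maps. If $G_f=G_g$, then $\phi_f/\phi_g$ is holomorphic near $\infty$ with modulus $1$, hence a constant $\lambda$.

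To see that $\lambda$ is a root of unity, first reduce to equal degrees by passing to iterates; this is legitimate because $\phi_{f^k}=\phi_f$. Then compare leading coefficients using $\alpha^{d-1}=a_d$. Conversely, (5) $\Rightarrow$ (4) is immediate from $G=\log|\phi|$.

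\textbf{Main implication (2) $\Rightarrow$ (1).} This is the classical theorem of Beardon~\cite{Bea90,Bea92} together with Schmidt--Steinmetz~\cite{SS95}; see also Baker--Eremenko~\cite{BE87}. The plan has three steps.

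\begin{enumerate}
\item Show that the set $S=\{h \text{ polynomial of degree}\ge2 : \calJ_h=\calJ\}$, together with $\Aut(\calJ)$, forms a semigroup under composition. By Proposition~\ref{prop: cplxsymmetrygroup}, non-integrability means $\calJ$ is not a circle, so $\Aut(\calJ)=\Sigma_g$ is finite.

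\item Use the Böttcher equation to show that two members of $S$ of equal degree differ by postcomposition with an element of $\Aut(\calJ)$. From $\phi_f=\lambda\phi_g$ one gets $\phi_g(f(z))=\lambda^{-d}\phi_g(g(z))$. Since $\phi_g$ is injective near $\infty$, this yields $f=\sigma\circ g$, where $\sigma$ is the affine map with $\phi_g\circ\sigma=\lambda^{-d}\phi_g$. By (1) of Proposition~\ref{prop: cplxsymmetrygroup}, $\sigma\in\Aut(\calJ)$.

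\item Treat unequal degrees $d$ and $e=\deg g$ (minimal). Schmidt--Steinmetz show that $f$ and $g$ then have a common iterate up to symmetry. In particular $d^a=e^b$, and minimality of $e$ forces $d=e^n$. Step~2 applied to $f$ and $g^n$ then gives $f=\sigma\circ g^n$.
\end{enumerate}

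\textbf{Main obstacle.} The hardest point is the commensurability $d=e^n$, i.e.\ excluding a degree $d$ that is not a power of the minimal degree. The argument I would rely on is Schmidt--Steinmetz's: commuting-type Böttcher relations force $f$ and $g$ to lie in a cyclic semigroup modulo the finite group $\Aut(\calJ)$. Finiteness of $\Aut(\calJ)$ is exactly where non-integrability enters. I would cite this rather than reprove it.
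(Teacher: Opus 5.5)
Your route matches the paper's. Both cite Schmidt--Steinmetz for (1)$\Leftrightarrow$(2), use classical potential theory for (2)$\Leftrightarrow$(3)$\Leftrightarrow$(4), use $G=\log|\phi|$ for (5)$\Rightarrow$(4), and compare leading coefficients for the rest.

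The one step that does not work as written is the root-of-unity claim in (4)$\Rightarrow$(5). Take iterates $F,G'$ of $f,g$ of common degree $D$. Comparing leading coefficients only gives $\lambda^{D-1}=\mathrm{lc}(F)/\mathrm{lc}(G')$, and (4) only tells you this ratio has modulus $1$. Your paragraph never uses non-integrability, and without it the conclusion is false: for $f=z^d$ and $g=e^{2\pi i\theta}z^d$ with $\theta\notin\Q$, one has $G_f=G_g$, but $\phi_f/\phi_g=e^{-2\pi i\theta/(d-1)}$. Moreover, equal-degree iterates exist only once $d^a=e^b$ is known, and (4) does not give this by itself.

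The paper instead proves that (1) and (4) together imply (5). If $f=\sigma\circ g^n$, then $\lambda^{d-1}=\mathrm{lc}(f)/\mathrm{lc}(g^n)$ is the linear coefficient of $\sigma$. This is a root of unity because $\Aut(\calJ)=\Sigma_g$ is finite for non-monomial $g$ (Proposition~\ref{prop: cplxsymmetrygroup}). So you should obtain (4)$\Rightarrow$(5) by first going (4)$\Rightarrow$(2)$\Rightarrow$(1) and then adding this sentence. Your Step~2 supplies the same thing once $\sigma$ is known to lie in the finite group $\Aut(\calJ)$.

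Some smaller fixes.

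\begin{enumerate}
\item In Step~2, the chain $\lambda\,\phi_g(f)=\phi_f(f)=\phi_f^{d}=\lambda^{d}\phi_g(g)$ gives $\phi_g\circ f=\lambda^{d-1}\,\phi_g\circ g$, not $\lambda^{-d}$.
\item The map $\sigma=\phi_g^{-1}\circ(\text{multiplication by }\lambda^{d-1})\circ\phi_g$ is a priori only a germ at $\infty$. To see it is affine, write $\sigma(w)=\lambda^{d-1}w+c+h(w)$ with $h(w)=O(1/w)$. Then $h\circ g=f-\lambda^{d-1}g-c$ is a polynomial tending to $0$ at $\infty$, so $h\equiv 0$.
\item Non-integrability is stronger than ``$\calJ$ is not a circle'': it also excludes Chebyshev maps. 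For instance $\Aut([-2,2])=\{\pm z\}$ is finite, yet $T_2$ and $T_3$ share this Julia set. So finiteness of $\Aut(\calJ)$ is not where non-integrability enters (2)$\Rightarrow$(1); it enters through the exclusion of monomial and Chebyshev maps in the Schmidt--Steinmetz cyclicity theorem.
\item The relation $d^a=e^b$ with $e$ minimal does not force $d=e^n$ (take $e=4$, $d=8$). What you need from Schmidt--Steinmetz is that $f$ and $g$ are, up to symmetries, iterates of a single polynomial $Q$. Minimality then gives $\deg Q=e$, which is exactly how the paper argues in its proof of (5)$\Rightarrow$(1) in Theorem~\ref{IntroRigidity1}.
\end{enumerate}
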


\begin{proof}
 The equivalence of (1) and (2) was proved by Schmidt--Steinmetz~\cite{SS95}.  
For the equivalence of (2), (3), and (4), we refer the reader to~\cite{Ran95}.  
The implication (5) $\Rightarrow$ (4) follows from the identity
$G_f=\log|\phi_f|=\log|\phi_g|=G_g,$
see~\cite[\S9]{Mi06}.

It remains to prove that (1) and (4) imply (5).  
Since $\log|\phi_f|=\log|\phi_g|$, there exists $\lambda\in\mathbb{S}^1$ such that
$\phi_f=\lambda\phi_g.$
Up to conjugation, we may assume that $f$ is monic and centered.  
Write
\[
\phi_f(z)=z+\cdots, \qquad \phi_g(z)=\frac{1}{\lambda}z+\cdots .
\]
By Proposition~\ref{prop: cplxsymmetrygroup}, $\sigma$ is given by multiplication by a root of unity. 
Moreover, $\sigma(z)=\lambda^{d-1}z$: indeed, the leading coefficient of 
$g^n=\sigma^{-1}\circ f$ is the $(d-1)$st power of the leading coefficient 
$\lambda$ of the Böttcher coordinate $\phi_g=\phi_{g^n}$. 
It follows that $\lambda^{d-1}$ is a root of unity, and hence $\lambda$ itself is a root of unity.
\end{proof}

\begin{lemma}
\label{lem: reducedform}
Let $k$ be a non-Archimedean field with residue characteristic $0.$ Let $f\in k[z]$ be a polynomial of degree at least $2$ which does not have potential good reduction. Suppose
$\Aut(\calJ_f)=\{\lambda z\in\mathrm{Aff}(k)\mid\lambda\in\bU_m\}$. Then 
there exist an integer $\mu\ge 0$ and a polynomial
$f_0\in k[z]$ such that $
f(z)=z^{\mu} f_0(z^{m}).$
\end{lemma}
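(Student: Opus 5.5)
Let $\zeta$ be a primitive $m$-th root of unity, so $\sigma(z)=\zeta z$ lies in $\Aut(\calJ_f)$. The plan is to use Theorem~\ref{thm: rigidity-samedegree} twice: first to get a functional equation for $f$, then to pin down the scalar appearing in it.

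\textbf{Step 1: the functional equation.} By Lemma~\ref{lem: charautojulia}, $\calJ_{f\circ\sigma}=\calJ_f$. The map $f\circ\sigma$ has degree $d$ and does not have potential good reduction, since its Julia set is $\calJ_f$, which is not a single point. Theorem~\ref{thm: rigidity-samedegree} therefore gives some $\tau\in\Aut(\calJ_f)$ with $f\circ\sigma=\tau\circ f$. By hypothesis $\tau(z)=\zeta^{\mu}z$ for some $0\le \mu<m$. So
\[
f(\zeta z)=\zeta^{\mu}f(z).
\]

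\textbf{Step 2: the monomial constraint.} Write $f=\sum_i a_iz^i$. Comparing coefficients gives $a_i\zeta^i=a_i\zeta^{\mu}$ for every $i$. Hence $a_i\neq 0$ forces $i\equiv \mu \pmod m$.

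\textbf{Step 3: conclusion.} Choose $\mu\ge 0$ to be the least index with $a_\mu\ne0$. Every nonzero term then has the form $a_{\mu+jm}z^{\mu+jm}$ with $j\ge 0$. Setting $f_0(w)=\sum_j a_{\mu+jm}w^j$, we get $f(z)=z^{\mu}f_0(z^m)$, as claimed.

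\textbf{Main obstacle.} The only delicate point is Step~1: justifying that Theorem~\ref{thm: rigidity-samedegree} applies to the pair $(f\circ\sigma,f)$. Both maps have degree $d$, and the argument above shows that neither has potential good reduction, so the hypotheses hold. The same reasoning was already used in the proof of Proposition~\ref{prop: symmetryjulia}. Everything after that is elementary coefficient comparison.
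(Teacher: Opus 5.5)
Your proof is correct, and it takes a more direct route than the paper's.

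The paper's proof goes through the following steps:
\begin{enumerate}
\item It conjugates $f$ to a monic centered polynomial $L^{-1}\circ f\circ L = z^{\mu}h_0(z^{m_1})$ in reduced presentation.
\item It invokes Proposition~\ref{prop: symmetryjulia}(3) to identify $\Aut(\calJ_f)$ with $L\circ\bU_{m_1}\circ L^{-1}$.
\item It compares cardinalities to get $m_1=m$.
\item It deduces from $L\circ\bU_m\circ L^{-1}=\bU_m$ that $L(z)=az$ is a homothety.
\item It rescales $h_0$ to obtain $f_0$.
\end{enumerate}

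You instead apply Theorem~\ref{thm: rigidity-samedegree} once, to the pair $(f\circ\sigma,f)$ with $\sigma$ a generator of $\Aut(\calJ_f)$. This gives $f(\zeta z)=\zeta^{\mu}f(z)$, which is the same relation $f\circ\sigma=\rho(\sigma)\circ f$ that drives the proof of Proposition~\ref{prop: symmetryjulia}(3). You then read off the conclusion from the coefficients. The hypothesis already says the automorphisms are homotheties fixing $0$, so you never need:
\begin{itemize}
\item the finiteness of $\Aut(\calJ_f)$;
\item the appeal to \cite[Proposition~3.6]{FG22};
\item uniqueness of reduced presentations;
\item the bookkeeping with $L$.
\end{itemize}
Your argument depends only on Lemma~\ref{lem: charautojulia} and the rigidity theorem. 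The paper's route additionally shows that the normalizing conjugacy is a homothety and that $m$ equals the symmetry order of the reduced presentation. None of that is required for the lemma as stated.

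One cosmetic point: in Step~3 you reuse the letter $\mu$ for the least index with $a_\mu\neq 0$, which need not be smaller than $m$. This is harmless, because all indices $i$ with $a_i\neq 0$ are congruent to one another modulo $m$.
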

\begin{proof} If $m=1$, the conclusion is immediate. We therefore assume that $m\neq 1$.

Let $L$ be an affine transformation such that $L^{-1}\circ f\circ L$ is monic and centered. 
Let $L^{-1}\circ f\circ L (z)= z^{\mu} h_0(z^{m_1})$ be a reduced representation, where $h_0(z)=\sum_{i=0}^{\delta}a_i z^i.$

Applying Proposition~\ref{prop: symmetryjulia} to $L^{-1}\circ f\circ L$, we obtain \[
\Aut(\calJ_f)=\{L\circ \sigma\circ L^{-1}\in\mathrm{Aff}(k)\mid\text{
$\sigma(z)=\lambda z$ for some 
$\lambda\in\bU_{m_1}$}\}.\] Since $\#\Aut(\calJ_f)=m$ by assumption, it follows that $m=m_1$. Thus $L\circ \bU_m\circ L^{-1}=\bU_m.$ Hence $L$ is of the form $L(z)=az.$
Define $f_0(z)=\sum_{i=0}^{\delta} a_ia^{i m+\mu-1} z^i.$ 
Then we have
\[
f(z)= z^{\mu} f_0(z^{m}),
\]
which proves the desired decomposition.
\end{proof}
\begin{lemma}
\label{lem: autjalgebraic}
  Let \(k\) be a non-Archimedean field of residue characteristic \(0\), and let
\(K \subseteq k\) be a subfield. Let \(f \in K[z]\) be a polynomial of degree
at least \(2\). Suppose that \(f\) does not have potential good reduction.

Then for any embedding
$\jmath : K/\bb{Q} \hookrightarrow \mathbb{C}/\bb{Q},$
we have
\[
\jmath\big(\mathrm{Aut}(\mathcal{J}_f)\big)
=
\mathrm{Aut}\big(\mathcal{J}_{\jmath(f)}\big).
\]
\end{lemma}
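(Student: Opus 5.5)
The plan is to reduce both sides to the purely algebraic group of dynamical symmetries $\Sigma$, using Proposition~\ref{prop: symmetryjulia}(3) on the non-Archimedean side and Proposition~\ref{prop: cplxsymmetrygroup}(2) on the complex side. The statement is then that the field embedding $\jmath$ transports $\Sigma_f$ onto $\Sigma_{\jmath(f)}$. Since $\Sigma_f$ will consist of affine maps with coefficients algebraic over $K$, I first extend $\jmath$ to an embedding $\bar K\hookrightarrow\mathbb{C}$, where $\bar K$ is the algebraic closure of $K$ inside the algebraically closed field $k$. This extension is what makes $\jmath(\Aut(\calJ_f))$ meaningful.

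\emph{Step 1.} Since $f$ does not have potential good reduction, Proposition~\ref{prop: symmetryjulia}(3) gives $\Aut(\calJ_f)=\Sigma_f$. To compute $\Sigma_f$, write $f=\sum a_iz^i$ and set $L(z)=\alpha z-\frac{a_{d-1}}{da_d}$ with $\alpha^{d-1}=a_d^{-1}$. Then $L^{-1}\circ f\circ L$ is monic and centered, and its coefficients lie in $K(\alpha)\subseteq\bar K$. Consequently every element $L\circ(\zeta z)\circ L^{-1}$ of $\Sigma_f$, with $\zeta$ a root of unity, has coefficients in $\bar K$.

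\emph{Step 2.} The map $\jmath$ commutes with composition and with the normalization. Concretely, $\jmath(L)^{-1}\circ\jmath(f)\circ\jmath(L)=\jmath(L^{-1}\circ f\circ L)$ is again monic and centered. The reduced presentation $z^\mu f_0(z^m)$ is defined by the vanishing pattern of coefficients, and a field embedding preserves which coefficients vanish. Hence the image has reduced presentation $z^\mu\,\jmath(f_0)(z^m)$ with the same $\mu$ and $m$, and $\jmath(f_0)=1$ exactly when $f_0=1$. In addition, $\jmath$ maps $\bU_m$ bijectively onto the complex $m$-th roots of unity. Together these give $\jmath(\Sigma_f)=\Sigma_{\jmath(f)}$.

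\emph{Step 3.} To apply Proposition~\ref{prop: cplxsymmetrygroup}(2) I must show that $\jmath(f)$ is not conjugate to $z^d$. I will use the algebraic characterization that a polynomial is affinely conjugate to $z^d$ if and only if it has a single critical point, of multiplicity $d-1$, and that point is fixed. Equivalently, after monic-centered normalization the polynomial equals $z^d$, which is again a vanishing pattern of coefficients preserved by $\jmath$. If $\jmath(f)$ were conjugate to $z^d$, this characterization would make $f$ conjugate to $z^d$ over $k$. But $z^d$ has good reduction, contradicting the assumption that $f$ does not have potential good reduction. Hence $\Aut(\calJ_{\jmath(f)})=\Sigma_{\jmath(f)}=\jmath(\Sigma_f)=\jmath(\Aut(\calJ_f))$.

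The main obstacle is bookkeeping rather than substance: everything must be phrased inside $\bar K$ so that $\jmath$ acts on it, and one must check that the monic-centered normalization and the reduced presentation are compatible with $\jmath$. Both points come down to the fact that these normalizations are defined by algebraic equations over $\mathbb{Q}$ in the coefficients.
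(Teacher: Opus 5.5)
Your proposal is correct and follows the same route as the paper, which likewise identifies both sides with the dynamical symmetry groups via Proposition~\ref{prop: symmetryjulia}(3) and Proposition~\ref{prop: cplxsymmetrygroup}(2), notes that $\jmath(f)$ is not conjugate to a monomial, and uses $\jmath(\Sigma_f)=\Sigma_{\jmath(f)}$. Your extra bookkeeping simply makes explicit what the paper's one-line proof leaves implicit: extending $\jmath$ to $\bar K$, and checking that the monic-centered normalization, the reduced presentation, and conjugacy to $z^d$ are all detected by coefficient relations that $\jmath$ preserves.
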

\begin{proof}
Since $f$ does not have potential good reduction, $\jmath(f)$ is not conjugate to a monomial. Therefore, by Proposition~\ref{prop: cplxsymmetrygroup} and~\ref{prop: symmetryjulia}, one obtains
\[\jmath(\Aut(\mathcal J_f))=\jmath(\Sigma_f)=\Sigma_{\jmath(f)}=\Aut(\mathcal J_{\jmath(f)}),\] 
which completes the proof.
\end{proof}
\newtheorem*{theorem1.1}{Theorem 1.1}
\begin{theorem1.1}
Let \( k \) be an algebraically closed, complete non-Archimedean field with residue characteristic \( 0 \). 
Let \( \mathcal{J} \) be the Julia set of a polynomial of degree at least \( 2 \) which does not have potential good reduction. 
Let \( g \) be a polynomial of minimal degree whose Julia set coincides with \( \mathcal{J} \).

For any polynomial $f$ of degree $d\ge 2$, the following conditions are equivalent:
\begin{enumerate}
\item There exist an integer $n \ge 1$ and $\sigma \in \Aut(\calJ)$ such that $f = \sigma \circ g^{n}.$
\item The Julia sets coincide:
$\calJ_f = \calJ.$
\item The equilibrium measures coincide: $
\mu_f = \mu_g.$
\item The Green functions coincide:
$G_f = G_g.$
\item The Böttcher coordinates at $\infty$ agree up to a root of unity:
$\phi_f = \lambda \phi_g$, where $\lambda$ is a root of unity.
\end{enumerate}
\end{theorem1.1}

\begin{proof}
The equivalence of (3) and (4) follows from \cite[Section 4]{FRL06} and the equivalence of (2) and (3) follows from \cite[Theorem~10.91]{BR10}.  Assume (1) holds, then Lemma~\ref{lem: charautojulia} implies that $\mathcal J_f=\mathcal J_g.$ Also, by~Lemma~\ref{lem: autjalgebraic}, one has
\[
\jmath(f)=\jmath(\sigma)\circ \jmath(g)^n 
\]
with $\jmath(\sigma)\in \Aut(\cal{J}_{\jmath(g)}).$
By Theorem~\ref{thm: cplxrigidity}, the Böttcher coordinates $\phi_{\jmath(f)}$ and $\phi_{\jmath(g)}$
coincide up to multiplication by a root of unity.  
Consequently the same holds for $\phi_f$ and $\phi_g$.

\smallskip

Next we prove (5) implies (1).
Let $K$ be a field finitely generated over $\mathbb Q$ such that
$f,g,\phi_f,$ and $\phi_g$ are defined over $K$.  
Let $\jmath\colon K/ \bb{Q}\hookrightarrow \mathbb C/\bb{Q}$ be an embedding.  
Since $g$ does not have potential good reduction, $\jmath(g)$ is not conjugate to a monomial.

If $\phi_f=\lambda\phi_g$ for some root of unity $\lambda$, then
$\phi_{\jmath(f)}=\lambda\phi_{\jmath(g)}$.  
By Theorem~\ref{thm: cplxrigidity}, there exists a polynomial $Q\in\mathbb C[z]$ such that
\[
\jmath(f)=\sigma_1\circ Q^{l_1}, \qquad 
\jmath(g)=\sigma_2\circ Q^{l_2},
\]
for some $\sigma_i\in\Aut(\mathcal J_{\jmath(g)})$ and integers $l_i\ge1$. Therefore, 
\[
f=\jmath^{-1}(\sigma_1)\circ \jmath^{-1}(Q)^{l_1}, \qquad 
g=\jmath^{-1}(\sigma_2)\circ \jmath(Q)^{l_2},
\]

By Lemma~\ref{lem: autjalgebraic}, one has $j^{-1}(\sigma_i)\in \Aut(\calJ).$
It then follows from Lemma~\ref{lem: charautojulia} that
\[
\mathcal J_f=\mathcal J_g=\mathcal J_{\jmath^{-1}(Q)} .
\]

Since $g$ was chosen to have minimal degree, we must have $\deg g=\deg Q$.  
Consequently $\deg f=\deg(g^{\,l_1})$.  
Applying Theorem~\ref{thm: rigidity-samedegree} to $f$ and $g^{l_1}$ yields
\[
f=\sigma\circ g^{\,l_1}
\]
for some $\sigma\in\Aut(\mathcal J)$.

\smallskip

It remains to prove (3) implies (5).
We may assume that $f$ is monic and centered.  Let $f(z)=z^{\mu}f_0(z^m)$ be a reduced representation.
By Proposition~\ref{prop: symmetryjulia}(3),
$\Aut(\mathcal J)=\{\lambda z\in \mathrm{Aff}(k)\mid \lambda\in \mathbb U_m\}$
for some $m\in\mathbb Z$.

Note that
$(f\circ g)^*(\mu_f)=\deg(f\circ g)\mu_f.$
By Theorem~\ref{thm: equidistribution}, we obtain $\mu_{f\circ g}=\mu_g$, and hence
$\mathcal J_{f\circ g}=\mathcal J.$ Similarly, $\mathcal J_{g\circ f}=\mathcal J$.  
Applying Theorem~\ref{thm: rigidity-samedegree} to $f\circ g$ and $g\circ f$, we deduce that
there exists $\zeta\in\mathbb U_m$ such that
\begin{equation}
\label{eqq: quaspermute}
f\circ g=\zeta g\circ f .
\end{equation}

By Lemma~\ref{lem: reducedform}, we may write $g(z)=z^{\nu}g_0(z^m).$
Define $\hat f(z)=z^{\mu}(f_0(z))^m, $ and $\hat g(z)=z^{\nu}(g_0(z))^m .$
Then \eqref{eqq: quaspermute} implies
\[
\hat f\circ \hat g=\hat g\circ \hat f .
\]

Thus $\jmath(\hat f)$ and $\jmath(\hat g)$ commute.  
By the classical results of Julia~\cite{Jul22}, Fatou~\cite{Fat24}, Ritt~\cite{Rit20}, one of the following occurs.

\smallskip

\noindent
(1) $\jmath(\hat f)$ is a monomial, or $\pm \jmath(\hat f)$ is conjugate to a Chebyshev polynomial.  
In this case $f$ would also be conjugate to a Chebyshev polynomial, which has potential good reduction, contradicting our assumption.

\smallskip

\noindent
(2) There exist integers $n_1,n_2\ge1$ such that $
\jmath(\hat f^{n_1})=\jmath(\hat g^{n_2}).$
Since $\deg\hat f=\deg f$ and $\deg\hat g=\deg g$, this implies
\[
\deg(f^{n_1})=\deg(g^{n_2}).
\]
Applying Theorem~\ref{thm: rigidity-samedegree} to the iterates
\(f^{n_1}\) and \(g^{n_2}\), we obtain some
\(\zeta'\in\mathbb U_m\) such that
$f^{n_1}=\zeta' g^{n_2}.$
Applying the implication \((1)\Rightarrow(5)\) to \(\zeta' g^{n_2}\), we then get
\[
\phi_f=\phi_{\zeta' g^{n_2}}=\lambda \phi_g
\]
for some root of unity \(\lambda\). This proves the statement.
\end{proof}

\section{Uniform bounds on common preperiodic points} \label{sec: UniformBound}
In this section, we will prove Theorem \ref{thm: uniformprep}. 

We will follow the strategy of DeMarco--Krieger--Ye \cite{DKY20, DKY21}.

Let \(f,g\in K[z]\) be polynomials of degrees \(d_1,d_2\ge 2\), respectively, defined over a number field \(K\). Suppose further that \(f\) is monic and centered. For each place $\nu \in M_K$, we may define a local energy pairing 
$$\langle f,g \rangle_{\nu} = -\iint_{\bb{P}_{\nu}^{1,\an} \times \bb{P}_{\nu}^{1,\an}} \log |x-y|_{\nu} d (\mu_{f,\nu} - \mu_{g,\nu}) (x) d (\mu_{f,\nu} - \mu_{g,\nu})(y).$$
We can then define a global energy pairing by 
$$\langle f , g \rangle = \sum_{\nu \in M_K} N_{\nu} \langle f , g \rangle_{\nu}$$
where $N_{\nu} = \frac{[K_{\nu}:\bb{Q}_p]}{[K:\bb{Q}]}$. Let  $h(f),h(g)$ be the standard Weil heights on the coefficients of $f$ and $g$. Then to establish Theorem \ref{thm: uniformprep}, we have to prove the existence of constants $c_1,c_2 >0$, depending only on $d_1$ and $d_2$, so that
\begin{equation} \label{eq: Global1}
\langle f , g \rangle \geq c_1( h(f)+h(g)) + c_2.
\end{equation}
This would follow from suitable uniform lower bounds on the local energy pairing $\langle f,g \rangle_{\nu}$, which in turn would follow from Theorem \ref{IntroRigidity1} using a degeneration argument.

\subsection{Uniform bounds for local pairings}
We will  begin by proving uniform lower bounds for $\langle f , g \rangle_{\nu}$.

Write $f = \sum_{i=0}^{d_1} a_i z^i$ and $g = \sum_{i=0}^{d_2} b_i z^i$. Recall the following notation: $|f|=\max\{|a_i|\}$; $|(f,g)| = \max\{|a_i|,|b_j|,|b_{d_2}|^{-1}\}$;
 \[|\Res(f)| =\frac{|a_d|^d}{\max_{0\le i\le d}\{1,|a_i|\}^{2d}};\]
 and
\[
\epsilon(f_n)
=
\bigl(-\log(|\Res(f_n)|/C_d)\bigr)^{-1},
\]
where \( C_d = e \, \sup_{\mathrm{Rat}_d(\mathbb{C})} |\Res| \) if \( k_n=\mathbb{C} \),
and \( C_d = 1 \) otherwise.

\begin{lemma}
\label{lem: mcminres}
Let \(k\) be a non-Archimedean field with residue characteristic \(0\).
Let \(f\) be a monic centered polynomial of degree \(d\ge 2\).
Then \(f\) has good reduction if and only if \(f\) has potential good reduction.
\end{lemma}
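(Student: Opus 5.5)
The implication from good reduction to potential good reduction holds by definition, so all the work is in the converse. Assume $f$ is monic, centered, and has potential good reduction. Then there is an affine map $L(z)=az+b$ such that $h=L^{-1}\circ f\circ L$ has good reduction, i.e. $\calJ_h=\{x_g\}$. This gives $\calJ_f=L(\{x_g\})=\{\zeta(b,|a|)\}$, a single type II point. I will show that $\zeta(b,|a|)$ must be the Gauss point. Once that is done, $\calJ_f=\{x_g\}$, which is exactly good reduction for $f$.

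\textbf{Step 1: the radius.} We have $\calJ_f=\partial\mathcal K_f$ and $\calJ_f=\{\zeta(b,r)\}$ with $r=|a|$. By Lemma~\ref{lem: increasinggreen}(1), $\bar D(b,r)\subseteq\mathcal K_f$, and the ray $(\zeta(b,r),\infty)$ lies outside $\mathcal K_f$; hence $\mathcal K_f=\bar D(b,r)$. Total invariance gives $f(\bar D(b,r))=\bar D(b,r)$ with full degree $d$. Expand $f$ around $b$ as $f(z)=\sum c_i(z-b)^i$, so $c_d=1$. By the image-of-disks formula the image has radius $\max_{i\ge1}|c_i|r^i$, and since the Weierstrass degree equals $d$, the maximum is attained at $i=d$, giving $r^d=r$. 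Thus $r=1$.

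\textbf{Step 2: the center.} It remains to show $|b|\le 1$. Since $f$ is centered, the sum of the $d$ roots of $f(z)-c$ is $0$ for every $c$. Take $c=f(b)$; as $f(\bar D(b,1))=\bar D(f(b),1)$ with degree $d$, all $d$ roots of $f(z)-f(b)$ lie in $\bar D(b,1)$. Write them as $b+\delta_j$ with $|\delta_j|\le1$. Then $0=\sum_j (b+\delta_j)=db+\sum_j\delta_j$. Because the residue characteristic is $0$, $|d|=1$, so $|b|=|\sum_j\delta_j|\le 1$. Hence $\bar D(b,1)=\bar D(0,1)$ and $\zeta(b,1)=x_g$.

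An alternative check for Step 2 uses coefficients directly: $h$ having good reduction means its coefficients are integral with unit leading coefficient, and conjugating back by $L$ with $|a|=1$ and the centering condition forces $|b|\le1$. The main obstacle is precisely this center estimate, and it is where residue characteristic $0$ enters, through $|d|=1$. Once $|b|\le1$ is established, $\calJ_f=\{x_g\}$, equivalently $|\Res(f)|=1$ by \cite[Theorem C]{FRL10}, which completes the argument.
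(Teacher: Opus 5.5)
Your proof is correct. Its ``alternative check'' is exactly the paper's proof. The paper takes the totally invariant type~II point $\zeta(a,R)$ and conjugates by $M(z)=\lambda z+a$ with $|\lambda|=R$. It reads off $M^{-1}\circ f\circ M=\lambda^{d-1}z^d+da\lambda^{d-2}z^{d-1}+\cdots$, and good reduction of this conjugate gives $|\lambda|=1$ and $|da\lambda^{d-2}|\le 1$, hence $|a|\le 1$ because $|d|=1$.

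Your main route replaces this two-line coefficient computation with the geometry of the filled Julia set. You identify $\mathcal K_f=\bar D(b,r)$ via Lemma~\ref{lem: increasinggreen}, get $r=1$ from the image-of-disks formula with Weierstrass degree $d$, and get $|b|\le 1$ from Vi\`ete on the fiber $f^{-1}(f(b))\subseteq \bar D(b,1)$. The identity $0=db+\sum_j\delta_j$ is the vanishing $z^{d-1}$-coefficient in disguise. So both arguments hinge on the same two facts, centering and $|d|=1$. Yours uses more of the disk machinery and shows geometrically why the invariant disk must be the unit disk; the paper's is more economical.

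Two small points:
\begin{enumerate}
\item Your deduction $\mathcal K_f\subseteq\bar D(b,r)$ implicitly uses that the component of $\mathbb A^{1,\an}_k\setminus\{\zeta(b,r)\}$ containing the ray $(\zeta(b,r),\infty)$ misses $\partial\mathcal K_f=\calJ_f$, so it lies in $\Omega_f$. You could bypass this step: a one-point Julia set is totally invariant, so $\zeta(b,r)$ is fixed with local degree $d$, which already gives $f(\bar D(b,r))=\bar D(b,r)$ with Weierstrass degree $d$.
\item Taking the conjugacy to be affine is harmless. You only use that $\calJ_f$ is a single type~II point of $\mathbb A^{1,\an}_k$.
\end{enumerate}
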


\begin{proof}
The implication ``good reduction \(\Rightarrow\) potential good reduction'' is obvious.
We prove the converse.

Write $f(z)=z^{d}+a_{d-2}z^{d-2}+\cdots+a_0.$ Assume that \(f\) has potential good reduction.
Then there exists a  type II fixed point $x=\zeta(a,R)$
with local degree $d$. Let \(M(z)=\lambda z+a\) with \(|\lambda|=R\).
Define $g=M^{-1}\circ f\circ M.$
A direct computation shows that
$g(z)
=
\lambda^{d-1}z^{d}
+
d a \lambda^{d-2} z^{d-1}
+\cdots .$

Since $\tilde{g}$ has degree $d$, we conclude  $|\lambda|^{d-1}=1,$ and 
$|d a \lambda^{d-2}|\le 1.$
Because the residue characteristic of \(k\) is \(0\), we have \(|d|=1\), and therefore $
|a|\le 1.$ It follows that \(x=\zeta(0,1)\), and hence \(f\)  has good reduction.
\end{proof}


Let $V$ be a Zariski-closed subvariety of $\mathrm{Poly}_{\mathrm{mc}}^{d_1}$ of positive dimension. Then by \cite[Proposition 3.9]{FG22}, there exists a positive integer $m \geq 1$ and a proper subvariety $V' \subset V$ such that for all $f \in (V \setminus V')(\ovl{\bb{Q}})$, the group of dynamical symmetries is $\mathbb{U}_m$. We can describe explicit equations cutting out $V'$ as follows. 
\par 
For any positive integer $m$, let $V_m$ be the set of monic centered polynomials $f(z) = z^{d_1} + \sum_{i=0}^{d_1-2} a_i z^i$ such that $m\mid \#\Sigma_f.$
Then
\[
V_m 
= \bigcup_{m\mid d_1-\mu } 
\left\{ a_i = 0 \;\middle|\; m \nmid (
i-\mu) \right\}
\] 
defines a Zariski closed subvariety of $\mathrm{Poly}_{\mathrm{mc}}^{d_1}.$
Set $V' = (\bigcup_{m'\nmid m} V_{m'})\cup (\bigcup_{i \geq 1} V_{im}).$
Then $V'$ is a Zariski-closed subvariety defined by finitely many monomials $p_1,\dots,p_\ell$.

For any metrized field $k$ of characteristic $0$ and any $f\in k[z]$,  we define
\[
\lambda_{V',k}(f) 
= - \log \max \{ |p_1(f)|, \dots, |p_\ell(f)| \}.
\]
Then $\lambda_{V',k}=+\infty$ if and only if $f\in V'(k)$. 

By construction, we have that $f \in (V\setminus V')(k)$ if and only if
$\Sigma_f = \mathbb{U}_m.$ 

\newtheorem*{theorem1.6}{Theorem 1.6}
\begin{theorem1.6} 
Let \((k,|\cdot|)\) be a field of characteristic \(0\) equipped with a
nontrivial absolute value, and fix \(C_1,C_2>0\). Then there exist constants
\(C_3,C_4>0\) depending only on $k$ such that the following holds:

Suppose that
\[
(f,g)\in (V\setminus V')(k)\times \operatorname{Poly}^{d_2}(k)
\]
satisfies
\[
\lambda_{V'}(f)\leq C_1\log |(f,g)|.
\]
If positive integers \(n_1,n_2\) exist with \(d_1^{n_1}=d_2^{n_2}\), assume
further that
\[
\log |f^{n_1}-\lambda g^{n_2}|
\geq -C_2\log |(f,g)|
\quad\text{for all } \lambda\in \mathbb{U}_m.
\]
Then
\[
\log |(f,g)|\geq C_3
\quad\Longrightarrow\quad
\langle f,g\rangle \geq C_4\log |(f,g)|.
\]
\end{theorem1.6}


\begin{proof}
We argue by contradiction. Suppose that there exists a sequence
\((f_n,g_n) \in V \setminus V'\times \mathrm{Poly}^{d_2}(k)\) such that, upon setting
\[
\epsilon_n := \Bigl(\log|(f_n,g_n)|\Bigr)^{-1},
\]
the following properties hold:
\begin{enumerate}
\item $\lambda_{V',k}(f_n) \le C_1\epsilon_n^{-1}$;
    \item $\log |f_n^{n_1} - \lambda g_n^{n_2}| \ge -C_2\epsilon_n^{-1}$ for all $\lambda \in \bU_m$;
    \item $\epsilon_n^{-1} \ge n$;
    \item $\langle f_n, g_n \rangle \le \frac{1}{n}\epsilon_n^{-1}$.
\end{enumerate}
By construction, $\lim_{n\to\infty}\epsilon_n = 0$. 

Fix a non-principal ultrafilter $\omega$.  
By Proposition~\ref{prop: prop-residuefield}, the field
\[
\sH(\omega)
=
\Bigl\{(z_n)\in k^{\bN}\bigm| \lim_{\omega}|z_n|^{\epsilon_n}<\infty\Bigr\}
\Big/
\Bigl\{(z_n)\in k^{\bN}\bigm|\lim_{\omega}|z_n|^{\epsilon_n}=0\Bigr\}
\]
is a non-Archimedean field. Moreover, for any integer $m$ we have
$\lim_{\omega}|m|^{\epsilon_n}=1.$
It follows that the residue field of $\sH(\omega)$ has characteristic $0$.

By construction, the sequences \((f_n)\) and
\((g_n)\) induce polynomial maps \(f_\omega\) and \(g_\omega\) of degrees
\(d_1\) and \(d_2\), respectively, over \(\sH(\omega)\).
Moreover, \begin{align}
\label{eqq: equalitycoeff}
    \max\{|a_{i,\omega}|,|b_{j,\omega}|,|b_{d_2,\omega}|^{-1}|\}=e.
\end{align}

Passing to the ultralimit, we obtain
$\lambda_{V',\sH(\omega)}(f_\omega) \le C_1<\infty$  and $\log |f_{\omega}^{n_1} - \lambda g_\omega^{n_2}| \ge -C_2>-\infty$.
Thus we have \begin{align}
\label{eqq: nosymmetry}
  \Sigma_f=\bU_m,~\text{and $f_\omega^{n_1} \neq \lambda g_\omega^{n_2}$ for all
$\lambda\in\bU_m$.}    
\end{align}
Furthermore, by Theorem~\ref{thm: cts-energy}, one has $\langle f_\omega, g_\omega \rangle
= \lim_{\omega} \epsilon_n\langle f_n, g_n \rangle
= 0.$
It then follows from \cite[Proposition 4.5]{FRL06} that $\mu_{f_\omega}=\mu_{g_\omega}$. Hence $\calJ_{f_\omega}=\calJ_{g_\omega}.$
 In particular, $f_\omega$ has good reduction if and only if $g_\omega$ has good reduction. 
If so, $\max\{|a_{i,\omega},|b_{j,\omega}|,|b_{d_2,\omega}|^{-1}|\}=1$, which contradicts \eqref{eqq: equalitycoeff}. Hence, both $f_{\omega}$ and $g_{\omega}$ does not have good reduction.
  
 Since $f_\omega$ is monic and centered, Lemma~\ref{lem: mcminres} implies that $f_\omega$ does not have potential good reduction. Therefore, by Theorem~\ref{thm: rigidity-samedegree} and Proposition~\ref{prop: symmetryjulia}(3), there exists $\lambda\in\bU_m$ such that
$f_\omega^{n_1}=\lambda g_\omega^{n_2},$
which contradicts~\eqref{eqq: nosymmetry}.
\end{proof}
The proof of Theorem~\ref{IntroUniform3} follows the same lines as that of Theorem~\ref{IntroUniform2}, with only minor modifications. 

\newtheorem*{theorem1.7}{Theorem 1.7}
\begin{theorem1.7} 
For every \(C_1,C_2>0\), there exist constants \(C_5,C_6>0\) depending only on $d_1$ and $d_2$ such that the following holds: Let \((k,|\cdot|)\) be a non-Archimedean field whose residue characteristic is at least \(C_5\), and let
\[
(f,g)\in (V\setminus V')\times \operatorname{Poly}^{d_2}(k)
\]
satisfy
\[
\lambda_{V',k}(f)\le C_1\log |(f,g)|.
\]
Suppose moreover that whenever there exist positive integers \(n_1,n_2\) with
\[
d_1^{n_1}=d_2^{n_2},
\]
the following additional condition is imposed:
\[
\log |f^{n_1}-\lambda g^{n_2}|
\ge -C_2\log |(f,g)|
\qquad\text{for every } \lambda\in \mathbb U_m.
\]
 Then
\[
\langle f,g\rangle \ge C_6\log |(f,g)|.
\]
\end{theorem1.7}

\begin{proof}[Proof of Theorem~\ref{IntroUniform3}]
For the sake of contradiction, we obtain a sequence of fields $(k_n,|\cdot|_n)$ with residue characteristic at least $n$, and a sequence
$(f_n,g_n) \subset V \setminus V'\times \mathrm{Poly}^{d_2}(k_n)$. Setting \[
\epsilon_n := \Bigl(\log |(f_n,g_n)|_n\Bigr)^{-1},
\]
one has
\begin{enumerate}
\item $\lambda_{V',k_n}(f_n)
\le C_1\epsilon_n^{-1}$;
    \item $\log |f_n^{n_1}-\lambda g_n^{n_2}|_n
\ge -C_2\epsilon_n^{-1}$ for all $\lambda \in \bU_m$;

\item $\langle f_n,g_n\rangle< \frac{1}{n}\epsilon_n^{-1}$.
\end{enumerate}
By~Proposition~\ref{prop: prop-residuefield},
the field
 \[\sH(\omega)=\left\{(z_n)\in \prod_{n\in\bN} k_n\mid \lim_{\omega}|z_n|_n^{\epsilon_n}<\infty\right\}/\left\{ (z_n)\in \prod_{n\in\bN} k_n\mid\lim_{\omega}|z_n|_n^{\epsilon_n}=0\right\}\]
is a non-Archimedean field. Since we have assumed $\mathrm{char}(\tilde{k}_n)\ge n$,  for any fixed integer $a$, $\lim_\omega|a|_n^{\epsilon_n}=1$. Therefore, $\sH(\omega)$ has residue characteristic \(0\).

As in the proof of Theorem~\ref{IntroUniform2}, \(f_\omega\) and \(g_\omega\)
are maps of degrees \(d_1\) and \(d_2\), respectively; at least one of them does not
have good reduction, and
\[
\Aut(f_\omega)=\bU_m,
\qquad
f_\omega^{n_1}\neq \lambda g_\omega^{n_2}~ \text{for all }\lambda\in\bU_m,
\qquad
\mu_{f_\omega}=\mu_{g_\omega}.
\]

The equality of equilibrium measures implies that neither \(f_\omega\) nor \(g_\omega\)
has good reduction. By Lemma~\ref{lem: mcminres}, it follows that neither map has potential good reduction. Applying Theorem~\ref{thm: rigidity-samedegree}, we obtain a contradiction.
\end{proof}

\subsection{Uniform bounds for global energy pairings}
We now begin the proof of \eqref{eq: Global1}.
\subsubsection{Places and  height}We refer the reader to~\cite{HS00} for a general reference. 

Let $M_{\mathbb{Q}}$
denote the set consisting of all prime numbers together with the symbol
$\infty$. We write $|\cdot|_{\infty}$ for the usual absolute value on
$\mathbb{Q}$, and for each prime number $p$ we denote by $|\cdot|_{p}$ the
$p$-adic norm, normalized so that $|p|_{p}=p^{-1}$.

Let $K$ be a number field, that is, a finite extension of
$\mathbb{Q}$. We define $M_{K}$ to be the set of all multiplicative
norms on $K$ extending one of the absolute values
$|\cdot|_{\nu}$ with $\nu \in M_{\mathbb{Q}}$. Elements of $M_{K}$ are
called \emph{places} of $K$. For each $\nu \in M_{K}$, we
 denote by $|\cdot|_{\nu}$ the corresponding multiplicative norm on
$K$. A place $\nu$ is said to be \emph{Archimedean} if its restriction
to $\mathbb{Q}$ coincides with $|\cdot|_{\infty}$, and
\emph{non-Archimedean} otherwise.

Each place $\nu$ gives rise to a local field. More precisely, let
$K_{\nu}$ be the completion of $K$ with respect to
$|\cdot|_{\nu}$, and let $\mathbb{Q}_{\nu}$ be the closure of $\mathbb{Q}$ inside
$K_{\nu}$. We denote by $\mathbb{C}_{\nu}$ the completion of an
algebraic closure of $K_{\nu}$. The field $\mathbb{C}_{\nu}$ is then
complete and algebraically closed. Moreover, for every Archimedean place
$\nu$ (denoted by $\nu\mid\infty $), one has an isomorphism $\mathbb{C}_{\nu} \simeq \mathbb{C}$; and for every non-Archimedean place
$\nu$ over $p$ (denoted by $\nu\mid p $), one has an isomorphism $\mathbb{C}_{\nu} \simeq \mathbb{C}_p$.

Equivalently, a place $\nu \in M_{K}$ extending a place
$v \in M_{\mathbb{Q}}$ can be described by an embedding $\sigma_{\nu} \colon K \hookrightarrow \mathbb{C}_{v}$
such that $|a|_{\nu} = |\sigma_{\nu}(a)|_{v},$ for all $a \in K$.

\smallskip

We associate to each place $\nu \in M_{K}$ the weight $N_{\nu} := \frac{[K_{\nu} : \mathbb{Q}_{\nu}]}{[K : \mathbb{Q}]}.$
A fundamental fact is that for every $\alpha \in K^{\ast}$,
all but finitely many places $\nu \in M_{K}$ satisfy
$|\alpha|_{\nu} = 1$, and the following product formula holds: \[
\prod_{\nu \in M_{K}} |\alpha|_{\nu}^{N_{\nu}} = 1 .\]

This leads naturally to the notion of height. For $\alpha \in K$, we
define its  height by
\[
h(\alpha) := \sum_{\nu \in M_{K}} N_{\nu} \log^{+} |\alpha|_{\nu}.
\]
By the product formula, one has $h(\alpha) = h(\alpha^{-1})$ for
$\alpha \neq 0$. Moreover, this definition is independent of the choice of the
number field $K$ containing $\alpha$.

\smallskip

Recall that $|f|_{\nu}=\max\{|a_i|_{\nu}\}$ and $ |(f, g)|_{\nu}
=
\max
\bigl\{ |a_i|_{\nu}, |b_j|_{\nu},|b_{d_2}|_{\nu}^{-1} \bigr\}.$
We define the height for polynomials and polynomial pairs: 
\begin{align*}
h(f)
&:=
\sum_{\nu \in M_{K}}
N_{\nu}\,\log^+ |f|_{\nu}.\\
    h(f,g)
&:=
\sum_{\nu \in M_{K}}
N_{\nu}\,\log^+ |(f, g)|_{\nu}.
\end{align*}

Similarly, the definitions do not depend on the choice of $K.$

\smallskip

We shall use the following lemma in the sequel. 
\begin{lemma}
\label{lem: height}
Let $f,g$ be two polynomials (possibly constant) defined over $\overline{\Q}$. Then 
\begin{enumerate}
    \item $h(f,g)\le h(f)+2h(g)\leq 3h(f,g)$;
    \item  $h(f+g)\le h(f)+h(g)+\log 2$
    \item  Fix two positive integers $n_1$ and $n_2$. Then there exists a constant $R=R(n_1,n_2)>2$ such that 
    \[h(f^{n_1},g^{n_2})\le Rh(f,g)+R\]
\end{enumerate}
\end{lemma}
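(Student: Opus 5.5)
The plan is to prove the three items place by place and then sum with the weights $N_\nu$. The constants $\log 2$ and $R$ come only from the Archimedean places, where they appear through the usual binomial or multinomial counts. We use $\sum_{\nu}N_\nu=1$ over the Archimedean places and, for $b\neq 0$, the product formula $h(b^{-1})=h(b)$.

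For (1), fix $\nu$. Since $\log^+\max\{x,y\}\le \log^+x+\log^+y$, we get
\[
\log^+|(f,g)|_\nu\le \log^+|f|_\nu+\log^+|g|_\nu+\log^+|b_{d_2}^{-1}|_\nu .
\]
Summing over $\nu$ gives $h(f,g)\le h(f)+h(g)+h(b_{d_2}^{-1})$. Then $h(b_{d_2}^{-1})=h(b_{d_2})\le h(g)$, because $\log^+|b_{d_2}|_\nu\le\log^+|g|_\nu$ at every place. This yields $h(f,g)\le h(f)+2h(g)$. The second inequality is immediate, since $|f|_\nu,|g|_\nu\le|(f,g)|_\nu$ at every place. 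If $g$ is constant zero, we read $b_{d_2}$ as the leading coefficient and the convention is vacuous.

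For (2), at a non-Archimedean place the ultrametric inequality gives $|f+g|_\nu\le\max\{|f|_\nu,|g|_\nu\}$. At an Archimedean place we have $|f+g|_\nu\le 2\max\{|f|_\nu,|g|_\nu\}$. In both cases $\log^+|f+g|_\nu\le \log^+|f|_\nu+\log^+|g|_\nu+\delta_\nu\log 2$, where $\delta_\nu=1$ exactly when $\nu$ is Archimedean. Summing and using $\sum_{\nu\mid\infty}N_\nu=1$ gives the claim.

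For (3), each coefficient of $f^{n_1}$ is a polynomial with integer coefficients in the $a_i$. It has total degree at most $D_1:=d_1^{\,n_1-1}+\cdots+1\le d_1^{n_1}$, and a number of monomials bounded in terms of $d_1$ and $n_1$. The same holds for $g^{n_2}$. The leading coefficient of $g^{n_2}$ is $b_{d_2}^{E}$ with $E=(d_2^{n_2}-1)/(d_2-1)$, so $|b^{E}_{d_2}|_\nu^{-1}\le |(f,g)|_\nu^{E}$. At non-Archimedean places every coefficient is therefore bounded by $|(f,g)|_\nu^{D}$ with $D=\max\{d_1^{n_1},d_2^{n_2}\}$, using $|(f,g)|_\nu\ge1$. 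The last fact follows because $|(f,g)|_\nu\ge\max\{|b_{d_2}|_\nu,|b_{d_2}|_\nu^{-1}\}$. At Archimedean places the same bound holds up to a multiplicative constant $C(d_1,d_2,n_1,n_2)$, which counts multinomial coefficients. Taking $\log^+$ and summing gives $h(f^{n_1},g^{n_2})\le D\,h(f,g)+\log C$, and we choose $R>\max\{2,D,\log C\}$.

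The main (mild) obstacle is the bookkeeping in (3). The constant must depend only on $n_1,n_2$ and the degrees, which the statement implicitly allows. One must also handle the term $|b_{d_2}|^{-1}$ correctly, via the inequality $|(f,g)|_\nu\ge1$ at every place.
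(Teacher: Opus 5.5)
Your proof is correct. Item (1) is argued exactly as in the paper (split $\log^+|(f,g)|_\nu$ into three terms, then use $h(b_{d_2}^{-1})=h(b_{d_2})\le h(g)$).

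For (2) and (3) your route differs. The paper gives no argument there: it cites \cite[Proposition~5]{HS11} together with (1). In effect, (1) reduces the pair height to $h(f^{n_1})+2h(g^{n_2})$, each iterate height is bounded linearly in $h(f)$, resp.\ $h(g)$, by the cited result, and then $h(f)+h(g)\le 2h(f,g)$ closes the argument. This way the inverse leading coefficient never needs separate treatment.

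You instead work place by place. You bound $|(f^{n_1},g^{n_2})|_\nu\le C_\nu|(f,g)|_\nu^{D}$ directly, and handle $|\mathrm{lc}(g^{n_2})|_\nu^{-1}=|b_{d_2}|_\nu^{-E}$ through $|(f,g)|_\nu\ge 1$. Your version is self-contained and gives an explicit $R$: essentially $\max\{d_1^{n_1},d_2^{n_2}\}$, together with the logarithm of an Archimedean multinomial constant.

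It also makes visible that $R$ must depend on $d_1,d_2$ and not only on $n_1,n_2$. Already $f=az^{d}$ gives $h(f^{2})=(d+1)h(f)$. The notation $R(n_1,n_2)$ hides this dependence, but it is harmless because the degrees are fixed wherever the lemma is applied. The paper's version is shorter, at the cost of relying on the external reference.
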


\begin{proof}
   Note that
\begin{align*}
h(f,g)
&=\sum_{\nu \in M_{K}}
N_{\nu}\,\log \max\{|a_i|_{\nu},|b_j|_{\nu},|b_{d_2}|_{\nu}^{-1}\}\\
&\le
\sum_{\nu\in M_K}N_{\nu}\,\log \max\{|a_i|_{\nu}\}
+\sum_{\nu\in M_K}N_{\nu}\,\log \max\{|b_i|_{\nu}\}
+\sum_{\nu\in M_K}N_{\nu}\,\log \max\{|b_{d_2}|_{\nu}^{-1}\}\\
&= h(f)+h(g)+h(b_{d_2}^{-1})\\
&= h(f)+h(g)+h(b_{d_2})
\le h(f)+2h(g).
\end{align*}
On the other hand, by definition, $h(f)\le h(f,g)$ and $
h(g)\le h(f,g).$
Therefore,
\[
h(f)+2h(g)\le 3h(f,g),
\]
which proves the first assertion.

The second and third assertions then follow immediately from
\cite[Proposition~5]{HS11} together with the first assertion.
\end{proof}

\subsubsection{Proof of the lower bounds for global energy pairing}
Fix two integers $d_1,d_2\ge 2$. Let $f$ and $g$ be two polynomials of degree $d_1$ and $d_2$ respectively over $K$,
$f(z) = \sum_{i=0}^{d} a_i z^i,$ and $
g(z) = \sum_{i=0}^{d} b_i z^i.$
For any place $\nu$ of $K$, the polynomials $f$ and $g$ can be viewed as polynomials over $\C_\nu$.

For all but finitely many places $\nu$, one has
$\lvert \operatorname{Res}(f) \rvert_\nu
=
\lvert \operatorname{Res}(g) \rvert_\nu
=
1,$
so that both $f_\nu$ and $g_\nu$ have good reduction at $\nu$. In this case, their
associated equilibrium measures coincide, and consequently the  local energy pairing
\(\langle f, g \rangle_\nu\) introduced in \S\ref{sec: energypairing} vanishes.

This allows us to define a \emph{global energy pairing} by summing the local
contributions:
\[
\langle f, g \rangle
:=
\sum_{\nu \in M_{K}}
N_{\nu}\langle f, g \rangle_{\nu}.
\]

As before, the definition does not depend on the choice of the number field
$K$. We are now ready to prove the first uniform lower bound on our global energy pairing. 
\begin{theorem}
\label{thm: glbenergybd1}
   Let \(d_1,d_2\ge 2\) be fixed integers. Then there exist constants
\(A=A(d_1,d_2)>0\) and \(B=B(d_1,d_2)>0\) such that, for every pair
$
(f,g)\in \mathrm{Poly}_{\mathrm{mc}}^{d_1}
\times \mathrm{Poly}^{d_2}(\overline{\mathbb Q}),$
if
\[
f^{n_1}\neq \lambda\, g^{n_2}
\quad
\text{for all } \lambda\in \Sigma_f
\text{ and all } n_1,n_2\in \mathbb Z_{>0},
\]
then
\[
\langle f,g\rangle \ge A\,h(f,g)-B .
\]
\end{theorem}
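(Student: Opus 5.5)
We will derive the global bound from the local bounds of Theorem~\ref{IntroUniform2} and Theorem~\ref{IntroUniform3}, summing over places, following DeMarco--Krieger--Ye. Since $f$ is monic centered of degree $d_1$, we stratify $\mathrm{Poly}_{\mathrm{mc}}^{d_1}$ by the symmetry group: $V=\mathrm{Poly}_{\mathrm{mc}}^{d_1}$ with its exceptional locus $V'$, and then recursively the irreducible components of $V'$ with their own exceptional loci. This terminates by Noetherianity, with finitely many strata and finitely many $m$. The constants will be minima over these strata, so it suffices to treat $f\in (V\setminus V')(\overline{\mathbb Q})$ with $\Sigma_f=\mathbb U_m$. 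The monomial $z^{d_1}$ lies in a zero-dimensional stratum and must be handled separately. There $\mu_{f,\nu}$ is the Gauss point or the circle measure, and a direct height computation of $\langle z^{d_1},g\rangle$ (as in Baker--DeMarco or Fili) gives the bound.

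\textbf{Step 1 (local input, with global control of the hypotheses).} Write $\lambda_{V'}$ and $\log|f^{n_1}-\lambda g^{n_2}|$ as local quantities. The key point is that the hypotheses of Theorems~\ref{IntroUniform2} and~\ref{IntroUniform3} hold at enough places \emph{on average}, because of the product formula. Each $p_j(f)$ for $f\notin V'$ is a nonzero algebraic number with $h(p_j(f))\le C h(f)+C$, so
\[
\sum_\nu N_\nu \lambda_{V',\nu}(f)\le \sum_\nu N_\nu \log^+|p_j(f)|_\nu^{-1}\le C h(f)+C.
\]
Likewise, if $d_1^{n_1}=d_2^{n_2}$ and $f^{n_1}-\lambda g^{n_2}\neq 0$, some coefficient $c$ of it is nonzero. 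By Lemma~\ref{lem: height}(3),
\[
\sum_\nu N_\nu\log^+|c|_\nu^{-1}=h(c)\le Rh(f,g)+R .
\]
We then fix $C_1,C_2$ large, say $C_1=C_2=10(C+R)$. By Markov's inequality, the places where either hypothesis fails carry only a small fraction of the total mass $\sum_\nu N_\nu\log|(f,g)|_\nu\approx h(f,g)$, up to $O(1)$.

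\textbf{Step 2 (summing).} We split $M_K$ into three classes of places. The first class consists of places where both hypotheses hold and $\log|(f,g)|_\nu\ge C_3$ (Archimedean, or residue characteristic $<C_5$), or where the residue characteristic is $\ge C_5$; there we get $\langle f,g\rangle_\nu\ge C_4'\log|(f,g)|_\nu$. The second class consists of places where a hypothesis fails; there we use only $\langle f,g\rangle_\nu\ge 0$. The third class consists of places with $\log|(f,g)|_\nu< C_3$ and small residue characteristic. Since there are finitely many primes below $C_5$, the weights $N_\nu$ of these places sum to a bounded amount, so their total contribution to the height is $O(1)$. Summing gives
\[
\langle f,g\rangle\ge C_4'\Bigl(h(f,g)-\tfrac12 h(f,g)-O(1)\Bigr).
\]
This requires the hypothesis-failure places to carry at most half the height, which Step 1 arranges by the choice of $C_1,C_2$.

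\textbf{Main obstacle.} The main obstacle is Step 1's bookkeeping. The failing places are defined by comparing $\lambda_{V',\nu}$ to $C_1\log|(f,g)|_\nu$, so we need $\sum_{\text{fail}}N_\nu\log|(f,g)|_\nu\le C_1^{-1}\sum N_\nu\lambda_{V',\nu}$. This holds on the failure set by definition, since there $\log|(f,g)|_\nu<C_1^{-1}\lambda_{V',\nu}$. Hence the lost height is at most $(Ch+C)/C_1$, and similarly with $C_2$. The one additional care point is that $\log|(f,g)|_\nu\ge0$ always holds thanks to the $|b_{d_2}|^{-1}$ entry, so the total is genuinely $h(f,g)$.
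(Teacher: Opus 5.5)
Your proposal is correct and follows essentially the same route as the paper: reduce to a symmetry stratum $V\setminus V'$ and treat $z^{d_1}$ separately, then show that the hypothesis-violating places carry at most a fixed fraction of $h(f,g)$, and finally sum the local bounds. The one gap is the monomial case, which the paper settles by citing \cite[Corollary~1.4]{Obe24}; you should cite it as well rather than call it a direct computation. For the fraction estimate, your height bounds on a nonzero $p_j(f)$ and on a nonzero coefficient of each $f^{n_1}-\lambda g^{n_2}$ work, but note that for $f\notin V'$ only some $p_j(f)$ is nonzero, not each. Your Markov-type estimate, with a union bound over the $m$ values of $\lambda$, is a cleaner phrasing of the paper's pigeonhole contradiction in Steps~1--2. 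The final summation, using Theorem~\ref{IntroUniform2} over $\mathbb{C}$ and the finitely many $\mathbb{C}_p$ with $p<C_5$ and Theorem~\ref{IntroUniform3} over the remaining places, matches the paper.
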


\begin{proof}[Proof of Theorem~\ref{thm: glbenergybd1}]
If $f$ is monomial, the result follows from \cite[Corollary 1.4]{Obe24}. 

Let \( V_m \subset \mathrm{Poly}_{\mathrm{mc}}^{d_1}(\overline{\mathbb{Q}}) \) denote the locus of polynomials whose group of dynamical symmetries is equal to \( \bU_m \). Note that
\[
\mathrm{Poly}_{\mathrm{mc}}^{d_1}(\overline{\mathbb{Q}})\setminus \{z^{d_1}\}
= \bigsqcup_{m=1}^{d_1} \left( V_m \setminus \bigcup_{i=2}^{d_1} V_{im} \right).
\]
It therefore suffices to treat the case where
$f \in V \setminus V'$,
with  $V = V_m$ and $V' = \bigcup_{i=2}^{d_1} V_{im}.$ By construction, for any \( f \in V \setminus V' \), we have \( \Sigma_f = \bU_m \).

Recall that \( V' \) is defined by a finite collection of monic monomials \( p_1, \ldots, p_\ell \). Write $f(z)=\sum_{n=0}^{d_1}a_nz^n$ and $g(z)=\sum_{n=0}^{d_2}b_nz^n$.
Let $K$ be the number field generated by $a_i$ and $b_j$ for all $0\le i\le d_1$ and $0\le j \le d_2$.  For any $\nu\in M_K$, we define 
\[\lambda_{V',\nu}(f)= - \log \max \{|p_1(f)|_\nu,\ldots,|p_{\ell}(f)|_\nu\}.\]
Set
$M := \max_{1 \le i \le \ell} \deg p_i.$ 

\medskip

We divide the proof into the following steps.

\noindent\emph{Step 1.} Suppose there exist positive integers $n_1,n_2$ such that $D=d_1^{n_1}=d_{2}^{n_2}.$
Let $R$ be the constant appearing in  Lemma~\ref{lem: height}. Let $R_1=2R+\log 2.$

We say a place $\nu\in M_{K}$ is good if the following statements hold:
\begin{enumerate}
  \item $\lambda_{V',\nu}(f)\leq 4mR_1(D+1)M\log|(f,g)|_\nu$;
    \item $\min_{\lambda\in\bU_m}\log|f^{n_1}-\lambda g^{n_2}|_\nu\geq -4mR_1(D+1)\log|(f,g)|_\nu$.
\end{enumerate}

We  prove that
\begin{align}
\label{eqq: ineq0}
    \sum_{\nu\text{~good}}N_\nu\log|(f,g)|_\nu\geq\frac{1}{2}h(f,g)-\frac{1}{2}.
\end{align}

\smallskip

Otherwise, since $\sum_{\nu\in M_K}N_\nu\log|(f,g)|_\nu=h(f,g)$, we obtain
\begin{align}
\label{eqq: badmany}
    \sum_{\nu\text{~bad}}N_\nu\log|(f,g)|_\nu\geq\frac{1}{2}h(f,g)+\frac{1}{2}.
\end{align}

 Write $f^{n_1}(z)=\sum_{n=0}^{D}a_n^{(D)}z^n$ and $g^{n_2}(z)=\sum_{n=0}^{D}b_n^{(D)}z^n$. Let $\nu$ be a bad place. Then either (1) or (2) does not hold. Suppose (1) does not hold. Note that there exists $p_i$, such that \[\lambda_{V',\nu}(f)=-\log|p_i(f)|_\nu\leq M\max_{a_i\not=0}\log^+\{|a_i|^{-1}_\nu\}.\]
Therefore, 
\[4mR_1(D+1)\log|(f,g)|_{\nu}<\max_{a_i\not=0}\log^+\{|a_i|_\nu^{-1}\}.\] If (2) does not hold, then \[4mR_1(D+1)\log|(f,g)|_{\nu}<\max_{\lambda\in\bU_m}\log\min_{0\le i\le D}\{|a_i^{(D)}-\lambda b_i^{(D)}|^{-1}\}.\]

Using~\eqref{eqq: badmany} and summing up over all bad places, one obtains
\begin{align*}
2m(D+1)(R_1h(f,g)+R_1)&\leq\sum_{\nu~\text{bad}} N_\nu\max_{\lambda\in\bU_m,~a_j\not=0}\max\{\log^+\min_{0\le i\le D}\{|a_i^{(D)}-\lambda b_i^{(D)}|_\nu^{-1}\},\log^{+}\{|a_j|_\nu^{-1}\}\}\\
&< \sum_{\nu\in\mathrm{M}_K} N_\nu\max_{\lambda\in\bU_m,~a_j\not=0}\max\{\log^+\min_{0\le i\le D}\{|a_i^{(D)}-\lambda b_i^{(D)}|_\nu^{-1}\},\log^{+}\{|a_j|_\nu^{-1}\}\},
\end{align*}
which is at most 
\[\left(\sum_{\lambda\in\bU_m,a_j\not=0}\sum_{\nu\in M_K}N_\nu\log^+\min_{0\le i\le D}\{|a_i^{(D)}-\lambda b_i^{(D)}|_\nu^{-1}\}\right)+\left(\sum_{a_j\not=0}m N_\nu\log^{+}\{|a_j|_\nu^{-1}\}\right).\]
By the pigeonhole principle,  either there exist $\lambda\in\bU_m,$ $a_i^{(D)}\not= \lambda b_i^{(D)}$ such that 
\[R_1h(f,g)+R_1< \sum_{\nu\in\mathrm{M}_K}N_\nu\log^+\{|a_i^{(D)}-\lambda b_i^{(D)}|_\nu^{-1}\}=h(a_i^{(D)}-\lambda b_i^{(D)})\]
or there exists $a_i\not=0$, 
such that
\[R_1h(f,g)+R_1< \sum_{\nu\in\mathrm{M}_K}N_\nu\log^+\{|a_i|_\nu^{-1}\}=h(a_i).\]
 By Lemma~\ref{lem: height}, we have
$R_1h(f,g)+R_1\ge h(a_i)$
and 
\[h(a_i^{(D)}-\lambda b_i^{(D)})\le h(a_i^{(D)})+h(b_i^{(D)})+\log 2 \le 2h(f^{n_1},g^{n_2})+\log 2\le 
R_1h(f,g)+R_1.\]
This yields a contradiction.

\medskip

\noindent\emph{Step 2.} Suppose there do not exist positive integers $n_1$ and $n_2$ such that $d_1^{n_1}=d_2^{n_2}$.

We say a place $\nu\in M_{K}$ is good if
  $\lambda_{V',\nu}(f)\leq 2(d+1)M\log|(f,g)|_\nu,$ where $d=\max\{d_1,d_2\}$.

We  prove that
\begin{align}
\label{eqq: ineq00}
    \sum_{\nu\text{~good}}N_\nu\log|(f,g)|_\nu\geq\frac{1}{2}h(f,g)-\frac{1}{2}.
\end{align}

\smallskip

Otherwise, since $\sum_{\nu\in M_K}N_\nu\log|(f,g)|_\nu=h(f,g)$, we obtain
\begin{align}
\label{eqq: badmanyy}
    \sum_{\nu\text{~bad}}N_\nu\log|(f,g)|_\nu\geq\frac{1}{2}h(f,g)+\frac{1}{2}.
\end{align}

 Let $\nu$ be a bad place.  Note that there exists $p_i$, such that \[\lambda_{V',\nu}(f)=-\log|p_i(f)|_\nu\leq M\max_{a_i\not=0}\log^+\{|a_i|^{-1}_\nu\}.\]
Therefore, 
\[2(d+1)\log|(f,g)|_{\nu}<\max_{a_i\not=0}\log^+\{|a_i|_\nu^{-1}\}.\]
Using~\eqref{eqq: badmanyy} and summing up over all bad places, one obtains
\begin{align*}
(d+1)(h(f,g)+1)&\leq\sum_{\nu~\text{bad}} N_\nu\max_{a_i\not=0}\{\log^{+}\{|a_i|_\nu^{-1}\}\}
\end{align*}
By the pigeonhole principle, there exists $a_i\not=0$, 
such that
\[h(f,g)+1< \sum_{\nu\in\mathrm{M}_K}N_\nu\log^+\{|a_i|_\nu^{-1}\}=h(a_i).\]
Since $h(f,g)\ge h(a_i)$, we obtain a contradiction.

\medskip

\noindent\emph{Step 3. Conclusion.}
If there exist positive integers $n_1,n_2$ such that $D=d_1^{n_1}=d_{2}^{n_2}$.
Let \(C_1 = 4mR_1(D+1)M\) and \(C_2 = 4mR_1(D+1)\), otherwise take $C_1=2(\max\{d_1,d_2\}+1)M$. Let \(C_5\) be the constant appearing in Theorem~\ref{IntroUniform3}. For any good place \(\nu\), we distinguish two cases according to whether the residue characteristic of \(\C_\nu\) is \(<C_5\) or \(\geq C_5\).

In the former case, since $\C_\nu$ has finitely many choices up to isometry, by Theorem~\ref{IntroUniform2}, there exist uniform constants $A_1,B_1>0$, such that 
\[\langle f,g\rangle_\nu\geq A_1\log|(f,g)|_\nu-B_1.\]
Note that for any prime number $p$, one has $\sum_{\nu \mid p }N_\nu=1.$ Therefore, 
\begin{align}
\label{eqq: ineq1}
    \sum_{\substack{
\nu\ \text{good} \\
\mathrm{char}(\tilde{\C}_\nu)< C_5
}}
N_\nu\langle f,g\rangle_{\nu}
\ge
- B_1 C_5
+
A_1 \sum_{\substack{
\nu\ \text{good} \\
\mathrm{char}(\tilde{\C}_\nu)< C_5
}}
N_\nu \log|(f,g)|_{\nu} .
\end{align}

In the latter case, by Theorem~\ref{IntroUniform3}, there  exist uniform constants $A_2>0$, such that 
\[\langle f,g\rangle_{\nu}\geq A_2\log|(f,g)|_{\nu}.\]
Therefore, 
\begin{align}
\label{eqq: ineq2}
   \sum_{\substack{
\nu\ \text{good} \\
\mathrm{char}(\tilde{\C}_\nu)\ge C_5
}}
N_\nu\langle f,g\rangle_\nu
\ge
 A_2 \sum_{\substack{
\nu\ \text{good} \\
\mathrm{char}(\tilde{\C}_\nu)\ge C_5
}}
N_\nu\log |(f,g)|_\nu . 
\end{align}
Let \(A=\tfrac{1}{2}\min\{A_1,A_2\}\) and \(B = B_1 C_5 + \tfrac{A}{2}\). Summing \eqref{eqq: ineq1} and \eqref{eqq: ineq2}, and using \eqref{eqq: ineq0} and \eqref{eqq: ineq00}, we obtain
\[
\langle f,g\rangle
\ge
\sum_{\nu\ \mathrm{good}} N_\nu \langle f,g\rangle_\nu
\ge
A\,h(f,g)-B.
\]
This completes the proof.
\end{proof}

We now prove the second uniform lower bound on our global energy pairing. 

\begin{theorem}
\label{thm: glbenergybd2}
Let  \(d_1,d_2\ge 2\) be fixed integers.   Then there exists a constant \(\delta=\delta(d_1,d_2)>0\), such that for any two  polynomials $(f, g) \in \mathrm{Poly}_{\mathrm{mc}}^{d_1}\times\mathrm{Poly}^{d_2}(\ovl{\bb{Q}})$ satisfying
\[
f^{n_1}\neq \lambda\, g^{n_2}
\quad
\text{for all } \lambda\in \Sigma_f
\text{ and all } n_1,n_2\in \mathbb Z_{>0},
\]
we have
\[
\langle f,g\rangle \ge \delta.
\]
\end{theorem}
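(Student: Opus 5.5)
Theorem~\ref{thm: glbenergybd1} already gives $\langle f,g\rangle \ge A\,h(f,g)-B$, so the plan splits into two height regimes. When $h(f,g)\ge 2B/A$, Theorem~\ref{thm: glbenergybd1} gives $\langle f,g\rangle\ge B$, which is positive. The only remaining case is a height-bounded family: pairs with $h(f,g)\le H_0:=2B/A$. There we need a uniform positive lower bound for pairs that are not dynamically related.

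For the bounded-height case I would argue by contradiction with a degeneration over the Archimedean places. Suppose $(f_n,g_n)$ satisfies $h(f_n,g_n)\le H_0$, the non-relation hypothesis, and $\langle f_n,g_n\rangle\to 0$. Every local pairing is nonnegative, so each weighted local contribution tends to $0$.

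The key step is to pick, for each $n$, an Archimedean embedding $\nu_n$ and pass to a non-principal ultrafilter limit with $k_n=\C$. I would use $\epsilon_n=1$ if $\log|(f_n,g_n)|_{\nu_n}$ stays bounded along $\omega$, and $\epsilon_n=(\log|(f_n,g_n)|_{\nu_n})^{-1}$ otherwise. Concretely:
\begin{enumerate}
\item In the bounded case, $\sH(\omega)=\C$ by Proposition~\ref{prop: prop-residuefield}(1). The limit pair $(f_\omega,g_\omega)$ consists of genuine complex polynomials of degrees $d_1,d_2$, since the leading coefficient of $g$ is controlled by $|b_{d_2}|^{-1}$. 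Theorem~\ref{thm: cts-energy} gives $\langle f_\omega,g_\omega\rangle=0$, so $\mu_{f_\omega}=\mu_{g_\omega}$.
\item Theorem~\ref{thm: cplxrigidity} (together with the monomial/Chebyshev cases) then forces $f_\omega^{n_1}=\lambda g_\omega^{n_2}$ with $\lambda\in\Sigma_{f_\omega}$.
\item In the unbounded case, the limit field is non-Archimedean of residue characteristic $0$. We argue exactly as in Theorem~\ref{IntroUniform2}, via Lemma~\ref{lem: mcminres}, Theorem~\ref{thm: rigidity-samedegree} and Proposition~\ref{prop: symmetryjulia}.
\end{enumerate}

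The contradiction has to come from the fact that the relation holds only in the limit. Two mechanisms could supply it.
\begin{itemize}
\item \emph{Discreteness.} Bounded height together with bounded degree would give finiteness (Northcott), but the degree of the number field is not bounded here. So Northcott cannot be used directly.
\item \emph{Arithmetic energy.} Since $h(f_n^{n_1}-\lambda g_n^{n_2})\le R_1H_0+R_1$ and this polynomial is nonzero, the product formula forces $\sum_\nu N_\nu\log|f_n^{n_1}-\lambda g_n^{n_2}|_\nu\ge -R_1(H_0+1)$. Hence on a set of places of weight bounded below, the difference $f_n^{n_1}-\lambda g_n^{n_2}$ is not small relative to $|(f_n,g_n)|_\nu$.
\end{itemize}

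I would use the second mechanism. I would run the good/bad place bookkeeping of Step~1 of Theorem~\ref{thm: glbenergybd1}, now with the quantity $\max(\log|(f,g)|_\nu,1)$ in place of $\log|(f,g)|_\nu$, so that places where $|(f,g)|_\nu$ is near $1$ still count. This produces a set of good places, of total weight bounded below by some $c(H_0)>0$, where two things hold: $\lambda_{V'}$ is bounded, and $|f^{n_1}-\lambda g^{n_2}|_\nu\ge e^{-C}$.

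On such places I want a local bound $\langle f,g\rangle_\nu\ge \delta_0$. This should follow from a compactness variant of Theorems~\ref{IntroUniform2}--\ref{IntroUniform3} in the regime $\log|(f,g)|\le C_3$:
\begin{itemize}
\item for Archimedean places, a compactness argument on a compact parameter set, using the complex rigidity statement;
\item for non-Archimedean places, the ultrafilter degeneration with $\epsilon_n=1$, using Theorem~\ref{thm: rigidity-samedegree} when the limit has bad reduction.
\end{itemize}

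The main obstacle I expect is at non-Archimedean places where both maps have good reduction in the limit. There $\langle\cdot,\cdot\rangle_\nu=0$ even though $f\neq\lambda g$, so no local lower bound is possible. Such places must be shown to carry little weight. To do this I would arrange the definition of good places so that good reduction with $|f^{n_1}-\lambda g^{n_2}|_\nu\ge e^{-C}$ is excluded, or absorbed using $\langle f,g\rangle_\nu\ge0$. Failing that, I would fall back on the Archimedean places alone: they have total weight $1$, so the complex compactness argument would suffice provided $f$ and $g$ are not archimedeanly close at most embeddings.
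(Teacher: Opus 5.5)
You handle the large-height case with Theorem~\ref{thm: glbenergybd1} exactly as the paper does. For $h(f,g)\le H_0$, however, the proposal never settles on an argument, and the step that carries the proof is left as a proviso. Your main line (good places including non-Archimedean ones, local bounds via ultrafilter limits) is a dead end here. Finite places have infinite total weight, and, as you note, the local pairing vanishes wherever both maps have good reduction. So you should simply discard the finite places using $\langle f,g\rangle_\nu\ge 0$.

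What works is your ``fallback''; this is the paper's proof. The proviso that $f,g$ are not close to the degenerate locus at most Archimedean embeddings is precisely what must be shown. Your product-formula remark proves it once you do the bookkeeping over Archimedean places with the plain weights $N_\nu$ (total mass $1$), not with $\max(\log|(f,g)|_\nu,1)$ over all places.
\begin{itemize}
\item Stratify so that $\Sigma_f=\bU_m$ with $V,V'$ fixed.
\item Let $Y$ be the set of complex pairs satisfying three conditions: $\log|(f,g)|\le K$; $\lambda_{V'}(f)\le K$; and, when $d_1^{n_1}=d_2^{n_2}$, $\min_{\lambda\in\bU_m}\log|f^{n_1}-\lambda g^{n_2}|\ge -K$. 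The first condition also bounds $|b_{d_2}|^{-1}$, which compactness needs and the paper's $Y$ omits.
\item Then $Y$ is compact. By complex rigidity (modulo the integrable cases below) it avoids the zero set of $\langle\cdot,\cdot\rangle_\infty$. Hence $\langle f,g\rangle_\nu\ge 2\delta$ whenever $\sigma_\nu(f,g)\in Y$.
\item Suppose the other Archimedean places had weight $\ge 1/2$. Pigeonholing gives one fixed nonzero algebraic number $x$ from a bounded list (a coefficient, $b_{d_2}$, a nonzero $p_i(f)$, or a nonzero $a_i^{(D)}-\lambda b_i^{(D)}$). This $x$ satisfies $|\log|x|_\nu|>K$ on Archimedean places of weight $\ge w>0$.
\item Hence $2h(x)=h(x)+h(x^{-1})\ge wK$. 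This contradicts $h(x)\ll H_0+1$ (Lemma~\ref{lem: height}) for $K$ large.
\item Therefore $\langle f,g\rangle\ge\sum_{\nu\mid\infty}N_\nu\langle f,g\rangle_\nu\ge\delta$.
\end{itemize}
No rescaling is needed. Coefficients are bounded on good places, so your case (3) never occurs, and an ultrafilter limit with $\epsilon_n=1$ is just compactness of $Y$.

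There are two further problems.
\begin{itemize}
\item Monomial $f$ is not covered. For $f=z^{d_1}$ one has $\lambda_{V'}(f)=+\infty$ at every place, so your good set is empty. The paper treats this case separately via~\cite{MY25}.
\item Your step (2) is false for Chebyshev maps of multiplicatively independent degrees, and this is a counterexample to the statement itself. Take $f=z^2-2$ and $g=z^3-3z$. Both have Julia set $[-2,2]$ over $\C$ and good reduction at every finite place, so $\langle f,g\rangle=0$. Yet the hypothesis holds, since $2^{n_1}\ne 3^{n_2}$.
\end{itemize}
The paper's proof has the same blind spot: it asserts $\langle f,g\rangle_\infty=0$ iff $(f,g)\in E$, with $E=\emptyset$ for such degrees. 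Similarly, $f=z^d$ and $g=2^{1/n}z^d$ satisfy the hypothesis, but $\langle f,g\rangle=2h(2^{1/n})/(d-1)=2\log 2/(n(d-1))\to 0$. So integrable pairs must be excluded from the statement: Chebyshev $f$ with multiplicatively independent degrees, and monomial $g$ when $f$ is monomial. Your Archimedean argument, completed as above, then proves the statement in the remaining cases with $f$ non-monomial, exactly as the paper does.
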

\begin{proof}
The case where $f$ is monomial and $g$ is monic follows from \cite[Proposition 5.6]{MY25}. The proof easily extends to the non-monic case as $g$ is assumed to have a non-zero coefficient that is not the leading coefficient. 
\par 
We may then suppose that $f$ is not monomial.
Let \( V_m \) denote the set of polynomials whose group of dynamical symmetries is equal to \( \bU_m \). Since $
\mathrm{Poly}_{\mathrm{mc}}^{d_1}(\overline{\mathbb{Q}})\setminus \{z^{d_1}\}
= \bigsqcup_{m=1}^{d_1} \left( V_m \setminus \bigcup_{i=2}^{d_1} V_{im} \right),$
it suffices to consider the case where
$f \in V \setminus V'$,
with  $V = V_m$ and $V' = \bigcup_{i=2}^d V_{im}.$
By construction, any \( f \in V \setminus V' \) satisfies \( \Sigma_f = \bU_m \).

If there exist positive integers $n_1,n_2$ such that $d_1^{n_1}=d_{2}^{n_2}$, up to iteration, we may suppose $d_1=d_2=d.$

By Theorem~\ref{thm: glbenergybd1}, if $h(f,g)$ is sufficiently large, then the conclusion follows.
Hence, we may assume that there exists a constant $C>1$ such that
$h(f,g)<C.$

Let $|\cdot|_\infty$ denote the standard Euclidean norm on $\C$.
We consider pairs $(f,g)$ satisfying the following four conditions:
\begin{enumerate}
    \item $\log|f|_{\infty} \le 8C$;
    \item $\log|g|_{\infty} \le 8C$;
    \item $-\log\max_{1\le i\le m}|p_i(f)|_{\infty} \le 8CM$.
\end{enumerate}
When $d_1=d_2=d$, we further suppose
\begin{enumerate}[resume]
  \item $\min_{\lambda\in\bU_m}\log|f-\lambda g|_\nu \ge -8m(d+1)(2C+\log 2)$.
  \end{enumerate}

These conditions define a compact subset
$Y \subset (V\setminus V')\times \mathrm{Poly}^{d_2}(\C)\setminus E,$
where $E=\emptyset$  if $d_1\not=d_2$ and
$E=\{((V\setminus V')\times \mathrm{Poly}^{d_2}(\C) : f=\lambda g \text{ for some } \lambda\in\bU_m\}$ if $d_1=d_2.$

By the complex rigidity of Julia sets~\cite{SS95}, for $(f,g)\in X$ we have
$\langle f,g\rangle_\infty=0$ if and only if $(f,g)\in E$.
Therefore, by continuity of the local energy pairing and compactness of $Y$,
there exists a constant $\delta>0$ such that $\langle f,g\rangle_{\infty} \ge 2\delta$ for all $(f,g)\in Y.$

Now choose a number field $K$ containing the coefficients of both $f$ and $g$. 
Denote by $\sigma_\nu:K\hookrightarrow\C$ the embedding corresponding to an Archimedean place $v\mid\infty$.

We say that an Archimedean place $\nu$ is \emph{good} if $
(\sigma_\nu(f),\sigma_\nu(g))\in Y.$ We claim that if $h(f,g)<C$, then $
\sum_{\nu\ \mathrm{good}} N_\nu \ge \frac12.$

Assuming the claim, we obtain
\[
\langle f,g\rangle
\ge
\sum_{\nu\mid\infty} N_\nu \langle f,g\rangle_\nu
\ge
\sum_{\nu\ \mathrm{good}} N_\nu \langle f,g\rangle_\nu
\ge
\sum_{\nu\ \mathrm{good}} 2N_\nu\delta
\ge \delta,
\]
which completes the proof.

\smallskip

It remains to prove the claim. Since $\sum_{\nu\mid\infty} N_\nu = 1$, if the claim were false, we would have
$\sum_{\nu\ \mathrm{bad}} N_\nu > \frac12.$
Each bad place $\nu$ must violate at least one of the defining conditions of $Y$,
that is, at least one of the following holds:
\begin{enumerate}
\item $\log|f|_\nu > 8C$,
\item $\log|g|_\nu > 8C$,
\item $-\log\max_{1\le i\le l}|p_i(f)|_\nu > 8CM$.
\end{enumerate}
When $d_1=d_2=d$, we further suppose
\begin{enumerate}[resume]
  \item $\min_{\lambda\in\bU_m}\log|f-\lambda g|_\nu < -8m(d+1)(2C+\log 2)$.
\end{enumerate}

By the pigeonhole principle, for at least one of these four cases the total sum
of the corresponding $N_\nu$ is at least $1/8$.

If this occurs in case~(1), then
\[
h(f,g)\ge \sum_{\substack{\nu\mid\infty\\ |f|_\nu>8C}} N_\nu\log|f|_\nu
> C,
\]
contradicting $h(f,g)<C$.
Case~(2) is identical.

In case~(3), write $f(z)=\sum_{n=0}^{d_1} a_n z^n$ and  $g(z)=\sum_{n=0}^{d_2} b_n z^n.$ Then  there exists $i$ such that $a_i\not=0$ while
$\log^+|a_i^{-1}|_\nu \ge 8C.$ Hence
\[
h(f,g)\ge h(a_i)\ge\sum_ {\substack{\nu\mid\infty\\ \text{(3) holds}}}N_\nu\log^+|a_i^{-1}|_\nu \ge C,
\]
a contradiction.

Finally, suppose case~(4) occurs. Then there exists $i$ and $\lambda\in\bU_m$ such that $a_i\neq \lambda b_i$ and 
$\log|a_i-\lambda b_i|_\nu < -8(2C+\log 2).$
Thus \[h(a_i-\lambda b_i)=h((a_i-\lambda b_i)^{-1})\ge \sum_ {\substack{\nu\mid \infty\\ \text{(4) holds}}}N_\nu\log(|a_i-\lambda b_i|_\nu^{-1}) > 2C+\log 2.\]
By Lemma~\ref{lem: height}, one has
\[
C>h(f,g)\ge \frac12\bigl(h(a_i)+h(b_i)\bigr)
\ge \frac12\bigl(h(a_i-\lambda b_i)-\log 2\bigr)
> C,
\]
again a contradiction.

Therefore all cases lead to contradictions, and the claim follows.
\end{proof}

\subsection{Proof of Theorem~\ref{thm: uniformprep}} We may now combine Theorems \ref{thm: glbenergybd1} and \ref{thm: glbenergybd2} to prove Theorem \ref{thm: uniformprep}. By a specialization argument~\cite[\S 10.2]{DKY21}, we may assume that \(f,g \in \ovl{\bb{Q}}[z]\).
By conjugating both \(f\) and \(g\) by an affine transformation, we may further assume that
$f$ is monic centered. 

\par 
Suppose there exist \(\lambda \in \Sigma_f\) and positive integers $n_1,n_2$ such that \(f^{n_1} = \lambda g^{n_2}\). Let $D=d_1^{n_1}=d_2^{n_2}$.
By induction, for all \(n \in \mathbb{N}\), the iterations of $f$ and $g$ satisfy
\[
f^{n_1n}(z)
= \lambda^{1 + D + \cdots + D^{\,n-1}} g^{n_2n}(z).
\]
Since \(\lambda\) is a root of unity, it follows that \(z\) has finite orbit under \(f\) if and only if it has finite orbit under \(g\).
Therefore, $\Prep(f) = \Prep(g).$

\smallskip

Now suppose otherwise.
By \cite[Theorem B]{Gau24}, if
$N=\bigl|\Prep(f)\cap \Prep(g)\bigr|\geq 2,$
then there exists a constant $C=C(d_1,d_2)>0$ such that 
\begin{equation}\label{eq: AY}
\langle f,g\rangle
\le
C\left(\frac{1}{N}+\frac{\log N}{N}\right)\bigl(h(f,g)+1\bigr)\leq 2C\frac{\log N}{N}(h(f,g)+1).
\end{equation}

On the other hand, by Theorems~\ref{thm: glbenergybd1} and~\ref{thm: glbenergybd2}, there exist uniform constants \(A,B,\delta>0\), depending only on the degrees, such that $
\langle f,g\rangle \ge \delta$ and $\langle f,g\rangle \ge A\,h(f,g)-B.$

Suppose $h(f,g)\ge\frac{2(B+2C)}{A}.$
We claim that \[\frac{\log N}{N}\geq\frac{A}{4C},\] hence $N$ is uniformly bounded. If the claim does not hold, applying \eqref{eq: AY}, we have 
\[B+2C<\left(A-2C\frac{\log N}{N}\right)h(f,g)\leq B+2C,\] 
which leads to a contradiction.

Suppose $h(f,g)<\frac{2(B+2C)}{A}.$ Applying \eqref{eq: AY} again, we obtain 
\[\frac{\log N}{N}\geq\frac{\delta A}{2C(2B+4C+A)}.\]
Thus $N$ is uniformly bounded, which completes the proof.

\section{DeMarco--Mavraki Conjecture for Fibered Powers of $\Delta$}

\subsection{DeMarco--Mavraki conjecture}  We now prove the following special case of the relative Dynamical Bogomolov conjecture. Let $X \subseteq \mathrm{Poly}_{\mathrm{mc}}^{d_1} \times \mathrm{Poly}^{d_2}$ be a closed irreducible subvariety of dimension $k$ defined over $\ovl{\bb{Q}}$. Let $\Delta \subseteq (\bb{P}^1)^2$ be the diagonal and consider the fibered power $\Delta^{k+1} \times X \subseteq (\bb{P}^1)^{2(k+1)} \times \mathrm{Poly}_{\mathrm{mc}}^d \times \mathrm{Poly}^d$. For a point $x=(x_1,x_2) \in \bP^{1}(\C)\times \bP^{1}(\C)$, we will let $\h_{f,g}(x) = \h_f(x_1) + \h_g(x_2).$

\begin{theorem} \label{RelativeManinMumford1}
Assume that as $(f,g)$ varies over $X(\ovl{\bb{Q}})$, the map $f$ is not always $z^{d_1}$. Further assume that $\Prep(f) \not = \Prep(g)$ for some $(f,g) \in X$. Then there exists $\eps > 0$ the set 
$$\left\{(x_1,\ldots,x_{k+1}, f,g) \text{ } \bigg| \text{ } \sum_{i=1}^{k+1} \h_{f,g}(x_i) < \eps \right\} \subset (\Delta)^{k+1} \times X$$
is not Zariski dense in $(\Delta)^{k+1} \times X$. 
\end{theorem}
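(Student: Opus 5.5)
We argue by contradiction and follow the Yuan--Zhang approach to the uniform Bogomolov conjecture: the obstruction to Zariski density of small points in the fibered power will come from a \emph{uniform} positivity of the adelic energy pairing, which is exactly what Theorems~\ref{thm: glbenergybd1} and \ref{thm: glbenergybd2} provide.

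\textbf{Setup.} Over the base $X$ (after passing to a quasi-projective open subset where $f$ is not a monomial and the degrees are constant) we consider the family of polarized dynamical systems $\Phi_{f,g}=(f,g)$ acting on $\bP^1\times\bP^1$. Yuan--Zhang \cite{YZ24} give an adelic line bundle $\overline{L}$ on the total space, with $\h_{f,g}$ as its fiberwise height. Restricting to $\Delta\times X$ gives an adelic line bundle $\overline{M}$ on $\Delta\times X\cong \bP^1\times X$. Its fiberwise height at $(x,(f,g))$ is $\h_f(x)+\h_g(x)$, and $c_1(\overline M)$ restricted to a fiber is $\mu_f+\mu_g$.

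\textbf{Step 1: identify the obstruction.} By the Yuan--Zhang equidistribution theorem and the criterion in \cite{Yua24} (the argument proving uniform Bogomolov), non-density of small points on the $(k+1)$-fold fibered power $\Delta^{k+1}\times X$ with $k=\dim X$ reduces to the following two facts.
\begin{enumerate}
\item The vector-valued height $\overline M$ is \emph{nondegenerate} on $\Delta\times X$, in the sense that the geometric bundle $\tilde M$ on the base satisfies $\widetilde{M}^{\,k+1}\neq 0$.
\item The essential minimum of the relative height on the base is positive.
\end{enumerate}
Concretely, the standard computation (Zhang's fundamental inequality, fiber by fiber) shows the following. If $\Delta^{k+1}\times X$ contained a Zariski dense set of points with $\sum\h_{f,g}(x_i)<\eps$ for every $\eps>0$, then the fiberwise essential minimum of $\h_f+\h_g$ on $\Delta$ would vanish on a Zariski dense set of the base. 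That minimum equals $\tfrac12$ of the fiberwise self-intersection, which equals $\langle f,g\rangle$ up to normalization, as in \cite{DKY21, FRL06}. We will therefore aim to prove the following: there exists $\eta>0$ such that $\langle f,g\rangle\ge \eta\max\{h(f,g),1\}$ on a Zariski dense open subset of $X(\ovl{\bb Q})$.

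\textbf{Step 2: uniform lower bound on $X$.} Since $\Prep(f)\ne\Prep(g)$ for some $(f,g)\in X$, the locus where $f^{n_1}=\lambda g^{n_2}$ with $\lambda\in\Sigma_f$ is a proper closed subset. This holds because each such relation is closed, and, since the degrees are fixed, only finitely many $(n_1,n_2)$ occur up to the minimal pair. Off this locus, Theorems~\ref{thm: glbenergybd1} and \ref{thm: glbenergybd2} give
\[
\langle f,g\rangle\ge\max\{A\,h(f,g)-B,\delta\}\ge \eta\,(h(f,g)+1)
\]
for a uniform $\eta>0$. The monomial case is excluded by the hypothesis that $f$ is not always $z^{d_1}$: it is a closed condition, and for non-generic monomial fibers we remove them.

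\textbf{Step 3: conclude via the height inequality.} In the setting of \cite{Yua24}, the generic bigness of the Deligne pairing $\langle\overline M,\overline M\rangle$ on the base follows from Step 2. This is because a lower bound of the form $\eta\, h(f,g)$ by an ample height on the base forces $\widetilde{\langle M,M\rangle}$ to be big. By the Yuan--Zhang version of the Siu inequality on the $(k+1)$-fold fibered power, the bundle $\sum_i p_i^*\overline M$ then has positive essential minimum on $\Delta^{k+1}\times X$, which is exactly the claim.

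\textbf{Main obstacle.} The main obstacle is Step 3: translating the pointwise energy lower bound into bigness of the relevant adelic bundle in Yuan--Zhang's framework, and verifying the hypotheses there (integrability of $\overline M$ over the non-compact base $\mathrm{Poly}$). I would first try to mimic \cite[Theorem 1.3]{Yua24} and \cite[Section 6]{DM24} verbatim, with the energy pairing playing the role of the Néron--Tate pairing on the Jacobian.
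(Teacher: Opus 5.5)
Your actual argument in Steps~2--3 is the paper's. Off the proper closed locus where $f^{n_1}=\lambda g^{n_2}$ with $\lambda\in\Sigma_f$, Theorems~\ref{thm: glbenergybd1} and~\ref{thm: glbenergybd2} bound the height of the Deligne pairing below by $\eta\,(h(f,g)+1)$. Comparison with the big Weil height then makes the pairing big (Proposition~\ref{YuanBigness2}). Yuan's criterion (Proposition~\ref{YuanBigness1}) converts this into bigness on the $(k+1)$-fold fibered power, and bigness gives non-density of small points. Your choice of $\overline{L}_f+\overline{L}_g$ on $\Delta$ is the right one. The fiber degree must be positive and the fiber height must be $\h_f+\h_g$, so the paper's $\pi_1^*\ovl{L}_f-\pi_2^*\ovl{L}_g$ has to be read as a sum.

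Two corrections. First, the ``concrete'' reduction in Step~1 is false and should be deleted. A Zariski-dense set of small points in $\Delta^{k+1}\times X$ does not force the fiberwise essential minimum of $\h_f+\h_g$ to vanish on a dense set of base points. Off the exceptional locus every fiber has positive essential minimum (Baker--DeMarco), hence only finitely many small points. These finite sets can nevertheless sweep out a Zariski-dense subset as $(f,g)$ varies. By \cite[Theorem~3.1]{DM24}, which the paper invokes in the proof of Theorem~\ref{DMConj3}, this really happens for preperiodic points on $\Delta^{j}\times X$ when $j\le\dim X$. If your implication held, it would give the conclusion already on $\Delta\times X$, with no uniform input at all. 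The fibered power and the uniform energy bounds enter only through Yuan's bigness criterion, and no separate nondegeneracy statement (your item~(1)) is needed.

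Second, in Step~3, Proposition~\ref{YuanBigness1} requires \emph{arithmetic} bigness of the Deligne pairing, not merely bigness of its geometric part $\widetilde{\langle M,M\rangle}$. This is supplied by the lower bound in terms of $h(f,g)$, together with nefness of the pairing. That bound yields positivity of the essential minimum and hence positive arithmetic volume via the Yuan--Zhang results, exactly as in the proof of Proposition~\ref{YuanBigness2}.
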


We have to exclude the case where $f$ is always a monomial map as there are counterexamples. For example, consider the family $(z^d, cz^d)$ as $c$ varies in $\bb{A}^1$. Then if $c$ is a root of unity, we have $\Prep(z^d) = \Prep(cz^d)$ which gives us a Zariski dense set of bad parameters in $\bb{A}^1$. 

Theorem \ref{RelativeManinMumford1} can be viewed as a special case to the DeMarco--Mavraki conjecture \cite[Conjecture 1.1]{DM26}, which we will recall the statement now. Let $S$ be a smooth and irreducible quasi-projective variety over $\bb{C}$ and let $\Phi: S \times \bb{P}^N \to S \times \bb{P}^N$ be an algebraic family of endomorphisms of degree $d$, i.e. $\Phi(s,x) = \Phi(s,f_s(x))$ where $f_s: \bb{P}^n \to \bb{P}^N$ is an endomorphism of degree $d$. The smoothness of $S$ is not important as one can always apply resolution of singularities to the base to obtain a smooth family. We will denote the function field of $S$ by $\bb{C}(S)$. Let $\cal{X} \subseteq S \times \bb{P}^N$ be a closed irreducible subvariety flat over $S$. Given an irreducible subvariety $\cal{Y} \subset S \times \bb{P}^N$ which is flat over a Zariski open, we say $\cal{Y}$ is $\Phi$-special if there exists a subvariety $\mbf{Z} \subseteq \bb{P}^N_{\ovl{\bb{C}(S)}}$ containing the generic fiber $\mbf{Y}$ of $\cal{Y}$ and a polarizable endomorphism $\mathbf{\Psi}: \mathbf{Z} \to \mbf{Z}$ over $\ovl{\bb{C}(S)}$ such that for some $n \in \bb{N}$, the following hold:

\begin{enumerate}
    \item $\mbf{\Psi}^n(\mbf{Z}) = \mbf{Z}$;
    \item $\mbf{\Psi}^n \circ \mbf{\Phi} = \mbf{\Phi} \circ \mbf{\Psi}^n$ on $\mbf{Z}$ where $\mbf{\Phi}$ is the generic fiber of $\Phi$;
    \item $\mbf{Y}$ is preperiodic for $\Psi$.
\end{enumerate}

We will let $r_{\Phi,\cal{X}}$ denote the relative special dimension of $\cal{X}$ over $S$, i.e. 
$$r_{\Phi,\cal{X}} := \min\{ \dim \cal{Y} - \dim S \mid \cal{Y} \text{ is } \Phi-\text{special} \text{ and } \cal{X} \subset \cal{Y}\}.$$
Now let $\wh{T}_{\Phi}$ denote the dynamical $(1,1)$-Green current associated to $\Phi$ \cite{GV25}. We can now state the DeMarco--Mavraki conjecture. 

\begin{conjecture}\label{DMConj1}
The following are equivalent:
\begin{enumerate}
    \item $\cal{X}$ contains a Zariski dense set of $\Phi$-preperiodic points; 
    \item $\wh{T}_{\Phi}^{\wedge r_{\Phi,\cal{X}}} \wedge [\cal{X}] \not = 0.$
\end{enumerate}
\end{conjecture}

The implication $(2 \Rightarrow (1)$ is \cite[Theorem 1.5]{DM26}. Hence the difficulty of the conjecture is to show that if $\wh{T}_{\Phi}^{\wedge r_{\Phi,\cal{X}}} \wedge[\cal{X}] = 0$, then $\cal{X}$ does not contain a Zariski dense set of preperiodic points. 
\par 
The DeMarco--Mavraki conjecture can be viewed as a dynamical generalization of the Relative Manin--Mumford conjecture, proven by Gao and Habegger \cite{GH23}. The reader may consult \cite{DM26} for some striking consequences of the conjecture. The relative Bogomolov conjecture was also proven for fibered powers of elliptic curves by Kühne \cite{Kuh23}. 
\par 
Combining Theorem \ref{RelativeManinMumford1} and implication $(2) \Rightarrow (1)$ for Conjecture \ref{DMConj1}, we obtain that the DeMarco--Mavraki conjecture is true for $\cal{X} = (\Delta)^{k} \times X$ for any $k$. We restrict ourselves to the case of equal degrees as the DeMarco--Mavraki conjecture is originally stated only for polarized dynamical systems.

\begin{theorem} \label{DMConj2}
Let $X \subseteq \mathrm{Poly}_{\mathrm{mc}}^d \times \mathrm{Poly}^d$ be an irreducible subvariety defined over $\ovl{\bb{Q}}$. Assume that as $(f,g)$ varies over $X(\ovl{\bb{Q}})$, the map $f$ is not always $z^d$. Fix a positive integer $k$ and let 
$$\cal{X} = (\Delta)^k \times X \subseteq (\bb{P}^1)^{2k} \times X,$$
where we view $\cal{X}$ as a family over the base $X$. 
Then the DeMarco--Mavraki conjecture is true for $\cal{X}$.
\end{theorem}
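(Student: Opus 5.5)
Since $(2)\Rightarrow(1)$ is \cite[Theorem 1.5]{DM26}, we only need $(1)\Rightarrow(2)$. We prove its contrapositive: if $\wh{T}_{\Phi}^{\wedge r_{\Phi,\cal{X}}}\wedge[\cal{X}]=0$, then $\cal{X}$ has no Zariski dense set of $\Phi$-preperiodic points. We split according to whether $\Prep(f)=\Prep(g)$ holds identically on $X$.

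\emph{Case 1: $\Prep(f)\neq\Prep(g)$ for some $(f,g)\in X(\ovl{\bb{Q}})$.} We show condition (1) fails for every $k$. If $k\ge\dim X+1$, project $(\Delta)^k\times X\to(\Delta)^{\dim X+1}\times X$. A Zariski dense set of preperiodic points upstairs maps to a Zariski dense set downstairs. Such a point has each $x_i\in\Prep(f)\cap\Prep(g)$, so Theorem~\ref{RelativeManinMumford1}, with height $0<\eps$, gives a contradiction. If $k\le\dim X$, take products with extra coordinates: from a dense set $\{(x_1,\dots,x_k,f,g)\}$ we build points $(x_1,\dots,x_k,y_{k+1},\dots,y_{\dim X+1},f,g)$ with each $y_j$ in $\{x_1,\dots,x_k\}$, or better in a larger common-preperiodic set. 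This step is not automatic. The cleanest route is to use the projection to the first factor, together with the fact that a dense preperiodic set in $(\Delta)^k\times X$ forces, over a dense set of parameters, at least one common preperiodic point. The natural approach is to fiber-power the dense set itself. Given a Zariski dense set $S\subset(\Delta)^k\times X$, the set $S\times_X S\times_X\cdots$ is Zariski dense in the fibered power, by a standard fiber-product density argument over a dense subset of the base. This reduces to $k\ge\dim X+1$. I expect this density of fibered products to be the main technical point to check. The fallback is to apply the Bogomolov version, Theorem~\ref{RelativeManinMumford1}, to the subset of parameters with at least $\dim X+1$ common preperiodic points, which is dense as soon as preperiodic points are dense.

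\emph{Case 2: $\Prep(f)=\Prep(g)$ for all $(f,g)\in X$.} Here condition (1) holds, since preperiodic points of the form $(x,\dots,x,f,g)$ with $x\in\Prep(f)$ are dense. So we must verify (2). By Theorem~\ref{thm: uniformprep} and rigidity (Theorem~\ref{IntroRigidity1}, or its complex analogue via \cite{SS95}), for generic $(f,g)$ we have $f=\lambda g$ with $\lambda\in\Sigma_f$ a root of unity. Since $X$ is irreducible, $\lambda$ is constant on $X$. Then $\Psi=(f,f)$ commutes with $\Phi=(f,g)$ up to this symmetry, and $\Delta$ is $\Psi$-invariant. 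Hence $\cal{X}$ is $\Phi$-special, and $r_{\Phi,\cal{X}}=0$ because $\cal{X}$ itself is special. Condition (2) then reads $[\cal{X}]\neq0$, which is trivially true. So both conditions hold and the conjecture is verified.

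Combining the two cases, conditions (1) and (2) are equivalent for $\cal{X}$.
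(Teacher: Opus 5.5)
\textbf{Verdict: the proposal has a genuine gap.} The split into cases matches the paper, and so does your treatment of $k\ge\dim X+1$: project to $\Delta^{\dim X+1}\times X$ and apply Theorem~\ref{RelativeManinMumford1}. The gap is the subcase $k\le\dim X$ when $\Prep(f)\neq\Prep(g)$ somewhere on $X$. There you try to show that condition (1) fails, but it typically \emph{holds}; the paper cites \cite[Theorem 3.1]{DM24} for this.

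For a concrete example, take $X=\{(z^2+c,z^2)\}$ and $k=1$. Every root of unity $\zeta$ is preperiodic for $z^2+c$ for infinitely many $c$. So the preperiodic points $(\zeta,\zeta,z^2+c,z^2)$ fill infinitely many curves $\{x=\zeta\}$ and are Zariski dense in the surface $\Delta\times X$.

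The step that breaks is your fiber-product density. Density of $S\times_X\cdots\times_X S$ requires $S$ to be dense in the fibers over a dense set of parameters. Here, over a dense open set of parameters, every fiber of $S$ is finite (at most $M$ points by Theorem~\ref{thm: uniformprep}). In fact Theorem~\ref{RelativeManinMumford1} says exactly that the $(\dim X+1)$-fold fibered power of common preperiodic points is \emph{not} dense. Your fallback fails for the same reason: having $\dim X+1$ common preperiodic points per fiber does not produce a dense subset of $\Delta^{\dim X+1}\times X$.

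The missing idea is that in this range you must prove (2). Since $(\mathbb{P}^1)^{2k}\times X$ is itself special, $r_{\Phi,\mathcal{X}}\le 2k$, so it suffices to show $\widehat{T}_\Phi^{\wedge 2k}\wedge[\mathcal{X}]\neq 0$.
\begin{itemize}
\item For $k=\dim X$ this is non-degeneracy of $\mathcal{X}$. The paper extracts it from the bigness behind Proposition~\ref{Relative1} via \cite[Theorem 4.1]{Yua24}.
\item For $k<\dim X$, expand the nonvanishing top power on $\Delta^{\dim X}\times X$. Because $\widehat{T}_f^{\wedge 2}=\widehat{T}_g^{\wedge 2}=0$, only the term $\pi_1^*\widehat{T}_f\wedge\pi_1^*\widehat{T}_g\wedge\cdots\wedge\pi_{\dim X}^*\widehat{T}_g$ survives. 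Then use the projection to the first $k$ factors to get nonvanishing on $\Delta^k\times X$.
\end{itemize}

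In the case $\Prep(f)=\Prep(g)$ on all of $X$, your value $r_{\Phi,\mathcal{X}}=0$ is wrong. The relative special dimension is $\dim\mathcal{Y}-\dim S$ with base $S=X$. So $\mathcal{X}$ being special gives $r_{\Phi,\mathcal{X}}=\dim\mathcal{X}-\dim X=k$. Condition (2) is therefore $\widehat{T}_\Phi^{\wedge k}\wedge[\mathcal{X}]\neq 0$, not $[\mathcal{X}]\neq 0$. It is true but needs an argument, for example by slicing as in the paper. On a fiber $\Delta^k\simeq(\mathbb{P}^1)^k$ the current is $\sum_i\pi_i^*(\widehat{T}_{f_s}+\widehat{T}_{g_s})$, and its $k$-th power contains the positive term $\bigwedge_i\pi_i^*\widehat{T}_{f_s}$.

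Some smaller points in this case:
\begin{itemize}
\item $\Psi=(f,f)$ need not commute with $\Phi$ for any iterate. For $f=z^4+z^2$ and $g=-f$, one has $f^n\circ g=f^{n+1}$ but $g\circ f^n=-f^{n+1}$. Instead take $\Psi=\Phi$ and note that $g^n=\eta_n f^n$ with $\eta_n\in\mathbb{U}_m$. Then $\Phi^n(\Delta)$ ranges over finitely many graphs, so $\mathcal{X}$ is $\Phi$-preperiodic.
\item The dense preperiodic set should consist of $(x_1,\dots,x_k)$ with independent $x_i\in\Prep(f)$, not $(x,\dots,x)$.
\item The identity $f=\lambda g$ comes from $\mu_f=\mu_g$ plus rigidity, not from Theorem~\ref{thm: uniformprep}.
\end{itemize}
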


As remarked previously, Theorem \ref{RelativeManinMumford1} can be viewed as a relative version of the Dynamical Manin--Mumford. It is easy to deduce a uniform bound on $|\Prep(f) \cap \Prep(g)|$ over a Zariski open $U$ of $X$ from Theorem \ref{RelativeManinMumford1}. Conversely, if one knew the existence of a Zariski open $U$ and $N = N(d) > 0$ such that $|\Prep(f) \cap \Prep(g)| \leq N$ for all $(f,g) \in U$, then the set 
$$\{(x_1,\ldots,x_{N+1},f,g) \mid  x_i \in \Prep(f) \cap \Prep(g)\} \subset (\Delta)^{N+1} \times U$$
is contained in the union of $\{x_i = x_j\}$ over all pairs $(i,j)$. The extra content of Theorem \ref{RelativeManinMumford1} is that we only have to pass to a $(\dim X + 1)$-fibered power to have the special points lie in a Zariski closed subset. 
\par 
We will deduce Theorem \ref{RelativeManinMumford1} by combining \cite[Theorem 4.1]{Yua24} with our uniform lower bounds in a similar fashion that Yuan deduces his uniform Bogomolov-type statements. We now recall the theory of adelic line bundles over quasi-projective varieties. 

\subsection{Adelic line bundles over quasi-projective varieties}
We will be using the theory of adelic line bundles over quasi-projective varieties which the reader may refer to \cite{YZ24} for the details. We first briefly review the basic definitions. Let $X$ be a flat integral projective scheme over $\Spec \bb{Z}$. Then an arithmetic $\bb{Q}$-divisor $\cal{D}$ is a pair $(D,g_{\cal{D}})$ where $D$ is a $\bb{Q}$-Cartier divisor and $g_{\cal{D}}$ a continuous Green's function for the divisor $D$ on $X(\bb{C})$. Now if $U$ is a quasi-projective scheme flat over $\Spec \bb{Z}$, we define the space of arithemtic model divisors $\wh{\Div}_{\bb{Q}}(U)_{\mathrm{mod}}$ as 
$$\wh{\Div}_{\bb{Q}}(U)_{\mathrm{mod}} = \lim_{\rightarrow} \wh{\Div}_{\bb{Q}}(X)$$
where $X$ ranges over all compactifications of $U$. Fixing some compactification $\ovl{U}$, we may define a boundary divisor $\cal{E}$ which is a model divisor whose suppport is $\ovl{U} \setminus U$ and whose Green's function satisfies $g_{\cal{E}} > 0$ everywhere. We may then endow $\wh{\Div}_{\bb{Q}}(U)_{\mathrm{mod}}$ with a suitable boundary topology and form its completion, giving us the space $\wh{\Div}_{\bb{Q}}(U)$. Quotienting out by a suitable equivalence gives us the space of adelic line bundles $\wh{\Pic}(U/\bb{Z})$. 
\par 
Finally if $X$ is simply a quasi-projective variety over a number field $K$, we will let 
$$\wh{\Pic}(X/\bb{Z}):= \lim_{\rightarrow} \wh{\Pic}(\cal{U}/\bb{Z})$$
where the limit is taken over all quasi-projective models $\cal{U}$ of $X$ over $\Spec \bb{Z}$. 
\par 
The space $\wh{\Pic}(X/\bb{Z})$ is usually too big and we will work with a subset of it known as the integrable adelic line bundles for which a good notion of intersection theory can be developed. We will denote the set of integrable adelic line bundles by $\wh{\Pic}(X/\bb{Z})_{\intt}$. 
\par 
We now start recalling some basic results that we will now need. Given an integral scheme $S$, we say $(X,f,L)$ is a polarized dynamical system over $S$ if the following holds:
\begin{enumerate}
\item $X$ is an integral projective scheme over $S$;

\item $f: X \to X$ is a morphism over $S$;

\item $L$ is a $\bb{Q}$-line bundle on $X$, relatively ample over $S$ and there exists $q > 1$ such that $f^*L \simeq qL$. 
\end{enumerate}

\begin{proposition} \label{InvariantAdelic1}
Let $S$ be a quasi-projective variety over a number field $K$ with an affine quasi-projective model over $\bb{Z}$. Let $(X,f,L)$ be a polarized dynamical system over $S$. Then there exists an integrable nef adelic line bundle $\ovl{L}_f \in \wh{\Pic}(X/\bb{Z})$ such that $f^* \ovl{L}_f = q \ovl{L}_f$. 
\end{proposition}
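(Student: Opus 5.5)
This is the standard Yuan--Zhang construction of invariant adelic line bundles via Tate's limiting argument; the plan is to adapt it to the quasi-projective base $S$.

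\emph{Step 1: models.} Using the affine quasi-projective model $\mathcal{S}$ of $S$ over $\bb{Z}$, spread $(X,f,L)$ out to a model $(\mathcal{X},f,\mathcal{L})$ over some $\mathcal{S}$, after shrinking $\mathcal{S}$ to an open subscheme. Here $\mathcal{X}$ is projective over $\mathcal{S}$, $f$ extends to an $\mathcal{S}$-morphism, $\mathcal{L}$ is relatively ample, and $f^*\mathcal{L}\simeq q\mathcal{L}$. Fix a projective compactification $\overline{\mathcal{X}}$ of $\mathcal{X}$ and an extension $\overline{\mathcal{L}}_0$ of $\mathcal{L}$ to $\overline{\mathcal{X}}$, endowed with a smooth (or continuous) positive hermitian metric at the Archimedean places. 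Since $\mathcal{S}$ is affine, a sufficiently large multiple of $\mathcal{L}$ is ample on $\mathcal{X}$ itself. After adding a pullback of an ample arithmetic divisor from a compactification of $\mathcal{S}$, we may assume $\overline{\mathcal{L}}_0$ is nef, or at least a difference of nef model bundles.

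\emph{Step 2: Tate iteration.} Because $f^*L\simeq qL$ on $X$, the model adelic line bundles $q^{-1}f^*\overline{\mathcal{L}}_0$ and $\overline{\mathcal{L}}_0$ agree on $\mathcal{X}$ up to a divisor supported on the boundary plus a Green function correction. Concretely, $q^{-1}f^*\overline{\mathcal{L}}_0-\overline{\mathcal{L}}_0=\overline{\mathcal{M}}$ is a model divisor on $\mathcal{U}$ whose finite part vanishes on $\mathcal{X}$. I will show that $-c\,\mathcal{E}\le \overline{\mathcal{M}}\le c\,\mathcal{E}$ for some boundary divisor $\mathcal{E}$ and some $c>0$. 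This is the key comparison, and I expect it to be the main obstacle: the boundary $\overline{\mathcal{X}}\setminus\mathcal{X}$ contains the preimage of the boundary of $\mathcal{S}$, where $f$ need not extend, so one must pass to a further blow-up on which $f$ is defined and check that $\mathcal{E}$ can be chosen independent of the iterate. I would use that $f$ is a morphism over $\mathcal{S}$, so that $f^*\mathcal{E}_S=\mathcal{E}_S$ for the boundary divisor pulled back from the base. This reduces the estimate to bounding $(f^n)^*\mathcal{E}$ by $\mathcal{E}$ up to a factor $q^n$, or even uniformly.

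\emph{Step 3: convergence.} Telescoping gives
\[
q^{-n}(f^n)^*\overline{\mathcal{L}}_0-\overline{\mathcal{L}}_0=\sum_{j=0}^{n-1}q^{-j}(f^j)^*\overline{\mathcal{M}},
\]
and each term is bounded by $c\,q^{-j}\mathcal{E}$. Hence the sequence is Cauchy in the boundary topology and converges in $\widehat{\Div}_{\bb{Q}}(\mathcal{U})$ to a limit $\overline{L}_f$. By continuity of $f^*$ in the boundary topology, $f^*\overline{L}_f=q\overline{L}_f$. Each term $q^{-n}(f^n)^*\overline{\mathcal{L}}_0$ is nef as a pullback of a nef bundle, and limits of nef model bundles in the boundary topology are nef by definition, so $\overline{L}_f$ is nef and hence integrable. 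Finally, pass to the direct limit over models to land in $\widehat{\Pic}(X/\bb{Z})$. Independence of the choice of model, and of the choice of $\overline{\mathcal{L}}_0$, follows from the same contraction argument, since two limits differ by a $q^{-n}$-shrinking error.
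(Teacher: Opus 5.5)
Your argument is correct and is the Tate-limit construction behind \cite[Theorem 6.1.1]{YZ24}, which the paper simply cites rather than reproving. You identify the two inputs that make it work: relative ampleness over an affine base makes $\mathcal{L}$ ample on the model, so a projective embedding gives a nef initial $\overline{\mathcal{L}}_0$; and a boundary divisor pulled back from the base satisfies $f^*\pi^*\mathcal{E}_S=\pi^*\mathcal{E}_S$, which gives the uniform bound needed for convergence and for continuity of $f^*$. You should drop the hedged fallback in Step 1: adding $\pi^*$ of an ample divisor from $\overline{\mathcal{S}}$ preserves $f^*\mathcal{L}\simeq q\mathcal{L}$ only if that divisor is supported on the boundary, and starting from a mere difference of nef bundles would give an integrable limit but not the nef one the statement requires.
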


\begin{proof}
This is \cite[Theorem 6.1.1]{YZ24}.     
\end{proof}

We now recall the Deligne pairing for adelic line bundles over quasi-projective varieties. Again let $K$ be a number field and let $Y$ be a normal quasi-projective variety over $K$. Let $f: X \to Y$ be a projective and flat morphism of relative dimension $n$ and assume that $X$ is integral. Then the Deligne pairing \cite[Theorem 4.1.3]{YZ24} induces a symmetric and multilinear functor 
$$\wh{\Pic}(X/\bb{Z})_{\intt}^{n+1} \to \wh{\Pic}(Y/\bb{Z})_{\intt}.$$
We denote the image of $(\ovl{L}_1,\ldots,\ovl{L}_{n+1})$ by $f_*\langle \ovl{L}_1,\ldots, \ovl{L}_{n+1} \rangle$. Furthermore, the Deligne pairing functor is compatible by base change, giving us the following interpretation of heights under the Deligne pairing. 

\begin{proposition} \label{HeightSubvariety1}
Let $\ovl{L}_1,\ldots,\ovl{L}_{n+1}$ be $n+1$ integrable adelic line bundles on $X$ and let $\ovl{M} = \langle \ovl{L}_1,\ldots,\ovl{L}_{n+1} \rangle \in \wh{\Pic}(Y/\bb{Z})_{\intt}$. Let $s \in Y(\ovl{\bb{Q}})$. Then
$$h_{\ovl{M}}(s) = h_{\ovl{L}_1,\ldots,\ovl{L}_{n+1}}(X_s).$$
\end{proposition}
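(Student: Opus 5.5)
This statement is the compatibility of the Yuan--Zhang Deligne pairing with base change, read off at a single algebraic point. The plan is to reduce it to the defining property of heights of adelic line bundles on a point. Fix $s\in Y(\ovl{\bb{Q}})$ and view it as a morphism $i_s\colon \Spec K'\to Y$ for a number field $K'$ over which $s$ is defined. By definition, the height $h_{\ovl{M}}(s)$ is the normalized arithmetic degree $\frac{1}{[K':\bb{Q}]}\wh{\deg}(i_s^*\ovl{M})$, computed on $\wh{\Pic}(\Spec K'/\bb{Z})_{\intt}$. Likewise, the height of the fiber $X_s$ with respect to $\ovl{L}_1,\ldots,\ovl{L}_{n+1}$ is defined as the normalized arithmetic intersection number $\frac{1}{[K':\bb{Q}]}\ovl{L}_1|_{X_s}\cdots\ovl{L}_{n+1}|_{X_s}$ on the projective variety $X_s$ over $K'$. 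Both sides are therefore invariant under enlarging $K'$, so we may fix one such $K'$.

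First, apply the base change property of the Deligne pairing \cite[Theorem 4.1.3]{YZ24} to the Cartesian square formed by $X_s=X\times_Y \Spec K'\to\Spec K'$ and $f\colon X\to Y$. Since $f$ is projective and flat of relative dimension $n$, the fiber $X_s\to\Spec K'$ is again projective and flat of relative dimension $n$. Base change then gives a canonical isometry
$$i_s^*\,f_*\langle \ovl{L}_1,\ldots,\ovl{L}_{n+1}\rangle \simeq (f_s)_*\langle i_s'^*\ovl{L}_1,\ldots,i_s'^*\ovl{L}_{n+1}\rangle,$$
where $i_s'\colon X_s\to X$ is the induced map. If $X_s$ is not integral, we apply the pairing componentwise, using linearity in cycles.

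Second, over the base $\Spec K'$ the Deligne pairing is the classical one for projective varieties. The arithmetic degree of $(f_s)_*\langle\cdots\rangle$ equals the arithmetic intersection number $\ovl{L}_1|_{X_s}\cdots\ovl{L}_{n+1}|_{X_s}$. This holds for model adelic line bundles by the Deligne--Elkik/Zhang formula, and it extends to integrable adelic line bundles by continuity of both sides in the boundary topology, which is how \cite{YZ24} constructs the pairing. Dividing both sides by $[K':\bb{Q}]$ yields $h_{\ovl{M}}(s)=h_{\ovl{L}_1,\ldots,\ovl{L}_{n+1}}(X_s)$.

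The only delicate step is this identification over a point: one must check that the limiting process defining the pairing for integrable adelic bundles commutes with restriction to $s$. I would handle it by approximating each $\ovl{L}_i$ by model bundles on quasi-projective models $\cal{U}$ of $X$ over $\bb{Z}$. Restriction to the closure of $s$ is continuous for the boundary topology, since the boundary divisor pulls back to a boundary divisor, or to $0$. Hence the model case, which is classical, passes to the limit. In practice this is exactly the content of \cite[Theorem 4.1.3]{YZ24} and its corollary on heights, so the proof amounts to citing it.
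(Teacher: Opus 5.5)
Your proposal is correct and follows essentially the same route as the paper. The paper simply invokes functoriality of the Yuan--Zhang Deligne pairing under base change to the point $s$ (as in the proof of \cite[Lemma 6.3.2]{YZ24}) and reads off heights as normalized arithmetic degrees; your discussion of approximating by model bundles and passing to the limit only unpacks what that citation already provides.
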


\begin{proof}
This follows from functoriality of the Deligne pairing by base change, see the proof of \cite[Lemma 6.3.2]{YZ24}.     
\end{proof}

We next recall the definition of a big line bundle. For any adelic line bundle $\ovl{L}$, we may define an arithmetic volume function $\wh{\vol}(\ovl{L})$ \cite[Section 5.2]{YZ24}. We say that $\ovl{L}$ is big if $\wh{\vol}(\ovl{L}) > 0$. The bigness of $\ovl{L}$ directly implies the non-Zariski density of small points for $h_{\ovl{L}}$, as seen in \cite[Theorem 5.3.7]{YZ24}. We will need a weaker condition called potential bigness. Again for our morphism $f: X \to Y$ of pure relative dimension $n$, we say that an adelic line bundle $\ovl{L}$ on $X$ is potentially big if the adelic line bundle 
$$\ovl{L}^{\boxtimes m} = (p_1^* \ovl{L}) \otimes \cdots \otimes (p_{m}^* \ovl{L})$$
is big on the $m$-fold fiber product 
$$X_{/Y}^{m} = X \times_Y \cdots \times_Y X$$
for all $m \geq \dim Y+1$. 
The following proposition of Yuan gives a criterion for potential bigness in terms of the bigness of the Deligne pairing. 

\begin{proposition} \label{YuanBigness1}
Assume that the generic fiber of $f: X \to Y$ is geometrically integral. Let $\ovl{L}$ be an adelic line bundle on $X$ which is nef, and that the degree of $L$ on the fibers $X \to Y$ is strictly positive. Then $\ovl{L}$ is potentially big if and only if the Deligne pairing 
$f_*(\ovl{L},\ldots,\ovl{L})$ is big, 
\end{proposition}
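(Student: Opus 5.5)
The plan is to reduce the statement to a comparison of arithmetic top self-intersection numbers. For nef integrable adelic line bundles these control volumes: the arithmetic Hilbert--Samuel formula in the setting of \cite[Section 5]{YZ24} gives $\wh{\vol}(\ovl{N}) = \ovl{N}^{\dim+1}$ for nef $\ovl{N}$. This is in fact Yuan's criterion, so the most economical route is to cite \cite[Theorem 4.1]{Yua24} (or the corresponding statement there). I would still organize the argument as follows. Write $r=\dim Y$, $X^m:=X^m_{/Y}$ with projections $p_i$, structure map $f_m\colon X^m\to Y$, and $\ovl{M}=f_*\langle \ovl{L},\ldots,\ovl{L}\rangle$.

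First I check nefness on both sides. Each $p_i^*\ovl{L}$ is nef, so $\ovl{L}^{\boxtimes m}$ is nef on $X^m$. By \cite[Theorem 4.1.3]{YZ24} the Deligne pairing of nef bundles is nef, so $\ovl{M}$ is nef on $Y$. The generic fiber of $X^m\to Y$ is geometrically integral, being a product of geometrically integral fibers, so $X^m$ is integral of dimension $mn+r$ and the volume formula applies. Hence
\[
\ovl{L}^{\boxtimes m}\ \text{big} \iff (\ovl{L}^{\boxtimes m})^{mn+r+1}>0, \qquad \ovl{M}\ \text{big} \iff \ovl{M}^{r+1}>0 .
\]

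Next I expand $(\sum_i p_i^*\ovl{L})^{mn+r+1}$ multinomially and push each monomial $\prod_i p_i^*\ovl{L}^{k_i}$ down to $Y$. For this I use the functoriality of the Deligne pairing under products of flat maps, the fiber-product analogue of the projection formula. A factor with $k_i<n$ contributes zero, since the fiber has dimension $n$. A factor with $k_i=n$ contributes the positive constant $\deg(L|_{X_\eta})$. A factor with $k_i=n+1$ contributes a copy of $\ovl{M}$. Since the total excess is $\sum_i(k_i-n)=r+1$, the leading terms, in which exactly $r+1$ indices have $k_i=n+1$ and the rest have $k_i=n$, sum to
\[
c_{m,n,r}\,\deg(L|_{X_\eta})^{\,m-r-1}\,\ovl{M}^{\,r+1}, \qquad c_{m,n,r}>0 .
\]
This is exactly where the hypothesis $m\ge r+1$ enters.

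The main obstacle is the remaining terms, in which some factor has $k_i\ge n+2$. These push forward to higher-codimension Deligne-type classes $f_*\langle \ovl{L}^{n+j}\rangle$ on $Y$, which are not multiples of $\ovl{M}$, so the naive expansion does not directly give $\ovl{M}^{r+1}$.

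My first attempt is an induction on $r$ by restricting to the generic fiber and to general hyperplane sections of $Y$ (Bertini), in order to show these terms are nonnegative: they are intersections of nef classes. With that, $\ovl{M}^{r+1}>0$ forces bigness of $\ovl{L}^{\boxtimes m}$, which is one implication. For the converse I would instead compare volumes directly. A section of $N\ovl{M}$ yields, through the pairing, small sections of a power of $\ovl{L}^{\boxtimes(r+1)}$ twisted by the fiber degree, and bigness on $X^{r+1}$ then propagates to all $m\ge r+1$ by pulling back along projections. If either implication resists this approach, I would fall back on Yuan's argument in \cite{Yua24} via the relative equidistribution/height inequality, which is the source of this proposition.
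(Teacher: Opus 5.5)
Citing \cite[Theorem 4.1]{Yua24}, which you name as the economical route, is exactly the paper's proof. The paper gives only that citation, plus the remark that Yuan's dimension is taken over $\mathbb{Z}$, which is why the threshold is $m\ge \dim Y+1$. Your leading-term count, which needs $r+1$ indices with $k_i=n+1$, explains this shift correctly.

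As a self-contained argument, however, your sketch proves only one direction.

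\textbf{Forward direction.} The implication ``$\ovl{M}$ big $\Rightarrow$ $\ovl{L}^{\boxtimes m}$ big for all $m\ge r+1$'' has no real obstacle. Every monomial $\prod_i (p_i^*\ovl{L})^{k_i}$ with $\sum_i k_i=mn+r+1$ is already a top intersection number of nef adelic line bundles on $X^m_{/Y}$, hence nonnegative. So you need neither the Bertini induction nor higher-codimension pushforwards. Monomials with some $k_i<n$ vanish, because then more than $\dim X^{m-1}_{/Y}+1$ factors are pulled back from $X^{m-1}_{/Y}$. Iterated projection formulas identify each leading monomial with $\deg(L_\eta)^{m-r-1}\ovl{M}^{\,r+1}$. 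Combined with nefness of $\ovl{M}$ and Hilbert--Samuel on both sides, this proves the direction the paper actually uses in Proposition~\ref{Relative1}.

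\textbf{Converse: a genuine gap.} Your paragraph starts from sections of $N\ovl{M}$ and produces sections on $X^{r+1}_{/Y}$, which is again the forward implication. Moreover, pulling back a big bundle along $X^m_{/Y}\to X^{r+1}_{/Y}$ does not give a big bundle. Nor can the expansion be inverted. Terms with some $k_i\ge n+2$ are not a priori controlled by $\ovl{M}^{\,r+1}$; for example, when $r=1$ and $m=2$ they are multiples of $\deg(L_\eta)\,\ovl{L}^{\,n+2}$. So positivity of $(\ovl{L}^{\boxtimes m})^{mn+r+1}$ does not by itself force $\ovl{M}^{\,r+1}>0$.

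Passing from bigness on $X^m_{/Y}$ back down to $Y$ needs an input of a different nature. One option is a height argument: use $h_{\ovl{M}}(s)=h_{\ovl{L}}(X_s)$ from Proposition~\ref{HeightSubvariety1}, together with successive minima on the fibers, to produce a Zariski dense set of small points on $X^m_{/Y}$ when $\ovl{M}$ is not big. Short of carrying that out, you should rely on \cite[Theorem 4.1]{Yua24} for this direction, as the paper does.
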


\begin{proof}
This is \cite[Theorem 4.1]{Yua24}. Note that we have to take $m \geq \dim Y+1$ as the dimension in Yuan's theorem is the dimension over $\bb{Z}$ and thus we have to add one to it.     
\end{proof}

Finally we have a criterion for bigness of a line bundle $\ovl{L}$ on $X$. 

\begin{proposition} \label{YuanBigness2}
Let $\ovl{L}$ and $\ovl{M}$ be adelic line bundle on $X$ and assume that $\ovl{L}$ is nef and $\ovl{M}$ is big. Then if there exists a Zariski open $U \subseteq X$ and constants $\eps > 0$ such that 
$$h_{\ovl{L}}(x) \geq \eps h_{\ovl{M}}(x)\text{ for all } x \in U(\ovl{\bb{Q}}),$$
then $\ovl{L}$ is also big. 
\end{proposition}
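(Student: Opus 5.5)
The plan is to reduce bigness of $\ovl{L}$ to positivity of its top self-intersection. For a nef integrable adelic line bundle the arithmetic Hilbert--Samuel formula of Yuan--Zhang \cite[Section 5]{YZ24} gives $\wh{\vol}(\ovl{L}) = \ovl{L}^{n+1}$, where $n+1=\dim X$ over $\bb{Z}$. So it suffices to prove $\ovl{L}^{n+1}>0$.

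\textbf{Step 1: reduce to an ample bundle.} I would first replace $\ovl{M}$ by an "ample plus effective" decomposition using the arithmetic Kodaira lemma in the quasi-projective setting \cite[Section 5.2]{YZ24}. Since $\ovl{M}$ is big, there are an integer $m\ge 1$, an integrable nef adelic line bundle $\ovl{A}$ which is arithmetically ample on some projective model, and an effective adelic divisor $\ovl{E}$ with $m\ovl{M} = \ovl{A} + \ovl{E}$. Away from $\supp E$ we have $h_{\ovl{E}}\ge 0$. Shrinking $U$ to $U' = U\setminus \supp E$, the hypothesis then gives
\[
h_{\ovl{L}}(x) \ge \tfrac{\eps}{m}\, h_{\ovl{A}}(x) \quad \text{for all } x\in U'(\ovl{\bb{Q}}).
\]
Because $\ovl{A}$ is ample on a model, $h_{\ovl{A}}$ is bounded below by a positive constant $c$ off a proper closed subset. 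Hence the essential minimum of $h_{\ovl{L}}$ on $X$ is at least $\eps c/m>0$. Using the boundary topology I would approximate $\ovl{L}$ and $\ovl{A}$ by nef model bundles on a common projective model. All estimates are stable under this approximation, because the boundary divisor $\cal{E}$ contributes errors that are uniformly small.

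\textbf{Step 2: positivity of mixed intersections (main obstacle).} On the projective model I would prove by induction on $j$ that the mixed numbers $\ovl{L}^{j}\cdot\ovl{A}^{\,n+1-j}$ are all positive. The base case $j=0$ is ampleness of $\ovl{A}$. For the inductive step, write
\[
\ovl{L}^{j}\cdot\ovl{A}^{\,n+1-j} = \ovl{L}^{j}\cdot\ovl{A}^{\,n-j}\big|_{\mathrm{div}(s)} + \int -\log\|s\|\, c_1(\ovl{L})^{j} c_1(\ovl{A})^{n-j},
\]
using a small section $s$ of a multiple of $A$, which makes the integral term nonnegative. I would choose $s$ so that every component of $\mathrm{div}(s)$ meets $U'$, so that the hypothesis restricts to it. Iterating cuts $X$ down to curves $C$ meeting $U'$. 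There I would apply Zhang's theorem of successive minima, $h_{\ovl{L}}(C)\ge \tfrac12 e_1(\ovl{L}|_C)\deg_L C$. This gives $h_{\ovl{L}}(C)>0$ with a quantitative bound $\ge \tfrac{\eps c}{2m}\deg_L C$. Note that $\deg L|_C>0$, since otherwise $h_{\ovl{L}}$ would be bounded on $C$, contradicting domination of the unbounded $h_{\ovl{A}}$.

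The delicate points are two. First, keeping the cuts generic relative to $U'$. Second, showing the quantitative lower bound survives summing over the components of $\mathrm{div}(s)$, so that the top number $\ovl{L}^{n+1}$ is bounded below by a positive multiple of $\deg$-type quantities rather than merely being nonnegative.

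\textbf{Step 3: conclude.} Once $\ovl{L}^{n+1}>0$ holds on models, with a bound uniform in the approximation, passing to the limit gives $\wh{\vol}(\ovl{L})=\ovl{L}^{n+1}>0$, so $\ovl{L}$ is big. As a fallback if the inductive cutting in Step 2 is awkward, I would use Siu's inequality instead. It gives $\wh{\vol}(\ovl{L}-t\ovl{A}) \ge \ovl{L}^{n+1}-(n+1)t\,\ovl{L}^n\ovl{A}$. Combining it with Zhang's inequality $e_1(\ovl{L})\le \ovl{L}^{n+1}/((n+1)\deg L)$ applied to $\ovl{L}-t\ovl{A}$ for small $t$ should force $\ovl{L}^{n+1}>0$ by contradiction.
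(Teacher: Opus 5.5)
The gap is the passage from $e_1(\ovl{L})>0$ to $\ovl{L}^{n+1}>0$; your Step 2 does not bridge it, and the fallback quotes Zhang's inequality in the wrong direction.

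\textbf{What is fine.} Your Step 1 and the closing appeal to the arithmetic Hilbert--Samuel formula are sound and match part of the paper's argument. Bigness of $\ovl{M}$ gives $e_1(\ovl{M})>0$. The height inequality then transfers this to $e_1(\ovl{L})>0$. For nef $\ovl{L}$ one has $\wh{\vol}(\ovl{L})=\ovl{L}^{n+1}$.

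\textbf{The gap.} For a nef bundle, $e_1(\ovl{L})>0$ does not imply $\ovl{L}^{n+1}>0$ unless you also know that the generic-fibre degree $\deg_L X=L^n$ is positive, i.e.\ that the geometric part $\tilde{L}$ is big. For example, on $\bb{P}^1\times\bb{P}^1$ take $p_1^*\ovl{\mathcal{O}(1)}$ twisted by the trivial bundle with constant metric $\|1\|=e^{-c}$, $c>0$. It is nef with $e_1\ge c>0$, yet its top self-intersection vanishes. Geometric bigness is exactly where the full hypothesis $h_{\ovl{L}}\ge \eps h_{\ovl{M}}$ is needed, and your proposal never proves it. The paper obtains it from \cite[Theorem 5.3.8]{YZ24}. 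It then concludes with Zhang's successive minima \cite[Theorem 5.3.3]{YZ24}, which for nef $\ovl{L}$ with $\tilde{L}$ big gives $\ovl{L}^{n+1}\ge \deg_L X\cdot e_1(\ovl{L})>0$.

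\textbf{Why Step 2 does not supply it.} Cutting by small sections of $A$ lowers the dimension but never the exponent of $\ovl{L}$. So your recursion never relates $\ovl{L}^{j}\cdot\ovl{A}^{n+1-j}$ to the case $j-1$, and the inductive hypothesis is never used. The case you actually need, $j=n+1$, has no $\ovl{A}$-factor to cut by, so the top term is never reached. What cutting by $A$ does give, such as $L\cdot A^{n-1}>0$ or $\deg L|_C>0$ for curves meeting $U'$, does not imply $L^n>0$. Mumford's nef divisor on a ruled surface is positive on every curve but has self-intersection $0$.

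\textbf{The fallback.} Zhang's theorem is $e_1(\ovl{L})\ge \ovl{L}^{n+1}/((n+1)\deg_L X)\ge e_1(\ovl{L})/(n+1)$. The upper bound you quote, $e_1\le \ovl{L}^{n+1}/((n+1)\deg L)$, is false in general. The correct lower bound is the useful one, but it again presupposes $\deg_L X>0$.

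\textbf{How to repair.} First establish geometric bigness, either by citing \cite[Theorem 5.3.8]{YZ24} or by cutting with perturbations of $L$ as well as $A$. In the latter route, take general complete-intersection curves of $k$ members of $|N(L+\delta A)|$ and $n-1-k$ members of $|NA|$. On them the domination yields $(L+\delta A)^{k+1}A^{n-1-k}\ge (\eps/m)(L+\delta A)^{k}A^{n-k}$. Iterating over $k$ and letting $\delta\to 0$ gives $L^n\ge(\eps/m)^nA^n>0$. Then the correct lower bound in Zhang's theorem finishes the proof.
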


\begin{proof}
We combine a few results in \cite[Section 5]{YZ24}. First by \cite[Theorem 5.3.8]{YZ24}, if $\tilde{L}$ denotes the image of $\ovl{L}$ in $\wh{\Pic}(X/K)$, then $\tilde{L}$ is big. By \cite[Lemma 5.3.4]{YZ24}, as $\wh{\vol}(\ovl{M}) > 0$, it follows that its first essential minimum $e_1(X,\ovl{M}) > 0$. Then our inequality implies that $e_1(X,\ovl{L}) > 0$ too. By \cite[Theorem 5.3.3]{YZ24}, as we know that $\tilde{L}$ is big, it follows that $h_{\ovl{L}}(X) > 0$. By \cite[Theorem 5.2.2]{YZ24}, it follows that $\wh{\vol}(\ovl{L}) > 0$ and hence $\ovl{L}$ is big as desired.
\end{proof}

\subsection{Proofs of Theorems \ref{RelativeManinMumford1} and \ref{DMConj2}} We now begin the proofs of Theorem \ref{RelativeManinMumford1} and \ref{DMConj2}. We first begin with the setup. Consider $Y = \bb{P}^1 \times \mathrm{Poly}_{\mathrm{mc}}^{d_1}$ as a variety over the base $\mathrm{Poly}_{\mathrm{mc}}^{d_1}$ which is a quasi-projective variety over $\ovl{\bb{Q}}$. Then we have a natural endomorphism $f: Y \to Y$ over $S$ which is polarizable of degree $d \geq 2$ using the line bundle $L = O(1)$. By Proposition \ref{InvariantAdelic1}, there exists an integrable adelic line bundle $\ovl{L}_f \in \wh{\Pic}(Y/\bb{Z})$ satisfying $f^* \ovl{L}_f \simeq d_1 \ovl{L}_f$. Similarly for $Z = \bb{P}^1 \times \mathrm{Poly}^{d_2}$, we obtain an adelic line bundle $\ovl{L}_g \in \wh{\Pic}(Z/\bb{Z})$ satisfying $g^* \ovl{L}_g \simeq  d_2 \ovl{L}_g$. 
\par 
Now on $Y \times Z = \bb{P}^1 \times \mathrm{Poly}_{\mathrm{mc}}^{d_1} \times \bb{P}^1 \times \mathrm{Poly}^{d_2}$, we may consider the adelic line bundle $\ovl{L} = \pi_1^*\ovl{L}_f - \pi_2^*\ovl{L}_g \in \wh{\Pic}(Y \times Z / \bb{Z})$ and we restrict it to $\Delta' = \Delta \times \mathrm{Poly}_{\mathrm{mc}}^{d_1} \times \mathrm{Poly}^{d_2}$ to obtain an adelic line bundle $\ovl{L} \in \wh{\Pic}(\Delta'/\bb{Z})$. Now $\Delta'$ is a flat family of relative dimension $1$ over the base $\mathrm{Poly}_{\mathrm{mc}}^d \times \mathrm{Poly}^d$ and the base is certainly normal. We may thus consider the Deligne pairing 
$$\ovl{M} = \langle \ovl{L}, \ovl{L} \rangle \in \wh{\Pic}(\mathrm{Poly}_{\mathrm{mc}}^{d_1} \times \mathrm{Poly}^{d_2}/\bb{Z}).$$
Now consider a subvariety $X \subset \mathrm{Poly}_{\mathrm{mc}}^{d_1} \times \mathrm{Poly}^{d_2}$. Let $\nu: X' \to X$ be its normalization. We aim to prove that $\ovl{L}$ is potentially big on $\Delta \times X$. 

\begin{proposition} \label{Relative1}
Assume that as $(f,g)$ varies over $X(\ovl{\bb{Q}})$, the map $f$ is not always the monomial map.  If not all $s \in X(\ovl{\bb{Q}})$ satisfy $\Prep(f_s) = \Prep(g_s)$, then the adelic line bundle $\ovl{L}$ is potentailly big on $\Delta \times X$.

\end{proposition}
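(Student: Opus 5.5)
The plan is to apply Proposition~\ref{YuanBigness1} to the family $\Delta\times X'\to X'$, pulled back along the normalization $\nu$. Its fibers are copies of $\bb{P}^1$, so the generic fiber is geometrically integral. This reduces potential bigness of $\ovl{L}$ to bigness of the Deligne pairing $\nu^*\ovl{M}=\langle \ovl{L},\ovl{L}\rangle$ on $X'$.

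\emph{Nefness and positive degree.} One caveat is that $\ovl{L}=\pi_1^*\ovl{L}_f-\pi_2^*\ovl{L}_g$ is a difference and is not nef as written. I would therefore run the argument with the nef bundle $\ovl{N}=\pi_1^*\ovl{L}_f+\pi_2^*\ovl{L}_g$ restricted to $\Delta\times X$. Its fiberwise degree is $2>0$, and its height at $x$ is $\h_f(x)+\h_g(x)=\h_{f,g}(x,x)$, which is what Theorem~\ref{RelativeManinMumford1} needs. Proposition~\ref{YuanBigness1} then applies to $\ovl{N}$, and the task becomes showing that $\langle\ovl{N},\ovl{N}\rangle$ is big on $X'$.

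\emph{Identifying the pairing with the energy.} By Proposition~\ref{HeightSubvariety1}, for $s=(f,g)\in X(\ovl{\bb{Q}})$ the height $h_{\langle \ovl{N},\ovl{N}\rangle}(s)$ equals the self-intersection of the sum of the two canonical adelic metrics on $\bb{P}^1$. A local computation at each place (Favre--Rivera-Letelier, Proposition~\ref{prop: computeE}) expresses this, up to a positive factor and up to bounded error terms, as the mutual energy $\langle f,g\rangle$. Concretely, the self-intersection of $\ovl{L}_f+\ovl{L}_g$ equals $\hat{h}_f\cdot\hat h_g$-type cross terms. Since $\ovl{L}_f^2=\ovl{L}_g^2=0$ (the canonical heights of $\bb{P}^1$ vanish), this gives $2\ovl{L}_f\cdot\ovl{L}_g$, which is exactly the energy pairing $\langle f,g\rangle$ up to normalization (the $-\log H$ integral against $\mu_f\times\mu_g$).

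\emph{Comparison height and bigness.} Next, compare $\langle f,g\rangle$ with a big height on $X'$. For example, take the Weil height $h(f,g)$ attached to a big (ample plus boundary) adelic bundle $\ovl{H}$ on $X'$; ampleness on a projective compactification, suitably adjusted at the boundary, ensures this is big. Choose a Zariski open $U\subseteq X$ avoiding the locus where $f=z^{d_1}$ and avoiding the proper closed subset where $f^{n_1}=\lambda g^{n_2}$ for some $\lambda\in\bU_m$. That locus is proper because $\Prep(f_s)\ne\Prep(g_s)$ somewhere, and it is contained in finitely many closed conditions since the relevant $n_1,n_2$ can be taken minimal. On $U$, Theorems~\ref{thm: glbenergybd1} and~\ref{thm: glbenergybd2} give $\langle f,g\rangle\ge A\,h(f,g)-B$ and $\langle f,g\rangle\ge\delta$. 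Combining these yields $\langle f,g\rangle\ge \epsilon\,(h(f,g)+1)$ for some $\epsilon>0$. The constant term is absorbed by adding a trivial bundle with constant metric to $\ovl{H}$, which keeps it big. Proposition~\ref{YuanBigness2} then gives that $\langle\ovl{N},\ovl{N}\rangle$ is big, and Proposition~\ref{YuanBigness1} gives potential bigness.

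\emph{Main obstacle.} The step I expect to be hardest is the rigorous identification of $h_{\langle\ovl{N},\ovl{N}\rangle}(s)$ with a positive multiple of $\langle f,g\rangle$, uniformly over $X$. Two issues enter here. First, the Deligne pairing must be computed via the local Green functions $\varphi_f,\varphi_g$ at every place. Second, one must check that no residual term (for instance, a contribution from the non-monic leading coefficient of $g$) breaks the inequality. I would first try to prove the identity pointwise by the formula in Proposition~\ref{prop: computeE}, summed over places.
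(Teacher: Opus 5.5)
Your proposal is correct and follows the paper's route. You pass to the normalization $X'$ and reduce via Proposition~\ref{YuanBigness1} to bigness of the Deligne pairing. You then read off its height through Proposition~\ref{HeightSubvariety1} as the energy pairing, and conclude by Proposition~\ref{YuanBigness2} from Theorems~\ref{thm: glbenergybd1} and~\ref{thm: glbenergybd2}, compared against the big Weil height $h(f,g)$ off the proper closed locus where $f^{n_1}=\lambda g^{n_2}$.

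Your switch to the nef bundle $\pi_1^*\ovl{L}_f+\pi_2^*\ovl{L}_g$ is the right reading of the statement. The literal difference has fiber degree $0$, so it cannot be potentially big, and its self-pairing is $-\langle f,g\rangle\le 0$; the paper itself uses the height $\h_f+\h_g$ in the proof of Theorem~\ref{RelativeManinMumford2}. Since $\ovl{L}_f^2=\ovl{L}_g^2=0$, your identity $(\ovl{L}_f+\ovl{L}_g)^2=2\,\ovl{L}_f\cdot\ovl{L}_g=\langle f,g\rangle$ is exact, with no bounded error terms, and it makes explicit a step the paper leaves implicit.
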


\begin{proof}
We first show that the set of points $s \in X(\ovl{\bb{Q}})$ that do satisfy $\Prep(f_s) = \Prep(g_s)$ is contained in a proper Zariski closed subset.If $\Prep(f) = \Prep(g)$, then for some Archimedean place $\nu$, we must have $\mu_{f,v} = \mu_{g,v}$. Fix some $(n_1,n_2)$ such that $d_1^{n_1} = d_2^{n_2}$. If they don't exist then the locus where $\mu_{f,v} = \mu_{g,v}$ is just empty. Otherwise, we obtain $f^{n_1} = \lambda g^{n_2}$ for some $\lambda \in \Aut(J(f))$. On a Zariski open $U \subseteq X$, we have that $\Aut(J(f)) = \{ \zeta^m = 1 \}$ for some $m \geq 1$ as $f$ is generically not the monomial map Then $\{f^{n_1} = \lambda g^{n_2}\}$ for $\lambda \in \Aut(J(f))$ cuts out a Zariski closed subset of $X$ which is not the whole space as desired as otherwise $\Prep(f_s) = \Prep(g_s)$ for all $s \in X(\ovl{\bb{Q}})$.  
\par 
Let $W$ be a Zariski closed subset containing all $s$ for which $\Prep(f_s) = \Prep(g_s)$ and let $U$ be its complement. Now by Theorems \ref{thm: glbenergybd1} and \ref{thm: glbenergybd2}, along with Proposition \ref{HeightSubvariety1}, it follows that 
$$h_{\ovl{M}}(s) \geq A h(f,g)+B$$
for some constants $A,B > 0$, for all $s \in U(\ovl{\bb{Q}})$. Since $\nu: X' \to X$ is birational, the potential bigness of $\ovl{L}$ on $X \times \Delta$ is equivalent to that of $\nu^* \ovl{L}$ on $X' \times \Delta$. Using Proposition \ref{YuanBigness1}, it suffices to prove that $\nu^* \ovl{M}$ is big, which again is equivalent to $\ovl{M}$ being big. Now as the Weil height $h(f,g)$ comes from the height of a big adelic line bundle, by Proposition \ref{YuanBigness2} it follows that $\ovl{M}$ is big and hence $\ovl{L}$ is potentially big on $X \times \Delta$.  
\end{proof}

\begin{theorem} \label{RelativeManinMumford2}
Assume that as $(f,g)$ varies over $X(\ovl{\bb{Q}})$, the map $f$ is not always $z^{d_1}$. Further assume that $\Prep(f) \not = \Prep(g)$ for some $(f,g) \in X$. Then there exists $\eps > 0$ the set 
$$\left\{(x_1,\ldots,x_{k+1}, f,g) \text{ } \bigg| \text{ } \sum_{i=1}^{k+1} \h_{f,g}(x_i) < \eps \right\} \subset (\Delta)^{k+1} \times X$$
is not Zariski dense in $(\Delta)^{k+1} \times X$. 
\end{theorem}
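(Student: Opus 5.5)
We plan to derive Theorem~\ref{RelativeManinMumford2} from Proposition~\ref{Relative1} combined with the bigness criterion of Yuan--Zhang, along the lines of Yuan's proof of uniform Bogomolov. Here $k=\dim X$. Replacing $X$ by its normalization $X'$ changes nothing, since the statement only concerns Zariski density and $\nu$ is birational. So we may assume the base is normal and work with the family $\Delta\times X\to X$ of relative dimension $1$.

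\textbf{Step 1.} Since $X$ satisfies both hypotheses of the theorem, Proposition~\ref{Relative1} says that $\ovl{L}$ is potentially big on $\Delta\times X$. By definition this means that
$$\ovl{L}^{\boxtimes m}=p_1^*\ovl{L}\otimes\cdots\otimes p_m^*\ovl{L}$$
is big on the fibered power $(\Delta\times X)^m_{/X}=\Delta^m\times X$ for every $m\ge \dim X+1=k+1$. We take $m=k+1$.

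\textbf{Step 2.} Fix $x_i=(y_i,y_i)\in\Delta$ over a point $s=(f,g)\in X(\ovl{\bb{Q}})$. We identify the height of $\ovl{L}^{\boxtimes(k+1)}$ at the point $(x_1,\ldots,x_{k+1},f,g)$. Since $\ovl{L}=\pi_1^*\ovl{L}_f-\pi_2^*\ovl{L}_g$ and the invariant adelic line bundles compute canonical heights fiberwise, we get
$$h_{\ovl{L}^{\boxtimes(k+1)}}(x_1,\ldots,x_{k+1},f,g)=\sum_{i=1}^{k+1}\bigl(\h_f(y_i)-\h_g(y_i)\bigr).$$
This is not the quantity $\h_{f,g}(x_i)$ in the statement, which is a sum rather than a difference, so the plan needs adjusting. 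The key observation is that the height of a point of $\Delta$ is bounded by its $\h_{f,g}$: since $\h_f,\h_g\ge 0$,
$$|\h_f(y)-\h_g(y)|\le \h_f(y)+\h_g(y)=\h_{f,g}(x).$$

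\textbf{Step 3.} This is the expected obstacle: a big line bundle that is not nef only gives non-density of points whose height is small in absolute value. To handle this, we apply the small-point theorem \cite[Theorem 5.3.7]{YZ24} in the form that uses the essential minimum. Bigness gives $e_1(\Delta^{k+1}\times X,\ovl{L}^{\boxtimes(k+1)})>0$ via \cite[Lemma 5.3.4]{YZ24}. Therefore there is $\eps>0$ such that
$$\{P : h_{\ovl{L}^{\boxtimes(k+1)}}(P)<\eps\}$$
is not Zariski dense. Every point with $\sum_i\h_{f,g}(x_i)<\eps$ has $h_{\ovl{L}^{\boxtimes(k+1)}}\le\sum_i|\h_f(y_i)-\h_g(y_i)|<\eps$, so it lies in this non-dense set, and the theorem follows.

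The delicate point is that $\ovl{L}$ is a difference of nef bundles, not nef itself, so the use of Proposition~\ref{YuanBigness1} inside Proposition~\ref{Relative1} and the validity of the essential-minimum bound for a non-nef big bundle have to be justified. We will rely on \cite[Lemma 5.3.4]{YZ24} holding for arbitrary big adelic line bundles. If it requires nefness, the fallback is to use instead the nef bundle $\pi_1^*\ovl{L}_f+\pi_2^*\ovl{L}_g$ together with the energy-pairing identity $\langle f,g\rangle=h_{\ovl{M}}(s)$, via Zhang's successive minima inequality.
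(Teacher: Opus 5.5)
Your main route has a genuine gap. Everything rests on $\ovl{L}^{\boxtimes(k+1)}$ being big for the \emph{difference} $\ovl{L}=\pi_1^*\ovl{L}_f-\pi_2^*\ovl{L}_g$. That bundle is not big, so Proposition~\ref{Relative1} cannot be invoked for it. The minus sign in the setup is a slip: the paper's own proof asserts that the height of $\ovl{L}^{\boxtimes(k+1)}$ is $\sum_i\h_{f,g}(x_i)$, which holds only for $\pi_1^*\ovl{L}_f+\pi_2^*\ovl{L}_g$.

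There are three ways to see that the difference is not big.
\begin{enumerate}
\item On each fiber $\Delta_s\simeq\bb{P}^1$ the difference restricts to $O(1)\otimes O(1)^{-1}$, which is trivial. So its fiber degree is $0$, and Yuan's criterion (Proposition~\ref{YuanBigness1}), which needs nefness and positive fiber degree, is unavailable.
\item Its Deligne pairing is $(\ovl{L}_f-\ovl{L}_g)^2|_{\Delta_s}=-2\,\ovl{L}_f\cdot\ovl{L}_g\le 0$, because $\ovl{L}_f^2=\ovl{L}_g^2=0$ on $\bb{P}^1$. Hence the lower bound on $h_{\ovl{M}}$ used in Proposition~\ref{Relative1} fails for it.
\item Your own Step~3 yields a contradiction. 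The points $(x_1,\ldots,x_{k+1},f,g)$ with $x_i=(y_i,y_i)$ and every $y_i\in\Prep(f)$ are Zariski dense in $\Delta^{k+1}\times X$. At those points the height of the difference bundle is $-\sum_i\h_g(y_i)\le 0<\eps$. So $\{h<\eps\}$ is Zariski dense for every $\eps>0$, which no big bundle allows.
\end{enumerate}
The real obstacle is therefore not whether Yuan--Zhang's Lemma~5.3.4 needs nefness. For any big $\ovl{L}$, writing $m\ovl{L}=\ovl{A}+\ovl{E}$ with $\ovl{A}$ ample and $\ovl{E}$ effective already gives non-density of $\{h<\eps\}$. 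The problem is that the bundle you feed into that lemma is not big, so the condition you set for switching to your fallback is misdiagnosed.

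Your fallback is the correct repair, and it is exactly the paper's intended argument. Take $\ovl{L}=\pi_1^*\ovl{L}_f+\pi_2^*\ovl{L}_g$ restricted to $\Delta\times X$. It is nef with fiber degree $2$. The height of its Deligne pairing at $s=(f,g)$ is $(\ovl{L}_f+\ovl{L}_g)^2|_{\Delta_s}=2\,\ovl{L}_f\cdot\ovl{L}_g$, which by Proposition~\ref{prop: computeE} is the energy pairing $\langle f,g\rangle$ up to normalization. Theorems~\ref{thm: glbenergybd1} and~\ref{thm: glbenergybd2}, together with Proposition~\ref{YuanBigness2}, then give bigness of $\ovl{M}$. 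Proposition~\ref{YuanBigness1} upgrades this to potential bigness, which is Proposition~\ref{Relative1} as intended. The height of $\ovl{L}^{\boxtimes(k+1)}$ at $(x_1,\ldots,x_{k+1},f,g)$ is then exactly $\sum_i\h_{f,g}(x_i)$, so no absolute-value comparison is needed. Non-density of points of height $<\eps$ follows from bigness alone, so Zhang's successive minima inequality is not required.
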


\begin{proof}
By Proposition \ref{Relative1}, we know that $\ovl{L}$ is potentially big on $X \times \Delta$ over $X$. Hence if $k = \dim X$, it follows that $\ovl{L}^{\boxtimes k+1}$ is big on $\Delta^{k+1} \times X$. The height function induced by $\ovl{L}^{\boxtimes k+1}$ is equivalent to $\sum_{i=1}^{k+1} \h_{f,g}(x_i)$. It thus follows there exists an $\eps > 0$ points of height $< \eps$ is not Zariski dense in $(\Delta)^{k+1} \times X$ as desired.
\end{proof}

We now prove Theorem \ref{DMConj2}. 

\begin{theorem} \label{DMConj3}
Let $X \subseteq \mathrm{Poly}_{\mathrm{mc}}^d \times \mathrm{Poly}^d$ be an irreducible subvariety defined over $\ovl{\bb{Q}}$. Assume that as $(f,g)$ varies over $X(\ovl{\bb{Q}})$, the map $f$ is not always $z^d$. Fix a positive integer $k$ and let 
$$\cal{X} = (\Delta)^k \times X \subseteq (\bb{P}^1)^{2k} \times X,$$
where we view $\cal{X}$ as a family over the base $X$. 
Then the DeMarco--Mavraki conjecture is true for $\cal{X}$.
\end{theorem}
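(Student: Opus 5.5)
The plan is to verify the two implications of Conjecture~\ref{DMConj1} for $\cal{X}=(\Delta)^k\times X$. The implication $(2)\Rightarrow(1)$ is \cite[Theorem 1.5]{DM26}, so only $(1)\Rightarrow(2)$ needs proof. We argue by contraposition: we compute $r_{\Phi,\cal{X}}$ and the current $\wh{T}_{\Phi}^{\wedge r}\wedge[\cal{X}]$ in two cases, according to whether $\Prep(f_s)=\Prep(g_s)$ holds for every $s\in X(\ovl{\bb{Q}})$.

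\emph{Case A: $\Prep(f_s)\neq\Prep(g_s)$ for some $s$.} We show that $r_{\Phi,\cal{X}}=k$. Since $\cal{X}$ has relative dimension $k$ over $X$, it suffices to show that $\cal{X}$ is not contained in a proper $\Phi$-special subvariety of $(\bb{P}^1)^{2k}\times X$.

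The key input is Theorem~\ref{RelativeManinMumford2}. Fibered powers of $\cal{X}$ over $X$ do not carry a Zariski dense set of preperiodic points once the power exceeds $\dim X$. We would like to show directly that $\cal{X}$ itself contains no Zariski dense set of $\Phi$-preperiodic points. Suppose it did. Then the fibered power $(\Delta)^{k'}\times X$ with $k'\ge \dim X+1$ would also contain a dense set. Indeed, preperiodic points on $(\Delta)^k\times X$ project to a dense set of parameters $s$ in $X$. Over each such $s$ they give points of $\Prep(f_s)\cap\Prep(g_s)$. Density in the fibers forces $|\Prep(f_s)\cap \Prep(g_s)|$ to be unbounded along a dense set of $s$. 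Tuples of these points then fill out a dense subset of $(\Delta)^{k'}\times X$.

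This density argument is the main obstacle. One must carefully produce Zariski density in the higher fibered power from density in $(\Delta)^k\times X$. I would try to use the $(\Delta)^1\times X$ case: density in $(\Delta)^k\times X$ implies density of the projection to each factor $\Delta\times X$. That gives a dense set in $\Delta\times X$, and taking products over the same parameter $s$ gives a dense set in any fibered power. Failing that, I would invoke the uniform bound over a Zariski open subset $U$ deduced from Theorem~\ref{RelativeManinMumford2}, as remarked after Theorem~\ref{DMConj2}: $|\Prep(f_s)\cap\Prep(g_s)|\le N$ on $U$. Then preperiodic points in $(\Delta)^k\times U$ lie in finitely many fibers over each $s$. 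By a standard argument they are contained in a finite-degree multisection, hence are not dense when $k\ge1$, which contradicts (1). So in Case A condition (1) fails, and the implication $(1)\Rightarrow(2)$ holds vacuously.

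\emph{Case B: $\Prep(f_s)=\Prep(g_s)$ for all $s$.} By the argument in Proposition~\ref{Relative1}, on a Zariski open subset of $X$ we have $f^{n}=\lambda g^{n}$ with $\lambda$ a root of unity. Here the degrees are equal, and we may take $n=1$ after the reduction in Theorem~\ref{thm: rigidity-samedegree}. Then $\Delta$ is preperiodic for $(f,g)$ over the generic point. With $\mbf{\Psi}=\Phi$ itself, this shows that $\cal{X}$ is $\Phi$-special, so $r_{\Phi,\cal{X}}=0$. Condition (2) then reads $[\cal{X}]\neq0$, which holds trivially. So (1) and (2) are both true, and the equivalence holds in this case as well.
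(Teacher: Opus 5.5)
There are genuine gaps in both cases. First, your Case~A conclusion that (1) fails is false when $k\le\dim X$. There the paper proves $\wh{T}_{\Phi}^{\wedge 2k}\wedge[\cal{X}]\neq 0$. By the known implication $(2)\Rightarrow(1)$, or directly by \cite[Theorem 3.1]{DM24}, $\cal{X}$ then does contain a Zariski dense set of $\Phi$-preperiodic points, so (2) is what must be proved.

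Each step of your density transfer breaks:
\begin{itemize}
\item Zariski density of a set in $\Delta^k\times X$ says nothing about density inside fibers, so it does not force $|\Prep(f_s)\cap\Prep(g_s)|$ to be unbounded.
\item Fibered products over $X$ of a dense set $P\subseteq\Delta\times X$ need not be dense: if $P$ has one point per fiber, $P\times_X P$ lies in $\{x_1=x_2\}$. Density passes only downward, to images under the projections $\Delta^{k'}\times X\to\Delta^{k}\times X$, never upward.
\item There is no ``standard argument'' putting a set with at most $N$ points per fiber into a finite-degree multisection. The relevant parameters $s$ form a countable Zariski-dense set, not an algebraic family. Indeed, for $k=\dim X=1$ the paper has both a Zariski dense set of common preperiodic points in the surface $\Delta\times X$ and the uniform bound on $U$.
\end{itemize}

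The missing idea for $k\le\dim X$ is non-degeneracy. Since $(\bb{P}^1)^{2k}\times X$ is itself special, $r_{\Phi,\cal{X}}\le 2k$, so it suffices to show $\wh{T}_{\Phi}^{\wedge 2k}\wedge[\cal{X}]\neq 0$. For $k=\dim X$ the paper gets this from the bigness of $\ovl{M}$ (Proposition~\ref{Relative1}) via \cite[Theorem 4.1]{Yua24}. For $k<\dim X$ it pulls back along $\Delta^{\dim X}\times X\to\Delta^k\times X$. Because $\wh{T}_f^{\wedge 2}=\wh{T}_g^{\wedge 2}=0$, the nonzero top product on $\Delta^{\dim X}\times X$ is a single term, and that term factors through the pullback. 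For $k\ge\dim X+1$ your conclusion is right, but it comes from potential bigness (bigness of $\ovl{L}^{\boxtimes k}$ for all $k\ge\dim X+1$), or from Theorem~\ref{RelativeManinMumford2} plus projection to $\Delta^{\dim X+1}\times X$. It does not come from an upward transfer.

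Second, you misread $r_{\Phi,\cal{X}}$. It is $\dim\cal{Y}-\dim X$ for the smallest special $\cal{Y}\supseteq\cal{X}$, a relative dimension over the base, not the codimension of $\cal{X}$ in $\cal{Y}$. So in Case~A, not lying in a proper special subvariety would give $r_{\Phi,\cal{X}}=2k$, not $k$. In Case~B your argument that $\cal{X}$ is special is fine: $f=\lambda g$ with $\lambda$ a root of unity makes $\Delta$ preperiodic. But then $r_{\Phi,\cal{X}}=\dim\cal{X}-\dim X=k$, not $0$. Condition (2) is therefore the nontrivial statement $\wh{T}_{\Phi}^{\wedge k}\wedge[\cal{X}]\neq0$, not $[\cal{X}]\neq 0$. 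It does hold, and the paper checks it by slicing. Over $s$, identify $\Delta^k$ with $(\bb{P}^1)^k$; the current becomes $\bigl(\sum_{i=1}^k\pi_i^*(\wh{T}_{f_s}+\wh{T}_{g_s})\bigr)^{\wedge k}$. This is a sum of positive currents, one of which is $\pi_1^*\wh{T}_{f_s}\wedge\cdots\wedge\pi_k^*\wh{T}_{f_s}\neq 0$.
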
 

\begin{proof}
Let's first handle the case where $\Prep(f_s) = \Prep(g_s)$ for every pair $(f,g) \in X(\ovl{\bb{Q}})$. Then there exists a constant $\lambda$ so that $f = \lambda g$ is true for all pairs $(f,g) \in X(\ovl{\bb{Q}})$ with $\lambda \in \Aut(\cal{J}(f_{\eta})$, where $f_{\eta}$ denotes the generic $f$. In particular, $\cal{X}$ itself is preperiodic for $\Phi$ and hence is $\Phi$-special. Thus $r_{\Phi,\cal{X}} = \dim \cal{X} - \dim X = k$.
\par 
We may now verify that 
$$\wh{T}_{\Phi}^{\wedge k} \wedge [\cal{X}] \not = 0.$$
Indeed by slicing, it suffices to prove that for $s \in X(\ovl{\bb{Q}})$, the current $\wh{T}_{\Phi,s}^{\wedge k}$ is non-zero on the fiber $(\Delta)^k$. In this case if we identify $\Delta$ with $\bb{P}^1$, then we are looking at the current $T = \sum_{i=1}^{k} \pi_i^*(\wh{T}_{f_s} + \wh{T}_{g_s})$ where $\pi_i: (\bb{P}^1)^k \to \bb{P}^1$ is the $i$th projection map and $\wh{T}_{f_s}, \wh{T}_{g_s}$ are the $(1,1)$-dynamical currents for the pair $(f_s,g_s)$. Now expanding out $T^{\wedge k}$ gives us a sum of non-negative currents, one of which is $\wedge_{i=1}^{k} \pi_i^* \wh{T}_{f_s}$, which is clearly positive as $\pi_i$'s are from independent coordinates. Hence $\wh{T}_{\Phi}^{\wedge k} \not = 0$, which verifies the DeMarco--Mavraki conjecture in this setting.
\par 
We now assume that there is at least one pair $(f_s,g_s)$ such that $\Prep(f_s) \not = \Prep(g_s)$. This puts us in the setting of Theorem \ref{RelativeManinMumford1}. Note that as $(2) \implies (1)$ is already known, if one knew that $\cal{X}$ does not contain a Zariski dense set of $\Phi$-preperiodic point then the DeMarco--Mavraki conjecture is automatically verified in this case. In particular, for $k \geq \dim X+1$, as Theorem \ref{RelativeManinMumford1} implies that $\cal{X}$ does not have a Zariski dense set of $\Phi$-preperiodic points and thus the DeMarco--Mavraki conjecture is true for $k \geq \dim X + 1$.
\par 
Now assume that $k \leq \dim X$. Then \cite[Theorem 3.1]{DM24} implies that $\cal{X}$ contains a Zariski dense set of preperiodic points. It thus suffices to show that $\wh{T}_{\Phi}^{r_{\Phi,\cal{X}}} \wedge [\cal{X}] \not = 0$. First, we have $r_{\Phi,\cal{X}} \leq 2k$ and we only need to show that $\wh{T}_{\Phi}^{\wedge 2k} \wedge [\cal{X}] \not = 0$. When $k = \dim X$, this is equivalent to non-degeneracy of $\cal{X}$. This is implied by the proof of Theorem \ref{RelativeManinMumford1}, as we have bigness of $\sum_{i=1}^{m} p_i^* \tilde{L}$ on $\cal{X}$ where $m = \dim X$, using again \cite[Theorem 4.1]{Yua24}, which gives us non-degeneracy of $\cal{X}$. 
\par 
For $k < \dim X$, let us assume for contradiction that $\wh{T}_{\Phi}^{\wedge 2k} \wedge [\cal{X}] = 0$. Identifying $\Delta$ with $\bb{P}^1$ again and letting $\pi_i: \Delta^k \times X \to \Delta \times X$ be the projection to the $i$th coordinate, we have that 
\begin{equation} \label{eq: Wedge1}
\left(\sum_{i=1}^{k} (\pi_i^*\wh{T}_f + \pi_i^* \wh{T}_g)\right)^{\wedge 2k} = 0 \text{ on } \Delta^k \times X.
\end{equation}
On the other hand for $m = \dim X$, we have 
\begin{equation} \label{eq: Wedge2}
\left(\sum_{i=1}^{m} (\pi_i^* \wh{T}_f + \pi_i^* \wh{T}_g) 
\right)^{\wedge 2m} \not = 0 \text{ on } \Delta^m \times X.
\end{equation}
We now expand out \eqref{eq: Wedge2}. Since $\wh{T}_f^2 = \wh{T}_g^2 = 0$ (see \cite{GV25}), the only surviving term is 
\begin{equation} \label{eq: Wedge3}
\pi_1^* \wh{T}_f \wedge \pi_1^* \wh{T}_g \wedge \cdots \wedge \pi_m^* \wh{T}_f \wedge \pi_m^* \wh{T}_g.
\end{equation}
Now let $\pi: \Delta^m \times X \to \Delta^k \times X$ be the projection to the first $k$ coordinates. Then we have 
$$\pi^*(\pi_1^* \wh{T}_f \wedge \cdots \wedge \pi_k^* \wh{T}_g) \wedge \pi^*_{k+1} \wh{T}_f \wedge \cdots \wedge \pi_m^* \wh{T}_g \not = 0.$$
This implies that 
$$\pi_1^* \wh{T}_f \wedge \cdots \wedge \pi_k^* \wh{T}_g \not = 0 \text{ on } \Delta^k \times X,$$
contradicting \eqref{eq: Wedge1}. Hence $\wh{T}_{\Phi}^{\wedge 2k} \wedge [\cal{X}] \not = 0$ as desired.
\end{proof}

\printbibliography

\end{document}